\documentclass[11pt]{amsart}
\usepackage{amsmath,amsfonts,amsthm,amssymb,amscd}
\setlength{\textwidth}{14.5cm}
\setlength{\textheight}{21.5cm}

\RequirePackage[OT1]{fontenc}

\DeclareMathOperator*{\esssup}{ess\,sup}

\newtheorem{definition}{Definition}[section]
\newtheorem{defn}{Definition}[section]
\newtheorem{theorem}{Theorem}
\newtheorem{lemma}[defn]{Lemma}
\newtheorem{prop}[defn]{Proposition}
\newtheorem{proposition}[defn]{Proposition}

\newtheorem{remark}[defn]{Remark}
\newtheorem{corollary}{Corollary}

\newcommand{\beginsec}{\setcounter{equation}{0}}

\def\R{{\mathbb R}}
\def\N{{\mathbb N}}
\def\C{{\cal C}}
\def\Z{{\mathbb Z}}
\def\1{{\mathbb I}}
\def\E{{\mathbb E}}
\def\P{{\mathbb P}}

\def\Q{{\mathbb Q}}
\def\J{{\mathcal J}}
\def\S{\widehat{{\mathcal U}}}

\newcommand{\vect}{v}

\newcommand{\stopr}{\theta}

\newcommand{\ptt}{\bar{y}}
\newcommand{\coeff}{s}

\def\ind{{\mathbb I}}

\def\ra{\rightarrow}
\def\hyp{{\cal S}} 

\def\fntwo{\bar{h}}
\def\Cone{\Theta}

\def\barz{\bar{z}}
\def\mart{S}
\def\martf{S^{f}}
\def\id{\chi} 
\def\shift{S^\id}
\def\stop{\tau_0}
\def\stopin{\varsigma}
\def\stopout{\tau}
\def\stopoutt{\stopout}
\def\stopouti{\tau_1}
\def\stopinnew{\varrho}
\def\stopoutnew{\iota}
\def\stopinnewt{\varrho}
\def\stopinf{\varrho}
\def\stopoutf{\iota}

\def\MA{{\mathcal A}}

\def\MH{{\mathcal H}}

\def\MM{{\mathcal M}}
\def\MB{{\mathcal B}}

\def\MI{{\mathcal I}}
\def\MF{{\mathcal F}}
\def\ME{{\mathcal E}}
\def\MK{{\mathcal K}}
\def\MV{{\mathcal V}}
\def\MU{{\mathcal U}}

\def\MT{{\mathcal T}}
\def\ML{{\mathcal L}}
\def\MC{{\mathcal C}}
\def\MJ{{\mathcal J}}
\def\MO{{\mathcal O}}
\def\MN{{\mathcal N}}
\def\MP{{\mathcal P}}
\def\MU{{\mathcal U}}

\def\cal{\mathcal}

\def\dist{{\rm{dist}}}
\def\supp{{\rm{supp}}}
\def\sm{\setminus}
\def\csto{\eta}
\def\cstt{\lambda}

\def\martin{M}
\def\fvproc{A}
\def\conset{\Lambda}
\def\mvect{\Upsilon}
\def\pushproc{R}
\def\vect{v}
\def\Proc{\bar{H}}

\newcommand{\conv}{\overline{\mbox{\rm co}}}
\newcommand{\noi}{\noindent }
\newcommand{\ba}{\begin{array}}
\newcommand{\ea}{\end{array}}
\newcommand{\bea}{\begin{eqnarray}}
\newcommand{\eea}{\end{eqnarray}}
\newcommand{\beas}{\begin{eqnarray*}}
\newcommand{\eeas}{\end{eqnarray*}}
\newcommand{\be}{\begin{equation}}
\newcommand{\ee}{\end{equation}}
\newcommand{\bt}{\begin{theorem}}
\newcommand{\et}{\end{theorem}}
\newcommand{\bc}{\begin{center}}
\newcommand{\ec}{\end{center}}
\newcommand{\ben}{\begin{enumerate}}
\newcommand{\een}{\end{enumerate}}
\newcommand{\lan}{\langle}
\newcommand{\ran}{\rangle}
\newcommand{\ei}{\end{itemize}}

\newcommand{\ds}{\displaystyle}

\newcommand{\ve}{\varepsilon}

\newcommand\cc{{\cal C}(\left[0,\infty\right):\R^J)}
\newcommand\ccspace{\cal C}

\newcommand{\brm}{W}
\newcommand{\exx}{X}
\newcommand{\y}{Y}
\newcommand{\zee}{Z}
\newcommand{\smart}{\shift}
\newcommand{\eh}{H}


\def\rv{\xi}
\def\intset{{\mathcal R}}

\title[The Submartingale Problem for Reflected Diffusions]{On the
  Submartingale Problem for Reflected Diffusions in Domains with
  Piecewise Smooth Boundaries}
\author{Weining Kang}
\address{Department of Mathematics \& Statistics\\
University of Maryland, Baltimore County\\
1000 Hilltop Circle\\
Baltimore, MD 21250 } \email{wkang@umbc.edu}

\author{Kavita Ramanan}
\address{Division  of Applied Mathematics\\
Brown University\\
Providence, RI 02118 \\
USA}
\email{Kavita\_Ramanan@brown.edu}

\thanks{The second author was partially supported by NSF grants
 CMMI-1052750 (formerly 0928154), CMMI-1234100 and DMS 1407504}

\subjclass[2010]{Primary: 60H10, 60J60, 58J65;  Secondary: 60H30, 90B15}
\keywords{Reflected diffusions, reflecting Brownian motion, submartingale
  problem, martingale problem,  stochastic differential
  equations with reflection, weak solutions, Skorokhod problem,
  extended Skorokhod problem, non-semimartingales, diffusion
  approximations, queueing networks}

\date{}

\begin{document}

\begin{abstract}  
Two  frameworks that have been used to characterize reflected diffusions
include   stochastic differential equations 
with reflection and the so-called submartingale problem. 
We introduce  a general formulation of the submartingale problem for 
  (obliquely) reflected   diffusions in domains with piecewise ${\mathcal C}^2$ 
  boundaries and piecewise continuous reflection vector fields. 
Under suitable assumptions, 
 we show that well-posedness of the submartingale problem is
equivalent to  existence and uniqueness in law of  weak solutions to
the corresponding stochastic differential equation with reflection.   
Our result generalizes  to the case of reflecting diffusions a
classical  result due to Stroock and Varadhan on the equivalence of 
well-posedness of martingale problems and well-posedness of weak solutions of stochastic differential
equations in $d$-dimensional Euclidean space.  
The analysis in the case of  reflected diffusions in domains with
non-smooth boundaries 
 is considerably more subtle and requires a careful 
analysis of the behavior of the reflected diffusion on the boundary of
the domain.   
In particular, the equivalence can fail to hold when our assumptions are not satisfied.
 The equivalence we establish allows one to transfer results on 
reflected diffusions characterized by one approach to reflected
diffusions analyzed by the other approach.  
As an application,  we  provide a characterization of  
stationary distributions of  a large class of reflected diffusions in 
convex polyhedral domains.  
\end{abstract}

\maketitle

\tableofcontents

\beginsec

\section{Introduction}

\subsection{Background and Motivation}

A reflected diffusion in a non-empty, connected domain $G$ with a vector field
$d(\cdot)$ on the boundary $\partial G$ and measurable drift and
dispersion coefficients $b: \bar{G} \mapsto \R^J$ and
$\sigma: \bar{G} \mapsto \R^{J \times J}$ defined on the closure
$\bar{G}$ of the domain is a continuous Markov 
  process that, roughly speaking, behaves like a
  diffusion with (state-dependent) drift $b(\cdot)$ and dispersion $\sigma(\cdot)$
  inside  the domain and that is restricted to stay in $\bar G$ by a constraining force that is only allowed
to act along the  directions specified by the vector field on the
boundary.  For historical reasons, this constrained process is 
referred to as a reflected diffusion.  Two approaches to  providing a precise mathematical
characterization of this intuitive description
 are the framework of stochastic differential equations with
 reflection (SDER), which is used, for example, in 
\cite{Sko61}, \cite{LioSzn84}, \cite{Cos92},
\cite{DupIsh93},  \cite{BasHsu00} and \cite{Ram06}, and  the submartingale problem formulation introduced by Stroock and Varadhan in
\cite{StrVar71}. 
These two approaches are respective generalizations of the 
stochastic differential equation (SDE) and martingale problem
formulations commonly used to analyze diffusions in
$\R^J$.  
  In the case of (unconstrained)  
diffusions,  under fairly general conditions, there is a well
established equivalence between  existence and
 uniqueness in law of weak solutions to SDEs 
and well-posedness of the martingale problem 
(see \cite{StrVar72}, and also \cite{Kur11} for a recent generalization in the
unconstrained setting).  
  Somewhat surprisingly, there appears to be no such general
  correspondence 
available in the case of obliquely reflected diffusions, particularly 
in non-smooth domains. 
 Such reflected diffusions arise in a variety 
of applications, ranging from queueing theory and  mathematical
finance to the study of random matrices. 
The  goal of the current work is to establish an analogous 
equivalence  (see Theorem \ref{thm:general})  between well-posedness of the submartingale problem and
well-posedness of  the associated stochastic differential equation with reflection (SDER) for  a
general class of semimartingale reflected diffusions in piecewise
smooth domains (see Section \ref{sec-back} for precise 
definitions).  
 The results of this paper are extended in
\cite{KanRam14b} to 
 diffusions that are not necessarily semimartingales, which 
arise in many situations \cite{DeBTob93,DupRam98,RamRei03, RamRei08,KanRam09,BurKanRam09}.

There are several  motivations for establishing such a
correspondence. 
Firstly, the submartingale problem was originally formulated only for smooth
 domains and continuous reflection \cite{StrVar71}.   
 Extensions to domains with non-smooth boundaries had previously been  considered 
only in special cases \cite{VarWil85, Wil87, Kwo92, KwoWil91, DeBTob93}. 
With the exception of
 \cite{Wil87}, in each of these cases,   the boundary of the domain 
 has a single point of non-smoothness. 
  The work  \cite{Wil87} considered the class of skew-symmetric reflected Brownian
 motions (RBMs) in polyhedral domains, which have the special property 
that they  almost surely do not hit the non-smooth parts of the
boundary. 
For general domains with non-smooth boundaries and oblique reflection, even the formulation of the
submartingale problem is somewhat subtle and a correct formulation in
multidimensional non-smooth domains had been  a longstanding open problem 
\cite{Wil-95Surv} (see comment (iii) of Section 4 therein). 
In Definition \ref{def-smg}, we introduce a general formulation 
of the submartingale problem in domains with piecewise smooth
boundaries.   The equivalence result established here provides  validation that
this formulation, which was also used in \cite{KanRam14},
is a reasonable formulation. 

A second motivation arises from the fact that whereas  some properties of reflected diffusions such as 
 existence and uniqueness in law have been established using  the SDER 
framework in \cite{LioSzn84, DupIsh93, TayWil93, Ram06},   other properties
such as  boundary properties and characterizations of  stationary distributions
have been established for reflected diffusions associated
with a well-posed submartingale problem  \cite{Wei81,KanRam14}. 
The equivalence allows one to transfer results proved in one
setting to the other setting.  As an illustration, in  Corollary
\ref{cor-stat} we use our equivalence result to characterize stationary distributions
for a large class of solutions to well-posed SDER. 
In addition, we also apply our result to show that 
(under fairly general conditions) to establish well-posedness of the
 submartingale problem one can without loss of generality assume 
that the drift is zero (see Remark \ref{rem-Girsanov}). 

Even for cases where the submartingale problem was well formulated, 
establishing a correspondence between  the submartingale
problem and weak solutions to SDERs had  been deemed a
challenging problem \cite[p. 149]{DeB96}. 
In the case of non-smooth domains, only 
very special cases seem to have been previously considered, 
such as, for example,  normal reflection in the $d$-dimensional 
nonnegative orthant, which essentially reduces to a one-dimensional
problem (see \cite[Theorem V.1.1]{BasBook97} and
\cite[Proposition 2.1]{BasLav07} for a brief discussion of this
case).   Our main equivalence result, Theorem \ref{thm:general}, is established for reflected
diffusions with a measurable, locally bounded drift and  measurable,  
locally bounded and uniformly elliptic diffusion coefficient in  piecewise
${\mathcal C}^2$ domains with (piecewise) continuous reflection that satisfy 
 a certain geometric
condition, which is a generalization of the 
so-called completely-${\mathcal S}$ condition that is used 
for RBMs in the orthant  \cite{BerElk91}.

 The proof of 
Theorem \ref{thm:general}  follows from two results, established 
in Theorem \ref{th-exi} and Theorem \ref{th:existence}, respectively. 
With a view to the extension to the case of non-semimartingale 
reflected diffusions in \cite{KanRam14b},  these two theorems 
are established in slightly greater generality, where the 
generalized completely-${\mathcal S}$-condition is allowed to fail in 
a certain subset of the boundary of the domain.  
First, in Theorem \ref{th-exi}, 
any weak solution to the SDER is shown to satisfy the submartingale problem. 
This is relatively straightforward, with  some additional arguments 
to deal with the fact that the weak solution need not be a
submartingale. 
The second  step (Theorem \ref{th:existence}),  which constructs 
a weak solution  to an SDER from a  solution to the 
corresponding submartingale problem, is the main step. 
It  is significantly more complicated  than in the unconstrained 
case due to the presence of boundaries and the  geometry of the
directions of reflection. 
In particular,  it involves the use of a
boundary property of solutions to submartingale problems in domains 
with piecewise smooth domains that was established in
\cite{KanRam14} (see Proposition \ref{lem-bdary} here) and 
the construction of 
special test functions that satisfy certain oblique derivative
conditions (see  Lemma \ref{lem:cutoff} and Appendix
\ref{ap-test}). 
Moreover, due to  the multi-valued nature of
the normal and reflection directions at non-smooth points, 
 to identify the local time term,  we use 
an integral representation of the candidate local time
process,  which we establish using functional analytic arguments 
(see Section \ref{subs-integrep}  and  Appendix \ref{apsub-integrep}).

\subsection{Outline of the Paper}

In Section \ref{sec-back} we
recall some basic definitions related to reflected diffusions.  The SDER and 
submartingale formulations and their properties are introduced in Sections
\ref{subs-sder} and \ref{sec:SP}, respectively, and Section 
\ref{subs-bdary} defines the class of domains 
and reflection directions that we consider. 
Section \ref{sec-mainres} contains the main result, 
Theorem \ref{thm:general}, its two auxiliary results 
 Theorem \ref{th-exi} and Theorem \ref{th:existence}, and 
some discussion of the ramifications, including 
Corollary \ref{cor-stat} and Remark \ref{rem-Girsanov}. 
The proof of Theorem \ref{th-exi}  is presented in Section
\ref{subs-red}, and the proof of Theorem \ref{th:existence}, 
which is considerably more involved, is broken down into may steps 
that are presented in  Sections
\ref{subs:locref}--\ref{sec:case2}. 
The proof of some lemmas are deferred to the Appendices. 
First, in the next section we collect some common notation used throughout
the paper.

\subsection{Common Notation}
\label{subs-not}

Let $\R$ denote the set of real numbers and
$\R_+$ is the set of non-negative real numbers. Given $a, b \in \R$, $a
\wedge b$ ($a \vee b$) 
denote the minimum (maximum)  of $a$ and $b$. For each $J\in \N$,
$\R^J$ is the $J$-dimensional Euclidean space and $|\cdot|$ and $\lan
\cdot,\cdot \ran$, respectively,
denote the Euclidean norm  and the inner product on $\R^J$. For each set $A\subset \R^J$, $A^\circ$, $\partial A$, $\bar
A$ and $A^c$ denote the interior, boundary, closure and complement of $A$, respectively. For
each $x\in \R^J$ and $A\subset \R^J$, $\dist(x,A)$ is the distance
from $x$ to $A$, that is, $\dist(x,A)=\inf\{y\in A:\ |y-x|\}$. For
each $A\subset \R^J$ and $r>0$, $B_r(A)=\{y\in \R^J:\ \dist(y,A)<
r\}$, and given $\ve > 0$ let $A^\ve \doteq \{y \in \R^J: \dist(y,A) <
\ve\}$ denote the (open) $\ve$-fattening of $A$.  If $A=\{x\}$, we
simply denote $B_r(A)$ by $B_r(x)$.  We will use $S_1(0)$ to denote
the unit sphere in $\R^J$.
We also let $\ind_A$ denote the indicator function of the set $A$
(that is, $\ind_A (x) = 1$ if $x \in A$ and $\ind_A(x) = 0$
otherwise). Given integers $i, j$ we let $\delta_{ij}$
denote the Kronecker delta function, $\delta_{ij}  = 1$ if $i=j$ and 
$\delta_{ij} = 0$, otherwise.   
Given a set $A \subset \R^J$, let $\conv [A]$ denote the closure of the convex
hull of $A$, which is defined to be the intersection of all closed convex sets that
contain $A$.   

Given  a domain  $E$ in $\R^n$, for some $n \in \N$,  let $\C(E) = \C^0(E)$
be the space of continuous real-valued
functions on $E$ and, for any
$m\in \Z_+\cup\{\infty\}$, let $\C^m(E)$ be the subspace of
functions in $\C(E)$ that are  $m$ times continuously
differentiable on $E$ with continuous partial derivatives of
order up to and including $m$.  When $E$ is the closure of a domain, $\C^m(E)$ is to be interpreted as
the collection of  functions in $\cap_{\ve > 0} \C^m
(E^\ve)$, where $E^\ve$ is an open $\ve$-neighborhood of $E$, restricted to $E$. Also, let $\C_b^m(E)$ be
 the subspace of $\C^m(E)$ consisting of bounded functions whose partial derivatives of order up to
and including $m$ are also bounded, let $\C_c^m(E)$ be the subspace of
$\C^m(E)$ consisting of functions that vanish outside compact sets.
In addition, let $\C_c^m(E)\oplus\R$ be the direct sum of $\C_c^m(E)$
and the space of constant functions, that is,
the space of functions that are sums of functions in $\C_c^m(E)$ and
constants in $\R$.  
The support of a function $f$ is denoted by $\supp(f)$,  its gradient
of $f$ is denoted by $\nabla f$. For $m\geq 1$  and a sequence of random variables $\{X_n,\ n\geq 1\}$ defined on some common probability space $(\Omega, \MF,\Q)$, we say $X_n$ converges to $X$ in $\mathbb{L}^m(\Q)$ as $n\rightarrow \infty$ if $\E^{\Q}[|X_n-X|^m]\rightarrow 0$ as $n\rightarrow \infty$. 
For $f \in {\mathcal C}^\infty (I)$, with $I \subset \R$ or $f \in
{\mathcal C}^2 (G)$, we let $||f||_\infty$  denote the supremum of 
$f$ on its domain.   

\beginsec

\section{Characterizations of Reflected Diffusions}
\label{sec-back}

Throughout, let $G$ be a nonempty connected domain in $\R^J$ and let $d(\cdot)$ be
a set-valued mapping defined on the closure $\bar G$ of $G$ such that $d(x) = \{0\}$ for $x \in G$,
 $d(x)$ is a non-empty, closed and convex cone in $\R^J$ with vertex at the origin for every
$x$ in $\partial G$,  and the graph of $d(\cdot)$ is closed, that is, the set
$\{(x,v): x \in \bar{G}, v \in d(x) \}$ is a closed subset of
$\R^{2J}$.  
Let $b: \R^J \mapsto \R^J$ and $\sigma: \R^J \mapsto \R^{J \times N}$
be measurable and locally bounded. 
Also,  denote the set of inward normals to $G$ at a point $x\in \partial G$ by
$n(x)$, assuming this is well-defined,  and let 
\be
\label{def-mv}
\MV\doteq \partial G \setminus \MU,
\ee where  $\MU$ is the subset of the boundary $\partial G$ defined by
 \be
\label{def-mu}
  \MU \doteq \{x \in \partial G: \exists\ n \in n(x)  \mbox{ such that }
  \lan n, d \ran > 0 ,\ \forall\ d \in d(x)\sm \{0\} \}. 
\ee
The set $\MV$  will play an important role in the analysis. 
Also, let $\ML$ be  the usual associated second-order  differential operator
\be \label{operL}
\ML f(x)\doteq\sum_{i=1}^J b_i(x)\frac{\partial f}{\partial
x_i}(x) +\frac{1}{2}\sum_{i,j=1}^J a_{ij}(x)\frac{\partial^2f}{\partial
x_i\partial x_j}(x), \qquad f\in \C^2_b(\bar G),
\nonumber \ee
where $\C^2_b(\bar G)$ is the space of twice continuously
differentiable functions on $\bar G$ that, along with their first
and second partial derivatives, are bounded.

We recall the definition of weak solutions to stochastic
differential equations with reflection associated with $(G,d(\cdot))$,
$b$ and $\sigma$ and some of their properties in
Section \ref{subs-sder}. 
We then 
introduce the formulation of the associated submartingale problem in Section
\ref{sec:SP}. 
 Lastly,  in Section \ref{subs-bdary} we 
describe the specific class of piecewise continuous domains
$(G,d(\cdot))$ of interest 
and then state a useful boundary property of 
 reflected diffusions in this class of domains that was established in
\cite{KanRam14}.

\subsection{Stochastic Differential Equations with Reflection}
\label{subs-sder}

The Skorokhod Problem (SP), which was introduced in one dimension by  
\cite{Sko61} and subsequently extended to higher dimensions by
numerous authors \cite{BerElk91,Cos92,DupIsh91,LioSzn84},  
and the  extended Skorokhod problem (ESP) introduced in \cite{Ram06}, 
are convenient tools for the pathwise construction of reflected
diffusions. 
Roughly speaking, given a continuous path $\psi$, the ESP associated with $(G,d(\cdot))$
produces a constrained version
$\phi$ of $\psi$ that is restricted to live within $\bar G$ by adding to it a ``constraining term''
$\eta$ whose increments over any interval lie in the closure of the
convex hull of the union of the allowable directions $d(x)$
at the points $x$ visited by $\phi$ during this interval.
   Let  $\ccspace = \cc$ 
denote the space of  continuous functions from $[0,\infty)$ to
$\R^J$,  equipped with the topology of uniform convergence on compact
sets.  We now rigorously define the  ESP.

\begin{defn}[{\bf Extended Skorokhod Problem}]
\label{def-esp}
Suppose $(G,d(\cdot))$ and  $\psi \in \ccspace$ with $\psi(0)\in \bar G$ are given.
Then the pair $(\phi, \eta) \in \ccspace \times \ccspace$
is said to solve the extended Skorokhod Problem (ESP) for $\psi$
  if $\phi(0) = \psi (0)$,
 and if for all
$t \in [0, \infty)$, the following properties hold:
\begin{enumerate}
\item[1.]
$\phi(t) = \psi (t) + \eta (t)$;
\item[2.]
$\phi (t) \in \bar G$;
\item[3.]
For every $s \in [0, t)$, 
\be
\label{hullprop}
 \eta (t) - \eta(s)  \in \conv \left[ \bigcup_{u \in [s,t]} d(\phi(u)) \right].
\ee
\end{enumerate}
If $(\phi,\eta)$ is the unique solution to the ESP for $\psi$,
then we write $\phi = \Gamma (\psi)$, and refer to
$\Gamma$ as the extended Skorokhod map (ESM).
\end{defn}

 The formulation of the ESP in Definition \ref{def-esp} appears slightly
 different from the original one given in \cite{Ram06} since the ESP
 in \cite{Ram06} was formulated more generally for c\`{a}dl\`{a}g
 paths.  However, as we show below, they coincide for the case of continuous paths, 
which is all that is required for this work.   Indeed,  
for continuous paths, property 4 of Definition 1.2 of \cite{Ram06} holds
automatically, and the following lemma shows that 
property 3 of Definition 1.2  of \cite{Ram06} is  equivalent to
property 3 in Definition \ref{def-esp}.

\begin{lemma}
\label{lem-esp} For every $t \in (0,\infty)$, 
Property (\ref{hullprop}) in Definition
\ref{def-esp} is satisfied if and only if  the following condition
holds: for every $s \in [0, t)$, 
\be
\label{hullprop1}
 \eta (t) - \eta(s)  \in \conv \left[ \huge\cup_{u \in (s,t]} d(\phi(u)) \right].
\ee
\end{lemma}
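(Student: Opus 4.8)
The plan is to exploit the fact that conditions (\ref{hullprop}) and (\ref{hullprop1}) differ only in whether the left endpoint $u=s$ is included in the union over $[s,t]$ versus $(s,t]$; one direction is pure set monotonicity, and the other is a one-sided limiting argument that uses continuity of $\eta$ together with the fact that $\conv[\cdot]$ is, by its very definition, a closed set.

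For the implication (\ref{hullprop1}) $\Rightarrow$ (\ref{hullprop}), I would fix $t\in(0,\infty)$ and $s\in[0,t)$ and simply note that $(s,t]\subseteq[s,t]$ forces $\bigcup_{u\in(s,t]}d(\phi(u))\subseteq\bigcup_{u\in[s,t]}d(\phi(u))$, and hence, passing to closed convex hulls, $\conv\big[\bigcup_{u\in(s,t]}d(\phi(u))\big]\subseteq\conv\big[\bigcup_{u\in[s,t]}d(\phi(u))\big]$. Therefore (\ref{hullprop1}) trivially implies (\ref{hullprop}); this direction uses no regularity of $\phi$ or $\eta$.

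For the converse (\ref{hullprop}) $\Rightarrow$ (\ref{hullprop1}), I would again fix $s\in[0,t)$ and, for each $s'\in(s,t)$, observe that $[s',t]\subseteq(s,t]$, so $\bigcup_{u\in[s',t]}d(\phi(u))\subseteq\bigcup_{u\in(s,t]}d(\phi(u))$; applying (\ref{hullprop}) with $s$ replaced by $s'$ then gives
\[
\eta(t)-\eta(s')\in\conv\Big[\bigcup_{u\in[s',t]}d(\phi(u))\Big]\subseteq\conv\Big[\bigcup_{u\in(s,t]}d(\phi(u))\Big].
\]
Letting $s'\downarrow s$, continuity of $\eta\in\ccspace$ yields $\eta(t)-\eta(s')\to\eta(t)-\eta(s)$, and since $\conv\big[\bigcup_{u\in(s,t]}d(\phi(u))\big]$ is closed, the limit $\eta(t)-\eta(s)$ lies in it, which is precisely (\ref{hullprop1}).

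I do not anticipate a genuine obstacle here: the only ingredients are monotonicity of $A\mapsto\conv[A]$, the continuity of the constraining term $\eta$, and the closedness built into the closed-convex-hull operation. The one point worth stating explicitly is that $(s,t)$ is nonempty because $s<t$, so that a sequence $s'\downarrow s$ with $s'\in(s,t)$ exists and the limiting step is legitimate.
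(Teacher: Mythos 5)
Your proposal is correct and follows essentially the same argument as the paper: the trivial direction via monotonicity of the closed convex hull, and the converse by applying (\ref{hullprop}) at points $s' \downarrow s$ in $(s,t)$ and using continuity of $\eta$ together with closedness of $\conv\left[\bigcup_{u \in (s,t]} d(\phi(u))\right]$. No gaps.
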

\begin{proof}  Note that the only  difference between \eqref{hullprop} 
and \eqref{hullprop1} is that the left bracket in $(s,t]$ is open in
\eqref{hullprop1} and closed in \eqref{hullprop}, and thus 
property (\ref{hullprop1}) trivially implies property
\eqref{hullprop}. 
To prove the converse, suppose that 
(\ref{hullprop}) holds. Let $\{s_n,\ n\in \N\}$ be a sequence of real
numbers such that $s<s_n<t$ for each $n\in \N$ and $s_n\rightarrow s$
as $n\rightarrow \infty$.  For each
$n\in \N$, by (\ref{hullprop}) we have 
\[\eta (t) - \eta(s_n)  \in \conv \left[ \bigcup_{u \in
    [s_n,t]} d(\phi(u)) \right]\subseteq \conv \left[ \bigcup_{u \in
    (s,t]} d(\phi(u)) \right].\] Together with the continuity of $\eta$ and
the closedness of the set $\conv \left[ \huge\cup_{u \in (s,t]}
  d(\phi(u)) \right]$, this  implies that the property (\ref{hullprop1})
holds. 
\end{proof}
\bigskip

\begin{remark}
\label{rem-esp}
{\em 
Given $(G,d(\cdot))$ and $\psi$ as in Definition \ref{def-esp}, 
a pair $(\phi,\eta)$  $ \in {\mathcal C} \times {\mathcal C}$ is said
to solve the Skorokhod Problem (SP) for $\psi$ if it satisfies
properties 1 and 2 of Definition \ref{def-esp} and, in addition,
$\eta$ has finite variation on bounded intervals and,  
in addition, there exists a Borel measurable function $\gamma:[0,\infty) \mapsto S_1(0)$ such that
for every $t \in [0,\infty)$,  
\be
\label{etaprop}
 \eta(t) = \int_{[0,t]} \gamma(s) \ind_{\{ \phi(s) \in \partial G\}}
d|\eta|(s),
\nonumber \ee
where $\gamma (s) \in d(\phi(s))$ for $d|\eta|$ almost every $s \in [0,\infty)$, and
$|\eta|(t)$ represents the total variation of $\eta$ on the
interval $[0,t]$ (see, \cite{DupIsh91}).
The ESP is a generalization of the SP that does not
{\em a priori} require the constraining term $\eta$ to have finite
variation, and hence allows for the construction of reflected
diffusions that are not necessarily semimartingales (see Lemma \ref{lem-spesp}
for an elaboration of this point).  
However, it was  shown in Theorem 1.3 of \cite{Ram06} that if the
solution
$(\phi,\eta)$ to the ESP for some $\psi$  is such that $\eta$ has finite variation on
every interval $[0,t]$, then $(\phi, \eta)$ is a solution to the SP
associated with $\psi$.  Sufficient  conditions  for the existence of 
a unique solution to the SP or ESP can be found, for example, in 
\cite{BerElk91,BurKanRam09,LehKruRamShr07,DupIsh91,DupRam99a,DupRam99b,Ram06}.
}
\end{remark}

The ESM  can be used to define solutions to
stochastic differential equations with reflection (SDERs) associated with a given
pair $(G,d(\cdot))$,  and drift and dispersion coefficients $b:\bar{G}
\mapsto \R^J$ and $\sigma: \bar{G} \mapsto \R^{J\times N}$.

\begin{defn}[{\bf Weak Solution}]
\label{def-SDER}
Given  $z  \in \bar G$, a weak solution to the SDER with initial condition $z$ associated with 
 $(G,d(\cdot))$, $b(\cdot)$ and $\sigma (\cdot)$ is a 
triplet $(\Omega, {\cal F}, \{{\cal F}_t\})$,  $\P_z$, $(Z,W)$, where 
$(\Omega, {\cal F}, \{{\cal F}_t\})$  is a filtered space that supports a 
probability measure  $\P_z$ and $(Z,W)$ is a pair of 
continuous, $\{{\cal F}_t\}$-adapted $J$-dimensional processes 
with the following properties that under $\P_z$, 
\begin{enumerate}
\item[1.]
$\{W_t, {\cal F}_t, t \geq 0\}$ is an adapted $J$-dimensional standard Brownian
motion;  
\item[2.] $\P_z \left( \int_0^t \left| b(Z(s)) \right| \, ds +
\int_0^t \left| \sigma (Z(s)) \right|^2 \, ds < \infty \right) = 1,$
\quad  $t \in [0,\infty)$;
\item[3.] there exists a continuous $\{{\cal F}_t\}$-adapted process $Y$ such that  
$\P_z$-almost surely,  $(Z,Y)$ solves the ESP associated with
$(G,d(\cdot))$ for $X$, where
 \be
\label{def-x}
\qquad  X(t) \doteq z + \int_0^t b(Z(s)) \, ds + \int_0^t \sigma
(Z(s)) \, d W(s), \quad  t \in [0,\infty);
\ee
\item[4.] $\P_z$-almost surely, the set $\{t:\ Z(t)\in \partial G\}$ has zero Lebesgue
  measure.  In other words, $\P_z$-almost surely, 
\be \int_0^\infty
  \ind_{\partial G}(Z(s))\,ds = 0.
\nonumber \ee
\end{enumerate}
We say that $(\Omega, {\cal F}, \{{\cal F}_t\})$,  $\{\P_z, z \in
\bar{G}\}$, $(Z,W)$ 
is a weak solution to the SDER associated with 
 $(G,d(\cdot))$, $b(\cdot)$ and $\sigma (\cdot)$  if for each $z\in
 \bar{G}$, $(\Omega, {\cal F}, \{{\cal F}_t\})$,  $\P_z$, $(Z,W)$ is 
  \end{defn}

Note that property 3 of Definition \ref{def-SDER} and the definition
of the ESP imply that under $\P_z$, $Z$
satisfies 
\be \label{def-z} 
Z(t) = X(t) + Y(t) = z + \int_0^t b(Z(s)) \, ds + \int_0^t \sigma (Z(s)) \, d
W(s)+Y(t),\quad  t \in [0,\infty). \ee 

The definition of uniqueness in law for the SDER is analogous to the
case of an SDE. 

\begin{defn}
\label{eq-weakunique}
Uniqueness in law is said to hold for 
the SDER associated with
$(G,d(\cdot))$, $b(\cdot)$ and $\sigma(\cdot)$ if for each $z\in \bar{G}$, given  any two
weak solutions $(\Omega, {\cal F}, \{{\cal F}_t\})$, $\P_z$, $(Z,W)$, and $(\tilde \Omega, \tilde{\cal F}, \{\tilde{\cal F}_t\})$,
 $\tilde \P_z$, $(\tilde Z, \tilde W)$, of the SDER with initial
 condition $z$, 
   the law of $Z$ under $\P_z$ is the same as the law of $\tilde{Z}$ under $\tilde \P_z$.
\end{defn}

We now define well-posedness of the SDER. 

\begin{defn}[{\bf Well-posedness of the SDER}]
\label{SDERu}
The SDER 
associated with $(G,d(\cdot))$,  drift $b(\cdot)$ and
dispersion $\sigma(\cdot)$ is said to be  well posed if  there exists a
weak solution to the SDER and uniqueness in law holds for the SDER. 
\end{defn}

Well-posedness of weak solutions (in some cases even existence of strong
solutions) to SDERs  have also been established in many classes of polyhedral and piecewise smooth domains
(see, e.g., \cite{Cos92,DaiWil95,DupIsh91, DupIsh93, DupRam99a,DupRam99b,LioSzn84, Ram06,TayWil93}). 
As mentioned above, in general, weak solutions defined via the ESP  need not be
semimartingales when $\MV \neq \emptyset$ 
(see \cite{BurKanRam09,DupRam00,KanRam09,Ram06,RamRei03,RamRei08} for examples
where $\MV \neq \emptyset$ and such  non-semimartingales arise).    
However, we now make some observations on the link between weak solutions
and the semimartingale property, which will be used in our subsequent
analysis. 

\begin{lemma}
\label{lem-spesp}
Given $z \in \bar{G}$,  
let $(\Omega, {\cal F},\{{\cal F}_t\})$, $\P_z$, $(Z,W)$, 
be a weak solution of the SDER with initial condition 
$z$, and let $X, Y$ be defined as in Definition \ref{def-SDER}.  Let $\theta_1, \theta_2$ be 
two $\{{\cal F}_t\}$-stopping times such that $\theta_1$ is
$\P_z$-almost surely finite and $\theta_2 \geq \theta_1$, and consider 
the  shifted and stopped processes  
\be
\label{def-tildey}
 \tilde{Y} (\omega, u) \doteq  Y(\omega, (\theta_1 (\omega) + u) \wedge
 \theta_2(\omega)) - Y(\omega, \theta_1(\omega)), \quad   \omega \in \Omega, u \in
 [0,\infty)
\ee
and 
\be
\label{def-tildez}
\tilde{Z} (\omega, u) \doteq Z (\omega, (\theta_1 (\omega)+ u) \wedge
 \theta_2(\omega)),    \quad   \omega \in \Omega, u \in
 [0,\infty). 
\ee
If $\tilde Z(\omega,t) \not \in {\mathcal V}$ for all $t \in [0,
\theta_2 (\omega)-\theta_1 (\omega)]$ 
(which should be interpreted as $t \in [0,\infty)$
 when $\theta_2 (\omega) = \infty$),   
then the total variation of $\tilde{Y}$ is $\P_z$-almost surely finite on every bounded
interval,  and 
there exists a measurable function $\gamma : (\Omega \times \R_+, {\mathcal F}
\times \MB (\R_+)) \mapsto (\R^J, \MB (\R^J))$ such that 
for each $\omega \in \Omega$,  and $0 \leq s \leq t < \infty$, 
\be
\label{eq-espmeas}
\tilde Y(\omega, t)-\tilde Y(\omega, s) = 
\int_{[s,t]}
   \gamma(\omega, u) d |\tilde Y|(\omega,u), 
\ee
and for $\P_z$-almost surely every $\omega$,  $\gamma (\omega, u) \in
d (\tilde{Z}(\omega, u))$  for $d|\tilde Y|(\omega)$-almost every $u \in
[0,\infty)$.  
\end{lemma}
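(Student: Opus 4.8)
The key point is that the ESP constraining term $\tilde{Y}$, which a priori need only satisfy the convex-hull condition (3) of Definition~\ref{def-esp}, actually has finite variation and the integral representation \eqref{eq-espmeas} as soon as the path $\tilde{Z}$ avoids $\MV$. The plan is to invoke Theorem~1.3 of \cite{Ram06} (recalled in Remark~\ref{rem-esp}), which states that if a solution $(\phi,\eta)$ to the ESP for some $\psi$ is such that $\eta$ has finite variation on every bounded interval, then $(\phi,\eta)$ actually solves the SP for $\psi$, and in particular $\eta$ admits the representation of Remark~\ref{rem-esp}. So the crux is to verify the finite-variation property for $\tilde{Y}$ under the hypothesis $\tilde{Z}(\omega,t)\notin\MV$ for all $t$ in the relevant interval.

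First I would fix $\omega$ outside a $\P_z$-null set on which property~3 of Definition~\ref{def-SDER} holds for $(Z,Y,X)$, so $(Z,Y)$ solves the ESP for $X$ on $[0,\infty)$. By the shift-invariance of the ESP (the hull condition \eqref{hullprop} is stated for all subintervals), the pair $(\tilde{Z},\tilde{Y})$ solves the ESP for the correspondingly shifted-and-stopped path $\tilde{X}(\omega,u)=X(\omega,(\theta_1(\omega)+u)\wedge\theta_2(\omega))-X(\omega,\theta_1(\omega))+Z(\omega,\theta_1(\omega))$; the stopping at $\theta_2$ just makes $\tilde{Y}$ eventually constant, which preserves the ESP properties. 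Next, the hypothesis $\tilde{Z}(\omega,t)\notin\MV=\partial G\setminus\MU$ means every boundary point visited by $\tilde{Z}$ lies in $\MU$, where by \eqref{def-mu} there is an inward normal $n(x)$ making a strictly positive inner product with every nonzero $d\in d(x)$. Along a compact time interval $[0,T]$, the image $\tilde{Z}(\omega,[0,T])\cap\partial G$ is compact and contained in $\MU$; using the closedness of the graph of $d(\cdot)$ and a covering/compactness argument, one obtains a uniform lower bound: there is $c>0$ and a measurable unit-vector field such that the increments of $\tilde{Y}$ over small subintervals have inner product at least $c$ times their length with a fixed reference direction locally, which forces $|\tilde{Y}|(\omega,T)<\infty$. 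This is essentially the standard argument (see e.g.\ \cite{DupIsh91,Ram06}) showing that a ``completely-$\mathcal{S}$''-type or pointwise-$\mathcal{S}$ condition at visited boundary points upgrades the ESP constraint to finite variation.

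Once finite variation on bounded intervals is established for $\tilde{Y}$, Theorem~1.3 of \cite{Ram06} gives that $(\tilde{Z},\tilde{Y})$ solves the SP for $\tilde{X}$, hence there is a Borel measurable $\gamma(\omega,\cdot):[0,\infty)\to S_1(0)$ with $\tilde{Y}(\omega,t)-\tilde{Y}(\omega,s)=\int_{[s,t]}\gamma(\omega,u)\,d|\tilde{Y}|(\omega,u)$ and $\gamma(\omega,u)\in d(\tilde{Z}(\omega,u))$ for $d|\tilde{Y}|(\omega)$-a.e.\ $u$; on the null set where the ESP property failed, simply set $\tilde{Y}\equiv 0$ and $\gamma\equiv 0$, which makes \eqref{eq-espmeas} hold trivially for those $\omega$. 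The only remaining task is the joint measurability of $(\omega,u)\mapsto\gamma(\omega,u)$ in $\mathcal{F}\times\mathcal{B}(\R_+)$; this follows from a measurable-selection argument: $\gamma(\omega,u)$ can be recovered as a limit of difference quotients $(\tilde{Y}(\omega,u)-\tilde{Y}(\omega,u-1/k))/|\tilde{Y}(\omega,u)-\tilde{Y}(\omega,u-1/k)|$ along the set where these are defined and converge (Radon--Nikodym derivative of $\tilde{Y}$ with respect to $|\tilde{Y}|$), and since $\tilde{Y}$ is jointly measurable (being built from $Y$, $Z$ and stopping times), so is $\gamma$ on the complement of a jointly null set, after which one redefines $\gamma$ arbitrarily (say $=0$) where needed; the exceptional $u$-set for fixed $\omega$ has $d|\tilde{Y}|(\omega)$-measure zero, which is all that is claimed.

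I expect the main obstacle to be the uniform lower bound in the second paragraph — i.e.\ leveraging the pointwise condition ``$\tilde{Z}(\omega,t)\notin\MV$'' into the finite-variation conclusion. The subtlety is that $\MU$ is defined by an \emph{existential} quantifier over normals at each point, so the constant $c$ in the $\mathcal{S}$-type estimate is not obviously uniform; one must use compactness of $\tilde{Z}(\omega,[0,T])\cap\partial G$ together with the closed-graph property of $d(\cdot)$ and (upper semi-)continuity of $n(\cdot)$ on the relevant portion of the boundary to extract a finite subcover on which a single $c>0$ works. Depending on the standing regularity assumptions on $(G,d(\cdot))$ stated in Section~\ref{subs-bdary}, this may already be encapsulated in an existence/uniqueness result for the SP (e.g.\ a ``completely-$\mathcal{S}$'' theorem), in which case the argument reduces to citing that result after checking its hypotheses hold on a neighborhood of the visited boundary points; otherwise it requires the covering argument sketched above.
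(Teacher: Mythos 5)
Your overall strategy is sound and the first two steps essentially coincide with the paper's: the paper also shifts/stops the ESP (citing Lemma 2.3 of \cite{Ram06}), upgrades to finite variation, and then invokes Theorem 1.3 of \cite{Ram06} to get, for each fixed $\omega$, a Borel function $\gamma(\omega,\cdot)$ with the stated properties. Two remarks on where you diverge. First, for the finite-variation step you propose to re-prove an $\mathcal{S}$-type covering estimate; this is unnecessary, since exactly this statement (the path avoiding $\MV$ implies $|\tilde Y|$ is finite on bounded intervals) is Theorem 2.9 of \cite{Ram06}, which is what the paper cites, so your ``main obstacle'' dissolves into a citation. Second, and more importantly, the genuinely new content of the lemma is the joint $(\omega,u)$-measurability of $\gamma$, and here your route differs from the paper's: the paper defines finite measures $\bar\mu^N$ and vector measures $\mu^N$ on the product space $\Omega\times\R_+$ via expectations of increments of $\tilde Y(\cdot\wedge\theta_2\wedge\tau^N)$ (with $\tau^N$ the time the variation reaches $N$), applies the Radon--Nikod\'ym theorem on the product space to get jointly measurable densities $\gamma^N$, checks consistency in $N$, and passes to a limsup. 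Your alternative is to recover $\gamma(\omega,u)$ pathwise as a limit of difference quotients of $\tilde Y$. This can be made to work, but as written it rests on two unproved facts that you should supply: (i) a differentiation-of-measures theorem giving that the one-sided quotients $(\tilde Y(\omega,u)-\tilde Y(\omega,u-1/k))/(|\tilde Y|(\omega,u)-|\tilde Y|(\omega,u-1/k))$ converge $d|\tilde Y|(\omega)$-a.e.\ to the Radon--Nikod\'ym derivative --- note $d|\tilde Y|(\omega)$ may be singular with respect to Lebesgue measure, so this is a Besicovitch/Vitali-type statement for the (non-centered) interval basis in one dimension, not the ordinary Lebesgue differentiation theorem; and (ii) the fact that your quotients are normalized by the Euclidean norm of the increment rather than by the total-variation increment, so you must also use that the polar density has unit norm $d|\tilde Y|$-a.e.\ to conclude that the two normalizations have the same limit. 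With (i) and (ii) made explicit (plus the routine verification that the difference quotients, and hence their limit set and limit, are jointly measurable because $\tilde Y$ and $|\tilde Y|$ are), your argument yields the same conclusion as the paper's; the paper's product-measure construction buys you freedom from any differentiation theorem, at the cost of the localization/consistency bookkeeping in $N$.
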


The main new content of this lemma, beyond the pathwise property stated 
in Remark \ref{rem-esp}, is the claim that the process $\gamma$
can be chosen to be jointly measurable in $\omega$ and $t$.  The proof of the lemma is relegated to Appendix 
 \ref{ap-esp}.

We close this section with an observation that will allow 
us to assume without generality that the Brownian motion driving 
the weak solution is $J$-dimensional. 
Recall that the diffusion coefficient $a(\cdot) = \sigma (\cdot) \sigma^T(\cdot)$ is uniformly
elliptic if there exists $ \bar{a} > 0$ such that
\be
\label{eq-ue}
 v^T a(x) v \geq \bar{a} |v|^2 \qquad \mbox{ for all }
v \in \R^J, x \in \bar G.
\ee

 \begin{remark} \label{exist}
{\em  Under the uniform
ellipticity condition (\ref{eq-ue}), 
 existence of a weak solution with an  $\R^{J \times N}$-valued
 dispersion coefficient 
$\sigma (\cdot)$ is equivalent to existence of a weak solution with the 
$\R^{J \times J}$ dispersion coefficient $a^{1/2} (\cdot)$. 
Indeed, for each $z\in \bar{G}$, existence of a weak
solution $(\Omega, {\mathcal F},
\{{\mathcal F}_t\})$, $\P_z$, $(Z, W)$ 
 to the
SDER with initial condition $z$ associated with $(G,d(\cdot))$, $b(\cdot)$ and $\sigma
(\cdot) \in \R^{J \times N}$ implies that 
$(\Omega, {\mathcal F},
\{{\mathcal F}_t\})$, $\P_z$, $(Z, B)$ 
is also a weak solution to the same SDER with initial condition $z$, 
 where, under $\P_z$, $B$ is the $J$-dimensional standard Brownian motion defined by
\be
\label{trans-B}
 B(t) \doteq\int_0^t a^{-1/2}(Z(u))\sigma(Z(u))\, dW(u),  \qquad t \geq 0.
\nonumber \ee   
Conversely, using an argument similar to that used in Proposition 4.6 in  Chapter 5 of
\cite{KarShrBook},  
it can be shown that for each $z\in \bar{G}$, given any weak solution $(\Omega, {\mathcal F},
\{{\mathcal F}_t\})$, $\P_z$, $(Z, W)$ to the SDER with initial condition $z$ 
 associated with 
$(G,d(\cdot))$, 
 $b(\cdot)$ and $a^{1/2}(\cdot) \in \R^{J \times J}$ 
(so that  now $W$ is an  associated $J$-dimensional standard Brownian motion under
 $\P_z$), 
there exists a possibly   extended filtered space
$(\tilde \Omega, {\tilde{\cal F}}, \{{\tilde{\cal F}}_t\})$, a
probability measure $\tilde \P_z$ on that space and 
$(\tilde Z, \tilde W)$, that  is a weak solution to the SDER with initial condition $z$ 
associated with $(G,d(\cdot))$, 
 $b(\cdot)$ and $\sigma(\cdot)$, and where
under $\tilde \P_z$, $\tilde W$ is an $N$-dimensional standard
 Brownian motion.
}
\end{remark}

\subsection{The Submartingale Problem} \label{sec:SP}

Let $(G,d(\cdot))$, $b(\cdot)$, $\sigma(\cdot)$, $\MV$ and 
${\mathcal L}$  be as defined at the beginning of 
Section \ref{sec-back}.  
We first introduce a class of test functions  that arises in the
formulation  of the submartingale problem. 
 Recall that  $\C_c^2(\bar{G})\oplus\R$ be the
space of functions that are sums of functions in  $\C_c^2(\bar{G})$
and constants in $\R$.  Define 
\be \label{dis:H}
\MH \doteq  \ds \left\{\begin{array}{ll} f\in \C_c^2(\bar G)\oplus \R:\ & \mbox{$f$ is constant in a neighborhood of $\MV$}, \\ & \mbox{$\left<d, \nabla
f(y)\right>\geq 0$ for $d\in d(y)$ and $y\in \partial
G$} \end{array}\right\}, \nonumber \ee
where for each function $f$ defined on $\R^J$, we say $f$ is constant in a
neighborhood of $\MV$ if for each $x \in \MV$, $f$ is constant
in some open neighborhood of $x$.  
When $\MV$ is the empty set, the condition that $f$ be
constant in a neighborhood of $\MV$ is understood to be void.  

\begin{remark}
\label{rem-MH}
{\em  The following useful property of $\MH$ was established in 
  Lemma 5.2 of  \cite{KanRam14}:  $\MH$ has a countable subset $\MH_0$ with the property that
for each $f\in \MH$ and each $N \in \N$ such that $B_N(0)$ contains
both an open neighborhood of $\MV$ and an open neighborhood of
$\supp(f-f(\infty))$, there exists a sequence $\{g_k:\ k\in
\N\}\subset \MH_0$ such that \be \label{fgk} \lim_{k\rightarrow
  \infty}\sup_{y\in \bar G \cap
  B_N(0)}\max_{i,j=1}^J|f(y)-g_k(y)|\vee\left|\frac{\partial
    f(y)}{\partial x_i}-\frac{\partial g_k(y)}{\partial
    x_i}\right|\vee \left|\frac{\partial^2 f(y)}{\partial x_i\partial
    x_j}-\frac{\partial^2 g_k(y)}{\partial x_i\partial x_j}\right|=
0. \ee
}
\end{remark}

We now define the submartingale problem associated with the data
$(G,d(\cdot))$, $\MV$, $b(\cdot)$ and $\sigma(\cdot)$. 
Recall that $\ccspace = \cc$ 
denote the space of  continuous functions from $[0,\infty)$ to
$\R^J$,  equipped with the topology of uniform convergence on compact
sets.   Let $\MM$ be the associated Borel $\sigma$-algebra, which is 
generated by sets of the form $\{ \omega \in \ccspace: \omega (t) \in
A\}$ for $t \in [0,\infty)$ and $A \in {\mathcal B} (\R^J)$.
We equip the measurable space $(\ccspace, \MM)$ with the filtration 
$\{\MM_t\}$, where for $t \in [0,\infty)$,  $\MM_t$ is the smallest $\sigma$-algebra with respect 
to which the map $\omega \in \ccspace \mapsto \omega (s) \in \R^J$ is 
measurable for every $s \in [0, t]$. 

\begin{defn}[{\bf Submartingale Problem}] \label{def-smg}
Given $z \in \bar G$,  a probability measure $\Q_z$ on the
measurable space $(\ccspace,\MM)$ is a solution to the submartingale
problem starting from $z$ 
associated with $(G,d(\cdot))$, $\MV$, drift $b(\cdot)$ and
dispersion $\sigma(\cdot)$ if   for each $A\in \MM$,
the mapping $z \mapsto \Q_z(A)$ is $\MB(\bar G)$-measurable and $\Q_z$ satisfies the following four properties:
\begin{enumerate}
\item[1.] $\Q_z(\omega(0)=z)=1$;
\item[2.] $\Q_z(\omega(t)\in \bar G \mbox{ for  every }  t \in
  [0,\infty))=1$;
\item[3.] For every $f\in \MH$, the process
\be
\label{term-mart}
f(\omega(t))-\int_0^t\ML
f(\omega(u))\,du,   \quad t \geq 0,
\nonumber \ee
is a $\Q_z$-submartingale on
$(\ccspace,\MM,\{\MM_t\})$;
\item[4.]  $\Q_z$-almost surely, $\int_0^\infty \ind_{\MV}(\omega(u))\,du=0.$ 
\end{enumerate}
 A family $\{\Q_z, z\in \bar G\}$ of probability measures on $(\ccspace,\MM)$ is a solution to the submartingale
problem if for each $z \in \bar G$, $\Q_z$ is a solution to the
submartingale  problem starting from $z$.  
\end{defn}

\begin{defn}[{\bf Well-posedness of the submartingale problem}]
The submartingale problem
associated with $(G,d(\cdot))$, $\MV$, drift $b(\cdot)$ and
dispersion $\sigma(\cdot)$ is said to be well posed if  there exists
exactly one solution to the  submartingale problem.
\end{defn}

Definition \ref{def-smg} differs slightly from past formulations 
of the submartingale problem in domains with non-smooth boundaries. 
As mentioned in the introduction, essentially all these works 
\cite{VarWil85, Wil87, Kwo92, KwoWil91, DeBTob93} 
consider domains 
that have only a single point of non-smoothness on the boundary and  the 
formulation they use is Definition \ref{def-smg}, but with $\MV$
replaced by the set of non-smooth points on the boundary.  
In either formulation,  since the test functions in property 3 are required to
be constant in a neighborhood of some subset of the boundary, 
an additional property (property 4) needs to be imposed to ensure 
that the reflected diffusion spends zero Lebesgue time on the
boundary.   For the class of domains we consider, it  is shown in 
Proposition \ref{prop-bdary} that 
any solution to the submartingale problem formulated as in Definition \ref{def-smg} 
spends zero Lebesgue time on the boundary.    On the other hand, since
$\MV$ is typically a subset of the non-smooth part of the domain, 
property 3  in
our formulation  has to be satisfied by a larger class of test functions and
hence, it is {\em a
  priori} easier to establish uniqueness and harder to establish  
existence of solutions. 
However, in the cases studied previously, it seems not much harder to
establish existence of solutions for  our formulation of the 
 submartingale problem. 
For example,  for the 
two-dimensional wedge considered in
\cite{VarWil85},  the two formulations coincide when $\MV = \{0\}$, which is
precisely the case when the parameter $\alpha$  in 
\cite{VarWil85} satisfies $\alpha \geq 1$. 
When $\alpha < 1$,  $\MV = \emptyset$, the formulations 
are different and so existence to 
a solution to the submartingale 
problem in Definition \ref{def-smg} does not follow directly from the
results in \cite{VarWil85}.  Nevertheless,  it can be deduced using 
similar arguments or, alternatively, by applying 
 Theorem \ref{th-exi} in conjunction with 
the results of \cite{TayWil93}. 
We believe our formulation  is more convenient for
obtaining results in general piecewise smooth
domains (that have more than one non-smooth point on the boundary). 
In particular, this formulation was used to obtain a 
 characterization of  stationary distributions of a large 
class of reflected diffusions in domains with piecewise smooth 
boundaries in \cite{KanRam14}.    As discussed in \cite{KanRam14b},
the correction formulation is even more subtle for multi-dimensional
domains whose $\MV$ sets have more 
complex geometries.

\subsection{A Class of Domains with Piecewise Smooth Boundary}
\label{subs-bdary}

 We now introduce the general  class of domains and reflection
 directions $(G,d(\cdot))$ covered by our results.    

\begin{defn}[{\bf Piecewise ${\cal C}^2$ with Continuous Reflection}]
\label{ass:G}
The pair $(G,d(\cdot))$ is said to be piecewise ${\cal C}^2$ with
continuous reflection if  it satisfies the following
properties: 
\begin{enumerate}
\item[1.]
$G$ is a non-empty domain in $\R^J$ with representation 
\be \label{G} G=\bigcap_{i\in \MI}G^i, \nonumber \ee 
where $\MI$ is a finite index set and for each $i\in \MI$, $G^i$ is a non-empty domain
with $\C^2$ boundary in the sense that for each $x\in \partial G$,
there exist a neighborhood
${\mathcal O}_x$ of $x$, and functions $\varphi_x^i \in {\cal C}^2(\R^J)$, $i\in
\MI(x)\doteq \{i \in \MI:\ x\in \partial G^i\}$, 
such that
\be
\label{gi-rep}
  {\mathcal O}_x\cap G^i = \{ z \in {\mathcal O}_x: \varphi^i_x (z) > 0 \}, \quad
  {\mathcal O}_x\cap \partial G^i
= \{ z \in {\mathcal O}_x: \varphi^i_x (z) = 0 \},
\nonumber \ee
and $\nabla \varphi_x^i\neq 0$ on ${\mathcal O}_x$.
 For each  $x \in \partial G^i$ and $i \in \MI(x)$, let 
\[ n^i(x) \doteq \frac{\nabla \varphi_x^i(x)}{|\nabla \varphi_x^i(x)|} \]
 denote the unit inward normal
 vector to $\partial G^i$ at $x$. 
\item[2.]
The direction vector field $d(\cdot):\bar{G} \mapsto \R^J$ is given by
$d(x) = 0$ if $x \in G$ and 
\be \label{def-d}  d(x)=\left\{\sum_{i\in \MI(x)}\coeff_i d^i(x):\
  \coeff_i\geq 0, i\in \MI(x) \right\},  \qquad x \in \partial G, \nonumber \ee
where, for each $i \in \MI$, 
$d^i(\cdot)$ is a continuous unit vector field defined on
$\partial G^i$  that satisfies 
 \[\lan n^i(x), d^i(x) \ran > 0 \mbox{ for each $x \in \partial
G_i$.} \]
\end{enumerate}
If $d^i(\cdot)$ is
constant for every $i \in\MI$, then the pair $(G,d(\cdot))$ is said to be piecewise ${\cal C}^2$ with
constant reflection. If, in addition, $n^i(\cdot)$ is
constant for every $i \in\MI$, then the pair $(G,d(\cdot))$ is said to be polyehdral with
piecewise constant reflection.  
\end{defn}

 Note that, with the definition given above, the set of inward normal
 vectors to $G$ takes the form 
 \be
\label{def-n}
n(x)=\left\{\sum_{i\in \MI(x)}\coeff_i n^i(x):\ \coeff_i\geq 0,\ i\in
   \MI(x)\right\}, \qquad x \in \partial G.
\nonumber \ee

Since $(G,d(\cdot))$ is piecewise ${\cal C}^2$ with
continuous reflection, it can readily be verified that $\MU$ is relatively open to $\partial G$ and hence $\MV$ is a closed set. 
We now state a boundary property established 
in Proposition 6.1 of \cite{KanRam14}, which extends 
results established in \cite{ReiWil88} for RBMs in polyhedral
domains.   Note that in \cite{KanRam14} the set $\MV$ in the submartingale
problem was allowed to be any 
arbitrary subset of $\partial G$ and Proposition 6.1 of
\cite{KanRam14} was established under the condition that this set 
$\MV$ satisfies $\partial G \sm \MU
\subseteq \MV$, which is in particular satisfied by the $\MV$
specified in \eqref{def-mv}.  

\begin{prop}[{\bf Boundary Property}]
\label{lem-bdary}
Suppose that $(G,d(\cdot))$ is a piecewise ${\mathcal C}^2$ domain with
continuous reflection,  $b(\cdot)$, $\sigma(\cdot)$ are measurable and
locally bounded,  $a = \sigma \sigma^T$ is uniformly elliptic 
and  the submartingale problem associated with $(G,d(\cdot))$, 
$b(\cdot)$ and $\sigma(\cdot)$ is well posed.  If $\{ \Q_z, z \in
\bar{G}\}$ is a solution to the associated submartingale problem then 
for each $z \in \bar{G}$, we have 
\be
\label{prop-bdary}
\int_0^\infty \ind_{\partial G} (w(u)) \, du = 0,  \quad \Q_z\mbox{-almost surely. }
\ee
\end{prop}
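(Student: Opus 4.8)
The plan is to reproduce, in the present setting, the argument behind Proposition 6.1 of \cite{KanRam14} (which itself generalizes \cite{ReiWil88}). First I would reduce to a statement about expectations: since $\int_0^\infty\ind_{\partial G}(\omega(u))\,du$ is non‑negative and non‑decreasing in its upper limit, it suffices to show $\E^{\Q}\big[\int_0^t\ind_{\partial G}(\omega(u))\,du\big]=0$ for every $t\in(0,\infty)$. Writing $\partial G=\MU\cup\MV$ with the union disjoint, property 4 of Definition \ref{def-smg} disposes of the part in $\MV$, so the task is to show that the occupation measure of $\MU$ has zero mean. Because $(G,d(\cdot))$ is piecewise $\C^2$ with continuous reflection, $\MU$ is relatively open in $\partial G$, hence (as $\R^J$ is second countable) covered by countably many bounded neighborhoods $\mathcal{O}$ of points $x_0\in\MU$ that can be chosen so that $\overline{\mathcal{O}}\cap\partial G\subseteq\MU$ and $\partial G\cap\mathcal{O}\subseteq\bigcup_{i\in\MI(x_0)}\{\varphi^i_{x_0}=0\}$. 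Thus it is enough to prove, for each such $x_0$, each face $i\in\MI(x_0)$, and each $t$, that $\E^{\Q}\big[\int_0^t\ind_{\{\varphi^i_{x_0}=0\}\cap\bar G\cap\mathcal{O}}(\omega(u))\,du\big]=0$.

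Fix such an $x_0$ and $i$. The core of the argument is a test function adapted to face $i$ and to the reflection geometry at $x_0$. Using that $x_0\in\MU$ (so some $n\in n(x_0)$ satisfies $\langle n,d\rangle>0$ for all $d\in d(x_0)\setminus\{0\}$), together with the continuity of the $d^i(\cdot)$ and the closedness of the graph of $d(\cdot)$, I would construct — after shrinking $\mathcal{O}$, in the spirit of Lemma \ref{lem:cutoff} and Appendix \ref{ap-test} — a function $\psi\in\C^2_b(\bar G)$ with $\psi\geq0$, whose zero set in $\bar G$ is $\{\varphi^i_{x_0}=0\}\cap\bar G\cap\mathcal{O}$, which is bounded below by a positive constant off a relatively compact subset of $\mathcal{O}$, with $\nabla\psi\neq0$ on $\overline{\mathcal{O}}\cap\bar G$, and, crucially, with $\langle d,\nabla\psi(y)\rangle\geq0$ for all $d\in d(y)$ and $y\in\partial G$. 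For small $\epsilon>0$, let $h_\epsilon\in\C^2(\R_+)$ be non-decreasing with $h_\epsilon(0)=h_\epsilon'(0)=0$, $h_\epsilon''\equiv2K$ on $[0,\epsilon]$, and $h_\epsilon$ constant on $[2\epsilon,\infty)$, and set $f_\epsilon\doteq h_\epsilon\circ\psi$. For $\epsilon$ small enough $f_\epsilon$ is constant off a relatively compact subset of $\mathcal{O}$, hence near $\MV$, and $\langle d,\nabla f_\epsilon\rangle=h_\epsilon'(\psi)\langle d,\nabla\psi\rangle\geq0$ on $\partial G$, so $f_\epsilon\in\MH$. From
\be
\ML f_\epsilon=h_\epsilon'(\psi)\Big[\langle b,\nabla\psi\rangle+\tfrac12\operatorname{tr}(aD^2\psi)\Big]+\tfrac12 h_\epsilon''(\psi)\,\langle\nabla\psi,a\nabla\psi\rangle \nonumber
\ee
and the fact that $\langle\nabla\psi,a\nabla\psi\rangle\geq\bar a|\nabla\psi|^2$ is bounded below by a positive constant on $\overline{\mathcal{O}}\cap\bar G$ while $|h_\epsilon'(\psi)|\leq2K\epsilon$ on $\{0\leq\psi\leq\epsilon\}$, uniform ellipticity and local boundedness of $b,a$ allow me to fix $K$ independently of $\epsilon$ so that $\ML f_\epsilon\geq1$ on $\{0\leq\psi\leq\epsilon\}\cap\bar G\cap\mathcal{O}$, while $\ML f_\epsilon\geq-CK$ on $\{\epsilon<\psi<2\epsilon\}$ and $\ML f_\epsilon=0$ on $\{\psi\geq2\epsilon\}$; moreover $\|f_\epsilon\|_\infty\leq3K\epsilon^2\to0$.

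With $f_\epsilon$ in hand, I would apply property 3 of Definition \ref{def-smg} to the submartingale associated with $f_\epsilon$ between times $0$ and $t$, which by property 1 gives $\E^{\Q}[f_\epsilon(\omega(t))]-f_\epsilon(z)\geq\E^{\Q}\big[\int_0^t\ML f_\epsilon(\omega(u))\,du\big]$. Splitting the integral over the three regions above yields
\be
\E^{\Q}\Big[\int_0^t\ind_{\{0\leq\psi\leq\epsilon\}\cap\bar G\cap\mathcal{O}}(\omega(u))\,du\Big]\leq2\|f_\epsilon\|_\infty+CK\,\E^{\Q}\Big[\int_0^t\ind_{\{\epsilon<\psi<2\epsilon\}}(\omega(u))\,du\Big]. \nonumber
\ee
As $\epsilon\downarrow0$ the right-hand side tends to $0$ (since $\|f_\epsilon\|_\infty\to0$, $K$ and $C$ are fixed, and $\{\epsilon<\psi<2\epsilon\}\downarrow\emptyset$, so the last expectation vanishes by dominated convergence), while on the left $\{0\leq\psi\leq\epsilon\}\cap\bar G\downarrow\{\psi=0\}\cap\bar G=\{\varphi^i_{x_0}=0\}\cap\bar G\cap\mathcal{O}$, so dominated convergence gives $\E^{\Q}\big[\int_0^t\ind_{\{\varphi^i_{x_0}=0\}\cap\bar G\cap\mathcal{O}}(\omega(u))\,du\big]=0$. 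Summing over the finitely many $i\in\MI(x_0)$, then over the countable cover of $\MU$, and adding the contribution of $\MV$ from property 4, gives $\E^{\Q}\big[\int_0^t\ind_{\partial G}(\omega(u))\,du\big]=0$ for all $t$, hence \eqref{prop-bdary}.

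The step I expect to be the main obstacle is the construction of $\psi$. At a smooth boundary point one may simply take $\psi=\varphi^i_{x_0}$ (suitably cut off), but at a genuinely non-smooth point of $\MU$ this fails, since $\langle d^j(y),\nabla\varphi^i_{x_0}(y)\rangle$ need not be non-negative for $j\neq i$, so $\varphi^i_{x_0}\notin\MH$; it is precisely the $\mathcal{S}$-type condition built into the definition of $\MU$ that guarantees a suitable oblique, compactly modified substitute $\psi$, and producing it requires the geometric and functional-analytic arguments of \cite{KanRam14} (generalizing \cite{ReiWil88}) together with the test-function machinery of Lemma \ref{lem:cutoff} and Appendix \ref{ap-test}. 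The remaining, more routine, care is in the localization (choosing the cover of $\MU$ so that each neighborhood avoids $\MV$ and meets only the faces through its center) and in verifying that $K$ can genuinely be taken uniform in $\epsilon$.
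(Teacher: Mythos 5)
The paper does not actually prove this proposition: it is quoted from Proposition 6.1 of \cite{KanRam14} (the text only checks that $\partial G\setminus\MU\subseteq\MV$ holds for the $\MV$ of \eqref{def-mv}), so your attempt is in effect a reconstruction of that cited argument. Your outer machinery (reduction to $\E^{\Q_z}[\int_0^t\ind_{\partial G}]=0$, disposal of $\MV$ via property 4, a countable cover of the relatively open set $\MU$, and the occupation-of-a-boundary-layer estimate driven by a test function $h_\epsilon\circ\psi$ whose second-order term dominates by uniform ellipticity) is the right family of argument. The genuine gap is the object everything rests on: a function $\psi\ge 0$ on $\bar G\cap\mathcal O$ vanishing exactly on the single face $\{\varphi^i_{x_0}=0\}\cap\bar G\cap\mathcal O$, with $\nabla\psi\neq 0$ there and $\lan d,\nabla\psi(y)\ran\ge 0$ for all $d\in d(y)$ and all nearby boundary $y$, does not exist in general at a non-smooth point of $\MU$ --- which is the only case where the proposition is not classical. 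Indeed, at relative-interior points $y$ of face $i$ the tangential derivatives of $\psi$ vanish and $\psi$ increases into $\bar G$, so $\nabla\psi(y)=c(y)\,n^i(y)$ with $c(y)\ge 0$; letting $y\to x_0$ gives $\nabla\psi(x_0)=c_0\,n^i(x_0)$. If $c_0>0$, the oblique condition at $x_0$ (or at face-$j$ points approaching the edge) forces $\lan n^i(x_0),d^j(x_0)\ran\ge 0$ for every $j\in\MI(x_0)$, which is \emph{not} implied by $x_0\in\MU$: in the quadrant with $d^1=e_1$, $d^2=e_2-\tfrac12 e_1$ the corner lies in $\MU$ (take $n=e_1+3e_2$) yet $\lan n^1,d^2\ran<0$. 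If instead $c_0=0$, then $|\nabla\psi|$ degenerates near $x_0$, the lower bound on $\lan\nabla\psi,a\nabla\psi\ran$ over the layer $\{0\le\psi\le\epsilon\}$ fails, and no fixed $K$ yields $\ML f_\epsilon\ge 1$ there. So your closing assertion that the $\mathcal S$-type condition in the definition of $\MU$ ``guarantees a suitable substitute $\psi$'' is precisely where the proof breaks: that condition supplies one combined direction $n=\sum_i s_i n^i(x_0)$ positive against the whole cone $d(x_0)$, not per-face compatibility.

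The standard repair --- and, in essence, what the cited argument exploits --- is not to make the per-face function admissible by itself but to correct it additively by a function with \emph{strictly} positive oblique derivative on the whole patch, which is exactly what the $\MU$-condition buys (Lemma \ref{lem:frs}, i.e.\ Theorem 2 of \cite{KanRam14}): with $\lan d,\nabla f_{r,s}(x)\ran\ge 1$ for unit $d\in d(x)$ on the patch, consider $F_\epsilon=h_\epsilon(\varphi^i_{x_0})+\lambda_\epsilon f_{r,s}$, suitably localized (via Lemma \ref{lem:cutoff} and the device of Lemma \ref{lem:htest2}, together with the conditioning/covering machinery if the cutoff forces you to work between entry and exit times). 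Since the offending oblique derivatives of $h_\epsilon(\varphi^i_{x_0})$ are only of size $O(h_\epsilon')=O(\epsilon)$, a coefficient $\lambda_\epsilon=O(\epsilon)$ restores the inequality $\lan d,\nabla F_\epsilon\ran\ge 0$ required for property 3 of Definition \ref{def-smg}, while perturbing $\ML F_\epsilon$ and $\|F_\epsilon\|_\infty$ only by $O(\epsilon)$, after which your occupation-time estimate and the limits $\epsilon\downarrow 0$ go through essentially as you wrote them. Without this corrector (or an equivalent device), the admissibility of your test functions --- the step on which the whole submartingale inequality rests --- is missing.
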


\beginsec

\section{Main Results}
\label{sec-mainres}

We now state our main results.  Recall that 
we will assume throughout, without always stating this explicitly, 
that the drift and dispersion coefficients are measurable and 
locally bounded,  and  that the diffusion coefficient is uniformly elliptic, 
that is, \eqref{eq-ue} holds for some $\bar{a} > 0$.

\begin{theorem}\label{thm:general}
Suppose $(G,d(\cdot))$ is piecewise
$\C^2$ with continuous reflection and $\MV = \emptyset$.
Then the SDER associated with $(G,d(\cdot))$, $b(\cdot)$ and
$\sigma(\cdot)$ is well posed if and only if the corresponding 
submartingale problem is well posed. 
\end{theorem}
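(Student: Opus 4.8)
The plan is to obtain the equivalence from the two auxiliary results announced just after Theorem~\ref{thm:general}, namely Theorem~\ref{th-exi} (every weak solution of the SDER induces a solution of the submartingale problem) and Theorem~\ref{th:existence} (every solution of the submartingale problem arises this way), each specialized to the case in which the generalized completely-$\mathcal S$ condition holds on all of $\partial G$ --- which is exactly what $\MV=\emptyset$ encodes, since then the exceptional set on which those theorems permit the condition to fail may be taken empty. I would prove the two implications separately; in each direction one half (existence) comes from Theorem~\ref{th:existence} and the other half (uniqueness) from Theorem~\ref{th-exi}.

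\emph{Submartingale well posed $\Rightarrow$ SDER well posed.} Let $\{\Q_z\}$ be the unique solution of the submartingale problem. Theorem~\ref{th:existence} produces, for each $z\in\bar G$, a filtered space carrying a $J$-dimensional Brownian motion $W$ and continuous adapted processes $Z,Y$ for which $(Z,Y)$ solves the ESP for the process $X$ of \eqref{def-x}, $Z$ spends zero Lebesgue time on $\partial G$, and the law of $Z$ equals $\Q_z$ (with Remark~\ref{exist} allowing one to pass between an $\R^{J\times N}$-valued $\sigma$ and $a^{1/2}$); this is existence of a weak solution. For uniqueness in law, I would take an arbitrary weak solution $(\Omega,\MF,\{\MF_t\}),\P_z,(Z,W)$ of the SDER started at $z$, with associated $X,Y$, and show that the $Z$-marginal law solves the submartingale problem (this is the content of Theorem~\ref{th-exi}); by uniqueness of $\Q_z$ that law must then be $\Q_z$ no matter which weak solution was chosen, which is precisely uniqueness in law. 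The key computation: since $\MV=\emptyset$, Lemma~\ref{lem-spesp} with $\theta_1=0$, $\theta_2=\infty$ gives that $Y$ has locally finite variation and $Y(t)-Y(s)=\int_{[s,t]}\gamma(u)\,d|Y|(u)$ with $\gamma(u)\in d(Z(u))$ for $d|Y|$-a.e.\ $u$, whence Itô's formula applied to $f(Z(\cdot))$ for $f\in\MH$ yields
\[
f(Z(t))-\int_0^t \ML f(Z(u))\,du \;=\; f(z)+\int_0^t \nabla f(Z(u))^{T}\sigma(Z(u))\,dW(u)+\int_0^t \langle \nabla f(Z(u)),\gamma(u)\rangle\,d|Y|(u).
\]
As $\nabla f$ has compact support and $\sigma$ is locally bounded, the stochastic integral is a true martingale, and $\langle\nabla f(Z(u)),\gamma(u)\rangle\ge 0$ for $d|Y|$-a.e.\ $u$ by the oblique-derivative inequality defining $\MH$ together with $\gamma(u)\in d(Z(u))$; hence the left-hand side is a $\P_z$-submartingale. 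Properties~1, 2 and~4 of Definition~\ref{def-smg} are immediate (property~4 being vacuous here), and pushing the law of $Z$ forward to $(\ccspace,\MM,\{\MM_t\})$ completes this half.

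\emph{SDER well posed $\Rightarrow$ submartingale well posed.} Starting from the weak solution family $(\Omega,\MF,\{\MF_t\}),\{\P_z\},(Z,W)$ furnished by well-posedness, Theorem~\ref{th-exi} (the Itô argument above) shows that for each $z$ the law $\Q_z$ of $Z$ under $\P_z$ satisfies the four conditions of Definition~\ref{def-smg}; the required $\MB(\bar G)$-measurability of $z\mapsto\Q_z(A)$ I would deduce from uniqueness in law via the standard measurable-selection argument of the unconstrained Stroock--Varadhan theory, producing a solution $\{\Q_z\}$ of the submartingale problem. For its uniqueness, if $\{\Q^{(1)}_z\}$ and $\{\Q^{(2)}_z\}$ both solved the submartingale problem, then by Theorem~\ref{th:existence} each $\Q^{(i)}_z$ would be the $Z$-marginal of a weak solution of the SDER started at $z$, and uniqueness in law for the SDER would force $\Q^{(1)}_z=\Q^{(2)}_z$ for every $z$.

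\emph{Where the difficulty lies.} The reduction above is short once Theorem~\ref{th-exi} and Theorem~\ref{th:existence} are available; Theorem~\ref{th-exi} is the easy direction, the only wrinkle being that a weak solution need not itself be a submartingale, which is handled by exhibiting it as a martingale plus a nondecreasing term as above. The real obstacle --- and where I would expect essentially all the effort to go --- is Theorem~\ref{th:existence}: from a measure $\Q_z$ on $\ccspace$ satisfying only the submartingale inequalities one must reconstruct the driving Brownian motion and, far more delicately, the constraining process $Y$ in the form \eqref{eq-espmeas}, identifying its boundary local-time structure, all on a single filtered space adapted to a common filtration. The multivaluedness of the inward normal field and of $d(\cdot)$ at the non-smooth points of $\partial G$ is the crux; I would expect the construction to hinge on the boundary regularity of Proposition~\ref{lem-bdary} (zero Lebesgue time on $\partial G$), on test functions in $\MH$ realizing prescribed oblique-derivative conditions near individual faces (Lemma~\ref{lem:cutoff} and Appendix~\ref{ap-test}), and on a functional-analytic integral representation of the candidate local time (Section~\ref{subs-integrep} and Appendix~\ref{apsub-integrep}).
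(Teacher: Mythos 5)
Your proposal is correct and follows essentially the same route as the paper: Theorem \ref{thm:general} is obtained there precisely by combining Theorem \ref{th-exi} (weak solution of the SDER $\Rightarrow$ solution of the submartingale problem, giving uniqueness transfer in one direction and existence in the other) with Theorem \ref{th:existence} (submartingale solution $\Rightarrow$ weak solution, with $\tau_{\MV}=\infty$ when $\MV=\emptyset$), exactly as you arrange the two implications. Your sketch of the It\^{o}/ESP argument behind Theorem \ref{th-exi} and your assessment that the substantive work lies in Theorem \ref{th:existence} also match the paper's development.
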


Suppose   $G = \R_+^J$ and
$d(x)$ is equal to the vector $d^j$
when $x$ is in the
relative interior of the face $\bar G \cap  \{x: x_j = 0 \}$. 
 Then the condition   $\MV = \emptyset$, 
with $\MV$ defined by \eqref{def-mv}, 
is equivalent to the condition that the so-called reflection matrix
$[d^i_j]_{i,j \in \{1, \ldots, J\}}$ is 
completely-${\cal S}$  (see \cite{BerElk91}).  
Given constant drift and dispersion coefficients 
$b$ and $\sigma$,  for different classes of polyhedral domains with piecewise constant 
reflection $(G,d(\cdot))$, it was shown in   
\cite{DaiWil95,ReiWil88,TayWil93} that the condition 
 $\MU= \partial G$ is sufficient for well-posedness of the associated SDER, and is 
 also necessary for existence of a weak solution that is a
 semimartingale. 
For more general $G$ and $d(\cdot)$,  the condition $\MV = \emptyset$
imposed in Theorem \ref{thm:general} can
be viewed as a generalized completely-${\cal S}$ condition, and it
follows 
from Lemma \ref{lem-spesp} that in this case the reflected diffusion
is a semimartingale. 

Theorem \ref{thm:general} is a direct consequence of Theorems
\ref{th-exi} and \ref{th:existence}, which prove slightly more
general results that do not assume that $\MV = \emptyset$. 

\begin{theorem} 
\label{th-exi}
Given $(G,d(\cdot))$, $\MV$, $b(\cdot)$, $\sigma(\cdot)$,  suppose that for  $z\in \bar{G}$, $(\Omega,
{\mathcal F}, \{{\mathcal F}_t\})$, $\P_z$, 
$(Z,W)$ is  a weak solution to the associated SDER with initial condition $z$, 
and let $\Q_z = \P_z \circ Z^{-1}$ denote the law of $Z$ 
on $(\ccspace, {\mathcal M})$ under $\P_z$.  
If $\MV$ is the union of finitely many closed connected sets, 
 then $\Q_z$ is a solution to the corresponding submartingale
 problem starting from $z$. 
Consequently, if  the submartingale problem has at most one solution,  then uniqueness in law
holds for the associated SDER.  
\end{theorem}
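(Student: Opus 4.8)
The plan is to verify that $\Q_z = \P_z \circ Z^{-1}$ satisfies each of the four properties in Definition \ref{def-smg}, together with the $\MB(\bar G)$-measurability of $z \mapsto \Q_z(A)$ (the latter being inherited from the corresponding measurability built into the definition of a weak solution of the SDER as a family indexed by $z$, or, if only a single solution is given, left aside since we are proving the "starting from $z$" statement). Properties 1 and 2 are immediate: property 1 holds because $Z(0) = z$ $\P_z$-a.s.\ by \eqref{def-z}, and property 2 holds because $(Z,Y)$ solves the ESP for $X$, so $Z(t) \in \bar G$ for all $t$. Property 4 is also immediate: it is exactly property 3 of Definition \ref{def-SDER} restricted to $\MV \subseteq \partial G$, since $\int_0^\infty \ind_{\partial G}(Z(s))\,ds = 0$ $\P_z$-a.s.\ forces $\int_0^\infty \ind_{\MV}(Z(s))\,ds = 0$ $\P_z$-a.s. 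The only substantive step is property 3: for every $f \in \MH$, the process $f(Z(t)) - \int_0^t \ML f(Z(u))\,du$ is a $\Q_z$-submartingale, equivalently, a $\P_z$-submartingale when viewed on $(\Omega,\MF,\{\MF_t\})$.

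For property 3, first I would like to apply It\^o's formula to $f(Z(t))$ using \eqref{def-z}. The difficulty is that, a priori, $Z$ need not be a semimartingale (when $\MV \neq \emptyset$), so the bounded-variation term $Y$ cannot be handled by a naive integration-by-parts. This is where the hypothesis that $\MV$ is the union of finitely many closed connected sets, combined with $f$ being constant in a neighborhood of $\MV$, comes in. The strategy is a localization: for $\ve > 0$ let $\MV^\ve$ be the $\ve$-fattening of $\MV$ and choose $\ve$ small enough that $f$ is constant on $\bar G \cap \MV^{2\ve}$; then introduce a sequence of stopping times alternating entry into $\MV^\ve$ and exit from $\MV^{2\ve}$. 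On each interval of the complementary (good) set, $\tilde Z$ avoids $\MV$, so by Lemma \ref{lem-spesp} the shifted constraining process $\tilde Y$ has finite variation with the integral representation \eqref{eq-espmeas} and $\gamma(\omega,u) \in d(\tilde Z(\omega,u))$ for $d|\tilde Y|$-a.e.\ $u$. On such a good interval, It\^o's formula gives
\[
f(Z(t)) - f(Z(s)) = \int_s^t \ML f(Z(u))\,du + \int_s^t \nabla f(Z(u))^T \sigma(Z(u))\,dW(u) + \int_s^t \langle \nabla f(Z(u)), \gamma(u)\rangle\, d|Y|(u),
\]
and the last term is nonnegative because $Z(u) \in \partial G$ whenever $d|Y|$ charges $u$ (using Proposition-type arguments: the constraining term only grows on the boundary, together with property 4 to handle the $Z(u)\in \partial G$ set relative to $\MV$) and $\langle d, \nabla f(y)\rangle \geq 0$ for $d \in d(y)$, $y \in \partial G$, by definition of $\MH$. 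On each bad interval, $Z$ lies in $\MV^{2\ve}$ where $f$ is constant, so $f(Z(t)) - f(Z(s)) = 0$ while $\int_s^t \ML f(Z(u))\,du = 0$ as well (since $\nabla f$ and its second derivatives vanish where $f$ is locally constant); hence the process is unchanged across bad intervals. Summing the martingale parts over good intervals yields a genuine $\P_z$-local martingale, which is a true martingale after a further localization using that $\nabla f$ and $\sigma$ are bounded on $\supp(f)$ (recall $f \in \C_c^2(\bar G)\oplus\R$, so $\nabla f$ has compact support). Assembling these pieces, $f(Z(t)) - \int_0^t \ML f(Z(u))\,du$ equals a martingale plus a nondecreasing process, hence is a submartingale; passing from $(\Omega,\MF,\{\MF_t\})$ to $(\ccspace,\MM,\{\MM_t\})$ under $Z$ preserves the submartingale property since the relevant $\sigma$-algebras are generated by the coordinate maps.

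I expect the main obstacle to be the careful bookkeeping in the localization argument: one must ensure the alternating stopping times are finite and exhaust $[0,\infty)$ (using continuity of $Z$ and that $\MV$ is closed — here the "finitely many closed connected sets" hypothesis guarantees that a fattening $\MV^\ve$ behaves well and that one does not accumulate infinitely many crossings in finite time), and one must justify applying Lemma \ref{lem-spesp} on each good interval with the correct measurable selection $\gamma$ and then patching these selections together into a single process defined $d|Y|$-a.e. A secondary technical point is the sign of the boundary integral term: one needs that $d|Y|$-a.e.\ point $u$ with $Z(u) \in \partial G$ actually satisfies $Z(u) \in \partial G \setminus \MV$ $d|Y|$-a.e., or else that $f$ being constant near $\MV$ kills any contribution there, so that the oblique-derivative inequality in the definition of $\MH$ applies. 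The final sentence of the theorem — uniqueness in law for the SDER follows from at-most-one solution of the submartingale problem — is then immediate: two weak solutions of the SDER with initial condition $z$ induce, via the first part, two solutions $\Q_z$ of the submartingale problem starting from $z$, which must coincide, so the laws of the respective $Z$'s agree.
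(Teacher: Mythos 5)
Your proposal is correct and follows essentially the same route as the paper: verify properties 1, 2 and 4 directly from the ESP and Definition \ref{def-SDER}, then prove property 3 by introducing alternating entry/exit stopping times for nested fattenings of $\MV$, applying Lemma \ref{lem-spesp} and It\^{o}'s formula on the excursions away from $\MV$, using the constancy of $f \in \MH$ near $\MV$ (with $\ve$ chosen via the finitely-many-closed-connected-components hypothesis so that $Z$ stays in the neighborhood of a single component between entry and exit) to kill the remaining intervals, and concluding that the resulting decomposition is a martingale plus a nondecreasing boundary term. The technical points you flag as potential obstacles (non-accumulation of crossings, patching the measurable selections $\gamma$, the sign of the boundary integral) are handled in the paper exactly as you anticipate.
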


The proof of Theorem \ref{th-exi} is given in Section \ref{subs-red}.    
It is essentially a consequence of It\^{o}'s formula; however, since
we also allow weak solutions that are not necessarily semimartingales, 
the proof requires some additional arguments, which use the results on the ESP
 from \cite{Ram06} that are summarized in Lemma  \ref{lem-spesp}.  The more 
substantial result is its (partial) converse, Theorem \ref{th:existence} below.

\begin{theorem} \label{th:existence}
Suppose $(G,d(\cdot))$ is piecewise $\C^2$ with continuous reflection,
and  the submartingale problem associated with $(G,d(\cdot))$, $\MV$,
$b(\cdot)$ and $\sigma(\cdot)$ has a solution $\{\Q_z, z\in \bar
G\}$. Let  
\be
\label{Z-weak}
   Z(\omega,t) = \omega (t),  \quad t \geq 0,\ \omega \in \ccspace, 
\ee
and consider the ${\mathcal M}_t$-stopping time given by 
\be
\label{def-taumv} 
\tau_\MV = \inf \{ t \geq 0:  \omega (t)  \in \MV \}. 
\ee
Then there exists a process $W$ on $({\mathcal C}, {\mathcal M},
{\mathcal M}_t)$ such that  $({\mathcal C}, {\mathcal M},
{\mathcal M}_t)$, $\{\Q_z, z\in \bar
G\}$, $(W(\cdot \wedge \tau_{\MV}), Z(\cdot \wedge \tau_{\MV}))$ is a 
weak solution to the associated SDER. 
\end{theorem}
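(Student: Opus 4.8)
The plan is to fix $z \in \bar G$, work under $\Q_z$ with $Z(\omega,t) = \omega(t)$, and first construct a semimartingale decomposition of $Z$ stopped at $\tau_{\MV}$. By Proposition \ref{lem-bdary} we know that (when the submartingale problem is well posed; in general we restrict to $t < \tau_{\MV}$ and invoke Lemma \ref{lem-spesp} to get the semimartingale property on that stochastic interval) $Z$ spends zero Lebesgue time on $\partial G$. The first step is to produce, for a suitable collection of test functions $f$, the Doob--Meyer decomposition of the submartingale $f(Z(t)) - \int_0^t \ML f(Z(u))\,du$ as a local martingale plus an increasing process. Applying this to coordinate functions (suitably localized to be in $\C_c^2(\bar G)\oplus\R$ and, near $\MV$, using that we stop at $\tau_{\MV}$ so the constancy-near-$\MV$ requirement is harmless up to the stopping time) yields a decomposition $Z(t \wedge \tau_{\MV}) = z + M(t) + A(t)$ where $M$ is a continuous local martingale and $A$ is a continuous adapted process of finite variation; the quadratic variation of $M$ is identified as $\int_0^{\cdot} a(Z(u))\,du$ by applying property 3 to both $f$ and $f^2$-type test functions (or products $f_i f_j$) and using uniform ellipticity to invert. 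Then I would define $W(t) \doteq \int_0^t \sigma^{-1}(Z(u))\,dM(u)$ — more precisely, using $a^{1/2}$ and Remark \ref{exist} to pass between $N$- and $J$-dimensional driving noise — and check via L\'evy's characterization that $W$ is an $\{\MM_t\}$-standard Brownian motion under $\Q_z$, so that $M(t) = \int_0^t \sigma(Z(u))\,dW(u)$.

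The central and hardest step is to show that the finite-variation part $Y \doteq A$ is a legitimate reflection term, i.e. that $(Z, Y)$ solves the ESP for $X(t) = z + \int_0^t b(Z(u))\,du + \int_0^t \sigma(Z(u))\,dW(u)$ — equivalently, that $Y$ only increases when $Z \in \partial G$ and that its increments over $[s,t]$ lie in $\conv[\bigcup_{u\in[s,t]} d(Z(u))]$. Because $Z$ spends zero time on $\partial G$, the drift/diffusion part of $A$ is absorbed correctly and what remains, call it $Y$, is supported on $\{u : Z(u)\in\partial G\}$. To control the direction of $Y$, the idea is to use the oblique-derivative test functions from Lemma \ref{lem:cutoff} and Appendix \ref{ap-test}: for each boundary point one builds $f \in \MH$ with $\langle d, \nabla f(y)\rangle > 0$ strictly for $d \in d(y)\setminus\{0\}$ in a neighborhood; applying property 3 shows the corresponding increasing process forces $dY(u)$ to point, at $\Q_z$-a.e. time $u$ with $Z(u)\in\partial G$, into the cone $d(Z(u))$ (using the multi-valued/closed-graph structure of $d(\cdot)$ from Definition \ref{ass:G} and a covering argument over a countable dense family of such test functions, as in Remark \ref{rem-MH}). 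The subtlety at non-smooth points — where $n(x)$ and $d(x)$ are multi-valued — is handled via the integral representation of the candidate local time promised in Section \ref{subs-integrep} and Appendix \ref{apsub-integrep}: one writes $Y(t) = \sum_i \int_0^t d^i(Z(u)) \, d\ell^i(u)$ (or a functional-analytic selection thereof) with each $\ell^i$ increasing and supported on $\partial G^i$, from which membership of the increments in the convex hull is immediate.

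Once $(Z,Y)$ is shown to solve the ESP for $X$ up to $\tau_{\MV}$, it remains to verify the four properties in Definition \ref{def-SDER}: property 1 is L\'evy's characterization just carried out; property 2 is the integrability of $b(Z)$ and $\sigma(Z)$ along paths, which follows from local boundedness of the coefficients and continuity of $Z$; property 3 is the ESP statement just established; and property 4 is Proposition \ref{lem-bdary} (zero Lebesgue time on $\partial G$), which in particular gives zero time on $\MV$. Finally, measurability of $z \mapsto \Q_z(A)$ is inherited from the corresponding property in Definition \ref{def-smg}, so that $(\ccspace, \MM, \MM_t)$, $\{\Q_z, z\in\bar G\}$, $(W(\cdot\wedge\tau_{\MV}), Z(\cdot\wedge\tau_{\MV}))$ is a weak solution to the SDER. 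I expect the main obstacle to be the direction-of-reflection identification at the non-smooth boundary points: ordinary It\^o/excursion arguments break down there because the normal and reflection vectors are not single-valued, so the argument genuinely needs the special oblique-derivative test functions of Lemma \ref{lem:cutoff} together with the functional-analytic integral representation of the pushing process.
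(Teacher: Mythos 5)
Your overall architecture (semimartingale decomposition, L\'evy characterization of $W$, then identification of the finite-variation part as an ESP constraining term via oblique-derivative test functions and an integral representation of the pushing process) is in the right spirit, but two of your key steps contain genuine gaps. First, you cannot obtain the decomposition $Z(\cdot\wedge\tau_\MV)=z+M+A$ and the identity $[M_i,M_j]=\int_0^\cdot a_{ij}(Z(u))\,du$ by "applying property 3 to coordinate functions and products $f_if_j$, suitably localized." The obstruction is not the constancy-near-$\MV$ requirement (which stopping at $\tau_\MV$ does not remove anyway, since property 3 is only assumed for $f\in\MH$ under $\Q_z$, not for stopped processes), but the oblique-derivative condition $\langle d,\nabla f(y)\rangle\geq 0$ for all $d\in d(y)$, $y\in\partial G$: generic coordinate functions and their products violate it on $\partial G$, so they are simply not in $\MH$ and property 3 gives you nothing for them. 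The paper's route is forced precisely by this: near a point $\barz\in\MU$ one uses the special vectors $v^\ell=n^{\barz}+\tilde\ve e^\ell$ (whose inner products with all local reflection directions are positive, by Lemma \ref{lem:cutoff}) together with the cutoff construction of Lemma \ref{lem:htest2} to make $\langle x,v^\ell\rangle$ locally admissible, yielding only a \emph{local} semimartingale property (Proposition \ref{prop-locsm}); the global statement up to $\tau_\MV$ is then assembled through the essential-supremum and regular-conditional-probability patching argument of Proposition \ref{prop-chiomega}, which your proposal omits entirely.

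Second, even granting a local decomposition $S(\cdot\wedge\theta_r)=M+A$, the identification of the bracket as $\int a(Z(u))\,du$ is not routine: away from $\partial G$ it follows as you say, but a priori $[M_i,M_j]$ could carry a singular part on the (Lebesgue-null but uncountable) set $\{u:Z(u)\in\partial G\}$, and without excluding this you cannot define $W=\int\sigma^{-1}dM$ and invoke L\'evy, nor conclude that the leftover finite-variation part equals $Y=Z-X$. Ruling out this boundary contribution is exactly the content of Proposition \ref{lem:q8}, whose proof in the paper requires the random-measure representation of the pushing processes (Proposition \ref{prop-chig}, via Hahn--Banach and Riesz representation) and the exponential-martingale estimates of Lemma \ref{lem-alpha}; your proposal treats this as automatic. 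By contrast, the paper sidesteps your ordering of steps by constructing the Brownian motion $W$ \emph{first}, as an $\mathbb{L}^2$-limit of stochastic integrals over excursion intervals inside nested subdomains $G_m$ (Lemmas \ref{lem-mart} and \ref{lem-qvbm}), where only test functions compactly supported in $G$ are needed. Your final step — deducing the convex-hull condition from a representation $Y(t)=\sum_i\int_0^t d^i(Z(u))\,d\ell^i(u)$ — is also asserted rather than derived; the paper instead obtains the directional constraint through the duality Lemma \ref{lem:ind} applied to submartingales $S^{g^v}$ for $v$ in the dual cone, plus an $\ve\downarrow 0$ covering/limiting argument (Lemma \ref{lem-locrefu}). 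So while you correctly identified the two hard ingredients (special test functions and the functional-analytic representation), the proposal as written would not go through without the local-to-global patching and the boundary bracket-vanishing argument.
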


The proof of the Theorem \ref{th:existence} is given at the end of Section
\ref{subs:locref}.   
 A broad outline of the proof, broken down into several steps, is first provided at the beginning of Section
\ref{sec-converse}, and details of the various steps are presented  
 in Sections \ref{sec-converse}--\ref{sec:case2}.  
We end this section by discussing two simple consequences of Theorem \ref{thm:general}. 

\begin{remark}
\label{rem-Girsanov}
{\em  If $b(\cdot)$ and $\sigma(\cdot)$ satisfy suitable conditions,  then 
 Girsanov's theorem can be used to show that 
well-posedness of the SDER  associated with $(G,d(\cdot))$, $b(\cdot)$ and $\sigma(\cdot)$ is equivalent to well-posedness of  the SDER
associated with $(G,d(\cdot))$, $b\equiv
0$ and $\sigma(\cdot)$.   In other words, under suitable conditions, to show well-posedness of an SDER one can
assume without loss of generality that $b \equiv 0$. 
Due to Theorem \ref{thm:general}, under the same conditions on $b(\cdot)$ and
$\sigma(\cdot)$,  when establishing well-posedness of a submartingale
problem,  one can also without loss of generality assume $b \equiv
0$. 
This can be a very convenient simplification. 
While it may be natural to expect such an equivalence, in the generality we are considering, 
it does  not seem to be straightforward to establish this result directly 
for the submartingale problem without invoking Theorem
\ref{thm:general}  and the corresponding result for weak solutions to
SDER.  
 For example, in \cite{Wil87} this was established by invoking
 the corresponding result for smooth domains and then using an
 approximation argument and the fact that the RBMs almost surely 
do not hit the non-smooth parts of the boundary. 
}
\end{remark}

Theorem \ref{thm:general} also allows us to transfer results that have been
established for solutions to well-posed submartingale problems to 
reflected diffusions characterized as solutions to well-posed SDER. 
In particular, the latter category includes a large  class of
semimartingale reflected Brownian motions that arise 
as diffusion limits of multiclass queueing networks
\cite{TayWil93,Wil-95Surv}.  

\begin{corollary}
\label{cor-stat}
Suppose that $(G,d(\cdot))$ is piecewise $\C^2$ with continuous
reflection and $\MV = \emptyset$. 
Suppose that the SDER associated with $(G,d(\cdot))$,
$b(\cdot)$ and $\sigma(\cdot)$ is well-posed. 
  Then a probability measure $\pi$ on
$\bar{G}$ is a stationary distribution for the associated reflected
diffusion if and only if $\pi (\partial G) = 0$ and 
\[   \int_{\bar{G}} {\mathcal L} f(x) \pi (dx)  \leq 0,  \]
for all $f \in \MH$. 
\end{corollary}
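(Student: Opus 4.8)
The plan is to invoke Theorem \ref{thm:general} to pass from the well-posed SDER to the well-posed submartingale problem, and then to prove the characterization of stationary distributions directly for solutions of the submartingale problem. Let $\{\Q_z, z \in \bar G\}$ be the solution to the submartingale problem associated with $(G,d(\cdot))$, $\MV = \emptyset$, $b(\cdot)$ and $\sigma(\cdot)$, and let $\{P_t\}$ denote the associated (Markov) semigroup, so that a probability measure $\pi$ on $\bar G$ is stationary for the reflected diffusion precisely when $\pi P_t = \pi$ for all $t \geq 0$. First I would record that, by Proposition \ref{lem-bdary}, for every $z$ we have $\int_0^\infty \ind_{\partial G}(\omega(u))\,du = 0$ $\Q_z$-a.s.; integrating against $\pi$ and using stationarity gives $\int_{\bar G} \P_z(\omega(t) \in \partial G)\,\pi(dz) = \pi(\partial G)$ for Lebesgue-a.e. $t$, whence Fubini forces $\pi(\partial G) = 0$. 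This shows the necessity of the condition $\pi(\partial G) = 0$ and will also be the condition under which the $f \in \MH$ arising as constants-plus-compactly-supported functions are rich enough to characterize stationarity.

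The core of the argument is the standard equivalence between stationarity and the "adjoint inequality" $\int_{\bar G} \ML f\, d\pi \le 0$ for all $f \in \MH$. For necessity: fix $f \in \MH$. By property 3 of Definition \ref{def-smg}, $M^f_t \doteq f(\omega(t)) - \int_0^t \ML f(\omega(u))\,du$ is a $\Q_z$-submartingale for every $z$, so $\E^{\Q_z}[f(\omega(t))] - f(z) \ge \E^{\Q_z}\!\big[\int_0^t \ML f(\omega(u))\,du\big]$; note $\ML f$ is bounded since $f \in \C^2_b(\bar G)$ and $b,\sigma$ are locally bounded and $f - f(\infty)$ has compact support, so all integrals are finite. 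Integrating this inequality against $\pi(dz)$, using Fubini and stationarity ($\pi P_s = \pi$ for $s \le t$), the left-hand side becomes $\int f\,d\pi - \int f\,d\pi = 0$ and the right-hand side becomes $\int_0^t \big(\int_{\bar G} \ML f\, d\pi\big)\,ds = t\int_{\bar G} \ML f\, d\pi$. Dividing by $t$ yields $\int_{\bar G}\ML f\,d\pi \le 0$. For sufficiency: suppose $\pi(\partial G) = 0$ and $\int \ML f\,d\pi \le 0$ for all $f \in \MH$. Since $\MH$ contains $\C^2_c(G) \oplus \R$ and $\ML$ acts trivially on constants, applying the inequality to both $f$ and (a suitable constant minus) $f$ — more precisely, since for $f\in\C^2_c(G)$ both $f$ and $-f$ together with small perturbations lie in $\MH$ (the oblique-derivative condition is vacuous on $\C^2_c(G)$, whose supports avoid $\partial G$) — gives $\int_{\bar G} \ML f\,d\pi = 0$ for all $f \in \C^2_c(G)$. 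Because $a = \sigma\sigma^T$ is uniformly elliptic and $\pi(\partial G) = 0$, this identity ($\ML^* \pi = 0$ in the distributional sense on the open set $G$) together with the well-posedness of the submartingale problem implies $\pi$ is invariant; the cleanest route is to show $\frac{d}{dt}\int P_t f\,d\pi = \int \ML(P_t f)\,d\pi = 0$ using that $P_t f$ stays in (the closure of) an appropriate test-function class and the submartingale property gives the needed one-sided bounds in both directions once $\MV = \emptyset$ makes $M^f$ a genuine martingale for $f \in \C^2_c(G)$, and then conclude $\int P_t f\,d\pi = \int f\,d\pi$ for a measure-determining class of $f$, hence $\pi P_t = \pi$.

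The main obstacle I anticipate is the sufficiency direction, specifically justifying the passage from the local identity $\int_{\bar G}\ML f\,d\pi = 0$ on $\C^2_c(G)$ to genuine invariance $\pi P_t = \pi$. The difficulty is that $\C^2_c(G) \oplus \R$ is not invariant under $P_t$ (the process reaches the boundary), so one cannot directly differentiate $t \mapsto \int P_t f\,d\pi$ using only interior test functions; one must use the full class $\MH$, whose members are constant near $\MV = \emptyset$ (a void condition here) and satisfy the oblique derivative inequality $\langle d, \nabla f \rangle \ge 0$ on $\partial G$, which only yields a submartingale, not a martingale. The resolution is to exploit that for each $z$ the measure $\Q_z$ is (by Theorem \ref{th:existence} and $\MV=\emptyset$) the law of a genuine semimartingale reflected diffusion $Z = X + Y$ with $Y$ of finite variation supported on $\{Z \in \partial G\}$, apply It\^o's formula to $f(Z_t)$ for $f \in \C^2_b(\bar G)$ to get $\E^{\Q_z}[f(Z_t)] - f(z) = \E^{\Q_z}[\int_0^t \ML f(Z_u)\,du] + \E^{\Q_z}[\int_0^t \langle \nabla f(Z_u), dY_u\rangle]$, and then integrate against $\pi$: the first term on the right vanishes after using a bounded-$\ML f$ approximation and the hypothesis extended from $\C^2_c(G)$ to all of $\C^2_b(\bar G)$ via the density statement in Remark \ref{rem-MH} combined with $\pi(\partial G)=0$, while the boundary term is handled by applying the hypothesis to $\pm f$-type functions in $\MH$ to squeeze it to zero. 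This gives $\int f\,d\pi = \int P_t f\,d\pi$ for all $f \in \C^2_b(\bar G)$, a measure-determining class, completing the proof. I would present the interior/approximation bookkeeping only in outline, citing Remark \ref{rem-MH} and Proposition \ref{lem-bdary}, and flag the It\^o-formula-plus-squeeze step as the crux.
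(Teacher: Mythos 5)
Your first step matches the paper exactly: invoke Theorem \ref{thm:general} to conclude that the submartingale problem is well posed. But the paper then simply cites Theorem 2 of \cite{KanRam14}, which is precisely the characterization of stationary distributions for solutions of well-posed submartingale problems in this setting; it does not re-prove it. You instead attempt to establish that characterization from scratch, and while your necessity direction (submartingale inequality, Fubini, stationarity, divide by $t$; boundary mass zero via Proposition \ref{lem-bdary}) is fine, your sufficiency direction has a genuine gap.

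The gap is a circularity in the It\^o-formula step. Writing
\[
\int_{\bar G} \E^{\Q_z}[f(Z(t))]\,\pi(dz) - \int_{\bar G} f\,d\pi
= \int_{\bar G}\E^{\Q_z}\!\left[\int_0^t \ML f(Z(u))\,du\right]\pi(dz)
+ \int_{\bar G}\E^{\Q_z}\!\left[\int_0^t \langle \nabla f(Z(u)), dY(u)\rangle\right]\pi(dz),
\]
the first term on the right equals $\int_0^t \int_{\bar G} (P_u \ML f)\,d\pi\,du$, and this reduces to $t\int_{\bar G}\ML f\,d\pi$ only if $\pi P_u=\pi$ --- which is the invariance you are trying to prove. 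The hypothesis of the corollary is only the one-time ``basic adjoint relationship'' $\int \ML f\,d\pi\le 0$ for $f\in\MH$ (plus $\pi(\partial G)=0$), and it gives no control over $\int P_u\ML f\,d\pi$ or over the boundary term $\int \pi(dz)\,\E^{\Q_z}[\int_0^t\langle\nabla f, dY\rangle]$ without already knowing stationarity; your proposed ``squeeze'' with $\pm f$-type test functions does not supply that control. Converting the adjoint inequality into genuine invariance is the hard content of Echeverr\'ia-type theorems and is exactly what Theorem 2 of \cite{KanRam14} provides; in this generality you should cite it (as the paper does) rather than expect a short direct argument.
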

\begin{proof} By Theorem
  \ref{thm:general}, it follows that the submartingale problem associated
  with  $(G,d(\cdot))$,
$b(\cdot)$ and $\sigma(\cdot)$  is well posed.  The
corollary then follows from  Theorem 2 of \cite{KanRam14}. 
\end{proof}

In particular, due to the result of \cite{TayWil93},  Corollary \ref{cor-stat} applies 
to polyhedral domains with piecewise constant reflection 
that satisfy $\MV = \emptyset$ (that is, the completely-${\mathcal S}$
condition). 

\beginsec

\section{Proof of Theorem \ref{th-exi}}
\label{subs-red}

Given $(G,d(\cdot))$, $b(\cdot)$ and $\sigma(\cdot)$, suppose the associated $\MV$ is
the union of finitely many connected sets, and 
 suppose that for each $z\in \bar{G}$, 
$(\Omega, {\cal F}, \{{\cal F}_t\})$, $\P_z$, $(Z,W)$ is a weak
solution to the associated SDER with initial condition $z$, and let
$\Q_z$ be the law of $Z$ induced by $\P_z$.   
Fix  $z \in \bar{G}$. 
Then the definition of the ESP and properties 3--4 of
Definition \ref{def-SDER} together imply that $\Q_z$ satisfies properties 1, 2
and 4 of the submartingale problem associated with $(G,d(\cdot))$, $b$
and $\sigma$.   
Thus, to prove the first assertion of the theorem, it suffices to
show that 
$\Q_z$ also satisfies property 3 of the submartingale problem. 
Fix $f \in \MH$,  and for some $\bar{L}  \in \N$, 
let  $\MV = \cup_{i=1}^{\bar{L}}\MV_i$ be the unique decomposition of $\MV$ into a finite union of its
connected components, each of which is closed. 
 Any  $f\in \MH$  is the sum of a constant and a
function $\tilde{f}$, where $\tilde{f}$ has
compact support and  is constant in a
neighborhood of every point in $\MV$.   
The set  $\MV_i \cap \supp[ \tilde{f}]$ is compact for every $i = 1,
\ldots, \bar{L}$, and hence, a  standard covering argument shows that 
 there exists $\varepsilon>0$ 
such that for each $i = 1, \ldots, \bar{L}$, 
$f$  is constant on $B_\varepsilon(\MV_i)\cap \bar G$. 
We assume without loss of generality that 
 $\ve$ is smaller than the minimum distance
between any two closed sets $\MV_i$ and $\MV_j$,  $i, j = 1,
\ldots, \bar{L}, i \neq j$.  
 
Now, define $\stopoutf_0\doteq 0$ and for $k\in \N$, let  
\begin{eqnarray*}
\label{def-thetan}
\stopinf_k & \doteq & \inf \{t>\stopoutf_{k-1}:\ Z(t)\in
\bar{B}_{\varepsilon/2}(\MV)\},  \\
\label{def-sigman}
\stopoutf_k & \doteq & \inf \{t>\stopinf_k:\ Z(t)\notin
B_\varepsilon(\MV)\},
 \end{eqnarray*}
 where, by  convention, the infimum over any empty set is taken to be
 infinity.    
Since $\bar{B}_{\varepsilon/2}(\MV)$ and $(B_\varepsilon(\MV))^c$ are
closed sets, $\stopoutf_k$ and $\stopinf_k$ are $\{{\cal F}_t\}$-stopping
times.  Since the process $Z$ is continuous, almost surely, $\stopoutf_k,
\stopinf_k\rightarrow \infty$ as $k\rightarrow \infty$. 
For $t\in [0,\infty)$, 
\be
\label{eqn1}
\begin{array}{rcl}
f(Z(t))- f(Z(0)) & =& \ds \sum_{k=1}^\infty
\left[\ind_{\{\stopoutf_{k-1}\leq t\}}(f(Z(t\wedge
  \stopinf_k))-f(Z(\stopoutf_{k-1})))\right. \\ & & \qquad
+\ds \left.\ind_{\{\stopinf_{k}\leq t\}}(f(Z(t\wedge
  \stopoutf_k))-f(Z(\stopinf_{k})))\right] \\ & = & \ds \sum_{k=1}^\infty \left[\ind_{\{\stopoutf_{k-1}\leq t\}}(f(Z(t\wedge \stopinf_k))-f(Z(\stopoutf_{k-1})))\right],
\end{array}
\ee
where the last equality holds because $f$ is constant 
on each $B_\varepsilon(\MV_{i})$ and the continuity of $Z$ implies
that almost surely, $Z$ lies in the $\ve$-neighborhood of exactly one 
connected component of $\MV$ during each interval $[\stopinf_{k},
\stopoutf_k)$.  
 Fix $k\in \N$.   Now, $(Z,Y)$ is a solution to the ESP for $X$,
 where $X$ is defined by (\ref{def-x}),  and 
$Z(t) \not\in \MV$ for $t \in [\stopoutf_{k-1}, \stopinf_k]$.
Therefore,  on the set $\{\stopinf_k < t\}$, applying  Lemma \ref{lem-spesp}  with $\theta_1 =
\stopoutf_{k-1}$, $\theta_2 = \stopinf_k$, $s= 0$ and $t$ replaced by
$t - \stopoutf_{k-1}$, and defining 
$\tilde{Y}^k (u) \doteq Y((\stopoutf_{k-1} + u)\wedge
\stopinf_k) - Y(\stopoutf_{k-1})$ for $u \in [0,\infty)$, 
it follows that there exists a measurable 
function $\gamma^k: \Omega \times \R_+
\mapsto \R^J$ such that  
\[  Y(t \wedge \stopinf_k)  - Y(\stopoutf_{k-1}) =  \tilde{Y}^k (
t-\stopoutf_{k-1}) - \tilde{Y}^k (0) = \int_{0}^{(t - \stopoutf_{k-1})\wedge (\stopinf_k-\stopoutf_{k-1})} \gamma^k(u) d
|\tilde{Y}^k|(u),\]
where $\gamma^k(u)\in d(Z(\stopoutf_{k-1}+u))$ for $d|\tilde{Y}^k|$ almost
every $u$ (where we have replaced  $d(Z( (\stopoutf_{k-1}+u) \wedge
 \stopinf_k))$ by  $d(Z(\stopoutf_{k-1}+u))$  because $d |\tilde{Y}^k|(u) = 0$ for $u >
 \stopinf_k-\stopoutf_{k-1}$). 
   In turn, this implies that the process $Z(\cdot\wedge \stopinf_k)-Z(\stopoutf_{k-1})$ admits the
following semimartingale 
decomposition: for $t \geq \stopoutf_{k-1}$,   
\begin{eqnarray*} 
Z(t\wedge \stopinf_k)-Z(\stopoutf_{k-1}) & = &  \int_{\stopoutf_{k-1}}^{t\wedge
  \stopinf_k} b(Z(u)) \, du + \int_{\stopoutf_{k-1}}^{t\wedge \stopinf_k}
\sigma(Z(u)) \, dW(u) \\
& & \quad +\int_{0}^{(t - \stopoutf_{k-1})\wedge (\stopinf_k-\stopoutf_{k-1})} \gamma^k(u) d
|\tilde{Y}^k|(u),\end{eqnarray*}
and by It\^{o}'s formula,   on the set $\{\stopoutf_{k-1}\leq t\}$ we have 
\begin{eqnarray*}
f(Z(t\wedge \stopinf_k))-f(Z(\stopoutf_{k-1})) & =&
\int_{\stopoutf_{k-1}}^{t\wedge \stopinf_k} \ML f(Z(u))du+
\int_{\stopoutf_{k-1}}^{t\wedge \stopinf_k} \left<\nabla f(Z(u)),
  \sigma(Z(u))dW(u) \right> \\ & & + \int_{0}^{(t - \stopoutf_{k-1})\wedge (\stopinf_k-\stopoutf_{k-1})} \left<\nabla f(Z(\stopoutf_{k-1}+u)), \gamma^k(u) \right> d|\tilde{Y}^k|(u).
\end{eqnarray*} 
Multiplying both sides of the last display
by $\ind_{\{\stopoutf_{k-1}\leq t\}}$, summing over $k\in \N$ and observing  that $\nabla f$ and $\ML f$ are
identically zero on $B_\varepsilon(\MV)$ because $f$ is constant on
each connected component of $\MV$,  we have the equalities
\[\sum_{k=1}^\infty \ind_{\{\stopoutf_{k-1}\leq
  t\}}\int_{\stopoutf_{k-1}}^{t\wedge \stopinf_k} \left<\nabla f(Z(u)),
  \sigma(Z(u))dW(u) \right> = \int_0^t  \left<\nabla f(Z(u)),
  \sigma(Z(u))dW(u) \right> \] and, likewise,
 \[\sum_{k=1}^\infty \ind_{\{\stopoutf_{k-1}\leq
   t\}}\int_{\stopoutf_{k-1}}^{t\wedge \stopinf_k} \ML f(Z(u))du =
 \int_0^t  \ML f(Z(u))du.  \]
Combining the last three displays with \eqref{eqn1}, we conclude that $\P_z$-almost surely, for every $t \geq 0$, 
\[ 
\begin{array}{l}
\label{eq-z}
\ds f(Z(t))-f(Z(0))  - \int_0^t  \ML f(Z(u))du \\
\quad  =  \ds \int_0^t  \left<\nabla f(Z(u)), \sigma(Z(u))dW(u) \right> \\
 \qquad \quad + \ds \sum_{k=1}^\infty \ind_{\{\stopoutf_{k-1}\leq
  t\}}\int_{0}^{(t-\stopoutf_{k-1})\wedge
  (\stopinf_k-\stopoutf_{k-1})} \left<\nabla f(Z(\stopoutf_{k-1} + u)),
  \gamma^k(u) \right> d|\tilde{Y}^k|(u). 
\end{array}
\]
Since $f\in \MH$, $\gamma^k(u) \in d(Z(\stopoutf_{k-1}+u))$ for 
 $d|\tilde{Y}^k|$ almost every $u$, the second term on the right-hand
 side is almost surely non-decreasing, whereas the local boundedness 
 of $\sigma$ and the fact that $f$ has compact support shows
 that the first term on the right-hand side is a martingale. 
 This implies that  the process described by the right-hand side, and
 therefore the left-hand side,  is a submartingale, and hence, shows that 
 $\Q_z$ satisfies the remaining property (3) of the submartingale problem.  This proves the first
assertion of Theorem \ref{th-exi}. 
The second assertion is a simple consequence of the first. 

\bigskip

\beginsec

\section{Proof of Theorem \ref{th:existence}}
\label{sec-converse}

In this section we provide  the broad outline of the proof of 
Theorem \ref{th:existence}, and establish some preliminary  results.  
The remaining steps of the proof are carried out in 
Sections \ref{sec:case1} and \ref{sec:case2}.   For the rest of the
paper, we consider $(G,d(\cdot))$ that is piecewise 
$\C^2$ with continuous reflection. 
 In light of Remark \ref{exist}, we can (and will) assume that $\sigma = a^{1/2}$, 
where $a = \sigma \sigma^T$.  We also  let $\{\Q_z, z\in \bar G\}$ be a
solution to the associated submartingale problem associated with the
above data, and  let $Z$ be the canonical process on $(\ccspace,
{\mathcal M}, \{{\mathcal M}_t\})$,  defined by \eqref{Z-weak}.

The proof of Theorem \ref{th:existence} consists of three main steps. 
First,  in Section \ref{sec:BM}, for each $z\in \bar{G}$,
we construct  a  continuous adapted stochastic process $\brm$ on  the canonical
filtered
probability space 
$(\ccspace, {\mathcal M}, \{{\mathcal M}_t\})$, and show that under
$\Q_z$, $\{\brm (t), {\mathcal M}_t, t \geq 0\}$ is a $J$-dimensional standard Brownian motion.   Next, in 
Section \ref{subs:locref} (see Proposition \ref{prop-chiomega}), we show that $(\ccspace, {\mathcal M}, \{{\mathcal M}_t\})$, 
$\Q_z$,  $(Z,\brm)$ is 
a weak solution to the associated SDER with initial condition $z$  if a certain local reflection 
property, specified in \eqref{chiomega},  is satisfied.    The third step entails the verification of this local
reflection property.  This is the most involved step, and requires a
careful analysis of the behavior of $Z$ at the boundary of the
domain.  This step is carried 
out in Sections \ref{sec:case1} and \ref{sec:case2} 
for the cases when $z \in G$ and $z \in \MU$, 
respectively. 

For notational conciseness, throughout the rest of the paper, we will use the
following notation.  Let $\{\martf
(t), t \geq 0\}$ be the process given by 
\begin{equation}
\label{def-martf}
 \martf (t)  \doteq  f (\zee (t)) - f(\zee (0)) - \int_0^t
 {\mathcal L} f (\zee(u) ) \, du,  \qquad t \geq 0. 
\end{equation}
for functions $f$ for which the process is well defined. In particular, this is
well defined 
for all $f \in {\mathcal C}^2_c(\bar G)$.  
Also, let $\id$
denote the identity function on $\R^J$: $\id (x) = x$.   Note that 
the measurability and local boundedness of $b$ ensures
that the process 
\be
\label{def-shift}
  \shift (t) \doteq   \zee (t) -  \zee (0) - \int_0^t b ( \zee(u)) \,
du,  \quad t \geq 0, 
\ee
is well defined.

\subsection{Construction of a Brownian Motion} \label{sec:BM}

In the unconstrained case, that is, when constructing a weak solution 
to an SDE with drift $b$ and dispersion coefficient $a^{1/2}$ from a
solution $\Q_z$ to the corresponding martingale problem, 
appropriate test functions can be used to show that $\shift$ is a
martingale   (see, e.g.,
Proposition 4.6 in Chapter 5 of \cite{KarShrBook}).   The Brownian
motion  that drives the SDE can
then be obtained as  a stochastic 
integral with respect to $\shift$. 
 In contrast, in the constrained 
setting, as suggested by the expression in \eqref{def-z}, $\shift$ is
no longer a martingale (and it  need not even be a semimartingale when  
$\MV \neq\emptyset$).  Instead,  we need to use a slightly more
complicated construction.  
First, we define a suitable nested sequence of domains 
$G_m$ and, using  an argument  analogous to the unconstrained case, 
we  show in Lemma \ref{lem-mart} that when restricted to certain random time intervals during which the
process lies strictly inside  a domain $G_m$,  the process $\shift$ 
is a $\Q_z$-martingale for every $z \in \bar{G}$. 
This allows us to construct, for each $z\in \bar{G}$, a sequence $\{ \brm^m \}_{m \in \mathbb N}$
of $\Q_z$-martingales, 
which are then shown in  Lemma \ref{lem-qvbm} to converge along a
subsequence to  a process that is a standard $\Q_z$-Brownian
motion.

\begin{lemma}
\label{lem-mart}
Let $\MO_i$, $i = 1, 2,$ be  connected bounded open subsets of $G$ 
such that $\bar{\MO}_1 \subset \MO_2$ and $\bar{\MO}_2 \subset G$.  
Given any $\{{\mathcal M}_t\}$-stopping time $\stop$, define the two stopping times 
\begin{eqnarray}
\label{def-stopin}
\stopin & \doteq & \inf \{ t > \stop: Z (t) \in \bar{\MO}_1 \},
\\
\label{def-stopout}
\stopout & \doteq & \inf \{ t > \stopin:  Z(t) \notin \MO_2 \}. 
\end{eqnarray}
Then  for every $z \in \bar{G}$, 
$\{ \shift (t \wedge \stopout) - \shift (t \wedge \stopin), \MM_t, t \geq 0\}$ is a continuous $\Q_z$-martingale 
and for $t \geq 0, i, j = 1, \ldots, J$,   
\be
\label{shift-cov1}
[ \shift_i, \shift_j] (t \wedge \stopout)  -  [ \shift_i, \shift_j] (t
\wedge \stopin)  
= \int_{t\wedge \stopin}^{t\wedge \stopout}  a_{ij} (Z(u)) \, du. 
 \ee
\end{lemma}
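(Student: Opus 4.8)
The plan is to localize the submartingale property (property 3 of Definition 3.5) to the region $\MO_2 \subset\subset G$, where the reflection constraint is inactive, and to upgrade it there from a submartingale to a martingale property by exploiting that both $f$ and $-f$ are admissible test functions. First I would fix $z \in \bar G$ and work under $\Q_z$. Choose a function $f \in \C_c^2(\bar G)$ with $f = \id_i$ (the $i$-th coordinate function) on a neighborhood of $\bar\MO_2$; this is possible since $\bar\MO_2 \subset G$ is compact and we may multiply $x \mapsto x_i$ by a smooth cutoff supported in $G$. Because $\bar\MO_2 \subset G$, such an $f$ satisfies $\langle d, \nabla f(y)\rangle = 0 \geq 0$ for all $y \in \partial G$ (the gradient condition is vacuous there once the support avoids $\partial G$ — more precisely, one arranges $f$ to be constant, say zero, near $\partial G$), and $f$ is trivially constant near $\MV \subset \partial G$; hence $f \in \MH$ and also $-f \in \MH$. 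Therefore $M^f$ as in \eqref{def-martf} is both a $\Q_z$-submartingale and a $\Q_z$-supermartingale, i.e.\ a martingale. Since on the stochastic interval $[\stopin, \stopout]$ the process $Z$ lies in $\bar\MO_2$, where $f(Z(u)) = Z_i(u)$ and $\ML f(Z(u)) = b_i(Z(u))$, the stopped-and-shifted process $M^f(t\wedge\stopout) - M^f(t\wedge\stopin)$ coincides with $\shift_i(t\wedge\stopout) - \shift_i(t\wedge\stopin)$. Optional stopping (the $\Q_z$-martingale $M^f$ is bounded on the relevant time set because $f$ has compact support and $\ML f$ is bounded, so integrability and uniform integrability on each $[0,T]$ are immediate) gives that $\{\shift_i(t\wedge\stopout) - \shift_i(t\wedge\stopin), \MM_t, t \geq 0\}$ is a continuous $\Q_z$-martingale, which is the first assertion.

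For the covariation identity \eqref{shift-cov1}, the standard device is to apply the martingale property to products. Fix $i,j$ and choose $g \in \C_c^2(\bar G)$ with $g = \id_i \id_j$ (i.e.\ $g(x) = x_i x_j$) on a neighborhood of $\bar\MO_2$ and $g$ constant near $\partial G$; as above $g, -g \in \MH$, so $M^g$ is a continuous $\Q_z$-martingale, and on $[\stopin,\stopout]$ we have $\ML g(Z(u)) = b_i(Z(u)) Z_j(u) + b_j(Z(u)) Z_i(u) + a_{ij}(Z(u))$. On the other hand, Itô's product rule applied to the continuous semimartingale pieces $\shift_i(\cdot\wedge\stopout) - \shift_i(\cdot\wedge\stopin)$ and $\shift_j(\cdot\wedge\stopout) - \shift_j(\cdot\wedge\stopin)$, together with the decomposition $Z(t) = Z(0) + \int_0^t b(Z(u))\,du + \shift(t)$ valid on the interval where $Z \in \MO_2 \subset G$ (there $Y \equiv$ const since the reflection term does not grow away from $\partial G$ — this uses the defining property of the ESP, that increments of the constraining term lie in $\conv[\cup d(\phi(u))] = \{0\}$ when $\phi$ stays in $G$), lets one express $Z_i Z_j$ between the stopping times as a stochastic integral plus $\int (b_i Z_j + b_j Z_i)\,du$ plus $[\shift_i,\shift_j]$. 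Matching the bounded-variation parts of the two semimartingale decompositions of the continuous martingale $M^g(\cdot\wedge\stopout) - M^g(\cdot\wedge\stopin)$ — equivalently, observing that the difference $[\shift_i,\shift_j](t\wedge\stopout) - [\shift_i,\shift_j](t\wedge\stopin) - \int_{t\wedge\stopin}^{t\wedge\stopout} a_{ij}(Z(u))\,du$ is simultaneously a continuous local martingale and of finite variation, hence constant and zero at $t=0$ — yields \eqref{shift-cov1}.

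The main obstacle, and the only genuinely delicate point, is justifying that $\shift$ restricted to $[\stopin,\stopout]$ really is a semimartingale with the asserted decomposition before one knows \eqref{shift-cov1}: one must first establish the pure martingale statement (first assertion) for all coordinates, so that $\shift(\cdot\wedge\stopout) - \shift(\cdot\wedge\stopin)$ is a genuine continuous martingale, and only then invoke Itô's formula for products. A secondary technical care is the construction of the cutoff test functions $f, g$ that agree with $\id_i$ and $\id_i\id_j$ on $\bar\MO_2$ while lying in $\MH$: since $\bar\MO_2 \subset G$ and $\bar\MO_2$ is compact, pick $\delta > 0$ with $\dist(\bar\MO_2, \partial G) > \delta$, take a smooth $\zeta \in \C_c^\infty(\R^J)$ with $\zeta \equiv 1$ on $\bar\MO_2$ and $\supp \zeta \subset G$, and set $f = \zeta \cdot \id_i$, $g = \zeta \cdot \id_i\id_j$; these are compactly supported, vanish (hence are constant) near $\partial G$ and in particular near $\MV$, and satisfy the oblique-derivative inequality vacuously on $\partial G$, so $\pm f, \pm g \in \MH$. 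With these in hand the argument above runs without further difficulty, and optional stopping applies because $f$ and $g$ are bounded with bounded $\ML f$, $\ML g$, making the stopped processes uniformly integrable on bounded time intervals.
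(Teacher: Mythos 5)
Your proposal is correct and follows essentially the same route as the paper: cutoff test functions equal to $x_i$ and $x_i x_j$ on $\bar{\MO}_2$ and vanishing near $\partial G$ lie in $\MH$ together with their negatives, so property 3 of the submartingale problem plus optional sampling yields the martingale claim, and the product test function identifies the covariation (the paper invokes the standard argument of Karatzas--Shreve, which is exactly what your It\^{o}/finite-variation matching amounts to). One small remark: in this setting $Z$ is the canonical process under $\Q_z$ and $\shift$ is defined by \eqref{def-shift}, so the decomposition $Z = Z(0) + \int_0^\cdot b(Z(u))\,du + \shift$ is definitional, and no appeal to an ESP constraining term $Y$ is needed (or indeed available) at this stage of the construction.
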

\begin{proof} 
Since $\bar{\MO}_2 \cap \partial G = \emptyset$, for each $i = 1,
\ldots, J$, there exists $f^{(i)} \in \MC_c^2 (G)$ such that 
$f^{(i)} (x) = x_i$ for $x \in \bar{\MO}_2$ and $f^{(i)} (x) = 0$ for each $x$ in a
neighborhood of $\partial G$.   Then, for every $i, j = 1, \ldots, J$, 
the functions $f^{(i)}$,  $-f^{(i)}$, $f^{(i)} f^{(j)}$ and $-f^{(i)} f^{(j)}$
clearly lie in $\MH$.  Recall the notation $\martf$ introduced
in \eqref{def-martf}, and let $\mart^{(i)}$ and $\mart^{i,j}$ be equal to 
$\mart^f$,  when $f = f^{(i)}$ and $f=f^{(i)} f^{(j)}$, respectively.
Then for any $z \in \bar{G}$, by property 3 of the
submartingale problem and the optional sampling theorem,
$\mart^{(i)}$,  $\mart^{(i,j)}$, $\mart^{(i)} (\cdot \wedge \stopout) - \mart^{(i)} (\cdot \wedge
\stopin)$ and $\mart^{(i,j)} (\cdot \wedge \stopout) - \mart^{(i,j)}
(\cdot \wedge \stopin)$ are all 
continuous $\Q_z$-martingales.   
 Since $\zee (s) \in \bar{\MO}_2$ for $s
\in [t \wedge \stopin, t \wedge \stopout]$ and 
for  $x \in
\bar{\MO}_2$,  $f^{(i)} (x) = x_i$ and 
$\ML f^{(i)} (x) = b_i (x)$, it follows that $\shift_i (\cdot
\wedge \stopout) - \shift_i (\cdot \wedge \stopin)$ is equal to $\mart^{(i)} (\cdot \wedge
\stopout) - \mart^{(i)} (\cdot \wedge \stopin)$, and hence, is a continuous
$\Q_z$-martingale. 
In addition, observing that $\ML (f^{(i)} f^{(j)}) (x) = x_j
b_i (x) + x_i b_j (x) + \frac{1}{2} a_{ij} (x)$,   a standard 
argument [e.g.,  see (4.10)--(4.12) on p. 315 of 
\cite{KarShrBook},  where $M^{(i)}$  there plays the role of $\shift_i$ here] can  be used to show that $\shift_i
\shift_j (t \wedge \stopout) - \shift_i \shift_j (t \wedge \stopin) -
\int_{t \wedge \stopin}^{t\wedge \stopout} a_{ij} (\zee (u)) \, du$
is a continuous $\Q_z$-martingale. 
This establishes \eqref{shift-cov1}.  
\end{proof}

Let $\{G_m,\ m\in \N\}$ be
a sequence of bounded domains in $G$ such that $\bar{G}_m\subset
G_{m+1}$ for each $m\in \N$ and $\cup_{m\in \N}G_m=G$.  
Also, for each $m \in \N$, let $\stopout_0^m\doteq 0$ and let $\{\stopin_k^m: k \in \N\}$ and
$\{\stopout_k^m: k \in \N\}$ be nested sequences of stopping times
defined by 
\begin{eqnarray*}\stopin_k^m & \doteq & \inf \{t>\stopout_{k-1}^m: \
  \zee (t)\in \bar{G}_{m} \}, \\
 \stopout_k^m & \doteq &  \inf \{t>\varsigma_k^m:\ \zee (t)
 \notin G_{2m}\}. 
\end{eqnarray*}
For each $k \in \N$, applying Lemma \ref{lem-mart} with $\MO_1 = G_m$, $\MO_2 = G_{2m}$,
$\tau_0 = \tau_{k - 1}^m$, $\stopin = \stopin^m_{k}$ and
$\stopout  = \stopout^m_{k}$, it follows that 
$\{\shift (t \wedge \stopout_{k}^m) - \shift (t \wedge
\stopin_{k}^m), t \geq 0\}$ 
is a continuous $\Q_z$-martingale with  covariation processes 
\be
\label{shift-cov2}   [ \shift_i, \shift_j ] ( t \wedge \stopout_k^m) -  
 [\shift_i, \shift_j ] ( t \wedge \stopin_k^m) = \int_{t \wedge
  \stopin_k^m}^{t \wedge \stopout_k^m} a_{ij} (\zee (u) )
\, du,  \quad t \geq 0, i, j = 1, \ldots, J. 
\ee 
Now,  for $m \in \N$, define 
 \begin{eqnarray} \label{q12m}\brm^m (t)&\doteq & 
\ds  \sum_{k = 1}^\infty \int_{t \wedge \stopin^m_k}^{t \wedge
  \stopout^m_k} \ind_{G_m} (\zee (u)) a^{-1/2}(\zee (u)) d \shift (u),
\qquad t \geq 0. 
\end{eqnarray}
In what follows, recall that $\delta_{ij}$ represents the Kronecker delta: 
 $\delta_{ij}=1$ if $i=j$, and $\delta_{ij}=0$ otherwise.

\begin{lemma}
\label{lem-qvbm} For every $z \in \bar{G}$, 
for each $m \in \N$, the  process $\{\brm^m (t), 
t \geq 0\}$ is well defined and is a continuous $\Q_z$-martingale with  covariation
processes given by 
\be \label{QV:Bm} [\brm^m_i,\brm^m_j](t)= \delta_{ij} \int_0^{t} \ind_{G_m}
(\zee (u)) \, du, \qquad t \geq
0, 1\leq i,j\leq J.    \ee 
 Moreover, there exists a process $\{\brm(t),
{\mathcal M}_t\}$  that is a Brownian
motion under $\Q_z$ and a  
subsequence
$\{\brm^{m_n},  n \in \N\}$ such that, as $n \ra \infty$, $\brm^{m_n}$
$\Q_z$-almost surely converges uniformly on 
bounded intervals to $\brm$. 
\end{lemma}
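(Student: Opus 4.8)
\textbf{Proof proposal for Lemma \ref{lem-qvbm}.}

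The plan is to analyze $\brm^m$ in two stages: first establish that it is a well-defined continuous $\Q_z$-martingale with the stated covariation structure, and then pass to a limit in $m$ to produce the Brownian motion $\brm$. For the first stage, I would fix $m$ and note that, by Lemma \ref{lem-mart} applied on each random interval $[\stopin^m_k, \stopout^m_k]$, the stopped increments $\shift(\cdot \wedge \stopout^m_k) - \shift(\cdot \wedge \stopin^m_k)$ are continuous $\Q_z$-martingales with covariation $\int a_{ij}(\zee(u))\,du$ over the relevant time set, as recorded in \eqref{shift-cov2}. On the set $\{\zee(u) \in G_m\}$ the coefficient $a(\zee(u))$ is bounded (since $\bar{G}_m$ is compact and $a$ is locally bounded) and bounded below by uniform ellipticity \eqref{eq-ue}, so $a^{-1/2}(\zee(u))$ is bounded on that set; hence each stochastic integral $\int_{t\wedge\stopin^m_k}^{t\wedge\stopout^m_k} \ind_{G_m}(\zee(u)) a^{-1/2}(\zee(u))\,d\shift(u)$ is a well-defined continuous $L^2$-martingale. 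The successive intervals $(\stopin^m_k, \stopout^m_k]$ are disjoint and, since $\zee$ is continuous and $\bar G_m \subset G_{2m}$, the exit-and-re-entry times march off to infinity, so the sum in \eqref{q12m} is locally a finite sum; thus $\brm^m$ is a well-defined continuous $\Q_z$-martingale. The covariation computation \eqref{QV:Bm} then follows: cross terms between different $k$ vanish because the time intervals are disjoint, and on interval $k$ the covariation of the integrals picks up $\int \ind_{G_m} (a^{-1/2} a (a^{-1/2})^T)_{ij} \, du = \delta_{ij}\int \ind_{G_m}(\zee(u))\,du$, using $a^{-1/2} a a^{-1/2} = I$ and that $\ind_{G_m}$ restricts to the set where $\zee$ lies strictly inside $G_m$ — so the indicator $\ind_{G_m}(\zee(u))$ is exactly the total time weight accumulated across all the $k$-intervals.

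For the second stage, the key observation is that for $m < m'$, on any interval where $\zee$ stays in $G_m$ the two processes $\brm^m$ and $\brm^{m'}$ agree in increments (both reduce to the appropriately normalized increments of $\shift$ there), and more generally $[\brm^{m'}_i - \brm^m_i, \brm^{m'}_j - \brm^m_j](t)$ is controlled by $\delta_{ij}\int_0^t (\ind_{G_{m'}} - \ind_{G_m})(\zee(u))\,du = \delta_{ij}\int_0^t \ind_{G_{m'}\setminus G_m}(\zee(u))\,du$. Since $\cup_m G_m = G$ and, by property 4 of the submartingale problem together with Proposition \ref{lem-bdary} (boundary property), $\zee$ spends zero Lebesgue time on $\partial G$, the quantity $\int_0^t \ind_{G\setminus G_m}(\zee(u))\,du$ decreases to zero $\Q_z$-almost surely as $m\to\infty$ by dominated convergence. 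Hence $\{\brm^m\}$ is Cauchy in the sense that $\E^{\Q_z}[|\brm^{m'}(t) - \brm^m(t)|^2] \to 0$; by Doob's maximal inequality applied to the martingale $\brm^{m'} - \brm^m$, this gives convergence in $\mathbb{L}^2(\Q_z)$ uniformly on bounded intervals, and passing to a subsequence $\{\brm^{m_n}\}$ we get $\Q_z$-almost sure uniform convergence on bounded intervals to a limit process $\brm$. The limit $\brm$ is a continuous $\{\MM_t\}$-adapted $\Q_z$-martingale (an $L^2$-limit of martingales), and its covariation is the limit of \eqref{QV:Bm}, namely $[\brm_i,\brm_j](t) = \delta_{ij}\lim_n \int_0^t \ind_{G_{m_n}}(\zee(u))\,du = \delta_{ij}\int_0^t \ind_G(\zee(u))\,du = \delta_{ij}\, t$, again using that $\zee$ spends zero Lebesgue time on $\partial G$. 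By L\'evy's characterization, $\{\brm(t), \MM_t, t\geq 0\}$ is a standard $J$-dimensional Brownian motion under $\Q_z$.

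I expect the main obstacle to be the convergence step: controlling $\brm^{m'} - \brm^m$ requires being careful that the indicator weights in the covariation genuinely telescope to $\ind_{G\setminus G_m}(\zee(u))$ and not to something involving the boundary (where $a^{-1/2}$ may blow up or be undefined), which is exactly where the zero-Lebesgue-time-on-$\partial G$ property — available here via Proposition \ref{lem-bdary}, since well-posedness is not assumed in Theorem \ref{th:existence} but the boundary property of \cite{KanRam14} was stated for solutions under the stated ellipticity and local boundedness hypotheses — is essential. A secondary technical point is justifying that the infinite sum defining $\brm^m$ is a legitimate martingale (not merely a formal series); this is handled by the observation that $\stopin^m_k \to \infty$ almost surely, so on each bounded time interval only finitely many summands are nonzero, and by checking the $L^2$ bounds are summable via the disjointness of the supporting time intervals.
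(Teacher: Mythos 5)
Your proposal is correct and follows essentially the same route as the paper: per-interval $L^2$ stochastic integrals summed via the $L^2$ bounds coming from disjointness of the time intervals, the bracket identity $[\brm^{m'}_i-\brm^m_i](t)=\int_0^t\ind_{G_{m'}\setminus G_m}(\zee(u))\,du$, Doob's maximal inequality plus a subsequence (Borel--Cantelli) for almost sure uniform convergence on bounded intervals, and then the boundary property together with L\'evy's characterization to identify the limit as a standard Brownian motion. The only cosmetic difference is that the boundary property is not actually needed at the Cauchy step, since $\ind_{G\setminus G_m}\to 0$ pointwise everywhere (boundary points are never in $G\setminus G_m$) so bounded convergence suffices there; it is genuinely needed only, as you also use it, to conclude $[\brm_i,\brm_j](t)=\delta_{ij}\,t$ at the end.
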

\begin{proof}  Fix $z\in \bar{G}$. 
The uniform ellipticity condition (\ref{eq-ue}) implies that the
(random) eigenvalues of $a(Z(t))$ are bounded below by $\bar{a}$
uniformly in $t\geq 0$ and hence the (random) eigenvalues of
$a^{-1/2}(Z(t))$ are bounded above by $\bar{a}^{-1/2}$ uniformly in
$t\geq 0$. Thus, $\{a^{-1/2} (\zee (t)), t \geq 0\}$ is a bounded,
$\{{\mathcal M}_t\}$-adapted process. 
Now,  for each $m, k \in \N$,  by (\ref{shift-cov2})  the covariation of the continuous $\Q_z$-martingale
$\shift ( \cdot \wedge \stopout_k^m) - \shift (\cdot \wedge
\stopin_k^m)$  is absolutely  continuous with respect to Lebesgue
measure, and hence, the process  
\be
\label{ehkm}
 \eh^{k,m} (t) \doteq \int_{t \wedge \stopin_k^m}^{t
  \wedge \stopout_k^m} \ind_{G_m} (\zee (u) ) a^{-1/2} (\zee
(u)) d\smart (u),  \qquad t \geq 0, 
\nonumber \ee 
 is a well defined,  continuous $\Q_z$-martingale. 
Moreover, \eqref{shift-cov2} also implies that 
for $i, j, k, k' \in \N$, 
\begin{align}
\nonumber
[\eh_i^{k,m}, \eh_j^{k',m}] &= \delta_{kk'} 
\sum_{\ell, \ell' =    1}^J 
\int_{t \wedge \stopin_k^m}^{t \wedge \stopout_k^m} \ind_{G_m} (\zee (u)) (a^{-1/2})_{i\ell} (\zee(u)) (
  a^{-1/2})_{j\ell'} (\zee(u))  d[\shift_\ell, \shift_{\ell'}] (u)\\ 
\nonumber
 &=  \delta_{kk'} \sum_{\ell, \ell' = 1}^J \int_{t \wedge \stopin_k^m}^{t
  \wedge \stopout_k^m} \ind_{G_m} (\zee (u)) (a^{-1/2})_{i\ell} (\zee(u)) (
  a^{-1/2})_{j\ell'} (\zee(u)) a_{\ell \ell'} (\zee (u)) \, du \\
\nonumber
  &=  \delta_{ij}  \delta_{kk'} \int_{t \wedge
  \stopin_k^m}^{t \wedge \stopout_k^m}\ind_{G_m} (\zee(u)
) \, du \\
\label{eh-cov}
& =    \delta_{ij}  \delta_{kk'}  \int_{t \wedge \stopin_k^m}^{t \wedge \stopin_{k+1}^m}\ind_{G_m} (\zee(u)
) \, du, 
\end{align}
where the last equality uses the fact that $\zee(u) \notin G_m$ for
$u \in [\stopout_k^m, \stopin_{k+1}^m]$.  

We next show that for a fixed $t> 0$ and $1\leq i\leq J$, 
$\sum_{k=1}^n \eh^{k,m}_i (t)$ converges in $\mathbb{L}^2(\Q_z)$  to $\brm^m_i (t)$ as $n\rightarrow \infty$. 
Applying Fatou's lemma, and using 
 \eqref{eh-cov} and the fact that  $\stopout_{k}^m \ra
\infty$ as $k \ra \infty$, we see that for $n \in \N$, 
 \begin{eqnarray*}
\E^{\Q_z}\left[\left|\brm^m_i (t)-\sum_{k=1}^n \eh^{k,m}_i
    (t)\right|^2\right] 
& \leq &  \sum_{k=n+1}^\infty \E^{\Q_z}\left[(\eh^{k,m}_i (t))^2\right] 
\\
& = & \E^{\Q_z}\left[ \int_{t \wedge \stopin_{n+1}^m}^t
    \ind_{G_m} (\zee (u)) du \right], 
\end{eqnarray*}
where the last equality uses the  monotone convergence theorem and the fact that $\zee (u) \notin
G_m$ for $u \in [\stopout_k^m, \stopin_{k+1}^m]$. 
Sending $n \rightarrow \infty$, the right-hand side above converges to
zero by the dominated convergence theorem and the fact that 
$\stopin_{n+1}^m \rightarrow \infty$ as $n\rightarrow \infty$. 
Thus,  we conclude that 
$\sum_{k=1}^n \eh^{k,m}_i (t)$ converges in $\mathbb{L}^2(\Q_z)$ 
to $\brm^m_i (t)$.  Since each $\sum_{k=1}^n \eh^{k,m}_i$ is a
continuous $\Q_z$-martingale, by Proposition 1.3 of  \cite{ChuWilBook90},  $\brm^m (t) = \sum_{k = 1}^\infty \eh^{k,
  m}( t)$ is well-defined and also a continuous $\Q_z$-martingale.   Moreover, using
\eqref{eh-cov}, the fact that $\stopin_{n+1}^m \ra \infty$ as $n \ra
\infty$, and the dominated convergence theorem,  
it is easy to see that 
its covariation process is equal to 
\begin{eqnarray*}
  [\brm^m_i, \brm^m_j] (t)  =    \delta_{ij}    \sum_{k = 1}^\infty  \int_{t \wedge
  \stopin_k^m}^{t \wedge \stopin_{k+1}^m}\ind_{G_m} (\zee (u))du =
\delta_{ij}    \int_{0}^{t}\ind_{G_m} (\zee (u) ) \, du, 
\end{eqnarray*}
which proves \eqref{QV:Bm}.  

We  now extract a convergent subsequence of 
$\{\brm^m, m \in \N\}$.  
Let  $\tilde{m} > m$, $\tilde{m}, m \in \N$.  
Using the fact that  $\bar{G}_m \subset G_{\tilde{m}}$,  it is easy to see that for any $k \in \N$, there exists  
$\tilde{k} \in \N$ such that $[\stopin_{k}^m, \stopout_{k}^m]
\subset [\stopin_{\tilde{k}}^{\tilde{m}},
\stopout_{\tilde{k}}^{\tilde{m}}]$. 
Moreover, for any $\tilde k\in \N$, $\zee (u) \in G_m$ for $u \in
[\stopin_{\tilde{k}}^{\tilde{m}}, \stopout_{\tilde{k}}^{\tilde{m}}]$
implies that $u \in [\stopin_{k}^m, \stopout_{k}^m]$ for some $k$.   
Together, this implies that 
\begin{eqnarray}
\label{diff-eq}
\brm^{\tilde{m}} (t) - \brm^m (t) & = &  \sum_{\tilde k =1}^\infty \int_{t \wedge
  \stopin_{\tilde k}^{\tilde m}}^{t \wedge \stopout_{\tilde k}^{\tilde
    m}} \ind_{G^{\tilde{m}}\sm G_m} (\zee (u)) a^{-1/2} (\zee 
  (u)) d \smart (u). 
\nonumber\end{eqnarray}
The argument that was used to establish \eqref{QV:Bm} can
also be used to  show that 
\[  [\brm^{\tilde{m}}_i - \brm^m_i] (t) = \int_0^t
\ind_{G_{\tilde{m}}\setminus G_m} ( \zee (u))\,du,  \quad i = 1,
\ldots, J. 
\]
For any $k \in \N$, $T < \infty$, and $i = 1,\ldots, J$,
by Doob's maximal inequality 
we have  
\begin{eqnarray*}\Q_z\left(\sup_{t \in [0,T]}|\brm^{\tilde{m}}_i(t)-\brm^{m}_i(t)| \geq
    \frac{1}{2^k}\right) &\leq & 2^{2k}
  \E^{\Q_z}\left[|\brm^{\tilde{m}}_i(T)-\brm^{m}_i(T)|^2\right]\\
  & = & 2^{2k} \ \E^{\Q_z}\left[\int_0^T\ind_{G_{\tilde{m}}\setminus
      G_{m}}(\zee (u))\,du\right]. 
\end{eqnarray*} 
Since $\cup_{m \in \N} G_m =  G$,   taking first $\tilde{m} \ra
\infty$ and then $m \ra \infty$, the 
 expectation on the right-hand side 
converges to zero by 
the bounded convergence theorem. 
Hence, there exists 
 a subsequence $\{\brm^{m_k}, k\in \N\}$ such that
 for $i = 1, \ldots, J$, 
\[\Q_z\left(\sup_{t \in [0, T]}|\brm^{m_{k+1}}_i(t)-\brm^{m_k}_i(t)| \geq
  \frac{1}{2^k}\right)\leq 
 2^{2k}\E^{\Q_z}\left[\int_0^T \ind_{G_{m_{k+1}}\setminus G_{m_k}}(\zee (s))\,ds\right]\leq\frac{1}{2^{k}},\] 
 and  by the Borel-Cantelli lemma and the completeness of $\ccspace$, it follows that 
there exists a set $\ccspace^0$ with $\Q_z(\ccspace^0)=1$ 
such that for  $\omega\in \ccspace^0$, 
the sequence of continuous functions $\{\brm^{m_k}(\omega,t),t \geq
0\}$, $k\in \N,$ converges uniformly on bounded intervals to a continuous
process $\brm = \{\brm(\omega,t), t \geq 0\}$.  
Furthermore,  for $\omega \in \ccspace^0$, by \eqref{QV:Bm} and the fact
that $\cup_{m \in \N} G_m =  G$,   
we have for $i, j = 1, \ldots, J$, 
\be
\label{betan-conv} \lim_{k \ra \infty}  \sup_{t \in [0,T]} \left| [ \brm_i^{m_k}, \brm_j^{m_k}] (t,\omega) -
\delta_{ij} \int_0^t \ind_{G} (\zee (u,\omega)) du \right| = 0. 
\nonumber\ee
The boundary property \eqref{prop-bdary} shows that $\Q_z$-almost surely,
$\int_0^t \ind_{G} (\zee (u)) du = t$ and thus, that  $(\brm^m, [\brm^m,
\brm^m])$ converge jointly to $(\brm, I_J t)$, where $I_J$ is the $J
\times J$ identity matrix.   Thus, 
by \cite[Theorem 2.4; p. 528]{JacShiBook},  $\brm$ is
a continuous $\Q_z$-martingale with $[\brm_i, \brm_j]  = \delta_{ij} t$, and so 
by L\'{e}vy's characterization theorem, $\brm$ is a $J$-dimensional
standard Brownian motion under $\Q_z$. 
\end{proof}

\subsection{A Local Reflection Property}
\label{subs:locref}

Fix $z\in \bar{G}$. Given the $\{{\mathcal M}_t\}$-adapted process $\zee$  as in
\eqref{Z-weak},  and the standard $\{{\mathcal M}_t\}$-Brownian motion $\brm$ 
constructed in Section \ref{sec:BM}, define the candidate
unconstrained process $\exx$ in terms of $Z$ and
$W$ via \eqref{def-x}, and let $\y$ be the corresponding candidate ``pushing'' or local time process: 
\be \label{def-y} 
\y(t) \doteq \zee(t)-\exx(t),  \quad t \geq 0. 
\ee  
Recalling the definition of $\shift$ in  \eqref{def-shift}, and using
\eqref{def-x} we can also write 
\be \label{eq:y}
\y(t) = \smart(t) - \int_0^t a^{1/2}
(\zee (u)) d \brm(u), \quad t \geq 0. 
\ee
Now, observe that property 1 of the weak
solution in  Definition \ref{def-SDER}  holds 
by the definition of $W$, property 2 follows due to the continuity of $Z$ and 
the local boundedness of $b$ and $\sigma$, and property 
4 is a consequence of the boundary property established in 
Proposition \ref{prop-bdary}.   Thus, to show that  $(\ccspace,
{\mathcal M}, \{{\mathcal M}_t\})$, $\Q_z$, 
$(\zee, \brm)$  is a weak solution to the SDER with initial condition 
$z$, 
it only remains to verify the third property, namely, to show that
$\Q_z$-almost surely, $(\zee,\y)$ solves  
the ESP for $\exx$.   However, property 1 of the ESP holds trivially
by the definition \eqref{def-y} of $Y$ and
property 2 of the ESP is a direct consequence of property 2 of the submartingale
problem and the definition \eqref{def-z} of $Z$. Thus, to prove Theorem \ref{th:existence}, it suffices 
to verify that $\y$ and $\zee$ satisfy the following ``reflection property'' embodied in
property 3 of the ESP: $\Q_z$ almost
surely, 
\be
\label{chiomega}
 \y (t) - \y(s)  \in \conv \left[ \huge\cup_{u \in [s,t]} d( Z(u))
 \right]\ \mbox{ for every }0\leq s<t<\infty. 
\ee

We now show that the proof of Theorem \ref{th:existence} 
can be reduced to the verification of a local version of
\eqref{chiomega}. Recall the stopping time $\tau_\MV$ defined by 
\eqref{def-taumv}.

\begin{proposition}
\label{prop-chiomega}
The triplet $(\ccspace, {\mathcal M},\{{\mathcal M}_t\})$,
$\Q_{z}$, $(Z(\cdot \wedge \tau_{\MV}),W (\cdot \wedge \tau_{\MV}))$  
is a weak solution to the SDER with initial condition $z$ associated 
with $(G, d(\cdot))$, $\MV$, $b(\cdot)$ and $\sigma(\cdot)$
if the process  $\y$ in \eqref{def-y} satisfies the following two properties: 
\begin{enumerate}
\item[1.] 
For every $\barz \in G$,  $\Q_{\barz}$-almost surely, $Y(t) = 0$ for $t \in [0,\tau_1]$, where 
\be
\label{def-tau}
\tau_1 \doteq \inf \{ t \geq 0: Z(t) \in \partial G\};  
\ee 
\item[2.] 
For every  $\barz \in \MU$, there exists an open neighborhood ${\mathcal O}_{\barz}$ of $\barz$ such that ${\mathcal O}_{\barz} \cap \MV = \emptyset$ and 
\be
\label{eq-locref}
 \Q_{\barz}\left(Y(t\wedge T_{\barz}) \right)\in \conv \left[    \cup_{u \in [0,t\wedge T_{\barz}]} d(\zee(u))  \right]=1,
\ee
 where 
\[  T_{\barz} \doteq \inf \{t \geq 0: Z(t) \not \in {\mathcal O}_{\barz} \}. 
\]
\end{enumerate}
\end{proposition}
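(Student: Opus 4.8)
# Proof Proposal for Proposition \ref{prop-chiomega}

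The plan is to show that the two local properties, together with the strong Markov property of the solution to the submartingale problem and the boundary property of Proposition \ref{lem-bdary}, suffice to propagate the global reflection property \eqref{chiomega} — along the reflected diffusion stopped at $\tau_\MV$ — by a stopping-time iteration argument. Since we have already observed in the discussion preceding the proposition that properties 1, 2 and 4 of Definition \ref{def-SDER} hold automatically, and that properties 1 and 2 of the ESP hold by the definitions \eqref{def-y} and \eqref{def-z}, the entire task reduces to verifying \eqref{chiomega} for $Z(\cdot \wedge \tau_\MV)$ and $Y(\cdot \wedge \tau_\MV)$. The key structural remark is that, because $d(x) = \{0\}$ for $x \in G$, the increment $Y(t) - Y(s)$ is automatically $0$ (hence trivially in the required convex hull) whenever $Z$ stays in $G$ on $[s,t]$; so the only intervals that require work are those during which $Z$ touches $\partial G$, and on $[0, \tau_\MV)$ these touches occur only at points of $\MU$, around each of which property 2 gives us a neighborhood-local reflection statement.

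First I would set up the excursion/return stopping times: starting from $z$, define $\sigma_0 = 0$ and inductively $\rho_k = \inf\{t > \sigma_{k-1} : Z(t) \in \partial G\}$ and $\sigma_k = \inf\{t > \rho_k : Z(t) \not\in \mathcal{O}_{Z(\rho_k)}\}$ where $\mathcal{O}_{\barz}$ is the neighborhood from property 2 (note $Z(\rho_k) \in \MU$ for $\rho_k < \tau_\MV$ since $\partial G \setminus \MU = \MV$). By the strong Markov property of $\{\Q_z\}$ — which holds since the submartingale problem is posed via the test-function class $\MH$ and the solution family is measurable in the initial point — the shifted process after $\rho_k$ is governed by $\Q_{Z(\rho_k)}$, so property 2 applied at $\barz = Z(\rho_k)$ gives that the increment $Y(t \wedge \sigma_k) - Y(\rho_k)$ lies in $\conv[\cup_{u \in [\rho_k, t\wedge\sigma_k]} d(Z(u))]$ for $t \geq \rho_k$, $\Q_z$-almost surely, on $\{\rho_k < \tau_\MV\}$. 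On the complementary open intervals $[\sigma_{k-1}, \rho_k)$, the process $Z$ is strictly inside $G$, so $Y$ is constant there: for $z \in G$ the first such interval is handled by property 1 directly, and for subsequent intervals one reapplies property 1 via the strong Markov property at $\sigma_{k-1}$ (where $Z(\sigma_{k-1})$, being on the boundary $\partial \mathcal{O}$ of a neighborhood disjoint from $\MV$, may lie in $G$ or in $\MU$ — in the latter case one uses the constancy of $Y$ together with $d(x) \supseteq \{0\}$, i.e. $0 \in d(x)$, so the zero increment is still admissible). One must check $\rho_k, \sigma_k \uparrow \infty$ (or reach $\tau_\MV$) $\Q_z$-a.s. by continuity of $Z$.

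The verification of the global property then proceeds by decomposing an arbitrary interval $[s,t] \subset [0, \tau_\MV)$ into the pieces it shares with the $[\sigma_{k-1}, \rho_k)$ and $[\rho_k, \sigma_k)$ intervals, using that the convex-hull-valued set $u \mapsto \conv[\cup_{v \in [\cdot, u]} d(Z(v))]$ is monotone in its arguments and that a sum of increments each lying in $\conv[\cup_{u \in I_j} d(Z(u))]$ over adjacent subintervals $I_j$ lies in $\conv[\cup_{u \in \cup_j I_j} d(Z(u))]$ (this is the elementary fact that $\conv$ of a union contains the sum of convex combinations drawn from pieces of that union — here one should be slightly careful and note that the increments are in the convex hull of the cone union, which is closed under addition up to taking closed convex hull, exactly as in property 3 of the ESP). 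Letting $s,t$ range over rationals and invoking continuity of $Y$ and closedness of the convex hull (as in the proof of Lemma \ref{lem-esp}) upgrades this to all real $0 \leq s < t < \infty$ on a single almost-sure set. I expect the main obstacle to be the careful bookkeeping at the transition points — in particular ensuring that the neighborhoods $\mathcal{O}_{Z(\rho_k)}$ depend measurably on $\omega$ so that the strong Markov application is legitimate, and handling the boundary-touch structure when $Z(\rho_k)$ lies at a non-smooth point of $\MU$ where $d(Z(\rho_k))$ is genuinely multi-valued; the boundary property \eqref{prop-bdary} is what guarantees the set of touch times has measure zero and keeps these excursion decompositions well-behaved.
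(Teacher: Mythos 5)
Your overall strategy—reduce everything to the reflection property \eqref{chiomega} for the stopped pair, then propagate the two local properties by conditioning at successive boundary-visit times—is the right starting point, and it matches the first half of the paper's argument (the reduction and the use of regular conditional probabilities, i.e.\ Lemma \ref{lem:submartperv}, are exactly as you describe). The genuine gap is in the step you dismiss with ``one must check $\rho_k,\sigma_k\uparrow\infty$ (or reach $\tau_\MV$) $\Q_z$-a.s.\ by continuity of $Z$.'' Continuity of $Z$ does not give this. Property 2 is a pure existence statement: it supplies, for each $\barz\in\MU$, \emph{some} neighborhood ${\mathcal O}_{\barz}$ with ${\mathcal O}_{\barz}\cap\MV=\emptyset$, but no lower bound on its size that is locally uniform in $\barz$; the neighborhoods may shrink arbitrarily fast as $\barz$ varies along $\MU$ (and must shrink near $\MV$). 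Consequently the exit times $\sigma_k$ can accumulate at a finite time $T^*<\tau_\MV$ with $Z(T^*)\in\MU$, and your iteration then stalls at $T^*$ without having established the property beyond it; restarting at $T^*$ only pushes the problem to a transfinite iteration, which your write-up does not control. A second, related gap is measurability: since ${\mathcal O}_{Z(\rho_k)}$ depends on $\omega$ through the point $Z(\rho_k)$ in an a priori non-measurable way, it is not even clear that your $\sigma_k$ are $\{\MM_t\}$-stopping times; you flag this but do not resolve it.

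The paper handles both issues with two devices you would need. First, the Covering Lemma \ref{lem:cover} produces a countable subfamily $\{{\mathcal O}_y, y\in\S\}$ covering $\MU$ together with a measurable map $\kappa:\MU\to\S$ with $y\in{\mathcal O}_{\kappa(y)}$ and $\MI(y)=\MI(\kappa(y))$; using ${\mathcal O}_{\kappa(Z(\cdot))}$ instead of ${\mathcal O}_{Z(\cdot)}$ makes the random exit times genuine stopping times. Second, and more importantly, instead of proving that any particular countable excursion scheme exhausts $[0,\tau_\MV)$, the paper considers the family $\Psi$ of \emph{all} stopping times $\tau\le\tau_\MV$ up to which the reflection property holds, notes that $\Psi$ is closed under $\Q_z$-a.s.\ limits, and shows by contradiction that $\esssup_{\tau\in\Psi}\tau=\tau_\MV$: if the essential supremum $\tau^*$ were smaller with positive probability, conditioning at $\tau^*$ (Lemma \ref{lem:submartperv}) and applying properties 1 and 2 at $\barz=Z(\tau^*)$, with the exit time of ${\mathcal O}_{\kappa(Z(\tau^*))}$ (which contains $Z(\tau^*)$ in its interior), yields a strictly larger element of $\Psi$, contradicting maximality. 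This essential-supremum argument is precisely what replaces your unjustified non-accumulation claim; without it, or some equivalent uniformity statement strengthening property 2, your proof does not go through.
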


The proof of Proposition \ref{prop-chiomega} will rely on two
preliminary results that we now state.  
We will make frequent use of a conditioning argument, which is encoded in 
the following lemma.

\begin{lemma} \label{lem:submartperv}
Fix $y \in \bar{G}$,  let $\tau$ be a stopping time on $(\ccspace, {\mathcal M}, \{{\mathcal M}_t\})$, 
and let $\{\Q_{\omega'}, \omega' \in  \ccspace\}$ be a family of regular conditional probability
distributions of $\Q_y$ given $\MM_{\tau}$. 
 There then exists a set $\MN\in \MM_{\tau}$ with $\Q_y(\MN)=0$ such that
 for every $\omega'\notin \MN$  with  $\tau(\omega')<\infty$ and each
$f\in \MH$, \[f(\omega(\tau(\omega')+t))-\int_{\tau(\omega')}^{\tau(\omega')+t}\ML
f(\omega(u))\,du\] is a $\Q_{\omega'}$-submartingale and, moreover,
$\omega(\tau(\omega')+\cdot)$ under $\Q_{\omega'}$ is a solution to
the submartingale problem starting from $\omega'(\tau(\omega'))$.    
\end{lemma}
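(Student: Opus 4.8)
The plan is to derive the statement from the standard theory of regular conditional probability distributions (r.c.p.d.) together with the defining properties of the submartingale problem, treating each of the four properties in Definition \ref{def-smg} in turn. Since $(\ccspace,\MM)$ is a standard Borel space, a family $\{\Q_{\omega'},\omega'\in\ccspace\}$ of r.c.p.d.\ of $\Q_y$ given $\MM_\tau$ exists, and one has the a.s.\ properties: for $\Q_y$-a.e.\ $\omega'$, $\Q_{\omega'}(\omega:\omega(s)=\omega'(s)\mbox{ for }s\le\tau(\omega'))=1$, and for any bounded $\MM$-measurable $\Phi$, $\E^{\Q_y}[\Phi\mid\MM_\tau](\omega')=\E^{\Q_{\omega'}}[\Phi]$ for $\Q_y$-a.e.\ $\omega'$. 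The key point of the proof is that one cannot verify the submartingale property \emph{simultaneously} for all $f\in\MH$ on a single $\Q_y$-null set, because $\MH$ is uncountable; this is exactly what Remark \ref{rem-MH} is designed to circumvent, so I would first reduce to the countable subset $\MH_0$.

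First I would record the trivial facts. For each $\omega'$ outside a $\Q_y$-null set in $\MM_\tau$ with $\tau(\omega')<\infty$, the shifted process $\omega(\tau(\omega')+\cdot)$ starts from $\omega'(\tau(\omega'))\in\bar G$ under $\Q_{\omega'}$ (using property 2 for $\Q_y$ and the r.c.p.d.\ identity), giving property 1 of the submartingale problem for the shifted measure; property 2 (staying in $\bar G$) transfers directly since $\{\omega(t)\in\bar G\ \forall t\}$ has full $\Q_y$-measure, hence full $\Q_{\omega'}$-measure for a.e.\ $\omega'$; and property 4 (zero Lebesgue time in $\MV$) transfers the same way, after noting $\int_{\tau(\omega')}^\infty\ind_\MV(\omega(u))\,du\le\int_0^\infty\ind_\MV(\omega(u))\,du=0$ $\Q_y$-a.s.

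The substantive step is property 3. The process $M^f(t)\doteq f(\omega(t))-\int_0^t\ML f(\omega(u))\,du$ is a $\Q_y$-submartingale for every $f\in\MH$; by the optional sampling / conditioning property of r.c.p.d.s (see e.g.\ the argument in Stroock--Varadhan, or Karatzas--Shreve Problem 5.4.?? — here one may cite the parallel reasoning used for the martingale problem), for each \emph{fixed} $f$ there is a $\Q_y$-null set $\MN_f\in\MM_\tau$ such that for $\omega'\notin\MN_f$ the shifted process $f(\omega(\tau(\omega')+t))-\int_{\tau(\omega')}^{\tau(\omega')+t}\ML f(\omega(u))\,du$ is a $\Q_{\omega'}$-submartingale with respect to the shifted filtration. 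Taking $\MN_0\doteq\bigcup_{g\in\MH_0}\MN_g$, which is still $\Q_y$-null since $\MH_0$ is countable, this submartingale property holds simultaneously for all $g\in\MH_0$ off $\MN_0$. Now fix an arbitrary $f\in\MH$ and $\omega'\notin\MN_0$; choose $N\in\N$ so that $B_N(0)$ contains open neighborhoods of both $\MV$ and $\supp(f-f(\infty))$, and take $g_k\in\MH_0$ with the uniform convergence \eqref{fgk} of $g_k,\nabla g_k,\nabla^2 g_k$ to $f,\nabla f,\nabla^2 f$ on $\bar G\cap B_N(0)$. Since $Z$ stays in $\bar G$ and $f,g_k$ are all constant (hence have $\ML\cdot\equiv0$) outside $B_N(0)$, one gets $\sup_{x\in\bar G}|g_k(x)-f(x)|\to0$ and $\sup_{x\in\bar G}|\ML g_k(x)-\ML f(x)|\to0$ (using local boundedness of $b,\sigma$ on the compact set $\bar G\cap B_N(0)$); hence the submartingale for $g_k$ converges uniformly on compacts, in particular in $\mathbb{L}^1(\Q_{\omega'})$ pointwise in $t$, to the process for $f$, and an $\mathbb{L}^1$-limit of submartingales is a submartingale. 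This shows property 3 of the submartingale problem holds for $\Q_{\omega'}$ for \emph{every} $f\in\MH$.

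Collecting the four properties, set $\MN\doteq\MN_0\cup\MN_1\cup\MN_2\cup\MN_4$ where $\MN_1,\MN_2,\MN_4$ are the null sets handling properties 1, 2, 4; then $\MN\in\MM_\tau$, $\Q_y(\MN)=0$, and for every $\omega'\notin\MN$ with $\tau(\omega')<\infty$ the claimed submartingale property and the assertion that $\omega(\tau(\omega')+\cdot)$ under $\Q_{\omega'}$ solves the submartingale problem starting from $\omega'(\tau(\omega'))$ both hold. The main obstacle is precisely the uncountability issue in property 3, resolved via the separability statement of Remark \ref{rem-MH}; the rest is routine measure-theoretic bookkeeping with r.c.p.d.s and a uniform-approximation argument to pass from $\MH_0$ to $\MH$.
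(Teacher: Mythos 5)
Your overall strategy coincides with the paper's: apply the Stroock--Varadhan conditioning result (Theorem 1.2.10) for each fixed test function, exploit the countable family $\MH_0$ of Remark \ref{rem-MH} to obtain a single null set in $\MM_\tau$, handle properties 1, 2 and 4 of Definition \ref{def-smg} by enlarging that null set, and then pass from $\MH_0$ to all of $\MH$ by approximation. The gap is in the last passage. You assert that the approximating functions $g_k\in\MH_0$ are constant outside $B_N(0)$ and deduce $\sup_{x\in\bar G}|g_k(x)-f(x)|\to 0$ and $\sup_{x\in\bar G}|\ML g_k(x)-\ML f(x)|\to 0$. Remark \ref{rem-MH} gives neither: the convergence \eqref{fgk} is only uniform on $\bar G\cap B_N(0)$, and nothing is said about the values, supports, or derivatives of the $g_k$ outside $B_N(0)$. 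Hence there is no uniform control (not even a dominating bound, uniform in $k$) on $g_k$ and $\ML g_k$ along paths that leave $B_N(0)$, and the claimed $\mathbb{L}^1(\Q_{\omega'})$ convergence of the shifted processes for $g_k$ to the one for $f$ is unjustified as written; the step would fail for paths exiting $B_N(0)$ before time $t$.

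The paper avoids exactly this by localizing before approximating. It first obtains, off a single $\Q_y$-null set, the submartingale property of the shifted processes stopped at $\varsigma^N=\inf\{t\geq 0:\ \omega(t)\notin B_N(0)\}$, simultaneously for all $f\in\MH_0$ and all $N\in\N$ (still a countable family). For the stopped processes only the values of $f$, $\nabla f$ and $\nabla^2 f$ on $\bar G\cap B_N(0)$ enter, so \eqref{fgk} does suffice to transfer the stopped submartingale property to every $f\in\MH$. One then lets $N\to\infty$, using that $f$ and $\ML f$ are bounded for $f\in\MH$, to remove the stopping. If you insert this localization step (approximate only the processes stopped at $\varsigma^N$, then pass to the limit in $N$), your argument becomes correct; without it, the passage from $\MH_0$ to $\MH$ does not go through.
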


For a fixed $f \in \MH$, this is a simple consequence of Theorem 1.2.10 of \cite{StrVarBook06}.
The main content of Lemma \ref{lem:submartperv} is the statement that a fixed exceptional set
$\MN$ can be chosen to work simultaneously for all $f \in \MH$.   
This is  a simple consequence of the separability of $\MH$ established
in \cite{KanRam14}.  The details of the proof are  
deferred to Appendix \ref{ap-regcondprob}. 

The next result is a covering lemma that is used to verify that certain random times
constructed in the proof of Proposition \ref{prop-chiomega} 
are stopping times.  The proof of the covering lemma is deferred to
Appendix \ref{subs-cover}.

\begin{lemma}[{\bf Covering Lemma}]
\label{lem:cover}
Given a collection of open sets $\{{\mathcal O}_y$, $y \in \partial G\}$ such that ${\mathcal O}_y \cap \MV = \emptyset$ if $y\in \MU$,  there exists a countable
set of points $\S \subset \MU$ such that
\be \label{cover}\MU \subseteq  \bigcup_{y \in \S} {\mathcal O}_y, \nonumber \ee
and there exists a measurable mapping $\kappa$ from $\MU$
onto $\S$ such that  $y\in {\mathcal O}_{\kappa(y)}$ and
$\MI(y)=\MI(\kappa(y))$ for each $y\in \MU$.
\end{lemma}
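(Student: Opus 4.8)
The plan is to use the finiteness of the index set $\MI$ to stratify $\MU$ into the finitely many Borel pieces on which the active-index map $\MI(\cdot)$ is constant, and then to run a Lindel\"of-type covering argument separately on each piece, taking care that the neighbourhoods selected in the cover are centred at points of that same piece. Concretely, for each non-empty $A\subseteq\MI$ I would set $\MU_A\doteq\{y\in\MU:\MI(y)=A\}$. Recalling that $\MU$ is relatively open in the closed set $\partial G$ (as observed following Definition \ref{ass:G}), $\MU$ is Borel; and since each $\partial G^i$ is closed, $\MU_A=\MU\cap\bigcap_{i\in A}\partial G^i\cap\bigcap_{i\in\MI\sm A}(\partial G^i)^c$ is Borel as well, while $\MU$ is the union of the finitely many pairwise disjoint sets $\MU_A$. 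Fixing $A$ with $\MU_A\neq\emptyset$ and using that each ${\mathcal O}_y$ contains $y$, the family $\{{\mathcal O}_y:y\in\MU_A\}$ is an open cover of $\MU_A$; since any subspace of $\R^J$ is second countable, hence Lindel\"of, there is a countable set $\S_A\subseteq\MU_A$ with $\MU_A\subseteq\bigcup_{y\in\S_A}{\mathcal O}_y$.

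Next I would build $\kappa$ by a ``smallest-index'' selection. Enumerate $\S_A=\{y_{A,1},y_{A,2},\dots\}$, and for $y\in\MU_A$ put $\kappa(y)\doteq y_{A,n(y)}$, where $n(y)\doteq\min\{k:y\in{\mathcal O}_{y_{A,k}}\}$ is finite by the covering property. Then $y\in{\mathcal O}_{\kappa(y)}$ by construction, and since $\kappa(y)\in\S_A\subseteq\MU_A$ we also get $\MI(\kappa(y))=A=\MI(y)$. For measurability, observe that the ranges of the various enumerations are disjoint (as the $\MU_A$ are), so $\kappa^{-1}(\{y_{A,k}\})=\MU_A\cap{\mathcal O}_{y_{A,k}}\sm\bigcup_{j<k}{\mathcal O}_{y_{A,j}}$ is Borel; since $\kappa$ takes only countably many values, $\kappa$ is Borel measurable. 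Finally, taking $\S\doteq\kappa(\MU)$, which is a countable subset of $\bigcup_A\S_A\subseteq\MU$ and is onto by definition, the relation $\MU\subseteq\bigcup_{y\in\MU}{\mathcal O}_{\kappa(y)}=\bigcup_{w\in\S}{\mathcal O}_w$ gives the asserted covering. The hypothesis ${\mathcal O}_y\cap\MV=\emptyset$ for $y\in\MU$ plays no role in the construction and is simply inherited by the chosen subfamily.

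The argument is largely bookkeeping; the only points calling for a little care are the verification that each stratum $\MU_A$ is Borel (so that the level sets of $\kappa$ are measurable) and the observation that one is free to restrict the covering subfamily to centres lying in the same stratum $\MU_A$ — which is precisely what forces $\MI(\kappa(y))=\MI(y)$. I do not anticipate any genuine obstacle beyond these routine topological and measure-theoretic checks.
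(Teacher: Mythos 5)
Your proof is correct, and it differs from the paper's in the key covering step. The paper first decomposes $\MU$ into the sets $\MU^{\J}_\delta$ ($\delta=2^{-n}$, $\J\subseteq\MI$), which carry a quantitative non-degeneracy condition and are \emph{closed} by Lemma 6.2 of \cite{KanRam14}; countability of the subcover then comes from $\sigma$-compactness ($\MU^{\J}_\delta\cap\bar B_n(0)$ compact, finite subcovers, union over $n$ and $\delta$). You instead stratify only by the exact active set, $\MU_A=\{y\in\MU:\MI(y)=A\}$, and extract a countable subcover directly from the hereditary Lindel\"of property of $\R^J$; this is more elementary, needs only that $\MU_A$ is Borel (which you verify from closedness of the $\partial G^i$), and avoids invoking the external lemma and the $\delta$-indexed family altogether, at the cost of discarding the extra structure (closedness, uniform obliqueness on $\MU^{\J}_\delta$) that the paper's decomposition provides but the lemma does not require. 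The selection of $\kappa$ by first index hitting the cover, its measurability via the level sets $\MU_A\cap\MO_{y_{A,k}}\setminus\bigcup_{j<k}\MO_{y_{A,j}}$, and surjectivity via $\S\doteq\kappa(\MU)$ are essentially the same device as in the paper (your handling of surjectivity is in fact slightly cleaner than the paper's appeal to minimality of the subcover). One small remark: like the paper, you implicitly use that $\MO_y$ is a neighborhood of $y$ (so that $\{\MO_y:y\in\MU_A\}$ covers $\MU_A$); this is not stated in the lemma but is how the lemma is applied, so it is a shared, harmless assumption.
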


The following definition (see Section 2.3 and Theorem 2.3.1 of
\cite{WonBook96}) is used in the proof of Proposition
\ref{prop-chiomega}. 

\begin{defn} \label{def:esssup}
Let $\{Z_{\alpha}:\ \alpha \in I\}$ be a collection of random variables on a probability space $(\Omega, \MF,P)$, with $I$ an arbitrary index set. Then there exists a unique (up to $P$-null sets) random variable $Z^* : \Omega \rightarrow \hat{R} = R \cup \{-\infty,\infty \}$ such that:
\begin{enumerate}
\item[(i)] $P(Z_\alpha \leq Z^*)=1$, $\forall \alpha\in I$,
\item[(ii)] if $Y:\ \Omega\rightarrow \hat{R}$ is another random
  variable that satisfies $\P(Z_\alpha \leq Y)$, then $P(Z^*\leq Y)=1$.
\end{enumerate}
We call $Z^*$ the \textit{essential supremum} of $\{Z_\alpha: \alpha \in I\}$, and write
\[ Z^*= \esssup_{\alpha \in I} Z_\alpha. \]
\end{defn}

\begin{proof}[Proof of Proposition \ref{prop-chiomega}]
Suppose $Y$ satisfies properties 1 and 2 of Proposition \ref{prop-chiomega}. 
 From the discussion prior to Proposition \ref{prop-chiomega}, it is 
 clear that to  show that $(\ccspace, {\mathcal M},\{{\mathcal
   M}_t\})$, $\Q_z$,  
$(Z(\cdot \wedge \tau_{\MV}),\brm(\cdot \wedge \tau_{\MV}))$ is a weak
 solution to the SDER with initial condition $z$, it suffices to show 
 that  for every $0 \leq s \leq t$, \eqref{chiomega}  holds with $Y$ and $Z$ replaced by 
$Y(\cdot \wedge \tau_\MV)$ and $Z(\cdot \wedge \tau_\MV)$, 
respectively.   Applying the regular
 conditional probability distribution of $\Q_z$ given $\MM_s$,  and using the continuity of the paths of $\y$ and    
 Lemma \ref{lem:submartperv},  
it suffices to show that for each $\barz \in \MU$ and $0<t <\infty$, we have
 \be  \label{final} \Q_{\barz}\left(\y (t\wedge \tau_\MV)  \in \conv \left[ \huge\cup_{u
       \in    [0,t]} d( \zee (u\wedge \tau_\MV)) \right]\right) =1.\ee

Let $\Psi_S$ be the set of stopping times on $(\ccspace, {\mathcal M},
\{{\mathcal M}_t\})$, and  define 
\[\Psi\doteq \{\tau: \tau \in \Psi_S: \tau \leq \tau_\MV \mbox{ and } \Q_z\left(\y (t\wedge \tau\wedge \tau_\MV)  \in \conv \left[ \huge\cup_{u
       \in    [0,t\wedge \tau]} d( \zee (u\wedge \tau_\MV))
   \right]\right) =1\}.\]    
Note that $\Psi$ is closed under $\Q_z$-almost sure limits. 
We now use an argument by contradiction to show that \be \label{esssup} \Q_z\left(\esssup_{\tau\in \Psi} \tau =\tau_\MV\right)=1.\ee
Suppose (\ref{esssup}) does not hold.  Then $\Q_z\left(\esssup_{\tau\in \Psi} \tau <\tau_\MV\right)>0$. 
It is well known (cf. Theorem 2.3.1 of \cite{WonBook96}) that there exists
a sequence  $\{\tau^n, n\in \N\}
\subset \Psi$ such that $\Q_z$-almost surely, $\tau^n$ is increasing
and 
\be
\label{taustar} 
\Q_z \left(\esssup_{\tau\in \Psi} \tau = \lim_{n \ra \infty}\tau^n\right) = 1. 
\ee
Now, consider 
the stopping time $\tau^* \doteq \liminf_{n \ra \infty} \tau^n$ (where
we use the limit inferior to deal with the set of $\Q_z$-measure zero 
on which $\tau_n, n \in \N,$ need not be monotonic).    Then, clearly 
$\Q_z (\tau^* = \esssup_{\tau\in
  \Psi} \tau ) = 1$ and 
 since each $\tau^n \in \Psi$,  \eqref{taustar} and the closure
 property of $\Psi$ implies $\tau^* \in \Psi$.  
Due to  the hypothesis that \eqref{esssup} does not hold and
\eqref{taustar},  it also follows that 
$\Q_z\left(\tau^* <\tau_\MV\right)>0$.
       
Now, let ${\mathcal O}_z$, $z \in \MU$, be the collection of open sets
defined in property 2 of Proposition \ref{prop-chiomega},  set
${\mathcal O}_z \doteq G$ for $z \in G$, and let   $\kappa: \MU
\mapsto \S$ be the measurable mapping from Lemma \ref{lem:cover}. 
Let $\bar \varrho$ be a stopping time defined by  
\begin{eqnarray}
\label{tau}
\bar \varrho \doteq \left\{\begin{array}{ll} \inf\{t\geq  \tau^*: \zee (t\wedge \tau_\MV)\notin
{\mathcal O}_{\kappa(\zee( \tau^*))}\} & \mbox{ if } \tau^*<\tau_\MV, \\ \tau_\MV & \mbox{ if } \tau^*\geq \tau_\MV. \end{array}  \right.
\nonumber\end{eqnarray}
By the property of $\kappa$ stated in Lemma \ref{lem:cover}, $\zee( \tau^*)\in G\cup\MU$ lies strictly in the interior of ${\mathcal O}_{\kappa(\zee( \tau^*))}$ on $\{\tau^*<\tau_\MV\}$. It then follows that $\tau_\MV\geq \bar \varrho \geq \tau^*$ and $\Q_z(\bar \varrho > \tau^*) >0$. 
Given the regular conditional 
probability distribution $\{\Q_{\omega'}\}$ of $\Q_z$ given
$\MM_{\tau^*}$,  Lemma \ref{lem:submartperv} implies that 
there exists a set $\MN \in \MM_{\tau^*}$ with $\Q_z(\MN)=0$
such that for every $\omega'\notin \MN$ with
$\tau^*(\omega')<\bar \varrho(\omega')\wedge \tau_\MV(\omega')$, $\Q_{\omega'}$ is a
solution to the submartingale problem starting from
$\omega'(\tau^*(\omega'))\in G\cup\MU$.  Applying 
properties 1 and 2 of Proposition \ref{prop-chiomega} with $\barz = \omega'(\tau^*(\omega'))$, we see that if
$\omega'(\tau^*(\omega'))\in G$, then \[\Q_{\omega'}(\y(t\wedge
\bar \varrho\wedge \tau_\MV)-\y(t\wedge \tau^*\wedge \tau_\MV) = 0
)=1,\] whereas if $\omega'(\tau^*(\omega'))\in \MU$, then  \[\Q_{\omega'}(\y(t\wedge
\bar \varrho\wedge \tau_\MV)-\y(t\wedge \tau^*\wedge \tau_\MV) \in  \conv \left[ \huge\cup_{u \in [t\wedge \tau^*,t\wedge \bar \varrho]} d( \zee (u\wedge \tau_\MV)) \right])=1.\] 
Combining the last two displays with the fact that $\tau^* \in \Psi$, it follows that 
\[\Q_z\left(\y (t\wedge \bar \varrho\wedge \tau_\MV)  \in \conv \left[ \huge\cup_{u
       \in    [0,t\wedge \bar \varrho]} d( \zee (u\wedge \tau_\MV)) \right]\right) =1. \]
Thus, $\bar \varrho \in \Psi$. Since $\Q_z(\bar \rho > \tau^*) >0$ and
$\Q_z(\tau^*=\esssup_{\tau\in \Psi} \tau)=1$, it follows that
$\Q_z(\bar \rho > \esssup_{\tau\in \Psi} \tau)>0$. This contradicts
property (i) of Definition \ref{def:esssup}. Thus, (\ref{esssup})
holds, which in turn  implies that (\ref{final}) holds. 
 This completes the proof of  Proposition \ref{prop-chiomega}.

\end{proof}

\begin{proof}[Proof of Theorem \ref{th:existence}] 
The above discussion show that to complete the proof of Theorem \ref{th:existence}, it only
remains to verify properties  1 and 2 of Proposition
\ref{prop-chiomega}.  This is carried out  
in Sections \ref{sec:case1} and
\ref{sec:case2}, respectively.  In particular, Property 1 of Proposition
\ref{prop-chiomega} follows from Lemma \ref{lem:sderep}  and property 2 of Proposition
\ref{prop-chiomega} follows from Lemma \ref{lem-locrefu}.
\end{proof}

\section{Local Reflection Property for $\barz \in G$} \label{sec:case1}

We need to prove property 1 of Proposition \ref{prop-chiomega} for all 
$\barz \in G$.   This is established in Lemma \ref{lem:sderep}. 
 Define $\stopouti$ as in \eqref{def-tau}, 
let $\brm$ be the $\Q_{\barz}$-Brownian motion constructed in Section
\ref{sec:BM}, and let $\shift$ and $\y$ be the processes defined in \eqref{def-shift} and
\eqref{def-y}, respectively.

 \begin{lemma} \label{lem:sderep}  For $\barz \in G$,  the process
   $\smart(\cdot\wedge \stopouti)$ is a $\Q_{\barz}$-martingale and
   satisfies 
 \begin{eqnarray}
\label{rep-rho}
\smart (t \wedge \stopouti) = \int_0^{t \wedge \stopouti}  a^{1/2}
(\zee (u)) d \brm (u),  \quad  t \geq 0.  
\end{eqnarray}
Moreover,  $\Q_{\barz}$-almost surely, 
$\y(t)=0$ for each $t\in [0, \stopouti]$.
 \end{lemma}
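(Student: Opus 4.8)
\emph{Proof plan.} The plan is to reduce the whole statement to the representation \eqref{rep-rho}. Indeed, by \eqref{eq:y} and $\smart(0)=\zee(0)-\zee(0)=0$ (see \eqref{def-shift}), the assertion $\y(t)=0$ for all $t\in[0,\stopouti]$ is equivalent to \eqref{rep-rho}; and since $a^{1/2}(\zee(\cdot))$ is adapted with $\int_0^{t\wedge\stopouti}|a^{1/2}(\zee(u))|^2\,du<\infty$ for every $t$ (the path $\zee$ being continuous, hence bounded on $[0,t\wedge\stopouti]$, and $a$ locally bounded), the right-hand side of \eqref{rep-rho} is a well-defined continuous $\Q_{\barz}$-local martingale, so \eqref{rep-rho} also yields the (local) martingale property of $\smart(\cdot\wedge\stopouti)$. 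It thus suffices to prove \eqref{rep-rho}, which I would do by localizing on the nested domains $G_m$ of Section \ref{sec:BM}.

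First I would fix $m_0$ with $\barz\in G_{m_0}$ and, for $m\ge m_0$, set $\sigma_m\doteq\inf\{t\ge 0:\zee(t)\notin G_{2m}\}$. Because $\barz\in G_m$ and $\zee$ is continuous, $\Q_{\barz}$-a.s.\ $\zee$ lies in $\bar G_m$ for all small times, so in the notation of Section \ref{sec:BM} one has $\varsigma_1^m=0$ and $\tau_1^m=\sigma_m$; and since $\bar G_{2m}$ is a compact subset of the open set $G$ and $\cup_m G_{2m}=G$, a routine argument using continuity of $\zee$ shows that $m\mapsto\sigma_m$ is nondecreasing with $\sigma_m\uparrow\stopouti$ $\Q_{\barz}$-a.s. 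Applying Lemma \ref{lem-mart} with $\stop=0$, $\MO_1=G_m$, $\MO_2=G_{2m}$ (so that its $\stopin$ equals $0$ and its $\stopout$ equals $\sigma_m$) then shows that $\{\smart(t\wedge\sigma_m),\MM_t\}$ is a continuous $\Q_{\barz}$-martingale with $[\smart_i,\smart_j](t\wedge\sigma_m)=\int_0^{t\wedge\sigma_m}a_{ij}(\zee(u))\,du$; moreover, uniform ellipticity \eqref{eq-ue} makes $a^{1/2}(\zee(\cdot))$ and $a^{-1/2}(\zee(\cdot))$ bounded on $[0,\sigma_m]$, so stochastic integrals of such integrands against $\smart(\cdot\wedge\sigma_m)$ are legitimate square-integrable $\Q_{\barz}$-martingales.

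The crux is then to identify $\brm$ restricted to $[0,\sigma_m]$ with $\int_0^{\cdot\wedge\sigma_m}a^{-1/2}(\zee(u))\,d\smart(u)$. For $m_n\ge m$ one has $\sigma_m\le\tau_1^{m_n}$, so for $t\le\sigma_m$ only the $k=1$ summand in \eqref{q12m} is active, which gives $\brm^{m_n}(t\wedge\sigma_m)=\int_0^{t\wedge\sigma_m}\ind_{G_{m_n}}(\zee(u))a^{-1/2}(\zee(u))\,d\smart(u)$, the integral being against the $\Q_{\barz}$-martingale $\smart(\cdot\wedge\tau_1^{m_n})$ furnished by Lemma \ref{lem-mart} with index $m_n$. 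Sending $n\to\infty$ along the subsequence from Lemma \ref{lem-qvbm}, the left side converges $\Q_{\barz}$-a.s.\ to $\brm(t\wedge\sigma_m)$; and, since $\ind_{G_{m_n}}(\zee(u))\to\ind_G(\zee(u))$ pointwise with the integrands uniformly bounded on $[0,\sigma_m]$, the dominated convergence theorem for stochastic integrals gives $\mathbb{L}^2(\Q_{\barz})$-convergence of the right side to $\int_0^{t\wedge\sigma_m}\ind_G(\zee(u))a^{-1/2}(\zee(u))\,d\smart(u)$, which by the boundary property \eqref{prop-bdary} together with \eqref{shift-cov1} (the part of the integrand supported on $\partial G$ yields a process of zero quadratic variation, hence zero) equals $\int_0^{t\wedge\sigma_m}a^{-1/2}(\zee(u))\,d\smart(u)$. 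Comparing the two limits gives $\brm(t\wedge\sigma_m)=\int_0^{t\wedge\sigma_m}a^{-1/2}(\zee(u))\,d\smart(u)$ $\Q_{\barz}$-a.s., whence, by associativity of the stochastic integral and $\smart(0)=0$, $\int_0^{t\wedge\sigma_m}a^{1/2}(\zee(u))\,d\brm(u)=\int_0^{t\wedge\sigma_m}a^{1/2}(\zee(u))a^{-1/2}(\zee(u))\,d\smart(u)=\smart(t\wedge\sigma_m)$. Letting $m\to\infty$ and using $\sigma_m\uparrow\stopouti$ together with the continuity of $\smart$ and of the stopped stochastic integral $\int_0^{\cdot\wedge\stopouti}a^{1/2}(\zee(u))\,d\brm(u)$ then yields \eqref{rep-rho}, and hence, by the first paragraph, the lemma.

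I expect the main obstacle to be the identification of $\brm$ on $[0,\sigma_m]$ in the third step: one must carefully unwind the definition \eqref{q12m} of $\brm^{m_n}$ on that interval (checking that only the first summand survives and that the stochastic integral against $\smart$ is legitimate there, which hinges on applying Lemma \ref{lem-mart} with the varying index $m_n$), and then reconcile the pathwise a.s.\ convergence $\brm^{m_n}\to\brm$ coming from Lemma \ref{lem-qvbm} with the $\mathbb{L}^2$-convergence of the stochastic integrals, using the boundary property \eqref{prop-bdary} to discard the contribution on $\partial G$.
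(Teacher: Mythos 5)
Your proposal is correct in substance and follows essentially the same route as the paper's proof: localize with the nested domains $G_m$, invoke Lemma \ref{lem-mart} to obtain the stopped martingale $\smart(\cdot\wedge\stopout_1^m)$ and its bracket, unwind the definition \eqref{q12m} of $\brm^m$ (only the $k=1$ term is active before the exit time), use the almost sure convergence from Lemma \ref{lem-qvbm} together with an $\mathbb{L}^2$ limit of the stochastic integrals to identify $\brm=\int_0^{\cdot}a^{-1/2}(\zee(u))\,d\smart(u)$ up to the exit times, and then integrate against $a^{1/2}$ to get \eqref{rep-rho}; the reduction of the claim $\y\equiv 0$ on $[0,\stopouti]$ to \eqref{rep-rho} via \eqref{eq:y} is also how the paper's statement is meant. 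The only structural difference is the order of limits: you first identify $\brm$ on $[0,\sigma_m]$ with $m$ fixed (sending the approximation index to infinity) and then let $m\to\infty$, whereas the paper sends the index and the stopping time $\tilde\stopout_1^m\uparrow\stopouti$ to the limit simultaneously; both work.

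One shortfall worth fixing: your first paragraph only delivers that $\smart(\cdot\wedge\stopouti)$ is a continuous \emph{local} martingale, while the lemma asserts the genuine martingale property, which the paper obtains by applying the convergence theorem \cite[Theorem 2.4, p.~528]{JacShiBook} to the true martingales $\smart(\cdot\wedge\stopout_1^m)$ and their brackets; in your scheme the same theorem (or a uniform-integrability argument) applied to $\smart(\cdot\wedge\sigma_m)$, which converge uniformly on compacts to $\smart(\cdot\wedge\stopouti)$ with brackets converging to $\int_0^{\cdot\wedge\stopouti}a_{ij}(\zee(u))\,du$, closes this point. Two cosmetic remarks: on $[0,\sigma_m]$ the path lies in $\bar G_{2m}\subset G$, so $\ind_G(\zee(u))\equiv 1$ there and the appeal to the boundary property \eqref{prop-bdary} is unnecessary; and the boundedness of $a^{1/2}(\zee(\cdot))$ on $[0,\sigma_m]$ comes from local boundedness of $a$ plus compactness of $\bar G_{2m}$, not from uniform ellipticity, which only bounds $a^{-1/2}$.
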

\begin{proof} 
Let $G_m, m \in \N,$  be the sequence of nested domains introduced in  Section
 \ref{sec:BM}, and let $\stopin_1^m$ and $\stopout_1^m$, $m \in \N$,
 denote the associated sequence of stopping times defined in 
\eqref{def-stopin} and \eqref{def-stopout}, respectively, with $\MO_1=G_m$ and $\MO_2=G_{2m}$. 
Since $\barz \in G$ and $\cup_m G_m = G$,  there exists  $m_0 < \infty$
 such that $\barz \in G_m$ for all $m \geq m_0$.    Then  
$\stopin_1^m = 0$  for $m \geq m_0$. Let $\tilde \stopout_i^{m} \doteq \inf\{t\geq 0:\ Z(t)\notin G_m\}$. It follows that  $\tilde \stopout_i^{m} \leq \tau_i^{m}$ for $m \geq m_0$ and  
\be
\label{stopout-conv}
 \stopouti =  \lim_{m \ra \infty} \stopout_1^m =  \lim_{m \ra \infty} \tilde \stopout_1^m.
\ee
From the discussion preceding \eqref{shift-cov2} it follows that 
$\smart(\cdot \wedge \stopout_1^m)$, $m \in \N$, is a sequence of 
continuous $\Q_{\barz}$-martingales. By \eqref{stopout-conv},  this sequence converges uniformly on
compact intervals to $\smart (\cdot \wedge
\stopouti)$. Since the dispersion matrix $\sigma(\cdot)$ is
locally bounded, then $a_{ij}(\cdot)$ is also locally bounded and hence
$a_{ij}(Z(\cdot))$ is locally integrable. Together with 
\eqref{shift-cov2} and \eqref{stopout-conv}, this shows that
$\Q_z$-almost surely, for every $T < \infty$, 
\[  \lim_{m \ra \infty}  \sup_{t \in [0,T]}\left| [\smart_i, \smart_j] (t
  \wedge 
\stopout_1^m)   -  \int_0^{t \wedge \stopouti} a_{ij} (\zee (u)) \,
du \right| = 0. 
\]
Thus, by \cite[Theorem 2.4; p. 528]{JacShiBook},  $\smart
(\cdot \wedge \stopouti)$ is a continuous $\Q_{\barz}$-martingale with 
\be
\label{shift-cov3}
[\smart_i, \smart_j](t \wedge \stopouti)  =  \int_0^{t  \wedge
  \stopouti} a_{ij} (\zee (u)) du,
\quad t \geq 0. 
\ee

Next, note that by \eqref{q12m},
\[ 
 \brm^m (t \wedge \stopout_1^{m}) = \int_0^{t \wedge \stopout_1^{m}}
\ind_{G_m} (\zee (u)) a^{-1/2} (\zee (u) ) d \smart (u). 
\] Since $\zee (u) \in
G_m$ for $u \in [0, \tilde \stopout_1^{m})$, we have 
\[ 
 \brm^m (t \wedge \tilde \stopout_1^{m}) = \int_0^{t \wedge \tilde \stopout_1^{m}}
\ind_{G_m} (\zee (u)) a^{-1/2} (\zee (u) ) d \smart (u) = \int_0^{t
  \wedge \tilde \stopout_1^{m}} a^{-1/2} (\zee (u) ) d \smart (u). 
\]
For any $t > 0$, using \eqref{shift-cov3} and \eqref{stopout-conv},  it is straightforward to
show that the right-hand side of the last equality converges in 
$\mathbb{L}^2(\Q_{\barz})$ to $\int_0^{t \wedge \stopouti} a^{-1/2} (\zee (u)) d \smart
(u)$.   On the other hand, by Lemma \ref{lem-qvbm} and \eqref{stopout-conv},  the left-hand side, 
$\brm^m (t \wedge \tilde \stopout_1^{m})$, converges 
$\Q_{\barz}$-almost surely  to $\brm ( t \wedge \stopouti)$ along a subsequence.  
This proves 
\[  \brm (t\wedge \stopouti)=\int_0^{t\wedge \stopouti}
  a^{-1/2}(\zee (u)) \, d \shift(u). 
\]
Taking the stochastic integral of the martingales on both sides with
respect to $a^{1/2}$, 
we obtain \eqref{rep-rho}.  
\end{proof}

\section{The Local Reflection Property for $\barz \in \MU$} 
\label{sec:case2}

Throughout this section we fix $\barz \in \MU$. 
The proof of the reflection property \eqref{chiomega} is considerably more involved 
in this case, and is broken down into several steps.  
   First, in Section \ref{subs-prelims} we establish some preliminary
   results on the existence of certain test functions, which is then used
   in Section \ref{subs-locsm} (see Proposition
   \ref{prop-locsm}) to  show that the stopped 
process $\shift(\cdot \wedge \stopr_r)$, for a  suitable stopping time $\stopr_r$ is a
continuous $\Q_{\barz}$-semimartingale, and thus admits a decomposition into a continuous $\Q_{\barz}$-local
martingale $\martin$ and a continuous finite variation process $\fvproc$. 
 (In contrast, note that $\shift$ is typically not  a
semimartingale on any positive time interval when $\barz \in \MV$.)
Then, in Lemma \ref{lem:Urep1} of Section \ref{subsub-submart}, we characterize the behavior of
$A$ and $M$ in the interior of the domain $G$, showing in particular
that $A$ is constant in the interior of the domain and that the
covariation of 
$M_i$ and $M_j$ in the interior of the domain is 
$\int a_{ij}(\zee (s)) \, ds$, as desired. 
  In Section \ref{subs-martbound} we first show in Proposition
\ref{lem:q8}  that the trace of the quadratic
variation process of $M$ vanishes on the boundary
$\partial G$.  The proof of this property is non-trivial due to the
geometry of the domain and directions of reflection.   It uses
properties of a certain family of processes $\pushproc^f$ that are
first established in Section \ref{subs-integrep} and Appendix
\ref{ap-chig}. This property is then used in Lemma \ref{lem:Urep2} to
show that $M(\cdot)$ is equal to the
stochastic integral $\int_0^{\cdot \wedge \stopr_r} a (Z(u)) \,
dW(u)$, which allows us to identify 
$\fvproc(\cdot)$ with $Y(\cdot \wedge \stopr_r)$, the  (stopped) candidate local
time process.  
 Finally, in Section
\ref{subsub-locesp} we show that the increments of $Y(\cdot
\wedge \stopr_r)$ satisfy the
desired reflection property stated in \eqref{eq-locref}.
This again uses properties of the family of processes $\pushproc^f$ 
as well as  geometric properties of the domain  established
in Section \ref{subs-cover}. 

\subsection{Existence of Test Functions}
\label{subs-prelims}

Our proof will  
makes use of certain geometric
properties of the directions of reflection and the existence of certain test
functions, which we  summarize  in Lemmas \ref{lem:cutoff}--\ref{lem:htest2}.

\begin{lemma}\label{lem:cutoff}
Suppose $(G,d(\cdot))$ is a piecewise ${\mathcal C}^2$ domain with 
continuous reflection.  Then, 
for each $y \in  \MU$,  there exist $\alpha_y >0$ 
and  $0< R_y<\dist(y,\MV)$  such that 
\begin{enumerate}
\item[1.] 
$\MI(x)\subseteq \MI(y)$ for all $x\in
B_{R_y}(y)\cap \partial G;$ 
\item[2.]  $\sup_{n \in n(y): |n|=1}\inf_{x\in B_{R_y}(y)\cap \partial G}\inf_{d\in d(x): |d|=1}\lan n, d \ran \geq
\alpha_y$;  
\item[3.] 
 There exist $r_y < R_y$, an increasing, continuous function
$\kappa_y:(0,r_y] \mapsto
(0,\infty)$ that satisfies $\kappa_y < r$ and a collection of
functions  $\{f^{y,r}, r \in (0,r_y]\}$ on $\R^J$  such that 
\begin{enumerate}
\item $-f^{y,r} \in \MH \cap \C^2_c(\bar G)$; 
\item $\supp[f^{y,r}] \cap \bar G \subset B_{r}(y) \cap \bar{G}$;
\item $0 \leq f^{y,r}(x)\leq 1$ for all $x\in \bar{G}$; 
\item $f^{y,r}(x)  =1$ for all $x\in B_{\kappa_y(r)}(y)\cap \bar G$. 
\end{enumerate}
\end{enumerate}
\end{lemma}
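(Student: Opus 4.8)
The plan is to handle parts 1 and 2 by soft local arguments and then to devote the real work to constructing the test functions of part 3, which I expect to be the main obstacle. Since $\MV$ is closed and $y\in\MU=\partial G\setminus\MV$, we have $\dist(y,\MV)>0$. For $i\in\MI\setminus\MI(y)$ the point $y$ lies in the open set $G^i$ (because $y\in\bar G\subseteq\bar{G^i}$ but $y\notin\partial G^i$), so taking $R_y$ below $\dist(y,\MV)$ and below $\dist\bigl(y,(G^i)^c\bigr)$ for every such $i$ gives $\MI(x)\subseteq\MI(y)$ whenever $x\in B_{R_y}(y)\cap\partial G$, which is part 1. For part 2, the definition of $\MU$ yields a unit vector $n_y\in n(y)$ with $\langle n_y,d\rangle>0$ for all $d\in d(y)\setminus\{0\}$, and by compactness of $d(y)\cap S_1(0)$ we get $2\alpha_y\doteq\inf\{\langle n_y,d\rangle:d\in d(y),\,|d|=1\}>0$. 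Were the inequality in part 2 to fail at every scale, one could find $x_k\to y$ in $\partial G$ and unit $d_k\in d(x_k)$ with $\langle n_y,d_k\rangle<\alpha_y$; a subsequential limit $d_\ast$ satisfies $d_\ast\in d(y)\cap S_1(0)$ by closedness of the graph of $d(\cdot)$, giving $\langle n_y,d_\ast\rangle\le\alpha_y<2\alpha_y$, a contradiction. Hence parts 1 and 2 hold after shrinking $R_y$, with this $n_y$ and $\alpha_y$.

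For part 3 the key geometric input is that the $\C^2$ structure of the $G^i$ confines $\bar G$ near $y$ to the region above a paraboloid tangent to $n_y^\perp$: writing $n_y=\sum_{i\in\MI(y)}\mu_i n^i(y)$ with $\mu_i\ge0$ and Taylor expanding each $\varphi^i_y$ (nonnegative on $\bar G$ near $y$, with $\nabla\varphi^i_y(y)$ parallel to $n^i(y)$) produces a constant $C=C(y)<\infty$ with $\langle x-y,n_y\rangle\ge-C|x-y|^2$ for all $x\in\bar G\cap B_{R_y}(y)$. A radial bump centred at $y$ is useless here, since on the interior side of $y$ its gradient points into $G$ and violates the oblique condition in the definition of $\MH$. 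Instead, for $r\in(0,r_y]$ I would centre the bump at the displaced point $\bar y\doteq y-\ve\, n_y$, with $\ve$ \emph{large} relative to $r$, namely $\ve\doteq r/\delta$ where $\delta\doteq\alpha_y/(1+\alpha_y)$, and set $f^{y,r}(x)\doteq\theta(|x-\bar y|^2)$ for a non-increasing $\theta\in\C^\infty(\R)$ with $\theta\equiv1$ on $(-\infty,\ve^2+r^2/8]$ and $\theta\equiv0$ on $[\ve^2+r^2/4,\infty)$; then automatically $f^{y,r}\in\C^\infty_c(\R^J)$, $0\le f^{y,r}\le1$, and $\supp[f^{y,r}]=\bar B_\rho(\bar y)$ with $\rho^2=\ve^2+r^2/4$.

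It then remains to pin down $r_y$ and check (a)--(d). Expanding $|x-\bar y|^2=|x-y|^2+2\ve\langle x-y,n_y\rangle+\ve^2$ and using the paraboloid bound shows that once $r_y$ is small enough that $\ve C=rC/\delta<\tfrac14$ one has $\supp[f^{y,r}]\cap\bar G\subset B_{r/\sqrt2}(y)\subset B_r(y)$, which is (b); (c) is immediate; and a direct computation gives $f^{y,r}\equiv1$ on $B_{\kappa_y(r)}(y)$ with $\kappa_y(r)\doteq\sqrt{\ve^2+r^2/8}-\ve$, a fixed positive multiple of $r$ lying in $(0,r)$, hence increasing, continuous and $<r$ as (d) requires. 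For (a) the one substantive point is $\langle d,\nabla f^{y,r}(x)\rangle\le0$ for $x\in\partial G\cap B_r(y)$, $d\in d(x)$: after shrinking $r_y$ below $1/C$ one has $u\doteq\langle x-y,n_y\rangle>-\ve$ there, and $\nabla f^{y,r}(x)=2\theta'(|x-\bar y|^2)(x-\bar y)$ is, when nonzero, a positive multiple of $\bar y-x=-(u+\ve)n_y-P(x-y)$ with $P$ the orthogonal projection onto $n_y^\perp$; the angle this vector makes with $-n_y$ has tangent $|P(x-y)|/(u+\ve)\le\alpha_y$ (using $|P(x-y)|<r$, $u+\ve>r(1/\delta-Cr)$ and $r_y\le1/C$), and since $\langle n_y,d\rangle\ge\alpha_y|d|$ on $\partial G\cap B_{R_y}(y)$ by part 2, every vector within angle $\arctan\alpha_y$ of $-n_y$ has non-positive inner product with all $d\in d(x)$. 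Finally, shrinking $r_y$ once more so that $\supp[f^{y,r}]\subset B_{\dist(y,\MV)}(y)$ makes $-f^{y,r}$ constant (equal to $0$) in a neighbourhood of $\MV$, so $-f^{y,r}\in\MH\cap\C^2_c(\bar G)$, completing (a). Assembling the finitely many smallness constraints on $r_y$ --- all expressible through $\alpha_y$, $C(y)$, $R_y$ and $\dist(y,\MV)$ --- finishes the proof. The main obstacle throughout is exactly the construction of $f^{y,r}$: one must displace the centre $\bar y$ far enough along $-n_y$ that the gradient on $\partial G\cap B_r(y)$ is steered into the cone of half-angle $\arctan\alpha_y$ about $-n_y$ while still keeping the support inside $B_r(y)$, and it is the $\C^2$ (paraboloid) control of the boundary that makes this balancing act possible.
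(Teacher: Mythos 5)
Your proposal is correct in substance, but for the heart of the lemma (the construction of the test functions in part 3) it takes a genuinely different route from the paper. Parts 1 and 2 are handled in the paper essentially as you do, via upper semicontinuity of $\MI(\cdot)$ and continuity of the normal and reflection fields (your compactness/closed-graph contradiction is an equivalent variant). For part 3, however, the paper builds $f^{y,r}$ out of the \emph{reflection directions} rather than out of a single normal: it forms $K_y=\conv\left[\{-d^i(y),\ i\in\MI(y)\}\right]$, smooths it to a compact convex set $K_{y,\delta_y}$ with $\C^\infty$ boundary, shows that a displaced truncated cone over $K_{y,\delta_y}$ meets $\bar G$ only at $y$, takes a mollified distance function $\ell_y$ to this cone (Lemma 6.2 of \cite{Ram06}) whose gradient has inner product bounded above by $-\theta_y<0$ with a fattening of $K_y$, rescales it to $k_{y,r}$, and composes with a one-dimensional cutoff. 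Your construction instead uses only the single unit normal $n_y$ furnished by $y\in\MU$: a radial bump centred at $y-\ve n_y$ with $\ve\sim r/\alpha_y$, the oblique condition being verified through the paraboloid bound $\langle x-y,n_y\rangle\ge -C|x-y|^2$ supplied by the $\C^2$ structure, which steers $\nabla f^{y,r}$ on $\partial G\cap B_r(y)$ into a cone about $-n_y$ of half-angle at most $\arctan\alpha_y$ (the sharp threshold is $\arctan\bigl(\alpha_y/\sqrt{1-\alpha_y^2}\bigr)$, so your bound is conservative but sufficient). What each buys: your version is more elementary and fully explicit, avoiding the cone-smoothing and the mollified distance function imported from \cite{Ram06}, and it proves exactly what the lemma asserts; the paper's cone-adapted construction additionally yields a uniform strict bound of the type $\langle r\nabla k_{y,r}(x),d^i(y)\rangle\ge\theta_y/2$, which is not needed for the statement but gives stronger, direction-by-direction control. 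Two points you should make explicit when writing this up: (i) before applying the paraboloid bound on $\supp[f^{y,r}]\cap\bar G$ you need the smallness condition that the whole support, which has diameter of order $\ve\sim r/\alpha_y$ rather than $r$, lies in $B_{R_y}(y)$ and inside the neighbourhood ${\mathcal O}_y$ where the Taylor estimate for the $\varphi^i_y$ is valid (e.g.\ $r_y(2/\delta+1)\le R_y$); without this the derivation of property (b) is circular; and (ii) the constant $C$ depends on the coefficients $\mu_i$ in the representation $n_y=\sum_{i\in\MI(y)}\mu_i n^i(y)$, which is harmless but should be recorded among the quantities determining $r_y$.
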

\begin{proof} The proof is deferred to Appendix \ref{ap-test}.
\end{proof}

\begin{lemma} \label{lem:frs}
There exist $0<s<r<\infty$ such that $B_{\kappa_{\barz}(r)}(\barz)\subseteq \MU_{r,s} \doteq \{ x\in \partial G: |x| \leq r, d(x, \MV)
\geq s\}$ and a function $f_{r,s}\in \MH$ such that $\left<\vect,\nabla f_{r,s}(x) \right>\geq
1$ for each $x\in \MU_{r,s}$, $\vect\in d(x)$ and
$|\vect|=1$.
\end{lemma}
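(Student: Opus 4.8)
The plan is to build $f_{r,s}$ from the cut-off functions produced by Lemma~\ref{lem:cutoff} together with a suitably shifted linear function pointing in a distinguished inward normal direction at $\barz$. First I would apply Lemma~\ref{lem:cutoff} at $y=\barz$ to obtain $\alpha_{\barz}>0$, $R_{\barz}<\dist(\barz,\MV)$, $r_{\barz}<R_{\barz}$, the increasing continuous function $\kappa_{\barz}$, and the family $\{f^{\barz,r}\}_{r\in(0,r_{\barz}]}$ with $-f^{\barz,r}\in\MH\cap\C^2_c(\bar G)$. Since $\{n\in n(\barz):|n|=1\}$ is compact and $n\mapsto\inf_{x\in B_{R_{\barz}}(\barz)\cap\partial G}\inf_{d\in d(x),\,|d|=1}\langle n,d\rangle$ is upper semicontinuous, property~2 of that lemma supplies a fixed unit vector $n^*\in n(\barz)$ with $\langle n^*,d\rangle\ge\alpha_{\barz}$ for all $x\in B_{R_{\barz}}(\barz)\cap\partial G$ and all unit $d\in d(x)$.

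Next I would choose the radii. Take $r\in(0,\kappa_{\barz}(r_{\barz}))$, so that $\bar B_r(\barz)\subset B_{\kappa_{\barz}(r_{\barz})}(\barz)$, and then take $s\in(0,r)$ with $s\le\dist(\barz,\MV)-\kappa_{\barz}(r)$, which is positive since $\kappa_{\barz}(r)<r<R_{\barz}<\dist(\barz,\MV)$. The inclusion $B_{\kappa_{\barz}(r)}(\barz)\cap\partial G\subseteq\MU_{r,s}$ then follows from $|x-\barz|<\kappa_{\barz}(r)<r$ and $\dist(x,\MV)\ge\dist(\barz,\MV)-|x-\barz|>\dist(\barz,\MV)-\kappa_{\barz}(r)\ge s$, and (after possibly shrinking $r$) one also has $\MU_{r,s}\subseteq B_{\kappa_{\barz}(r_{\barz})}(\barz)\cap\partial G$, which is all that will be used below. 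With a constant $C>\alpha_{\barz}^{-1}r_{\barz}$ I would then set
\[
 g(x)\doteq\alpha_{\barz}^{-1}\langle n^*,x-\barz\rangle-C,\qquad f_{r,s}\doteq g\cdot f^{\barz,r_{\barz}},
\]
so that $g<0$ on $B_{r_{\barz}}(\barz)$, which contains the support within $\bar G$ of $f^{\barz,r_{\barz}}$. Since $g$ is smooth and $f^{\barz,r_{\barz}}\in\C^2_c(\bar G)$ vanishes on $\bar G\setminus B_{r_{\barz}}(\barz)$, $f_{r,s}\in\C^2_c(\bar G)$, and $f_{r,s}\equiv0$ in a neighborhood of $\MV$ because $B_{r_{\barz}}(\barz)$ is disjoint from $\MV$; in particular $f_{r,s}$ is constant near $\MV$.

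The heart of the proof is checking that $f_{r,s}\in\MH$. For $y\in\partial G$ and $d\in d(y)$, using $\nabla g\equiv\alpha_{\barz}^{-1}n^*$,
\[
 \langle d,\nabla f_{r,s}(y)\rangle=\alpha_{\barz}^{-1}f^{\barz,r_{\barz}}(y)\langle d,n^*\rangle+g(y)\langle d,\nabla f^{\barz,r_{\barz}}(y)\rangle.
\]
The first summand vanishes where $f^{\barz,r_{\barz}}(y)=0$, and where $f^{\barz,r_{\barz}}(y)>0$ one has $y\in B_{r_{\barz}}(\barz)\cap\partial G\subset B_{R_{\barz}}(\barz)\cap\partial G$, whence $\langle d,n^*\rangle\ge0$ by the choice of $n^*$; so this summand is nonnegative. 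The second summand vanishes where $\nabla f^{\barz,r_{\barz}}(y)=0$, and where $\nabla f^{\barz,r_{\barz}}(y)\ne0$ one has $y\in\supp[f^{\barz,r_{\barz}}]\cap\bar G\subset B_{r_{\barz}}(\barz)$, so $g(y)<0$, while $\langle d,\nabla f^{\barz,r_{\barz}}(y)\rangle\le0$ because $-f^{\barz,r_{\barz}}\in\MH$; so this summand is nonnegative too. Hence $f_{r,s}\in\MH$. Finally, for $x\in\MU_{r,s}$ we have $|x-\barz|\le r<\kappa_{\barz}(r_{\barz})$, so $f^{\barz,r_{\barz}}\equiv1$ on a neighborhood of $x$ in $\bar G$, hence $\nabla f^{\barz,r_{\barz}}(x)=0$ and $\nabla f_{r,s}(x)=\alpha_{\barz}^{-1}n^*$; since $x\in B_{R_{\barz}}(\barz)\cap\partial G$, for every unit $v\in d(x)$ we get $\langle v,\nabla f_{r,s}(x)\rangle=\alpha_{\barz}^{-1}\langle v,n^*\rangle\ge1$, as desired.

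The only genuinely delicate step is the construction of $f_{r,s}$: one needs a function that is globally admissible (oblique derivative nonnegative on all of $\partial G$, constant near $\MV$) yet has a nonvanishing gradient on $\MU_{r,s}$ forcing the oblique derivative there to be at least $1$, and the difficulty is that spatially localizing the linear profile $g$ normally spoils the sign of the oblique derivative in the transition region. I expect to circumvent this exactly as above: use $f^{\barz,r_{\barz}}$, whose oblique derivative is already nonpositive by Lemma~\ref{lem:cutoff}, as the cut-off, and shift $g$ down by the large constant $C$ so that $g<0$ throughout the support of $f^{\barz,r_{\barz}}$, which turns the cross term $g\,\langle d,\nabla f^{\barz,r_{\barz}}\rangle$ into a product of a negative and a nonpositive quantity. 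The accompanying choices of $r$ and $s$ are then routine bookkeeping.
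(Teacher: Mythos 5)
Your local construction near $\barz$ (the product of the cut-off $f^{\barz,r_{\barz}}$ with a downward-shifted linear function in the direction $n^*$, and the sign argument showing membership in $\MH$) is sound, but it does not prove the lemma as stated, because you have misread the set $\MU_{r,s}$. In the lemma, $\MU_{r,s}=\{x\in\partial G:\ |x|\leq r,\ d(x,\MV)\geq s\}$, where $|x|\leq r$ is the ball of radius $r$ centered at the \emph{origin}, not at $\barz$: your verification of the inclusion uses $|x-\barz|<r$, and your key reduction step, namely that ``after possibly shrinking $r$'' one has $\MU_{r,s}\subseteq B_{\kappa_{\barz}(r_{\barz})}(\barz)\cap\partial G$, is false in general. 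With the correct reading, the inclusion $B_{\kappa_{\barz}(r)}(\barz)\cap\partial G\subseteq\MU_{r,s}$ forces $r$ to be at least of order $|\barz|$ (incompatible with your choice $r<\kappa_{\barz}(r_{\barz})\leq r_{\barz}$ unless $\barz$ happens to lie near the origin), and $\MU_{r,s}$ then typically contains boundary points far from $\barz$ --- all of $\partial G\cap \bar B_r(0)$ at distance at least $s$ from $\MV$. On that part of $\MU_{r,s}$ your $f_{r,s}$ is identically zero (its support meets $\bar G$ only inside $B_{r_{\barz}}(\barz)$), so $\langle \vect,\nabla f_{r,s}(x)\rangle=0<1$ there, and the conclusion of the lemma fails for your function.

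The missing ingredient is genuinely global: one needs a single function in $\MH$ whose oblique derivative is at least $1$ on the whole compact set $\MU_{r,s}$, which may consist of many faces and points far from $\barz$. The paper obtains this by invoking Theorem 2 of \cite{KanRam14} (the verification of part 2 of Assumption 1 there), which patches local test functions over such a compact boundary set; your argument replaces this by a purely local construction and therefore does not substitute for it. Note also that the later uses of the lemma require the global version: in the proof of Lemma \ref{lem-gtest} one needs $g+Cf_{r,s}\in\MH$ for arbitrary $g\in\C^2_c(\bar G)$, and in Appendix \ref{ap-chig} (Lemma \ref{lem:p}) one needs $\MU_{r,s}$, for suitable $r,s$, to contain the projection of an arbitrary compact subset $K$ of $\MK$, so $r$ must be allowed to be large; a test function with positive oblique derivative only on a small neighborhood of $\barz$ cannot serve these purposes. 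If you want a self-contained proof, you would have to carry out a covering/patching argument over $\MU_{r,s}$ (in the spirit of your local construction combined with Lemma \ref{lem:cover}), which is essentially what the cited theorem does.
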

\proof The existence of $s,r$ follows directly from the property of $B_{\kappa_{\barz}(r)}(\barz)$ and the existence of $f_{r,s}$ follows from Theorem 2 of \cite{KanRam14}. Note that the proof the existence of $f_{r,s}$ in Theorem 2 of \cite{KanRam14} (which is the verification of part 2 of Assumption 1 of \cite{KanRam14}) does not require Assumption 2 therein to hold. \endproof

Recall that $\barz \in \MU$ and define the stopping time 
\be
\label{def-stopr}
\stopr_r  \doteq \inf \left\{ t > 0:  \zee (t) \not \in B_{\kappa_{\barz}(r)} (\barz)
\right\},  \quad r  > 0. 
\ee
From property 3 of the submartingale problem it follows that 
$\mart^h$ is a $\Q_{\barz}$-submartingale for every $h \in {\mathcal H}$. 
The next result identifies a slightly broader class of functions $g$ for which the 
stopped process $\mart^g (\cdot \wedge \theta_r)$ is a
$\Q_{\barz}$-submartingale.

\begin{lemma}[{\bf Localization Lemma}]
\label{lem:htest2}
Let $r \in (0,r_{\barz})$ and let $\stopr_r$ be
defined as in \eqref{def-stopr}.  
Given $g \in {\mathcal C}^2 (\R^J)$ such that $\langle \nabla g
(x), d \rangle \geq 0$ for all $d \in d(x)$ and $x \in B_r({\barz})
\cap \partial G$, there exists a function $h\in \MH$ such that $h(x)-h({\barz})=g(x)-g(\barz)$ for each $x\in B_{\kappa_{\barz}(r)}({\barz})$ and the
process 
$\{\mart^g (t \wedge \stopr_r), t \geq 0\}$ is a continuous $\Q_{\barz}$-submartingale.  
\end{lemma}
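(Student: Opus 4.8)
The plan is to manufacture $h$ from $g$ by a localized modification: multiply $g - g(\barz)$ by a cutoff that equals $1$ near $\barz$ and is supported in $B_r(\barz)$, but correct the resulting loss of the oblique-derivative condition using the test function $f_{r,s}$ from Lemma \ref{lem:frs}. More precisely, fix $r \in (0, r_{\barz})$ and let $f^{\barz, r} \in -\MH \cap \C^2_c(\bar G)$ be the cutoff provided by part 3 of Lemma \ref{lem:cutoff}, so $0 \le f^{\barz,r} \le 1$, $f^{\barz,r} \equiv 1$ on $B_{\kappa_{\barz}(r)}(\barz) \cap \bar G$, and $\supp[f^{\barz,r}] \cap \bar G \subset B_r(\barz) \cap \bar G$. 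First I would set $\tilde{g}(x) \doteq f^{\barz,r}(x)\,(g(x) - g(\barz))$; this is in $\C^2_c(\bar G)$, agrees with $g - g(\barz)$ on $B_{\kappa_{\barz}(r)}(\barz)$, and is supported in $B_r(\barz)$. The issue is that on $\supp[f^{\barz,r}] \cap \partial G$ away from $B_{\kappa_{\barz}(r)}(\barz)$, the gradient $\nabla \tilde g$ has an extra term $(g - g(\barz))\nabla f^{\barz,r}$ whose sign against $d \in d(x)$ we cannot control, so $\tilde g$ need not lie in $\MH$. The remedy is to define $h \doteq \tilde g + C f_{r,s}$ for a suitably large constant $C > 0$, where $f_{r,s}$ and the radii $0 < s < r$ come from Lemma \ref{lem:frs} (chosen, via that lemma, so that $B_{\kappa_{\barz}(r)}(\barz) \subseteq \MU_{r,s}$).

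The key steps, in order: (1) Verify $h \in \MH$. Since $f_{r,s} \in \MH$ and $\tilde g \in \C^2_c(\bar G)$, the only nontrivial point is the oblique-derivative inequality $\langle d, \nabla h(y)\rangle \ge 0$ for $y \in \partial G$, $d \in d(y)$. Outside $B_r(\barz)$ both $\nabla \tilde g$ vanishes and we only need $\langle d, \nabla f_{r,s}\rangle \ge 0$, which holds as $f_{r,s} \in \MH$. On $B_{\kappa_{\barz}(r)}(\barz) \cap \partial G$ we have $\tilde g = g - g(\barz)$ so $\nabla \tilde g = \nabla g$ and the hypothesis $\langle \nabla g(y), d\rangle \ge 0$ (valid since $B_{\kappa_{\barz}(r)}(\barz) \subset B_r(\barz)$) combines with $\langle d, \nabla f_{r,s}\rangle \ge 0$. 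On the intermediate shell $(B_r(\barz) \setminus B_{\kappa_{\barz}(r)}(\barz)) \cap \partial G \subseteq \MU_{r,s}$ (using the containment from Lemma \ref{lem:frs} and shrinking $r$ if needed so the shell stays at distance $\ge s$ from $\MV$), the term $\langle d, \nabla \tilde g(y)\rangle$ is bounded below by $-K|d|$ for $K \doteq \|\nabla \tilde g\|_\infty$, while $\langle d, \nabla (C f_{r,s})(y)\rangle \ge C|d|$ by the defining property of $f_{r,s}$; choosing $C \ge K$ makes the sum nonnegative. Also $h$ is constant (namely $C f_{r,s}$-plus-constant, but in fact one checks $h$ is constant near $\MV$ because $\tilde g$ is supported in $B_r(\barz)$, which is at distance $\ge s$ from $\MV$, and $f_{r,s} \in \MH$ is constant near $\MV$), so $h \in \MH$. (2) The identity $h(x) - h(\barz) = g(x) - g(\barz)$ for $x \in B_{\kappa_{\barz}(r)}(\barz)$: on that ball $\tilde g(x) = g(x) - g(\barz)$ and $f_{r,s}$ is constant (since $B_{\kappa_{\barz}(r)}(\barz) \subseteq \MU_{r,s}$ forces... — more carefully, one picks the radii so $f_{r,s}$ is constant on $B_{\kappa_{\barz}(r)}(\barz)$, or simply notes that the constant $C f_{r,s}$ cancels in the difference if $f_{r,s}$ is constant there); in any case $h(x) - h(\barz) = \tilde g(x) - \tilde g(\barz) + C(f_{r,s}(x) - f_{r,s}(\barz)) = (g(x)-g(\barz)) + C \cdot 0$. (3) The submartingale property of $\mart^g(\cdot \wedge \stopr_r)$: since $h \in \MH$, property 3 of the submartingale problem gives that $\mart^h$ is a $\Q_{\barz}$-submartingale. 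On the stochastic interval $[0, \stopr_r]$ the canonical process $\zee$ stays in $B_{\kappa_{\barz}(r)}(\barz)$, where $h - h(\barz) = g - g(\barz)$ and hence $\nabla h = \nabla g$, $\ML h = \ML g$ pointwise; therefore $\mart^h(t \wedge \stopr_r) = \mart^g(t \wedge \stopr_r)$ for all $t$. Combined with the optional sampling theorem (applied exactly as in the proof of Lemma \ref{lem-mart}, using that $\mart^h$ is a submartingale and $\stopr_r$ an $\{\MM_t\}$-stopping time), $\mart^g(\cdot \wedge \stopr_r)$ is a continuous $\Q_{\barz}$-submartingale.

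The main obstacle is Step (1), specifically the bookkeeping of radii: one must simultaneously arrange $\kappa_{\barz}(r) < r < r_{\barz}$, $B_{\kappa_{\barz}(r)}(\barz) \subseteq \MU_{r,s}$ for some $s > 0$ (so that $f_{r,s}$ is constant on the inner ball, making Step (2) clean), $R_y$-type control so that $\MI(x) \subseteq \MI(\barz)$ and the reflection-cone condition holds throughout $B_r(\barz) \cap \partial G$, and the shell $(B_r(\barz) \setminus B_{\kappa_{\barz}(r)}(\barz)) \cap \partial G$ lying inside $\MU_{r,s}$ so that the $f_{r,s}$-correction is effective there. All of these are available from Lemmas \ref{lem:cutoff} and \ref{lem:frs} after possibly decreasing $r$, but threading them together and verifying the oblique-derivative inequality on the intermediate shell — where neither $\nabla g$ alone nor $\nabla f_{r,s}$ alone suffices and one genuinely needs the large-constant argument $C \ge \|\nabla \tilde g\|_\infty$ — is the crux. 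Everything else is routine: the $\C^2_c$ regularity of $\tilde g$, the pointwise agreement on the inner ball, and the optional-sampling reduction are all standard.
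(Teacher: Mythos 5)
Your Step (1) is fine as far as it goes, but there is a genuine gap at Steps (2)--(3), and it is not just bookkeeping. With $h \doteq f^{\barz,r}\,(g-g(\barz)) + C f_{r,s}$, the required identity $h(x)-h(\barz)=g(x)-g(\barz)$ on $B_{\kappa_{\barz}(r)}(\barz)$ forces $f_{r,s}$ to be constant on that ball, and it cannot be: Lemma \ref{lem:frs} guarantees $\langle \vect,\nabla f_{r,s}(x)\rangle \geq 1$ for all unit $\vect\in d(x)$ at every $x\in \MU_{r,s}\supseteq B_{\kappa_{\barz}(r)}(\barz)\cap\partial G$, and this set contains $\barz\in\MU$ itself, so $\nabla f_{r,s}(\barz)\neq 0$ and $f_{r,s}$ is non-constant in every neighborhood of $\barz$. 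Consequently your $h$ does not satisfy the asserted identity, and the transfer of the submartingale property also breaks: since $\mart^f$ is linear in $f$, one has $\mart^{g}(\cdot\wedge\stopr_r)=\mart^{h}(\cdot\wedge\stopr_r)-C\,\mart^{f_{r,s}}(\cdot\wedge\stopr_r)$ on the inner ball, and subtracting the (genuine, non-martingale) submartingale $C\,\mart^{f_{r,s}}$ from the submartingale $\mart^{h}$ does not yield a submartingale. The additive large-constant trick $g+Cf_{r,s}\in\MH$ is indeed used in this paper, but only in Lemma \ref{lem-gtest} to extend a \emph{martingale} property, which survives subtraction; it cannot deliver the exact localization identity or the submartingale conclusion needed here.

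The paper's proof sidesteps the problematic cross term on the shell without any correction function: it sets $h(x)\doteq\bigl(g(x)-\sup_{y\in B_r(\barz)}g(y)\bigr)f^{\barz,r}(x)$. Then $\nabla h = f^{\barz,r}\nabla g + (g-\sup_{B_r(\barz)}g)\nabla f^{\barz,r}$, and on $B_r(\barz)\cap\partial G$ both terms pair nonnegatively with $d\in d(x)$: the first because $f^{\barz,r}\geq 0$ and $\langle\nabla g,d\rangle\geq 0$, the second because the scalar factor is $\leq 0$ there while $\langle\nabla f^{\barz,r},d\rangle\leq 0$ (as $-f^{\barz,r}\in\MH$); outside $B_r(\barz)$ the gradient vanishes. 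Since $f^{\barz,r}\equiv 1$ on $B_{\kappa_{\barz}(r)}(\barz)$, the identity $h-h(\barz)=g-g(\barz)$ holds there exactly, and $\mart^g(\cdot\wedge\stopr_r)=\mart^h(\cdot\wedge\stopr_r)$ gives the submartingale property via property 3 of the submartingale problem and optional stopping. If you want to salvage your route, you would need a correction that is constant on the inner ball yet has uniformly positive oblique derivative on the shell; none of the quoted lemmas provides one, and the sign choice $g-\sup_{B_r(\barz)}g$ in the multiplicative construction is precisely what makes such a correction unnecessary.
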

\begin{proof}
Let $f = f^{\barz,r}$ be a function that satisfies property 3 of Lemma
\ref{lem:cutoff} and extend the definition of $f$ to all of $\R^J$ by
setting it to be 
zero outside its compact support.  Next, given $g$ as in the statement
of the lemma, define 
\[
h(x) \doteq \left( g(x) -  \sup_{y \in B_r(\barz)} g(y)
  \right) f(x),  \quad x \in \R^J. 
\]
Then, clearly $h \in {\mathcal C}^2 (\bar G)$, with $\supp[h] \subset \supp
[f] \subset B_r (\barz)$,  and  \[ \nabla h(x) =  f(x) \nabla g(x)   + \left(g(x) -
\sup_{y \in B_r (\barz)} g(y)\right) \nabla f(x).\]    The properties of
$g$ stated in the lemma, together with the fact that 
 $f \geq 0$, $\supp[ \nabla f]
 \subset B_r(\barz)$ and  $-f \in \MH$,  imply 
 that $h \in \MH$.    By property 3 of the
  submartingale problem and the optional stopping theorem, it follows
  that $\mart^h$ and $\mart^h (\cdot \wedge \stopr_r)$ are  continuous
  $\Q_{\barz}$-submartingales.   Since $h(x) - h(\barz)   = g(x) - g(\barz)$ and $\ML f(x)=\ML g(x)$ for all $x \in B_{\kappa_{\barz}(r)}
(\barz)$, it follows  that $\mart^g ( \cdot \wedge \stopr_r)$ is equal to $\mart^h (
\cdot \wedge \stopr_r)$ under $\Q_{\barz}$, and is therefore also a continuous 
$\Q_{\barz}$-submartingale.  
\end{proof}

In what follows, for  $\barz \in \MU$, we will fix  $r_{\barz} < \infty$ and  $r \in
(0,r_{\barz})$ as in Lemma \ref{lem:cutoff}, and let $\stopr_r$ be defined as in \eqref{def-stopr}.

\subsection{A Semimartingale Property} 
\label{subs-locsm}

We now show that a suitably stopped version
of $\smart$ is a continuous $\Q_{\barz}$-semimartingale, and 
introduce some auxiliary processes that will be used in the sequel. 
 Recall the definition
of $\shift$ given in \eqref{def-shift}.  

\begin{prop}[{\bf Local Semimartingale Property}]
\label{prop-locsm} 
$\smart (\cdot \wedge
\stopr_r)$ is a continuous $\Q_{\barz}$-semimartingale, that is,  there exist a continuous 
$\Q_{\barz}$-local martingale $\martin$ with $\martin(0)=0$ and  a continuous process $\fvproc$
with $\fvproc(0) = 0$ that is  of locally bounded variation such that 
\be \label{semi} 
\smart(t\wedge \stopr_r) = 
\martin(t\wedge \stopr_r)+\fvproc(t\wedge \stopr_r), \quad t \geq 0. \ee 
Furthermore,  $\zee (\cdot \wedge \stopr_r)$ is also a continuous $\Q_{\barz}$-semimartingale. 
\end{prop}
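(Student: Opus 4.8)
The plan is to derive the local semimartingale property coordinate by coordinate, by exhibiting each $\smart_i(\cdot\wedge\stopr_r)$ as the difference of two continuous $\Q_{\barz}$-submartingales and then invoking the Doob--Meyer decomposition.

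The key ingredient is the function $f_{r,s}$ furnished by Lemma \ref{lem:frs}, which satisfies $\langle v,\nabla f_{r,s}(x)\rangle\geq 1$ for every unit vector $v\in d(x)$ at every $x$ in the relevant portion of $\partial G$; being an element of $\C^2_c(\bar G)\oplus\R$, it extends to a function in $\C^2(\R^J)$. First I would fix $r\in(0,r_{\barz})$ small enough that this lower bound is available on the whole of $B_r(\barz)\cap\partial G$, and recall $\stopr_r$ from \eqref{def-stopr}. For $i=1,\dots,J$, let $\chi_i(x)=x_i$ denote the $i$-th coordinate function and set $g^{\pm}_i\doteq\pm\chi_i+f_{r,s}$. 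Because $d(x)$ is a cone it suffices to test the oblique condition on unit vectors, and for $x\in B_r(\barz)\cap\partial G$ and $v\in d(x)$ with $|v|=1$ one has $\langle\nabla g^{\pm}_i(x),v\rangle=\pm v_i+\langle\nabla f_{r,s}(x),v\rangle\geq-|v_i|+1\geq 0$. Hence $g^{\pm}_i$ meets the hypothesis of the Localization Lemma \ref{lem:htest2}, so $\{\mart^{g^{\pm}_i}(t\wedge\stopr_r),\,t\geq 0\}$ is a continuous $\Q_{\barz}$-submartingale.

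Next, using the linearity of the operator $\ML$ and of the map $g\mapsto\mart^{g}$, together with the fact that $\ML\chi_i=b_i$ and the definition \eqref{def-shift} of $\smart$, which gives $\mart^{\chi_i}=\smart_i$, I would obtain $\mart^{g^{+}_i}=\smart_i+\mart^{f_{r,s}}$ and therefore
\[ \smart_i(t\wedge\stopr_r)=\mart^{g^{+}_i}(t\wedge\stopr_r)-\mart^{f_{r,s}}(t\wedge\stopr_r),\qquad t\geq 0. \]
On the right, the first term is a continuous $\Q_{\barz}$-submartingale by the previous step; the second is one as well, since $f_{r,s}\in\MH$ makes $\mart^{f_{r,s}}$ a $\Q_{\barz}$-submartingale by property 3 of the submartingale problem, and since $f_{r,s}\in\C^2_c(\bar G)\oplus\R$ makes $\mart^{f_{r,s}}$ bounded on bounded time intervals, so that stopping at $\stopr_r$ preserves the submartingale property. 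Each continuous submartingale is, by the Doob--Meyer decomposition, the sum of a continuous local martingale and a continuous increasing process, hence a continuous semimartingale; so their difference $\smart_i(\cdot\wedge\stopr_r)$ is a continuous semimartingale, and letting $i$ range over $1,\dots,J$ shows $\smart(\cdot\wedge\stopr_r)$ is a continuous $\Q_{\barz}$-semimartingale. Since $\smart(0)=0$, the continuous local martingale part $\martin$ and the continuous finite-variation part $\fvproc$ of its (unique) decomposition may be taken to vanish at $0$, which is \eqref{semi}. For the last assertion, \eqref{def-shift} gives $\zee(t\wedge\stopr_r)=\zee(0)+\int_0^{t\wedge\stopr_r}b(\zee(u))\,du+\smart(t\wedge\stopr_r)$; as $b$ is locally bounded and $\zee$ has continuous paths, the drift term is a continuous process of locally bounded variation, so $\zee(\cdot\wedge\stopr_r)$ is the sum of that and the continuous semimartingale $\smart(\cdot\wedge\stopr_r)$, hence is itself a continuous $\Q_{\barz}$-semimartingale.

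Given the test functions assembled in Section \ref{subs-prelims}, the argument is otherwise routine; the one place requiring care is the bookkeeping of radii in the construction of $g^{\pm}_i$, namely choosing $r$ small enough that the uniform oblique-derivative bound of Lemma \ref{lem:frs} holds on all of $B_r(\barz)\cap\partial G$ and not merely on $\MU$ — this is exactly where the local description of $\partial G$ near $\barz\in\MU$ provided by Lemma \ref{lem:cutoff} enters. The only conceptual point is the elementary fact that a difference of continuous submartingales is a continuous semimartingale.
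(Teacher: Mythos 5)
Your overall architecture---push suitable test functions through the Localization Lemma \ref{lem:htest2} to obtain stopped continuous $\Q_{\barz}$-submartingales and then invoke Doob--Meyer---is exactly the paper's; the only real difference is the choice of test functions. The paper uses properties 1 and 2 of Lemma \ref{lem:cutoff} to pick a single vector $n^{\barz}\in n(\barz)$ with $\lan n^{\barz},d\ran\geq\alpha_{\barz}$ for all unit $d\in d(x)$, $x\in B_r(\barz)\cap\partial G$, applies Lemma \ref{lem:htest2} to the linear functions $g^{\ell}(x)=\lan x,n^{\barz}+\tilde\ve e^{\ell}\ran$, and recovers $\shift(\cdot\wedge\stopr_r)$ from the $J$ linearly independent submartingales $\lan \shift(\cdot\wedge\stopr_r),v^{\ell}\ran$; you instead write each $\shift_i(\cdot\wedge\stopr_r)$ as a difference of two stopped submartingales built from $\pm\chi_i+f_{r,s}$. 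The difference-of-submartingales bookkeeping, the identity $\mart^{\chi_i}=\shift_i$, the Doob--Meyer step (the stopped processes are bounded on bounded time intervals, hence of class DL), and the final passage from $\shift(\cdot\wedge\stopr_r)$ to $\zee(\cdot\wedge\stopr_r)$ are all fine.

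The gap is in verifying the hypothesis of Lemma \ref{lem:htest2} for $g^{\pm}_i$. That lemma requires $\lan\nabla g^{\pm}_i(x),d\ran\geq0$ for all $d\in d(x)$ and all $x\in B_r(\barz)\cap\partial G$, i.e.\ on the \emph{larger} ball of radius $r$, whereas Lemma \ref{lem:frs} guarantees $\lan v,\nabla f_{r,s}(x)\ran\geq1$ only on $\MU_{r,s}$, a set known merely to contain $B_{\kappa_{\barz}(r)}(\barz)\cap\partial G$ with $\kappa_{\barz}(r)<r$. On $\left(B_r(\barz)\cap\partial G\right)\setminus\MU_{r,s}$ the only information you have, from $f_{r,s}\in\MH$, is $\lan v,\nabla f_{r,s}(x)\ran\geq0$, which does not dominate $\mp v_i$, so the inequality $\pm v_i+\lan\nabla f_{r,s}(x),v\ran\geq0$ is unjustified there. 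Your proposed remedy (``choose $r$ small enough'') does not address this: shrinking $r$ shrinks both $B_r(\barz)$ and $B_{\kappa_{\barz}(r)}(\barz)$, so the mismatch persists, and in any case the proposition must hold for the $r\in(0,r_{\barz})$ fixed at the end of Section \ref{subs-prelims}, since $\stopr_r$ with that $r$ is used in all subsequent lemmas. Two repairs are available: (i) follow the paper and use Lemma \ref{lem:cutoff}(2), e.g.\ replace $f_{r,s}$ by $x\mapsto C\lan x,n^{\barz}\ran$ with $C\alpha_{\barz}\geq1$, so that $\pm v_i+C\lan n^{\barz},v\ran\geq -1+C\alpha_{\barz}\geq0$ on all of $B_r(\barz)\cap\partial G$---this is essentially the paper's proof rewritten as a difference of submartingales instead of via linear independence of the $v^{\ell}$; or (ii) invoke the construction behind Lemma \ref{lem:frs} (Theorem 2 of \cite{KanRam14}) with parameters chosen so that the region carrying the unit lower bound contains $B_r(\barz)\cap\partial G$, which is possible because $r<R_{\barz}<\dist(\barz,\MV)$ keeps $B_r(\barz)\cap\partial G$ at positive distance from $\MV$---but that is a stronger statement than the one Lemma \ref{lem:frs} literally records, so it would need to be stated and justified explicitly.
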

\begin{proof}
By properties 1 and 2 of Lemma
\ref{lem:cutoff},  there exists $\alpha_{\barz} > 0$ and $n^{{\barz}}\in n(\barz)$ such that
$\lan n^{{\barz}}, d\ran \geq \alpha_{{\barz}}$ for all  $d\in d(x)$  with $|d|=
1$ and 
$x\in B_{r}(\barz) \cap \partial G$.   Let $\{e^{\ell}, \ell=1,\ldots,J\}$ be an orthonormal
basis of $\R^J$ and for $\tilde \varepsilon_{z} > 0$, let 
$\vect^\ell \doteq n^{{\barz}}+\tilde\varepsilon_{z} e^\ell$. 
Then,  we can choose $\tilde \varepsilon_{{\barz}} > 0$ small
enough such that for
each $\ell = 1,\ldots, J$, $\lan \vect^\ell, d\ran >0$ for all non-zero $d\in d(x)$ and $x\in
B_{r}(\barz) \cap \partial G$.   For each $\ell=1,\ldots, J$, define $g^{\ell}$ via 
\be
\label{def-gl}
g^\ell (x) = \langle x, v^\ell\rangle,  \quad  x \in \R^J.  
\nonumber\ee
Then, clearly $g^{\ell} \in \C^2(\R^J)$ and $\lan \nabla g^{\ell} (x),
d\ran \geq 0$ for $d \in d(x)$, $x \in
B_{r}(\barz) \cap \partial G$.  Therefore, by 
Lemma \ref{lem:htest2}, $\mart^{g^{\ell}}(\cdot
\wedge \stopr_r)$ is a continuous 
$\Q_{\barz}$-submartingale for each $\ell = 1, \ldots, J$.   
Now, for each $\ell$, 
 $\mart^{g^{\ell}} (\cdot \wedge \stopr_r)$ is equal to $\langle \shift  (\cdot \wedge \stopr_r),
 \vect_{\ell}\rangle$ and, due to  the local    
boundedness of $b (\cdot)$,  is a bounded continuous $\Q_{\barz}$-submartingale.  
Hence, by the Doob-Meyer decomposition theorem, each $\langle \shift  (\cdot \wedge \stopr_r),
 \vect^\ell\rangle$ is a continuous $\Q_{\barz}$-semimartingale. 
Since  the vectors $\vect^{\ell}, \ell = 1, \ldots, J,$ are linearly
independent,  
$\smart (\cdot \wedge \stopr_r)$ is a continuous $\Q_{\barz}$-semimartingale.  
Let $\martin^r$ and $\fvproc^r$ denote the continuous local martingale and continuous, locally bounded
variation components in the semimartingale decomposition of $\smart(\cdot\wedge \stopr_r)$.  
Since $\stopr_r \leq \stopr_{r_{\barz}}$ for $r \leq r_{\barz}$,  by uniqueness of
the semimartingale decomposition, it follows that $\Q_{\barz}$-almost
surely, $\martin^r (t) = \martin^{r_{\barz}} (t)$ for $t \leq \stopr_r$ and, since
$\smart(\cdot\wedge \stopr_r)$ is stopped at $\stopr_r$,  $\martin^r (t)
= \martin^{r_{\barz}} (\stopr_r)$ for $t \geq \stopr_r$, with an analogous
relation  holding for $\fvproc^r$.  Define  
$\martin (t) \doteq \martin^{r_{\barz}} (t)$ and $\fvproc (t) \doteq
\fvproc^{r_{\barz}}(t)$ for $t \in [0,\infty)$. 
    By uniqueness of the semimartingale decomposition,
\eqref{semi} holds.  
Lastly, since, by
\eqref{def-shift},  the process $\smart - \zee$ has almost surely finite
variation on bounded intervals, 
it follows that $\zee (\cdot \wedge \stopr_r)$ is also a continuous 
$\Q_{\barz}$-semimartingale. 
\end{proof}

As a  consequence of the semimartingale decomposition, we establish a property that will be
used in the subsequent analysis.  To state the property, we  need to introduce some
notation.  
   Let $\martin$ and $\fvproc$ be the 
processes in Proposition \ref{prop-locsm}. 
Then, for $f \in {\mathcal C}^2 (\R^J)$,  define  
\begin{eqnarray}
\nonumber 
\pushproc^f(t) &\doteq & \int_0^t  \langle
\nabla f( \zee (u)), d\fvproc (u)  \rangle + 
\frac{1}{2} \sum_{i,j=1}^J \int_0^t\displaystyle\frac{\partial^2 f}{\partial x_i \partial x_j} 
 (\zee (u))  d[\martin_i, \martin_j] (u)  \\ & & 
\label{def-chig}
- \frac{1}{2}  \sum_{i,j=1}^J   \int_{0}^{t}
a_{ij} (\zee (u))
 \frac{\partial^2 f}{\partial  x_i \partial x_j} (\zee(u))  \, du,
 \qquad t \geq 0, 
\end{eqnarray}
and 
\begin{eqnarray}
\label{chi-mart}
N^f(\cdot \wedge \stopr_r)  & \doteq  &  \mart^f(\cdot \wedge \stopr_r)
 - \pushproc^f(\cdot \wedge \stopr_r)  \\
\nonumber 
& = & f ( \zee (\cdot\wedge \stopr_r)) -f(\zee(0))- \int_0^{\cdot\wedge \stopr_r}
{\mathcal L} f (\zee (u)) \, du - \pushproc^f (\cdot \wedge
\stopr_r), 
\end{eqnarray}
where the latter equality follows  from \eqref{def-martf}. 
We also introduce some localizing stopping times.  For each $c>0$, let
\be \label{zeta} \zeta_c\doteq \left\{t\geq 0:\ |A|(t)\geq c \mbox{ or
  } \sum_{i,j=1}^J |[\martin_i,\martin_j]|(t) \geq c\right\}, \ee
where recall that $|A|$ and $|[\martin_i,\martin_j]|$ denote the total
variation processes associated with  $A$ and $[\martin_i,\martin_j]$,
respectively.  It is clear that 
\be \label{zetabd} \E^{\Q_{\barz}}[|A|(t\wedge \zeta_c)]\leq c \quad
\mbox{ and } \quad \sum_{i,j=1}^J \E^{\Q_{\barz}}[|[\martin_i,\martin_j]|(t\wedge \zeta_c)] \leq c. \nonumber \ee

\begin{lemma}
\label{lem-gtest}  
For 
each $f \in {\mathcal C}^2(\R^J)$, 
 $N^f(\cdot \wedge \stopr_r)$  is  a continuous $\Q_{\barz}$-martingale and for each $c <  \infty$, $N^f(\cdot \wedge \stopr_r \wedge \zeta_c)$ is
uniformly bounded on every finite interval of $[0,\infty)$. 
Furthermore, $\Q_{\barz}$-almost surely, 
for each $g \in {\mathcal C}^2 (\R^J)$ such that $\langle \nabla g
(x), d \rangle \geq 0$ for all $d \in d(x)$ and $x \in B_r(z)
\cap \partial G$, the process $\pushproc^g(\cdot \wedge
\stopr_r)$ is continuous and increasing. 
\end{lemma}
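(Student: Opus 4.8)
The plan is to derive everything from It\^{o}'s formula applied to the semimartingale decomposition supplied by Proposition \ref{prop-locsm}. By \eqref{semi} and \eqref{def-shift}, $\zee(\cdot\wedge\stopr_r)$ is a continuous $\Q_{\barz}$-semimartingale whose local-martingale part is $\martin(\cdot\wedge\stopr_r)$ and whose finite-variation part is $\int_0^{\cdot\wedge\stopr_r}b(\zee(u))\,du+\fvproc(\cdot\wedge\stopr_r)$; moreover, by the definition \eqref{def-stopr} of $\stopr_r$ the path $\zee$ stays in the closed ball $\bar{B}_{\kappa_{\barz}(r)}(\barz)$ on $[0,\stopr_r]$, so $f(\zee)$, $\nabla f(\zee)$, the second derivatives of $f$ along $\zee$, $b(\zee)$ and $a(\zee)$ are all bounded there. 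I would apply It\^{o}'s formula to $f(\zee(t\wedge\stopr_r))$ for $f\in\C^2(\R^J)$ and regroup the $du$-term, the Hessian term and the $a$-term according to the definitions of $\ML$ and of $\pushproc^f$ in \eqref{def-chig}; the drift and finite-variation contributions then cancel against $\int_0^{\cdot\wedge\stopr_r}\ML f(\zee(u))\,du$ and $\pushproc^f(\cdot\wedge\stopr_r)$, so that \eqref{chi-mart} reduces to $N^f(t\wedge\stopr_r)=\sum_{i=1}^{J}\int_0^{t\wedge\stopr_r}\frac{\partial f}{\partial x_i}(\zee(u))\,d\martin_i(u)$ for $t\ge 0$, which is manifestly a continuous $\Q_{\barz}$-local martingale.

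The hard part, I expect, is upgrading this to a genuine $\mathbb{L}^2(\Q_{\barz})$-martingale, which forces one to exploit the specific structure of the decomposition in Proposition \ref{prop-locsm}. It suffices to show $\E^{\Q_{\barz}}[[\martin_i](t\wedge\stopr_r)]<\infty$ for each $t$ and $i$. Recalling from the proof of Proposition \ref{prop-locsm} that, for a basis $\{\vect^\ell\}_{\ell=1}^{J}$ of $\R^J$, each $\mart^{g^\ell}(\cdot\wedge\stopr_r)=\langle\smart(\cdot\wedge\stopr_r),\vect^\ell\rangle$ is a continuous $\Q_{\barz}$-submartingale that is bounded on every finite time interval by a deterministic constant (again since $\zee$ stays in the ball up to $\stopr_r$ and $b$ is locally bounded), hence of class DL, the Doob--Meyer theorem together with uniqueness of the semimartingale decomposition identifies $\langle\fvproc(\cdot\wedge\stopr_r),\vect^\ell\rangle$ as a continuous nondecreasing process, null at $0$, whose value at each fixed $t$ is $\Q_{\barz}$-integrable; expanding in the basis then gives $\E^{\Q_{\barz}}[\,|\fvproc|(t\wedge\stopr_r)\,]<\infty$. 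I would next stop additionally at $\zeta_c$ from \eqref{zeta}: on $[0,\stopr_r\wedge\zeta_c]$ the martingale $\martin_i$ has $[\martin_i]\le c$ and $\fvproc_i$ has total variation $\le c$, so $\martin_i(\cdot\wedge\stopr_r\wedge\zeta_c)$ is a true $\mathbb{L}^2$-martingale, and It\^{o}'s formula applied to $\smart_i^2$ (using $\smart_i(0)=0$ and the integrability just obtained) yields $\E^{\Q_{\barz}}[[\martin_i](t\wedge\stopr_r\wedge\zeta_c)]\le C_t$ with $C_t$ independent of $c$; letting $c\to\infty$, so that $\zeta_c\uparrow\infty$ $\Q_{\barz}$-a.s., and invoking monotone convergence gives $\E^{\Q_{\barz}}[[\martin_i](t\wedge\stopr_r)]\le C_t<\infty$. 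Since $\nabla f(\zee)$ is bounded on $[0,\stopr_r]$, $[N^f](t\wedge\stopr_r)$ is dominated by a constant multiple of $\sum_i[\martin_i](t\wedge\stopr_r)$ and is therefore integrable, so the continuous local martingale $N^f(\cdot\wedge\stopr_r)$ is in fact a continuous $\mathbb{L}^2(\Q_{\barz})$-martingale; and stopping once more at $\zeta_c$ bounds $[N^f](t\wedge\stopr_r\wedge\zeta_c)$ by a constant multiple of $c$, whence $\sup_{t\ge0}\E^{\Q_{\barz}}[(N^f(t\wedge\stopr_r\wedge\zeta_c))^2]<\infty$, which is the asserted uniform boundedness on finite intervals.

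For the final assertion I would fix $g\in\C^2(\R^J)$ with $\langle\nabla g(x),d\rangle\ge 0$ for all $d\in d(x)$ and $x\in B_r(\barz)\cap\partial G$, and invoke the Localization Lemma (Lemma \ref{lem:htest2}) to get that $\mart^g(\cdot\wedge\stopr_r)$ is a continuous $\Q_{\barz}$-submartingale. By \eqref{chi-mart} it equals $N^g(\cdot\wedge\stopr_r)+\pushproc^g(\cdot\wedge\stopr_r)$, where $N^g(\cdot\wedge\stopr_r)$ is the continuous $\mathbb{L}^2(\Q_{\barz})$-martingale produced above and $\pushproc^g(\cdot\wedge\stopr_r)$, by \eqref{def-chig}, is a continuous process, null at $0$, of finite variation on bounded intervals, being a sum of Lebesgue--Stieltjes integrals of continuous, bounded, adapted integrands against $d\fvproc$, the $d[\martin_i,\martin_j]$ and $du$. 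Hence $\pushproc^g(\cdot\wedge\stopr_r)=\mart^g(\cdot\wedge\stopr_r)-N^g(\cdot\wedge\stopr_r)$ is simultaneously a continuous $\Q_{\barz}$-submartingale and a continuous finite-variation process, and such a process is necessarily nondecreasing: being a continuous semimartingale of finite variation its continuous local-martingale part vanishes (a continuous local martingale of finite variation is constant), so the increasing predictable part in its Doob--Meyer decomposition must coincide with the process itself. Thus $\pushproc^g(\cdot\wedge\stopr_r)$ is continuous and increasing. To secure a single $\Q_{\barz}$-null exceptional set valid simultaneously for all admissible $g$, I would note from \eqref{def-chig} that $\pushproc^g(\cdot\wedge\stopr_r)$ depends on $g$ only through, and continuously on, the restriction of $g$ and its first two derivatives to the compact ball $\bar{B}_{\kappa_{\barz}(r)}(\barz)$; fixing a countable subset of the admissible class that is dense in $\C^2(\bar{B}_{\kappa_{\barz}(r)}(\barz))$, discarding the corresponding countable union of null sets, and approximating an arbitrary admissible $g$ by a sequence from this subset exhibits $\pushproc^g(\cdot\wedge\stopr_r)$ as an a.s.\ locally uniform limit of continuous increasing processes, hence continuous and increasing off the fixed null set. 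Besides the $\mathbb{L}^2$-martingale step flagged above, this last bookkeeping for the uncountable family of test functions is the only other point requiring care.
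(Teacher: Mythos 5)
Your proposal is essentially correct, and on the key point it takes a genuinely different route from the paper. The paper gets both conclusions for $f\in\MH$ in one stroke: it identifies the right-hand side of \eqref{chig-equality} as the Doob--Meyer decomposition of the class-DL submartingale $\mart^f(\cdot\wedge\stopr_r)$, which simultaneously yields that $\pushproc^f(\cdot\wedge\stopr_r)$ is increasing and that $N^f(\cdot\wedge\stopr_r)=\int_0^{\cdot\wedge\stopr_r}\langle\nabla f(\zee(u)),d\martin(u)\rangle$ is a true martingale; it then extends the martingale property to all $f\in\C^2$ by adding a large multiple of the function $f_{r,s}$ of Lemma \ref{lem:frs} and using that $\zee(\cdot\wedge\stopr_r)$ stays in $\bar B_{\kappa_{\barz}(r)}(\barz)$. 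You instead prove $\E^{\Q_{\barz}}\bigl[[\martin_i](t\wedge\stopr_r)\bigr]<\infty$ directly (Doob--Meyer applied to the bounded submartingales $\langle\smart(\cdot\wedge\stopr_r),\vect^\ell\rangle$ to get $\E[|\fvproc|(t\wedge\stopr_r)]<\infty$, then It\^{o} on $\smart_i^2$ stopped at $\zeta_c$ and monotone convergence), which makes $N^f(\cdot\wedge\stopr_r)$ an $\mathbb{L}^2$-martingale at once for every $f\in\C^2(\R^J)$, with no detour through $\MH$; the increasing property of $\pushproc^g$ you then recover from Lemma \ref{lem:htest2} plus the fact that a continuous finite-variation submartingale is nondecreasing. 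This is a valid and somewhat more self-contained argument (it buys the martingale property for all $f$ without the $f_{r,s}$-shift), at the price of the quadratic-variation moment estimate; your closing separability/approximation bookkeeping for the uncountable family of admissible $g$ is in substance the same as the paper's argument with $\MH_0$ and \eqref{fgk}.

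The one genuine defect is your treatment of the boundedness assertion. You prove only $\sup_{t}\E^{\Q_{\barz}}\bigl[(N^f(t\wedge\stopr_r\wedge\zeta_c))^2\bigr]<\infty$ and declare this to be ``the asserted uniform boundedness,'' but the lemma claims a pathwise bound: $\sup_{t\in[0,T]}|N^f(t\wedge\stopr_r\wedge\zeta_c)|$ is dominated by a deterministic constant depending only on $f$, $T$, $c$ and $r$, as in the paper's estimate \eqref{Nf}. This stronger statement is what is actually used later (it is the hypothesis invoked in Lemma \ref{lem-alpha} to apply Lemma 2.1 of Stroock--Varadhan, and the same estimate drives the approximation step for the last assertion), so an $\mathbb{L}^2$ bound does not suffice; note also that your stochastic-integral representation of $N^f$ cannot give a sup bound by itself. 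The gap is easily closed: go back to $N^f=\mart^f-\pushproc^f$ from \eqref{chi-mart}, use that $\zee(\cdot\wedge\stopr_r)$ is confined to $\bar B_{\kappa_{\barz}(r)}(\barz)$ to bound $\mart^f$, and use the definitions \eqref{def-chig} and \eqref{zeta} (which cap $|\fvproc|$ and the brackets of $\martin$ by $c$ up to $\zeta_c$) to bound $\pushproc^f$; this is exactly the paper's computation, and you should state it rather than the $\mathbb{L}^2$ substitute.
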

\proof  
First observe that 
 from (\ref{chi-mart}),  (\ref{def-chig}), \eqref{def-stopr}  and
\eqref{zeta} it is clear that for each $f\in
\C^2(\bar G)$, $c>0$ and $T <  \infty$, \begin{eqnarray} \label{Nf} &
  & \sup_{t\in [0,T]} |N^f(t \wedge \stopr_r\wedge \zeta_c)| \\ &\leq
  & 2 \sup_{x\in \bar{B}_r({\barz})}|f(x)| + T \sup_{x\in \bar{B}_r(\barz)} |\ML
  f(x)| + c \sup_{x\in \bar{B}_r({\barz})} |\nabla f(x)| \nonumber \\ & & +
  \frac{c}{2} \sup_{x\in \bar{B}_r({\barz})}
  \left|\sum_{i,j=1}^J\frac{\partial^2 f}{\partial x_i \partial x_j}
    (x)\right| + \frac{T}{2} \sup_{x\in \bar{B}_r({\barz})}
  \left|\sum_{i,j=1}^J a_{ij}(x)\frac{\partial^2 f}{\partial
      x_i \partial x_j} (x)\right|. \nonumber \end{eqnarray} 
Thus, $N^f(\cdot \wedge \stopr_r\wedge \zeta_c)$ is uniformly bounded
on every finite interval of $[0,\infty)$.  

By  \eqref{def-shift} and Proposition \ref{prop-locsm}, with $M$ and $A$ as in
\eqref{semi}, it follows that $\zee (\cdot \wedge \stopr_r)$  
is a continuous $\Q_{\barz}$-semimartingale with local martingale component
$M(\cdot\wedge \stopr_r)$ and locally finite variation component $A(\cdot\wedge \stopr_r) + \int_0^{\cdot\wedge \stopr_r} b
(Z(u)) \, du$.     Then, for $f \in {\mathcal
  C}^2 (\bar{G})$, 
 It\^{o}'s formula  shows that for $t \geq 0$, 
\begin{eqnarray}
\nonumber
f(\zee (t \wedge \stopr_r)) & = &  f(\zee (0)) + \int_0^{t \wedge \stopr_r}  \langle
\nabla f( \zee (u)), b (\zee (u))  \rangle \, du  + \int_0^{t \wedge \stopr_r} \langle
\nabla f( \zee (u)), d\martin (u)  \rangle \\
\label{form-ito} 
& & + \int_0^{t \wedge \stopr_r}  \langle
\nabla f( \zee (u)),  d \fvproc(u)  \rangle 
+ \frac{1}{2}  \sum_{i,j=1}^J \int_0^{t \wedge \stopr_r}
\frac{\partial^2 g}{\partial x_i \partial x_j} (\zee (u)) 
d [\martin_i, \martin_j ] (u). 
\end{eqnarray}
Substituting the definitions 
of $\mart^f$ and $\pushproc^f$ from \eqref{def-martf} and
\eqref{def-chig}, respectively, into \eqref{form-ito}, we obtain 
\begin{eqnarray}
\label{chig-equality}
 \mart^f (\cdot \wedge \stopr_r ) & = & 
\pushproc^f (\cdot \wedge \stopr_r) + \int_0^{t \wedge \stopr_r}
\langle \nabla f (\zee (u)), d \martin_u \rangle. 
\end{eqnarray}
If $f \in \MH$, then  property 3 of the submartingale problem,  
the optional stopping theorem, the
definition of $\stopr_r$ and the uniform 
boundedness of $f$ and ${\mathcal L} f$ 
imply that $\mart^f(\cdot \wedge \stopr_r)$ is a  continuous $\Q_{\barz}$-submartingale
 of class DL. 
 Thus, by the Doob-Meyer 
decomposition theorem,   $\mart^f(\cdot \wedge \stopr_r)$ admits a (unique) decomposition into a continuous $\Q_{\barz}$-martingale
and a continuous increasing process.   Now, the stochastic integral    $\int_{0}^{\cdot \wedge \stopr_r} \langle
\nabla f (\zee (u)), d \martin (u) \rangle$ on the right-hand side of
\eqref{chig-equality} is a continuous $\Q_{\barz}$-local martingale, and from 
definition \eqref{def-chig}  it is clear that
$\pushproc^f (\cdot\wedge \stopr_r)$ is  a continuous finite-variation process. 
Thus, the right-hand side of \eqref{chig-equality}  is in fact 
the Doob-Meyer decomposition of $\mart^f(\cdot \wedge \stopr_r)$.  
In particular, this implies that $\Q_{\barz}$-almost surely, $\pushproc^f (\cdot\wedge \stopr_r)$ is
a continuous increasing process and 
$N^f(\cdot \wedge \stopr_r)   = \int_0^{t \wedge \stopr_r} \langle \nabla f (\zee (u)), d \martin (u) \rangle$ is a continuous $\Q_{\barz}$-martingale.    
We now show that this property holds for all $f \in {\mathcal C}_c^2
(\bar{G})$, and not just $f \in \MH$. 
Let $f_{r,s}$ be the function in Lemma \ref{lem:frs}.  Therefore, for each $g\in \C_c^2(\bar G)$, there exists
a constant $C>0$ such that $g+C f_{r,s} \in \MH$.   Then, since $N^f(\cdot \wedge \stopr_r\wedge \zeta_c)$
is a continuous $\Q_{\barz}$-martingale for 
 both $f = f_{r,s}$ and $f = g+C f_{r,s}$, it also a continuous $\Q_{\barz}$-martingale for $f
 = g\in \C_c^2(\bar G)$. 
Since the process $\zee (\cdot \wedge \theta_r)$ lives in
$B_{\kappa_{\barz}(r)}(\barz)$, then for any function $f\in \C^2(\bar G)$, there
exists a function $g\in \C^2_c(\bar G)$ such that $f=g$ on
$B_{\kappa_{\barz}(r)}(\barz)$. Thus, $N^f(\cdot \wedge \stopr_r)$ is  a continuous
$\Q_{\barz}$-martingale for each $f\in \C^2(\bar G)$.

It only remains to  establish the last assertion of the lemma. 
Let $\MH_0$ be the countable dense subset of $\MH$ mentioned in Remark
\ref{rem-MH}. Since $\MH_0$ is
countable, the continuity and the monotonicity of $\pushproc^f
(\cdot\wedge \stopr_r)$ hold $\Q_{\barz}$-almost surely (simultaneously) for
all $f \in\MH_0$.   Now, note that  for each $T>0$, $\pushproc^{f_n} -
\pushproc^f = \pushproc^{f-f_n}$ and  $\sup_{t\in [0,T]}
|\pushproc^{f-f_n} (t \wedge \stopr_r \wedge \zeta_c)|$,
  is bounded above by (the sum of the last four terms on the) right-hand
side of (\ref{Nf}) with $f_n-f$ in place of $f$.  
Since any $f \in \MH$ can be
approximated by  a sequence $\{f_n\}$ in $\MH_0$ in the strong sense made precise 
in \eqref{fgk}, this implies that as $n\rightarrow \infty$, 
$\sup_{t\in [0,T]} |\pushproc^{f_n} (t \wedge \stopr_r\wedge
\zeta_c)-\pushproc^f (t \wedge \stopr_r\wedge \zeta_c)|$  converges to
$0$ both pointwise (that is, for each $\omega\in \ccspace$) and in
$L^1(\Q_{\barz})$.  It follows that $\pushproc^f (\cdot \wedge
\stopr_r\wedge \zeta_c)$ and hence $\pushproc^f (\cdot \wedge
\stopr_r)$ is continuous and increasing on $[0,T]$. Since $T$ is
arbitrary, the desired property holds $\Q_{\barz}$-almost surely for
all $f \in \MH$. For each $g \in {\mathcal C}^2 (\R^J)$ such that $\langle \nabla g
(x), d \rangle \geq 0$ for all $d \in d(x)$ and $x \in B_r({\barz})
\cap \partial G$, by Lemma \ref{lem:htest2}, there exists a function
$f\in \MH$ such that $f(x)-f(z)=g(x)-g(z)$ on $B_{\kappa_{\barz}(r)}({\barz})$.  Because the process $\zee (\cdot \wedge \theta_r)$ lives in
$B_{\kappa_{\barz}(r)}({\barz})$, it follows that $\pushproc^g (\cdot \wedge
\stopr_r)=\pushproc^f (\cdot \wedge \stopr_r)$ and hence, $\pushproc^g
(\cdot \wedge \stopr_r)$ is continuous and increasing. 
This completes the
proof of the lemma. 
\endproof

\subsection{Behavior of the Semimartingale in the Interior of the Domain}
\label{subsub-submart}

We now characterize the behavior of the components $\fvproc(\cdot \wedge \stopr_r)$ and $\martin(\cdot \wedge \stopr_r)$
of the semimartingale decomposition \eqref{semi}  of $\smart (\cdot
\wedge \theta_r)$  in $G$.  
As in Section \ref{sec:BM}, let $\{G_m,\ m\in \N\}$ be a  
sequence of bounded domains with $\bar{G}_m \subset G_{\tilde{m}}$ for
$m < \tilde{m}$ and  $\cup_{m \in \mathbb N} G_m = G$.   
For each $\omega \in \MC [0,\infty)$ 
and fixed $m \in \mathbb N$, set $\stopoutt_0^m (\omega) \doteq 0$ and
for
 $k \in \N$,  recursively define
\begin{eqnarray}
\label{def-stopin2}
\stopin_k^m = \stopin_{k}^m (\omega) & \doteq & \inf\{t\geq
\stopoutt_{k-1}^m: \zee (t)\in
 \bar{G}_m\} ,\\
\label{def-stopout2}
\stopoutt_k^m = \stopoutt_{k}^m (\omega) & \doteq & \inf \{ t \geq
\stopin_k^m:  \zee (t) \in
\partial G\}.
\end{eqnarray}

\begin{lemma}\label{lem:Urep1}
Let $\martin(\cdot \wedge \stopr_r)$ and $\fvproc(\cdot \wedge \stopr_r)$, respectively, be the continuous $\Q_{\barz}$-local martingale and continuous bounded variation
processes that arise in the local semimartingale decomposition of
$\smart(\cdot \wedge \stopr_r)$ given 
in (\ref{semi}).   Then $\Q_{\barz}$-almost surely, for every $k, m \in \N$, 
\begin{eqnarray} \label{disU}
\martin(t\wedge \stopoutt_k^m\wedge \stopr_r)-\martin(t\wedge
\stopin_k^m\wedge \stopr_r)  =\int_{t \wedge
\stopin_k^m \wedge \stopr_r}^{t \wedge
\stopoutt_k^m \wedge \stopr_r}
a^{1/2} (\zee (u)) d \brm (u), \ t \geq 0, 
\end{eqnarray}
Moreover, $\Q_{\barz}$-almost surely, for every $t \geq 0$, 
\be \label{bdjump}\int_0^{t \wedge \stopr_r} \ind_G( \zee (u))
d|\fvproc|(u) = 0,
\ee
and 
\be
\label{bdjump2}
 \int_0^{t \wedge \stopr_r} \ind_G( \zee (u))
\,d[\martin_i,\martin_j](u) = \int_{0}^{t \wedge \stopr_r} \ind_G(\zee (u))a_{ij}(
\zee (u))du.
 \ee
Moreover, $\Q_{\barz}$-almost surely, for each $f \in \C^2(\R^J)$, the process $\pushproc^f$ defined  in
\eqref{def-chig} satisfies 
\be
\label{chig} 
\int_0^{t \wedge \stopr_r} \ind_G (Z(u)) d \pushproc^f (u) = 0,  \quad
\forall t \geq 0. 
\ee
\end{lemma}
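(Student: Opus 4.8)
The plan is to read off all four identities from the local semimartingale decomposition $\smart(\cdot\wedge\stopr_r)=\martin(\cdot\wedge\stopr_r)+\fvproc(\cdot\wedge\stopr_r)$ of Proposition \ref{prop-locsm}, the It\^o identity \eqref{chig-equality}, and the monotonicity statement in Lemma \ref{lem-gtest}. The basic remark is that if $f\in\C_c^2(G)$ then $f$ and all of its derivatives vanish near $\partial G$, so both $f$ and $-f$ lie in $\MH$ and both satisfy the oblique derivative condition of Lemma \ref{lem-gtest}; consequently $\pushproc^f(\cdot\wedge\stopr_r)$ and $\pushproc^{-f}(\cdot\wedge\stopr_r)=-\pushproc^f(\cdot\wedge\stopr_r)$ are both continuous, increasing and start at $0$, which forces $\pushproc^f(\cdot\wedge\stopr_r)\equiv 0$ for every $f\in\C_c^2(G)$, and this vanishing holds $\Q_{\barz}$-almost surely simultaneously for all such $f$ off a single null set (by the last assertion of Lemma \ref{lem-gtest}, whose exceptional set is already uniform via the $\MH_0$-approximation of Remark \ref{rem-MH}). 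Localizing this identity along the nested bounded subdomains $G_m$ with $\bar G_m\subset G_{m+1}$ and $\cup_m G_m=G$ will then deliver \eqref{chig}, \eqref{bdjump}, \eqref{bdjump2}, and feed into \eqref{disU}.

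Concretely, since the integrand defining $\pushproc^f$ in \eqref{def-chig} at time $u$ depends only on $\nabla f$ and the Hessian of $f$ evaluated at $\zee(u)$, for any $f\in\C^2(\R^J)$ and any $m$ one may choose $f_m\in\C_c^2(G)$ coinciding with $f$ on an open set containing $\bar G_m$, so that, pathwise, $\ind_{G_m}(\zee(u))\,d\pushproc^f(u)=\ind_{G_m}(\zee(u))\,d\pushproc^{f_m}(u)=0$ on $[0,\stopr_r]$; letting $m\to\infty$, so that $\ind_{G_m}\uparrow\ind_G$, gives \eqref{chig}. Taking instead $f_m\in\C_c^2(G)$ with $f_m(x)=x_i$ on a neighborhood of $\bar G_m$, the vanishing $\pushproc^{f_m}(\cdot\wedge\stopr_r)\equiv 0$ reduces on $\{\zee(u)\in G_m\}$ to the statement that the signed measure $\ind_{G_m}(\zee(u))\,d\fvproc_i(u)$ is zero on $[0,\stopr_r]$, hence $\ind_{G_m}(\zee(u))\,d|\fvproc_i|(u)=0$ for each $i$ (a signed measure that vanishes on a set has vanishing total variation there), hence $\ind_{G_m}(\zee(u))\,d|\fvproc|(u)=0$, and $m\to\infty$ yields \eqref{bdjump}. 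Taking $f_m\in\C_c^2(G)$ with $f_m(x)=x_ix_j$ on a neighborhood of $\bar G_m$ and using the Hessian of $x_ix_j$ together with the symmetry of $a$, the vanishing of $\pushproc^{f_m}(\cdot\wedge\stopr_r)$ becomes, on $\{\zee(u)\in G_m\}$ and after discarding the first-order terms via \eqref{bdjump}, the identity $\ind_{G_m}(\zee(u))\big(d[\martin_i,\martin_j](u)-a_{ij}(\zee(u))\,du\big)=0$; letting $m\to\infty$ gives \eqref{bdjump2}. In all three cases the $\Q_{\barz}$-null set is made simultaneous in $t$ by path-continuity, in $m$ and the coordinate indices by countability, and the passages to the limit in $m$ are justified after first localizing by the stopping times $\zeta_c$ of \eqref{zeta} to control total variations and then letting $c\to\infty$.

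The remaining identity \eqref{disU} is the \emph{main obstacle}: unlike the other three it is not purely interior and must be reconciled with the limiting construction of the Brownian motion $\brm$ in Lemma \ref{lem-qvbm} (indeed \eqref{disU} may be viewed as the ``restarted at $\stopin_k^m$'' analogue of Lemma \ref{lem:sderep}). The first step is to identify $\brm(\cdot\wedge\stopr_r)$ with a stochastic integral against $\martin$: recalling from \eqref{q12m} that the pre-limit martingales $\brm^m$ are sums of integrals of $\ind_{G_m}(\zee)a^{-1/2}(\zee)$ against $\smart$ over the successive excursions of $\zee$ into $\bar G_m$, one stops at $\stopr_r$, substitutes $\smart(\cdot\wedge\stopr_r)=\martin+\fvproc$, drops the $d\fvproc$-part via \eqref{bdjump} (since $\ind_{G_m}\le\ind_G$), and observes that those excursion intervals carry the indicator $\ind_{G_m}(\zee(u))$ for a.e.\ $u$, whence $\brm^m(t\wedge\stopr_r)=\int_0^{t\wedge\stopr_r}\ind_{G_m}(\zee(u))a^{-1/2}(\zee(u))\,d\martin(u)$. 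Passing to the limit along the $\Q_{\barz}$-a.s.\ convergent subsequence $\{m_n\}$ of Lemma \ref{lem-qvbm} and using dominated convergence for stochastic integrals (the integrands are bounded by $\bar a^{-1/2}$ by uniform ellipticity and converge pointwise to $\ind_G(\zee)a^{-1/2}(\zee)$) yields $\brm(t\wedge\stopr_r)=\int_0^{t\wedge\stopr_r}\ind_G(\zee(u))a^{-1/2}(\zee(u))\,d\martin(u)$, that is, $a^{1/2}(\zee(u))\,d\brm(u)=\ind_G(\zee(u))\,d\martin(u)$ on $[0,\stopr_r]$. Finally, by the definition \eqref{def-stopout2} of $\stopoutt_k^m$ as the first hitting time of $\partial G$ after $\stopin_k^m$, the path $\zee$ stays in the open set $G$ on $[\stopin_k^m\wedge\stopr_r,\stopoutt_k^m\wedge\stopr_r)$, so $\ind_G(\zee(u))=1$ there; integrating the last identity over this interval (the single endpoint time $\stopoutt_k^m$ contributing nothing to any continuous integrator) gives exactly \eqref{disU}. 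The two delicate points in this last step are the bookkeeping between the two identically named families of stopping times from Sections \ref{sec:BM} and \ref{subsub-submart} and the justification of the limit--integral interchange; everything else is routine localization.
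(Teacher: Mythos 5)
Your argument is correct, but it takes a genuinely different route from the paper's proof. The paper proves the lemma by conditioning at the entrance times $\stopin_k^m$: Lemma \ref{lem:submartperv} shows that the restarted path solves the submartingale problem from a point of $\bar G_m\subset G$, so the interior representation of Lemma \ref{lem:sderep} applies and delivers \eqref{disU} and the constancy of $Y$ (hence, by uniqueness of the decomposition \eqref{semi}, of $\fvproc$) on each excursion $[\stopin_k^m,\stopoutt_k^m]$ in one stroke; \eqref{bdjump} and \eqref{bdjump2} then follow by summing over excursions and letting $m\to\infty$, and \eqref{chig} is read off from \eqref{def-chig}. You reverse the logic: you first obtain the interior identities by a pure test-function argument --- for $f\in\C_c^2(G)$ both $\pm f$ fall under the last assertion of Lemma \ref{lem-gtest}, so $\pushproc^{f}(\cdot\wedge\stopr_r)\equiv0$, and since the integrands in \eqref{def-chig} depend only on $\nabla f$ and $D^2f$ evaluated at $\zee(u)$, localizing $f=x_i$, $f=x_ix_j$ and a general $f\in\C^2(\R^J)$ along the $G_m$ yields \eqref{bdjump}, \eqref{bdjump2} and \eqref{chig} off the single null set furnished by the uniformity in Lemma \ref{lem-gtest} --- and you then get \eqref{disU} without any restart argument by unwinding \eqref{q12m}: stopping at $\stopr_r$, discarding the $d\fvproc$ part via \eqref{bdjump}, and passing to the limit of Lemma \ref{lem-qvbm} to identify $\brm(\cdot\wedge\stopr_r)=\int_0^{\cdot\wedge\stopr_r}\ind_G(\zee(u))a^{-1/2}(\zee(u))\,d\martin(u)$, whence \eqref{disU} follows because $\ind_G(\zee(u))=1$ on each excursion interval except possibly at its right endpoint, which the continuous integrator does not charge. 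The two points you flag are real but standard and fixable in a line each: the sums in \eqref{q12m}, stopped at $\stopr_r$, coincide with semimartingale integrals against $\martin+\fvproc$ because $\{u:\zee(u)\in G_m\}\subset\cup_k[\stopin_k^m,\stopout_k^m]$ and stochastic integration is consistent under stopping, and the limit--integral interchange follows from bounded pointwise convergence of the integrands together with the pathwise vanishing of the quadratic variation of the difference; there is no circularity since your \eqref{bdjump} is obtained before \eqref{disU}. What each route buys: the paper recycles Lemma \ref{lem:sderep} and avoids these stochastic-integral manipulations at the cost of the regular-conditional-probability machinery, while your route is self-contained stochastic calculus, needs no conditioning, and in fact proves the stronger representation of $\brm(\cdot\wedge\stopr_r)$ as an integral against $\martin$, anticipating part of Lemma \ref{lem:Urep2}.
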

\begin{proof}
For $k, m \in \N$, 
  let $\{\Q_{\omega'}^{k,m}, \omega' \in  \ccspace\}$  be the regular conditional probability of $\Q_{\barz}$ 
given $\MM_{\stopin_k^m}$.    Then 
by Lemma \ref{lem:submartperv} there exists a set $\MN^{k,m} \in \MM_{\stopin_k^m}$ such that 
$\Q_{\barz} (\MN^{k,m}) = 0$ and for all $\omega \not \in
\MN^{k,m}$ with $\stopin_k^m (\omega')<\infty$, $\tau_k^m (\omega) = \tau_1 (\omega (\stopin_k^m (\omega')
+ \cdot))$ and so, by Lemma \ref{lem:sderep},
 for each $t \geq 0$, 
\[   \shift ( t\wedge \stopoutt_k^m \wedge \stopr_r) - \shift (t \wedge
\stopin_k^m \wedge \stopr_r) 
= \int_{t \wedge
\stopin_k^m \wedge \stopr_r}^{t \wedge
\stopoutt_k^m \wedge \stopr_r}
a^{1/2} (\zee (u)) d \brm (u), 
\]
and $Y ( t \wedge
\stopoutt_k^m \wedge \stopr_r) - Y ( t \wedge \stopin_k^m \wedge
\stopr_r) = 0$.   Comparing this with \eqref{semi}, 
we have \eqref{disU} 
and $\fvproc (t \wedge \stopr_r) - \fvproc(\stopin_k^m \wedge \stopr_r)= 0$ for  $t \in
[\stopin_k^m, \stopoutt_k^m]$. 
Since the latter equality holds for all $k \in \N$ and, because, for  
$u \in [0,\stopr_r]$, $\zee (u) \in
G_m$ implies $u \in \cup_k [ \stopin_k^m \wedge 
\stopr_r, \stopoutt_k^m\wedge \stopr_r]$, it follows that $\Q_{\barz}$-almost
surely, 
\[  \int_0^{\stopr_r}  \ind_{G_m} (\zee (u)) d|\fvproc|(u) = 0 \]  and 
\[  \int_0^{\stopr_r} \ind_{G_m}( \zee (u))
\,d[\martin_i,\martin_j](u) = \int_{0}^{\stopr_r} \ind_{G_m}( \zee
(u))a_{ij}( \zee (u))du. 
\] Taking limits as $m \ra \infty$, recalling that 
$\bigcup_m G_m
= G$ and applying 
the dominated convergence theorem,  we obtain \eqref{bdjump} and
\eqref{bdjump2}.  In turn, when combined with \eqref{def-chig},  \eqref{bdjump} and \eqref{bdjump2}
imply \eqref{chig}. 
\end{proof}

\subsection{A Random Measure}
\label{subs-integrep}

We would like to extend the result established in 
\eqref{disU} for when the process lies in $G$  
 to show that  $M$ is equal to 
$\int_0^{\cdot \wedge \stopr_r} a^{1/2} (Z(u)) dW(u)$.  
For this, we need to characterize the boundary behavior of $M(\cdot \wedge \stopr_r)$ and
$A(\cdot \wedge \stopr_r)$.   As a first step,  in Proposition \ref{prop-chig} below,  we introduce a random measure and
establish a convenient integral representation for the process
$\pushproc^f$.  
We then establish  some additional properties of the random measure 
which are used in the next section to prove the stochastic integral representation for $M(\cdot \wedge \stopr_r)$. 

\begin{prop}
\label{prop-chig} 
For each $\omega \in \ccspace$, there exists a $\sigma$-finite measure 
 $\tilde{\mu} (\omega, \cdot)$ on $(\R_+ \times
S_1(0), {\mathcal B} (\R_+ \times S_1(0))$, and 
there exists a subset  $\Omega_0 \subset \ccspace$ 
with $\Q_{\barz} (\Omega_0) = 1$ such that 
for every $\omega \in \Omega_0$,  for all  $f \in \MH$ and $t \geq 0$, 
\be \label{xif31} \pushproc^f(\omega, t\wedge \stopr_r(\omega)) = \int_{\intset_t(\omega)}
  \left<\vect,\nabla f(\zee (\omega,u))\right>  \tilde \mu(\omega,du,d\vect), 
 \ee 
where, for $t \geq 0$, 
\be
\label{def-intset}   \intset_t(\omega) \doteq 
\{ (u,\vect ) \in [0,t]
  \times S_1(0): \zee(\omega,u) \in \partial G\setminus \MV, \vect \in d(\zee (\omega,u)) \}. 
\nonumber\ee
Moreover, 
 $\int_{\intset_t}  \left< \vect, g(\zee(u))\right>  \tilde
 \mu(\cdot,du,d\vect), t \geq 0,$ is a continuous stochastic
 process for any continuous function
$g: \R^J \ra \R^J$. 
\end{prop}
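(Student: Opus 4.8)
The plan is to obtain $\tilde\mu$ as a pathwise disintegration of the bounded-variation measure $d\fvproc$, where $\fvproc$ is the continuous finite-variation component of $\smart(\cdot\wedge\stopr_r)$ produced in Proposition \ref{prop-locsm}, after first reducing $\pushproc^f(\cdot\wedge\stopr_r)$ to a genuinely first-order functional of $f$. First I would fix a $\Q_{\barz}$-full-measure event $\Omega_0$ on which, simultaneously for all $f$ in the countable dense family $\MH_0\subset\MH$ of Remark \ref{rem-MH} (hence, via the approximation \eqref{fgk} exactly as in the proof of Lemma \ref{lem-gtest}, for all $f\in\MH$): the process $\pushproc^f(\cdot\wedge\stopr_r)$ is continuous and nondecreasing (Lemma \ref{lem-gtest}), the assignment $f\mapsto\pushproc^f(\cdot\wedge\stopr_r)$ is $\R$-linear (immediate from \eqref{def-chig}), and $\int_0^{\cdot\wedge\stopr_r}\ind_G(\zee(u))\,d\pushproc^f(u)=0$ and $\int_0^{\cdot\wedge\stopr_r}\ind_G(\zee(u))\,d|\fvproc|(u)=0$ (equations \eqref{chig} and \eqref{bdjump} of Lemma \ref{lem:Urep1}). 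Since $\kappa_{\barz}(r)<r_{\barz}<R_{\barz}<\dist(\barz,\MV)$ (Lemma \ref{lem:cutoff}), the stopped path $\zee(\cdot\wedge\stopr_r)$ lives in $B_{\kappa_{\barz}(r)}(\barz)$, which is disjoint from $\MV$; consequently each of $d\pushproc^f(\omega,\cdot)$ and $d|\fvproc|(\omega,\cdot)$ is carried by $\{u\le\stopr_r(\omega):\zee(\omega,u)\in\partial G\setminus\MV\}$.

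The heart of the proof is to show that on $\Omega_0$,
\[
\pushproc^f(\omega,t\wedge\stopr_r(\omega))=\int_0^{t\wedge\stopr_r(\omega)}\langle\nabla f(\zee(\omega,u)),\,d\fvproc(\omega,u)\rangle,\qquad f\in\MH,\ t\ge0,
\]
i.e.\ the two second-order terms in \eqref{def-chig} contribute nothing once the process is stopped at $\stopr_r$. Using \eqref{bdjump2} together with the fact that $\zee$ spends zero Lebesgue time on $\partial G$ (Proposition \ref{lem-bdary}), those two terms combine, on $[0,\stopr_r]$, to $\tfrac12\sum_{i,j}\int_0^{\cdot\wedge\stopr_r}\ind_{\partial G}(\zee(u))\,\partial_{ij}f(\zee(u))\,d[\martin_i,\martin_j](u)$; since $\partial_{ij}f$ may be taken to be an arbitrary continuous function on the (compact) range of $\zee(\cdot\wedge\stopr_r)$, the displayed identity is equivalent to $\ind_{\partial G}(\zee(u))\,d[\martin_i,\martin_j](u)=0$ on $[0,\stopr_r]$ for all $i,j$, i.e.\ the trace of $[\martin,\martin]$ does not charge $\{\zee\in\partial G\}$ before $\stopr_r$. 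I would establish this by exploiting the monotonicity of $\pushproc^h(\cdot\wedge\stopr_r)$ for \emph{all} $h\in\MH$ at once: whenever $h\in\C^2_c(\bar G)$ is supported in $B_{\kappa_{\barz}(r)}(\barz)\cap\bar G$ and satisfies $\langle d,\nabla h(x)\rangle=0$ for all $d\in d(x)$, $x\in\partial G\cap B_{\kappa_{\barz}(r)}(\barz)$, then both $h$ and $-h$ belong to $\MH$, so $\pushproc^h(\cdot\wedge\stopr_r)\equiv0$; combining the full collection of such identities with the piecewise $\C^2$ geometry near $\barz$ — continuity of the fields $n^i,d^i$ and $\MI(x)\subseteq\MI(\barz)$ for $x\in B_{R_{\barz}}(\barz)\cap\partial G$ (Lemma \ref{lem:cutoff}) — one produces, on each smooth and non-smooth stratum of $\partial G\cap B_{\kappa_{\barz}(r)}(\barz)$, enough test functions carrying prescribed second-order behaviour to force the vanishing of $\ind_{\partial G}(\zee)\,d[\martin_i,\martin_j]$. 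Turning this family of monotonicity and linearity constraints into the measure-theoretic conclusion is the functional-analytic content of Appendix \ref{apsub-integrep}, and I expect this to be the main obstacle; the delicate point is the behaviour at the lower-dimensional non-smooth strata of $\partial G$, where the admissible test functions are most constrained.

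Granting this reduction, the construction of $\tilde\mu$ is routine. Write the polar decomposition $d\fvproc(\omega,u)=\vect(\omega,u)\,d|\fvproc|(\omega,u)$ with $\vect(\omega,u)\in S_1(0)$ for $d|\fvproc|(\omega,\cdot)$-a.e.\ $u$. Since $\zee(\cdot\wedge\stopr_r)$ stays in $B_{\kappa_{\barz}(r)}(\barz)$ and $B_{\kappa_{\barz}(r)}(\barz)\cap\partial G\subseteq\MU_{r,s}$, Lemma \ref{lem:frs} applied to $f_{r,s}$ gives $\langle\vect(\omega,u),\nabla f_{r,s}(\zee(\omega,u))\rangle\ge1$ for $d|\fvproc|$-a.e.\ $u$ (in particular $d\pushproc^{f_{r,s}}(\omega,\cdot)$ dominates $d|\fvproc|(\omega,\cdot)$). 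For the cone membership of $\vect$, the monotonicity of $\pushproc^f(\cdot\wedge\stopr_r)$ forces $\langle\vect(\omega,u),\nabla f(\zee(\omega,u))\rangle\ge0$ for $d|\fvproc|$-a.e.\ $u$ and each $f\in\MH$; since a standard local construction (cf.\ Lemma \ref{lem:htest2} and the functions $g^\ell(x)=\langle x,v^\ell\rangle$, $v^\ell=n^{\barz}+\tilde\varepsilon e^\ell$, from the proof of Proposition \ref{prop-locsm}) shows that $\{\nabla f(x):f\in\MH\}$ is dense in the dual cone $\{p:\langle p,d\rangle\ge0\ \forall d\in d(x)\}$ at every $x\in\partial G\cap B_{\kappa_{\barz}(r)}(\barz)$, the bipolar theorem and closedness of the finitely generated cone $d(\zee(\omega,u))$ give $\vect(\omega,u)\in d(\zee(\omega,u))$ for $d|\fvproc|$-a.e.\ $u$ (running the quantifiers over $\MH_0$ and a countable dense subset of the boundary and using continuity of $\nabla f$ and of $n^i,d^i$). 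Now define $\tilde\mu(\omega,\cdot)$ on $(\R_+\times S_1(0),{\mathcal B}(\R_+\times S_1(0)))$ to be the image of $d|\fvproc|(\omega,\cdot)$ restricted to $[0,\stopr_r(\omega)]$ under $u\mapsto(u,\vect(\omega,u))$ (and $\tilde\mu(\omega,\cdot)\equiv0$ for $\omega\notin\Omega_0$); it is $\sigma$-finite because $\tilde\mu(\omega,[0,t]\times S_1(0))=|\fvproc|(\omega,t\wedge\stopr_r(\omega))<\infty$ for every $t$, it is carried by $\{(u,\vect):u\le\stopr_r(\omega),\ \zee(\omega,u)\in\partial G\setminus\MV,\ \vect\in d(\zee(\omega,u))\cap S_1(0)\}$ (so its intersection with $\{u\le t\}$ lies in $\intset_t(\omega)$), it is jointly measurable in $(\omega,\cdot)$ by a measurable-selection argument, and \eqref{xif31} is the change-of-variables identity
\[
\pushproc^f(\omega,t\wedge\stopr_r(\omega))=\int_0^{t\wedge\stopr_r(\omega)}\langle\vect(\omega,u),\nabla f(\zee(\omega,u))\rangle\,d|\fvproc|(\omega,u)=\int_{\intset_t(\omega)}\langle\vect,\nabla f(\zee(\omega,u))\rangle\,\tilde\mu(\omega,du,d\vect).
\]

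Finally, the continuity assertion follows from the same change of variables: for continuous $g:\R^J\to\R^J$,
\[
\int_{\intset_t(\omega)}\langle\vect,g(\zee(\omega,u))\rangle\,\tilde\mu(\omega,du,d\vect)=\int_0^{t\wedge\stopr_r(\omega)}\langle\vect(\omega,u),g(\zee(\omega,u))\rangle\,d|\fvproc|(\omega,u),
\]
whose increment over $[s,t]$ is bounded in absolute value by $\big(\sup_{x\in\bar B_{\kappa_{\barz}(r)}(\barz)}|g(x)|\big)\big(|\fvproc|(\omega,t\wedge\stopr_r(\omega))-|\fvproc|(\omega,s\wedge\stopr_r(\omega))\big)$; since $\fvproc$, and hence $|\fvproc|$, is continuous (Proposition \ref{prop-locsm}), the map $t\mapsto\int_{\intset_t(\omega)}\langle\vect,g(\zee(\omega,u))\rangle\,\tilde\mu(\omega,du,d\vect)$ is continuous. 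In summary, everything except the reduction in the second paragraph — the vanishing of the second-order contribution to $\pushproc^f(\cdot\wedge\stopr_r)$, equivalently of $\ind_{\partial G}(\zee)\,d[\martin_i,\martin_j]$ before $\stopr_r$ — is measure-theoretic bookkeeping; that reduction, which must convert the global monotonicity of the family $\{\pushproc^h(\cdot\wedge\stopr_r):h\in\MH\}$ into boundary information about $\martin$ in the presence of non-smooth boundary strata, is the crux, and is what the functional-analytic arguments of Appendix \ref{apsub-integrep} are designed to handle.
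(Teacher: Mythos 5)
There is a genuine gap, and it sits exactly where you locate "the crux." Your plan first reduces $\pushproc^f(\cdot\wedge\stopr_r)$ to the purely first-order functional $\int_0^{\cdot\wedge\stopr_r}\langle\nabla f(\zee(u)),d\fvproc(u)\rangle$, which is equivalent to showing $\ind_{\partial G}(\zee(u))\,d[\martin_i,\martin_j](u)=0$ on $[0,\stopr_r]$. But that statement is Proposition \ref{lem:q8} of the paper, and in the paper it is proved \emph{after} and \emph{by means of} Proposition \ref{prop-chig}: the representation \eqref{xif31} is what makes the process $\alpha$ of \eqref{def-alpha} well defined, Lemma \ref{lem-alpha} then produces the exponential martingales via the Stroock--Varadhan criterion, and only from those does the boundary-vanishing of the quadratic variation follow. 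Your substitute argument --- that test functions $h$ with $\langle\nabla h,d\rangle=0$ on the boundary, for which $\pushproc^h\equiv 0$, carry "enough prescribed second-order behaviour" to force the vanishing --- is asserted rather than proved, and it faces a real obstruction: for such $h$ the identity $\pushproc^h\equiv0$ only constrains the \emph{coupled} quantity $\int\ind_{\partial G}(\zee)\bigl[\langle\nabla h(\zee),d\fvproc\rangle+\tfrac12\sum_{i,j}\partial_{ij}h(\zee)\,d[\martin_i,\martin_j]\bigr]$, where at this stage $d\fvproc$ is an unknown signed measure not yet known to point along $d(\zee(u))$; no mechanism is offered to decouple the first- and second-order contributions (monotonicity for general $f\in\MH$ gives only one-sided, likewise coupled, information). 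Your appeal to "the functional-analytic arguments of Appendix \ref{apsub-integrep}" is misplaced: what that appendix actually does is build a random positive linear functional from the family $\{\pushproc^f, f\in\MH\}$ on the span of functions $\ell(u)\langle \vect,\nabla f(x)\rangle$, extend it by the Hahn--Banach theorem for positive functionals, represent it by a measure via Riesz--Markov (Lemma \ref{lem-chirep}), and then localize the support to the path (Lemma \ref{lem:musupport}, using \eqref{chig} and the function of Lemma \ref{lem:frs}). That route represents $\pushproc^f$ \emph{as defined in} \eqref{def-chig}, second-order terms included, precisely so that no boundary information about $[\martin,\martin]$ is needed first.

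Because of this, the rest of your construction inherits the gap rather than repairing it: the polar decomposition $d\fvproc=\vect\,d|\fvproc|$, the domination $d|\fvproc|\le d\pushproc^{f_{r,s}}$, and the cone-membership $\vect(\omega,u)\in d(\zee(\omega,u))$ are all deduced from the identity $\pushproc^f=\int\langle\nabla f(\zee),d\fvproc\rangle$ that you have not established, so relative to the paper's logical order the proposal is circular. (As a side remark, even granting the reduction, your $\tilde\mu$ would concentrate on a single direction $\vect(\omega,u)$ at each time, which is a legitimate but different representing measure from the one the paper obtains; the continuity argument in your last paragraph would then be fine. The missing ingredient is an independent proof of the boundary-vanishing of $\ind_{\partial G}(\zee)\,d[\martin_i,\martin_j]$ before $\stopr_r$, and the paper's own proof of that fact cannot be borrowed here without creating a cycle.)
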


The proof of Proposition \ref{prop-chig} is 
functional analytic in nature, involving an application of the
Hahn-Banach and Riesz Representation theorems for random linear
functionals, and 
 is deferred to Appendix \ref{ap-chig}.    
 We now identify  
a family of martingales associated with the random measure
$\tilde{\mu}$
 from Proposition \ref{prop-chig}. 
For $t \geq 0$, define 
\be
\label{def-alpha}
 \alpha (t) \doteq \smart (t \wedge \stopr_r) - \int_{\intset_{t}}   \vect \tilde{\mu} (\cdot,du, d\vect) \mbox{ with } \alpha(0)=0.
\ee
Applying the last assertion of Proposition \ref{prop-chig}
with $g (x) = e^\ell$ for $\ell=1,\ldots, J$, it follows that 
$\alpha$ is a well defined continuous stochastic process. 
We now identify some exponential martingales
associated with the process $\alpha$.   Recall the family of stopping times
$\{\zeta_c, c > 0\}$ defined by \eqref{zeta}.

\begin{lemma}
\label{lem-alpha}
For each $c>0$ and every bounded, ${\mathcal M}_t$-adapted process 
$\{\vartheta (t), t \geq 0\}$, 
\be
\label{def-expmart}  \exp \left\{  \int_0^{t \wedge \stopr_r\wedge \zeta_c} \langle \vartheta (u), d
  \alpha (u) \rangle - \frac{1}{2} \int_0^{t \wedge \stopr_r\wedge \zeta_c} \langle 
\vartheta (u), a (\zee (u)) \vartheta (u) \rangle \, du \right\},  \quad
t \geq 0, 
\ee
is a continuous $\Q_{\barz}$-martingale. 
\end{lemma}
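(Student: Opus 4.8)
The statement to prove is that the stochastic exponential of $\int_0^{\cdot} \langle \vartheta(u), d\alpha(u)\rangle$, compensated by $\frac{1}{2}\int_0^{\cdot} \langle \vartheta(u), a(Z(u))\vartheta(u)\rangle\, du$, is a continuous $\Q_{\barz}$-martingale when everything is stopped at $\stopr_r\wedge\zeta_c$. The natural route is: (i) show that $\alpha(\cdot\wedge\stopr_r)$ is a continuous $\Q_{\barz}$-local martingale with quadratic covariation $[\alpha_i,\alpha_j](t\wedge\stopr_r) = \int_0^{t\wedge\stopr_r} a_{ij}(Z(u))\,du$; (ii) conclude by the standard Novikov/exponential-local-martingale argument that the displayed process is a local martingale; (iii) use the stopping at $\zeta_c$ together with boundedness of $\vartheta$ and local boundedness of $a$ on $B_{\kappa_{\barz}(r)}(\barz)$ to upgrade the local martingale to a genuine martingale.

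Let me spell out step (i), which is the crux. From \eqref{def-alpha}, $\alpha(t) = \smart(t\wedge\stopr_r) - \int_{\intset_t}\vect\,\tilde\mu(\cdot,du,d\vect)$. By Proposition \ref{prop-locsm}, $\smart(\cdot\wedge\stopr_r) = \martin(\cdot\wedge\stopr_r) + \fvproc(\cdot\wedge\stopr_r)$. Applying \eqref{xif31} with $f = g^\ell(x) = \langle x, e^\ell\rangle$ (which lies in $\C^2(\R^J)$, with $\nabla g^\ell \equiv e^\ell$; strictly I should take $g^\ell$ as in Lemma \ref{lem:htest2}, but within $B_{\kappa_{\barz}(r)}(\barz)$ the process $\pushproc^{g^\ell}$ depends only on $\nabla g^\ell$ so the linear coordinate function works), and noting that by definition $\pushproc^{g^\ell}(t\wedge\stopr_r) = \int_0^{t\wedge\stopr_r}\langle e^\ell, d\fvproc(u)\rangle = \fvproc_\ell(t\wedge\stopr_r)$ since the second-derivative terms in \eqref{def-chig} vanish for a linear $f$, we get $\fvproc_\ell(t\wedge\stopr_r) = \int_{\intset_t}\langle\vect, e^\ell\rangle\,\tilde\mu(\cdot,du,d\vect) = \left(\int_{\intset_t}\vect\,\tilde\mu(\cdot,du,d\vect)\right)_\ell$. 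Hence the finite-variation part of $\smart(\cdot\wedge\stopr_r)$ is exactly $\int_{\intset_\cdot}\vect\,\tilde\mu$, and so $\alpha(t) = \martin(t\wedge\stopr_r)$, the continuous local martingale component. Its quadratic covariation is then $[\alpha_i,\alpha_j](t) = [\martin_i,\martin_j](t\wedge\stopr_r)$. To identify the latter with $\int_0^{t\wedge\stopr_r} a_{ij}(Z(u))\,du$, I combine the interior identity \eqref{bdjump2} of Lemma \ref{lem:Urep1} with the boundary fact that $\int_0^{t\wedge\stopr_r}\ind_{\partial G}(Z(u))\,d[\martin_i,\martin_j](u)=0$ — the latter follows because, by Proposition \ref{lem-bdary}/\eqref{prop-bdary}, $\int_0^\infty\ind_{\partial G}(Z(u))\,du=0$, so the right-hand side of \eqref{bdjump2} already accounts for all of $[\martin_i,\martin_j]$ once one knows $[\martin_i,\martin_j]$ is absolutely continuous, or more directly: writing $[\martin_i,\martin_j](t\wedge\stopr_r) = \int_0^{t\wedge\stopr_r}\ind_G(Z(u))\,d[\martin_i,\martin_j](u) + \int_0^{t\wedge\stopr_r}\ind_{\partial G}(Z(u))\,d[\martin_i,\martin_j](u)$, the first term equals $\int_0^{t\wedge\stopr_r}\ind_G(Z(u))a_{ij}(Z(u))\,du = \int_0^{t\wedge\stopr_r}a_{ij}(Z(u))\,du$ by \eqref{bdjump2} and \eqref{prop-bdary}, and the second term vanishes. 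One needs the trace of the quadratic variation to vanish on the boundary — but actually for this particular lemma I may not even need the full strength of Proposition \ref{lem:q8} (proved later); it suffices that $[\martin,\martin]$ is absolutely continuous with a density supported, up to Lebesgue-null time, on $G$, which is implied by \eqref{bdjump2} together with the zero-occupation-time property. I should be a little careful here about whether the absolute continuity of $[\martin_i,\martin_j]$ on the boundary portion is available at this point in the paper; if it is not, I would instead argue directly from \eqref{bdjump2} and the fact that $\{u: Z(u)\in\partial G\}$ has zero Lebesgue measure, combined with the Kunita–Watanabe-type bound $|[\martin_i,\martin_j]| \le ([\martin_i,\martin_i])^{1/2}([\martin_j,\martin_j])^{1/2}$, to reduce to the diagonal case and then use $[\martin_i,\martin_i](t\wedge\stopr_r) = \int_0^{t\wedge\stopr_r}a_{ii}(Z(u))\,du$.

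**Conclusion of the argument.** Once $\alpha(\cdot\wedge\stopr_r)$ is identified as a continuous $\Q_{\barz}$-local martingale with $[\alpha_i,\alpha_j](t\wedge\stopr_r) = \int_0^{t\wedge\stopr_r}a_{ij}(Z(u))\,du$, the process $L(t) \doteq \int_0^{t\wedge\stopr_r}\langle\vartheta(u), d\alpha(u)\rangle$ is a continuous local martingale with $\langle L\rangle(t) = \int_0^{t\wedge\stopr_r}\langle\vartheta(u), a(Z(u))\vartheta(u)\rangle\,du$, so the displayed expression is precisely the stochastic exponential $\mathcal{E}(L)(t\wedge\zeta_c) = \exp\{L(t\wedge\zeta_c) - \frac{1}{2}\langle L\rangle(t\wedge\zeta_c)\}$, which is always a nonnegative continuous local martingale. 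To get a true martingale on $[0,\infty)$, I stop at $\zeta_c$: on $[0,\stopr_r]$ the process $Z$ lives in $B_{\kappa_{\barz}(r)}(\barz)$, a bounded set, so $a(Z(\cdot))$ is bounded there (local boundedness of $\sigma$, hence of $a$), and $\vartheta$ is bounded by hypothesis; hence $\langle L\rangle(t\wedge\stopr_r\wedge\zeta_c) \le C_1\, c$ for a deterministic constant $C_1$, and Novikov's criterion ($\E^{\Q_{\barz}}[\exp(\frac{1}{2}\langle L\rangle(\infty))] \le e^{C_1 c/2} < \infty$) gives that $\mathcal{E}(L)(\cdot\wedge\stopr_r\wedge\zeta_c)$ is a uniformly integrable $\Q_{\barz}$-martingale.

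**Main obstacle.** The only delicate point is step (i) — specifically, pinning down that the quadratic covariation $[\martin_i,\martin_j](\cdot\wedge\stopr_r)$ equals $\int_0^{\cdot\wedge\stopr_r}a_{ij}(Z(u))\,du$ rather than merely agreeing with it on the interior time set. This requires knowing that the quadratic-variation measure puts no mass on $\{u: Z(u)\in\partial G\}$. Since the full statement that the trace of $[\martin,\martin]$ vanishes on $\partial G$ is the content of the later Proposition \ref{lem:q8}, I would either (a) invoke it if the logical order permits, or (b) give the self-contained reduction sketched above, combining \eqref{bdjump2}, the zero-Lebesgue-occupation-time property \eqref{prop-bdary} of the boundary, and the Kunita–Watanabe inequality to handle the off-diagonal entries from the diagonal ones — the diagonal entries being directly controlled because $[\martin_i,\martin_i]$ is a nondecreasing process whose increment over $\{u: Z(u)\in\partial G\}$ is forced to be zero once the interior contribution already exhausts $\int a_{ii}(Z(u))\,du$ and the boundary time set is Lebesgue-null. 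Everything else is the routine exponential-martingale machinery.
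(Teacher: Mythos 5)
Your overall strategy---identify $\alpha(\cdot\wedge\stopr_r)$ as a continuous local martingale with quadratic covariation $\int_0^{\cdot\wedge\stopr_r}a_{ij}(Z(u))\,du$ and then run the standard stochastic-exponential/Novikov argument---contains a genuine gap at precisely the point you flag as delicate, and the patch you propose does not close it. The identification $\alpha=\martin(\cdot\wedge\stopr_r)$ is fine (it follows from the extension of \eqref{xif31} to all of ${\mathcal C}^2(\bar G)$, including linear functions, which is established in the appendix proof of Proposition \ref{prop-chig} even though the proposition is stated only for $f\in\MH$). The problem is the identification of $[\martin_i,\martin_j](\cdot\wedge\stopr_r)$ with $\int_0^{\cdot\wedge\stopr_r}a_{ij}(Z(u))\,du$. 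Your route (a) is circular: Proposition \ref{lem:q8} is proved in the paper by applying the very lemma you are trying to prove, with $\vartheta(\cdot)=\vartheta\,\ind_{\partial G}(Z(\cdot))$, so the logical order does not permit invoking it here. Your route (b) is a non sequitur: \eqref{bdjump2} together with the zero Lebesgue occupation time of $\partial G$ (Proposition \ref{lem-bdary}) tells you only that the interior part of $d[\martin_i,\martin_i]$ has total mass $\int_0^{t\wedge\stopr_r}a_{ii}(Z(u))\,du$; nothing at this stage prevents $d[\martin_i,\martin_i]$ from carrying an additional singular part on the Lebesgue-null time set $\{u:Z(u)\in\partial G\}$. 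Saying the interior contribution ``already exhausts'' $\int a_{ii}(Z(u))\,du$ presupposes that the total quadratic variation equals that integral, which is exactly what has to be proved; Kunita--Watanabe only reduces off-diagonal to diagonal terms and does not help with the diagonal ones. Moreover, this is not a removable technicality for your scheme: if $[\alpha]$ did charge boundary times, the displayed process would be $\mathcal{E}(L)\exp\{\tfrac12\langle L\rangle_{\partial}\}$, a local submartingale rather than a martingale, so your steps (ii)--(iii) genuinely require the boundary part to vanish.

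This is why the paper's proof takes a different, more roundabout route that never uses the quadratic-variation identification. It first reduces to constant $\vartheta$ by citing Theorem 3.2(v) of \cite{StrVar69} (a general result allowing passage from constant to bounded adapted integrands for exponential martingales), and then, for constant $\vartheta$, it proves the stronger statement that $\Proc^f(t)=f(Z(t\wedge\stopr_r\wedge\zeta_c))V^f(t\wedge\stopr_r\wedge\zeta_c)$ is a $\Q_{\barz}$-martingale for every uniformly positive $f\in{\mathcal C}^2(\bar G)$, via the integration-by-parts identity $\Proc^f-f(Z(0))=N^fV^f-\int N^f\,dV^f$, the martingale property and boundedness of $N^f(\cdot\wedge\stopr_r\wedge\zeta_c)$ from Lemma \ref{lem-gtest}, the representation \eqref{xif31}, and Lemma 2.1 of \cite{StrVar71}; taking $f=\exp\{\langle\vartheta,\cdot\rangle\}$ gives the constant-$\vartheta$ case. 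If you want to salvage your approach you would need an independent proof that the trace of $[\martin,\martin]$ puts no mass on boundary times, which is essentially as hard as Proposition \ref{lem:q8} itself; otherwise you should follow the paper's $N^f$/$V^f$ argument.
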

\begin{proof}  
Fix $c>0$. We first reduce the proof of the lemma to  showing the 
result for constant $\vartheta(\cdot)$, namely, to showing that 
for all $\vartheta \in \R^J$, \be
\label{alpha-mart}
\exp\left\{\left<\vartheta, \alpha(t\wedge \theta_r\wedge \zeta_c) \right> - \frac{1}{2}\int_0^{t\wedge \theta_r\wedge \zeta_c}
  \left<\vartheta, a(\zee(u))\vartheta\right> du\right\}
\ee is a continuous 
 $\Q_{\barz}$-martingale. 
Indeed, given the local boundedness of $a$,  the non-degeneracy condition
\eqref{eq-ue},  and the continuity of $\alpha (\cdot \wedge
\stopr_r)$, it follows that the conditions of Theorem 3.1 (and
therefore Theorem 3.2) of
\cite{StrVar69} are fulfilled with $P, \xi$ and  $s$ therein replaced by 
$\Q_{\barz}, \alpha(\cdot \wedge \stopr_r\wedge \zeta_c)$ and $0$, respectively.  
Therefore, we can apply part (v) of Theorem 3.2 of \cite{StrVar69},  with $\xi = \alpha$ and
$\theta = \vartheta$, to conclude that  for every bounded adapted
process $\vartheta(\cdot)$, the process in \eqref{def-expmart} is a continuous
 $\Q_{\barz}$-martingale. 

To show that the process in \eqref{alpha-mart} is a continuous
$\Q_{\barz}$-martingale, we  establish a slightly more general result.  
Suppose $f \in {\mathcal C}^2 (\bar G)$  is uniformly positive, that
is, there exists $c_f > 0$ with $\inf_{x \in \bar{G}} f(x) >
c_f$.  Then for $t \geq 0$ and $\omega \in {\mathcal C}$, define
\[  
 V^f (\omega,t)\doteq  \exp\left\{-\int_0^{t} \frac{\ML
    f(\zee(\omega,u))}{f(\zee(\omega,u))}du - 
\int_{\intset_t(\omega)}
 \left<\vect,\frac{    \nabla f(\zee(\omega,u))}{f(\zee(\omega,u))}\right>  \tilde \mu(\omega,du,d\vect)
\right\},
\]
 and  
\begin{eqnarray}
\label{def-procf}
 \Proc^f (t) & \doteq & 
f(\zee(t\wedge \stopr_r\wedge \zeta_c)) V^f(t\wedge \stopr_r\wedge \zeta_c),  \quad t \geq 0, 
\end{eqnarray} 
Applying the last assertion of Proposition \ref{prop-chig} with
$g(x) = \nabla f(x)/f(x)$, it follows that $V^f$ and therefore
$\Proc^f$ are 
well-defined, continuous 
stochastic processes.  
We now claim that for any  uniformly positive $f \in {\mathcal C}^2(\bar G)$,   $\Proc^f$  is a positive continuous 
$\Q_{\barz}$-martingale starting from $f(z)$.  
Suppose the claim were true. Then  for fixed  $\vartheta\in \R^J$, define $f (x) = f^\vartheta (x) = \exp \{\langle \vartheta,
x \rangle\}$, $x \in \R^J$.  Then $f$ is clearly  uniformly positive,  lies in ${\mathcal
  C}^2(\bar{G})$ and satisfies 
\[\frac{\ML f(x)}{f(x)}= \left< \vartheta, b(x) \right> + \frac{1}{2}
\left< \vartheta, a(x)\vartheta \right> \quad \mbox{ and } \quad  \frac{\nabla 
  f(x)}{f(x)} = \vartheta.\]   Substituting this into the definition
of $V^f$ and $\Proc^f$ and recalling the definition of $\alpha$ in 
\eqref{def-alpha}, it is easy to verify that  the process in
\eqref{alpha-mart} is equal to $\Proc^f (t), t \geq 0,$ and hence, is a
continuous $\Q_{\barz}$-martingale.   

Thus, it only remains to establish the claim.  Fix 
$f \in \C^2(\bar G)$ that is uniformly positive.  
To prove the claim, we will first establish the relation 
\be
\label{rel-Procf}
 \Proc^f (t)  - f(Z(0)) = \bar{N}^f (t \wedge \stopr_r \wedge \zeta_c),  \quad t \geq 0, 
\ee
where  
\be
\label{def-tildenf} 
 \bar{N}^f (t) \doteq  N^f (t) V^f(t) - \int_0^t N^f (u)  d V^f (u), \quad
 t \geq 0. 
\ee
and then show that $\bar{N}^f (\cdot \wedge \stopr_r\wedge \zeta_c)$ is a continuous $\Q_{\barz}$-martingale. 
Using the relation (\ref{chi-mart}) for $N^f$ in the first and last
lines below, the definition of
$V^f$, the representation \eqref{xif31} for $\pushproc^f$, and integration-by-parts, 
we obtain 
\begin{eqnarray*}
&&\int_0^t N^f (u\wedge \stopr_r\wedge \zeta_c) d V^f (u\wedge \stopr_r\wedge \zeta_c) \\ &&\quad= \int_0^{t\wedge \stopr_r\wedge \zeta_c} \left(f(\zee(u))-f(\zee(0)) -\int_0^{u}
{\mathcal L} f (\zee (s)) \, ds - \pushproc^f (u)\right) dV^f(u)  \\ &&\quad= -\int_0^{t\wedge \stopr_r\wedge \zeta_c} V^f(u)d\left(\int_0^u\ML
    f(\zee(s))ds +\pushproc^f (u)\right)   \\ && \quad \quad - \int_0^{t\wedge \stopr_r\wedge \zeta_c} \left(f(\zee(0)) +\int_0^{u}
{\mathcal L} f (\zee (s)) \, ds + \pushproc^f (u)\right) dV^f(u) \\ &&\quad= - V^f(t\wedge \stopr_r\wedge \zeta_c)\left(\int_0^{t\wedge \stopr_r\wedge \zeta_c}\ML
    f(\zee(u))du +\pushproc^f (t\wedge \stopr_r\wedge \zeta_c)\right) \\ &&\quad\quad  + \int_0^{t\wedge \stopr_r\wedge \zeta_c} \left(\int_0^{u}
{\mathcal L} f (\zee (s)) \, ds + \pushproc^f (u)\right) dV^f(u) \\
&&\quad \quad -\int_0^{t\wedge \stopr_r\wedge \zeta_c} \left(f(\zee(0)) +\int_0^{u}
{\mathcal L} f (\zee (s)) \, ds + \pushproc^f (u)\right) dV^f(u) \\ &&\quad= f(\zee(0))-  V^f(t\wedge \stopr_r\wedge \zeta_c)\left(f(\zee(0))+\int_0^{t\wedge \stopr_r\wedge \zeta_c}\ML
    f(\zee(u))du +\pushproc^f (t\wedge \stopr_r\wedge \zeta_c)\right)
  \\
&&\quad= f(Z(0))  - V^f(t \wedge \stopr_r \wedge \zeta_c)\left( -N^f (t
  \wedge \stopr_r \wedge \zeta_c) + f (Z(t \wedge \stopr_r \wedge
  \zeta_c))\right). 
\end{eqnarray*}
Together with the definitions of $\bar{N}^f$ and $\bar{H}^f$ in
\eqref{def-tildenf} and \eqref{def-procf}, respectively, this proves \eqref{rel-Procf}.

We now show that $\bar{N}^f (\cdot \wedge \stopr_r\wedge \zeta_c)$ is a $\Q_{\barz}$-martingale. 
By  Lemma  \ref{lem-gtest}, 
$N^f(\cdot \wedge \stopr_r\wedge \zeta_c)$ is a continuous $\Q_{\barz}$-martingale that is uniformly
bounded on every finite interval and $V^f(\cdot \wedge \stopr_r\wedge \zeta_c)$ is a continuous finite
variation process.   
Thus, the desired result will follow from Lemma 2.1 of \cite{StrVar71} with $\phi = N^f(\cdot \wedge \stopr_r\wedge \zeta_c)$ and
$\psi = V^f(\cdot \wedge \stopr_r\wedge \zeta_c)$ if we can show that 
$\E[|V^f|(t \wedge \stopr_r\wedge \zeta_c)] < \infty$ for every $t >
0$,  where recall that $|V^f| (t)$ denotes the total variation of $V^f$ on
$[0,t]$.   Let  $C_f < \infty$ be the maximum of
the supremum of $f$ and the suprema of its
first and second partial derivatives over $\bar{B}_{\kappa_{\barz}(r)}(\barz)$. 
  Choose $f_{r,s}$ from Lemma \ref{lem:frs}. It follows from (\ref{xif31}) and (\ref{def-chig}) that \begin{eqnarray*}  \tilde \mu(\omega,\intset_{t\wedge \stopr_r(\omega)\wedge \zeta_c(\omega)}(\omega))  &\leq &  \pushproc^{f_{r,s}}(\omega, t\wedge \stopr_r(\omega)\wedge \zeta_c(\omega)) \\ &\leq & \left(c J  + \frac{cJ^2}{2} + \frac{tJ^2}{2} \sum_{i,j=1}^J\sup_{x\in \bar{B}_r(\barz)}  |a_{ij}|(x) \right) C_{f_{r,s}}.
 \end{eqnarray*}
 In turn, this implies that for all $t > 0$, 
\begin{eqnarray*} & & \E[ |V^f| (t \wedge \stopr_r\wedge \zeta_c) ] \\
  &\leq & t \exp \left\{  \frac{t J \sup_{x\in \bar{B}_r(\barz)}  |\ML
     f(x)|}{c_f} +   \frac{J C_f}{c_f}\left(\frac{3cJ^2}{2} +
      \frac{tJ^2}{2} \sum_{i,j=1}^J\sup_{x\in \bar{B}_r(\barz)}
      |a_{ij}|(x) \right) C_{f_{r,s}} \right\} 
\\ &<& \infty, 
\end{eqnarray*}
as desired.   This completes the proof of the lemma. 
\end{proof}

\subsection{A Boundary Property of the Martingale Component}
\label{subs-martbound}

We start by showing that the trace of the quadratic variation of $M$
vanishes on the boundary $\partial G$. 

\begin{prop} \label{lem:q8}  
The continuous $\Q_{\barz}$-local martingale $\martin(\cdot \wedge
  \stopr_r)$ in
  the decomposition \eqref{semi} for $\shift (\cdot \wedge
  \stopr_r)$ satisfies $\Q_{\barz}$-almost surely, 
\be \label{q8} \int_{0}^{\stopr_r}
  \ind_{\partial
    G}( \zee (u))\,d[\martin_i,\martin_i](u)=0,  \quad i = 1, \ldots,
  J. 
\ee
\end{prop}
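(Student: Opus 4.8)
The plan is to establish the stronger identity
\[
[\martin_i,\martin_i](t\wedge\stopr_r)=\int_0^{t\wedge\stopr_r}a_{ii}(\zee(u))\,du,\qquad t\ge 0,\ i=1,\dots,J,
\]
(a strengthening of the interior identity \eqref{bdjump2} of Lemma \ref{lem:Urep1}), and then to read off \eqref{q8} from it together with the boundary property \eqref{prop-bdary}.

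To prove the identity I would compare two semimartingale decompositions of $\smart(\cdot\wedge\stopr_r)$, localized by the stopping time $\zeta_c$ of \eqref{zeta}. On the one hand, \eqref{semi} gives $\smart(\cdot\wedge\stopr_r)=\martin(\cdot\wedge\stopr_r)+\fvproc(\cdot\wedge\stopr_r)$ with $\fvproc$ continuous of locally bounded variation, so $[\smart_i,\smart_i](\cdot\wedge\stopr_r\wedge\zeta_c)=[\martin_i,\martin_i](\cdot\wedge\stopr_r\wedge\zeta_c)$. On the other hand, by the definition \eqref{def-alpha} of $\alpha$ we have $\smart(t\wedge\stopr_r\wedge\zeta_c)=\alpha(t\wedge\stopr_r\wedge\zeta_c)+\int_{\intset_{t\wedge\stopr_r\wedge\zeta_c}}\vect\,\tilde\mu(\cdot,du,d\vect)$; since $|\vect|=1$ on $S_1(0)$ and, by the bound established in the proof of Lemma \ref{lem-alpha}, $\tilde\mu(\cdot,\intset_{t\wedge\stopr_r\wedge\zeta_c})\le\pushproc^{f_{r,s}}(\cdot,t\wedge\stopr_r\wedge\zeta_c)<\infty$, the second term has total variation at most $\tilde\mu(\cdot,\intset_{t\wedge\stopr_r\wedge\zeta_c})$ on $[0,t]$ and is continuous by the last assertion of Proposition \ref{prop-chig}, hence is continuous of locally bounded variation. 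Therefore $[\smart_i,\smart_i](\cdot\wedge\stopr_r\wedge\zeta_c)=[\alpha_i,\alpha_i](\cdot\wedge\stopr_r\wedge\zeta_c)$, and combining gives $[\martin_i,\martin_i](\cdot\wedge\stopr_r\wedge\zeta_c)=[\alpha_i,\alpha_i](\cdot\wedge\stopr_r\wedge\zeta_c)$.

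Next I would compute $[\alpha_i,\alpha_i]$ from Lemma \ref{lem-alpha}: taking $\vartheta\in\R^J$ constant in \eqref{def-expmart} and using the standard characterization for such exponential martingales (expand $\E^{\Q_{\barz}}[\exp\{\langle\vartheta,\alpha(t\wedge\stopr_r\wedge\zeta_c)-\alpha(s\wedge\stopr_r\wedge\zeta_c)\rangle-\frac{1}{2}\int\langle\vartheta,a(\zee)\vartheta\rangle\,du\}\mid\MM_s]=1$ to second order at $\vartheta=0$, or invoke Theorem 3.2 of \cite{StrVar69}), the process $\alpha(\cdot\wedge\stopr_r\wedge\zeta_c)$ is a continuous $\Q_{\barz}$-local martingale with $[\alpha_i,\alpha_j](t\wedge\stopr_r\wedge\zeta_c)=\int_0^{t\wedge\stopr_r\wedge\zeta_c}a_{ij}(\zee(u))\,du$. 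Hence $[\martin_i,\martin_i](t\wedge\stopr_r\wedge\zeta_c)=\int_0^{t\wedge\stopr_r\wedge\zeta_c}a_{ii}(\zee(u))\,du$; since $\fvproc$ and the $[\martin_i,\martin_j]$ are continuous, $\zeta_c\uparrow\infty$ $\Q_{\barz}$-a.s.\ as $c\to\infty$, so letting $c\to\infty$ and using continuity of both sides yields the desired identity for $[\martin_i,\martin_i](\cdot\wedge\stopr_r)$.

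Finally, since $\zee(\cdot\wedge\stopr_r)$ takes values in the compact set $\bar{B}_{\kappa_{\barz}(r)}(\barz)$ and $a$ is locally bounded, for each $i$,
\[
\int_0^{\stopr_r}\ind_{\partial G}(\zee(u))\,d[\martin_i,\martin_i](u)=\int_0^{\stopr_r}\ind_{\partial G}(\zee(u))\,a_{ii}(\zee(u))\,du\le\Big(\sup_{x\in\bar{B}_{\kappa_{\barz}(r)}(\barz)}|a_{ii}(x)|\Big)\int_0^\infty\ind_{\partial G}(\zee(u))\,du,
\]
which is $0$ $\Q_{\barz}$-almost surely by \eqref{prop-bdary}; this is \eqref{q8}. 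The main obstacle is really concentrated in the inputs rather than in this assembly: the statement says that the quadratic variation of the stopped process $\smart(\cdot\wedge\stopr_r)$ (a semimartingale by Proposition \ref{prop-locsm}, although $\smart$ is not one in general) is absolutely continuous with respect to Lebesgue measure with density $a_{ii}(\zee(\cdot))$, and the work of encoding the geometry of the reflection directions sits in the finiteness of the random measure $\tilde\mu$ (which rests on the test function $f_{r,s}$ of Lemma \ref{lem:frs}) and in the exponential-martingale identity of Lemma \ref{lem-alpha}; once those are available, the boundary property \eqref{prop-bdary} does the rest.
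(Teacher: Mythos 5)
Your argument is correct, and it reaches \eqref{q8} by a route that differs in emphasis from the paper's. The paper does not compute the full quadratic variation of $\alpha$: it applies Lemma \ref{lem-alpha} with the adapted choice $\vartheta(\cdot)=\vartheta\,\ind_{\partial G}(\zee(\cdot))$, uses the boundary property \eqref{prop-bdary} to make the compensator in \eqref{def-expmart} vanish, deduces that $\E^{\Q_{\barz}}[\exp\{\langle\vartheta,\int_0^{t\wedge\stopr_r\wedge\zeta_c}\ind_{\partial G}(\zee(u))\,d\alpha(u)\rangle\}]=1$ for all $\vartheta\in\R^J$, hence that this stochastic integral is a.s.\ zero, so that $\int_0^{\stopr_r\wedge\zeta_c}\ind_{\partial G}(\zee(u))\,d[\alpha_i,\alpha_i](u)=0$, and then lets $c\to\infty$ and identifies $[\alpha_i,\alpha_i]=[\martin_i,\martin_i]$ exactly as you do (via the locally bounded variation of $\fvproc(\cdot\wedge\stopr_r)-\int_{\intset_\cdot}\vect\,\tilde\mu(du,d\vect)$). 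You instead extract from the constant-$\vartheta$ case \eqref{alpha-mart} (via the Stroock--Varadhan equivalence, Theorem 3.2 of \cite{StrVar69}, which the paper's proof of Lemma \ref{lem-alpha} itself passes through) the full identity $[\alpha_i,\alpha_j](t\wedge\stopr_r\wedge\zeta_c)=\int_0^{t\wedge\stopr_r\wedge\zeta_c}a_{ij}(\zee(u))\,du$, transfer it to $\martin$ through the same finite-variation comparisons, and only then invoke \eqref{prop-bdary}. This buys you a stronger intermediate statement --- absolute continuity of $[\martin_i,\martin_i](\cdot\wedge\stopr_r)$ with density $a_{ii}(\zee(\cdot))$, which anticipates part of what Lemma \ref{lem:Urep2} later establishes --- at the cost of needing the equivalence between the exponential-martingale property and the local-martingale-plus-quadratic-variation characterization (your ``expand to second order at $\vartheta=0$'' shortcut needs a domination argument to differentiate under the conditional expectation, so the clean justification is indeed the citation to \cite{StrVar69}); the paper's indicator-weighted choice of $\vartheta(\cdot)$ targets only the boundary contribution and so gets \eqref{q8} from the statement of Lemma \ref{lem-alpha} alone. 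Your handling of the finite-variation step is also sound: the bound $\tilde\mu(\cdot,\intset_{t\wedge\stopr_r\wedge\zeta_c})\leq\pushproc^{f_{r,s}}(\cdot,t\wedge\stopr_r\wedge\zeta_c)<\infty$ is exactly the estimate used in the proof of Lemma \ref{lem-alpha}, and the passage $c\to\infty$ is legitimate since $\zeta_c\uparrow\infty$ a.s.
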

\begin{proof}  
For each $\vartheta \in \R^J$, 
choosing $\vartheta(\cdot) = \vartheta \ind_{\partial
  G}(\zee (\cdot))$ in Lemma \ref{lem-alpha}, we see 
that for each $c>0$, \[\exp\left\{\left< \vartheta, \int_0^{t\wedge \theta_r\wedge \zeta_c}
    \ind_{\partial G}(\zee (u)) d\alpha(u) \right> -
  \frac{1}{2}\int_0^{t\wedge \theta_r\wedge \zeta_c} \ind_{\partial G}(\zee (u))
  \left<\vartheta, a(\zee (u))\vartheta\right> du\right\}\] is a continuous 
$\Q_{\barz}$-martingale. Since $\Q_{\barz}$-almost surely $Z$ spends zero Lebesgue time on
the boundary by Proposition \ref{lem-bdary}, this implies that for each $\vartheta \in \R^J$
and $t \geq 0$,
 \begin{eqnarray*} 
& & \E^{\Q_{\barz}}\left[\exp\left\{\left< \vartheta, \int_0^{t\wedge \theta_r\wedge \zeta_c}
      \ind_{\partial G}(\zee (u)) d\alpha(u) \right>\right\}\right] =
  1.
\end{eqnarray*}
 Hence,  for each $t\geq 0$, we have  $\Q_{\barz}$-almost
 surely, $\int_0^{t\wedge \theta_r\wedge \zeta_c} \ind_{\partial G}(\zee(u)) d\alpha(u) =
0$, 
which in turn implies that  for each $i=1,\ldots, J$,
$\int_0^{\theta_r\wedge \zeta_c} \ind_{\partial G}(\zee (u))
d[\alpha_i,\alpha_i](u) = 0.$ 
By letting $c\rightarrow \infty$, we have 
\be \label{alpha0} \int_0^{\theta_r} \ind_{\partial G}(\zee (u))
d[\alpha_i,\alpha_i](u) = 0, \quad  i=1,\ldots, J.  \ee
From (\ref{semi}) and \eqref{def-alpha} we know that  \[\alpha(t\wedge
\theta_r) = \martin(t \wedge
  \stopr_r)+\fvproc(t \wedge
  \stopr_r) -
\int_{\intset_t}
\vect\tilde \mu(du,d\vect ). \] 
Since $\fvproc(t \wedge
  \stopr_r) -\int_{\intset_t}$ 
$\vect\tilde \mu(du,d\vect )$ is a process with locally bounded variation, it follows
that $[\alpha_i, \alpha_i] = [\martin_i, \martin_i]$, and 
(\ref{q8}) follows directly from (\ref{alpha0}). This completes the proof of
Proposition \ref{lem:q8}. 
\end{proof} 

\begin{lemma}\label{lem:Urep2}
We have  $\Q_{\barz}$-almost surely, for $t \geq 0$, 
 \be \label{Urep2a}
\martin(t \wedge
  \stopr_r)=\int_0^{t\wedge \stopr_r}a^{1/2}(
\zee (u))d\brm(u) 
\ee
and
\be
\label{Urep2b}
\fvproc(t \wedge
  \stopr_r)= Y(t\wedge \stopr_r). 
 \ee 
\end{lemma}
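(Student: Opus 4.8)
The plan is to prove \eqref{Urep2a} first, and then deduce \eqref{Urep2b} immediately from \eqref{semi} and the definitions of $\smart$ and $\y$. For \eqref{Urep2a}, the natural route is to show that the continuous $\Q_{\barz}$-local martingale $\martin(\cdot \wedge \stopr_r)$ and the stochastic integral $\int_0^{\cdot \wedge \stopr_r} a^{1/2}(\zee(u))\,d\brm(u)$ have the same cross-variation structure with respect to $\brm$, and then invoke the martingale representation / Lévy-type identification (e.g. via Theorem 3.2 of \cite{StrVar69} or the Knight-type argument already in play, or more elementarily by computing $[\martin_i - \int a^{1/2}_{i\cdot}\,d\brm,\ \martin_j - \int a^{1/2}_{j\cdot}\,d\brm]$ and showing it vanishes). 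Concretely, I would: (i) use \eqref{bdjump2} of Lemma \ref{lem:Urep1} to identify $[\martin_i,\martin_j](\cdot \wedge \stopr_r)$ \emph{in} $G$ with $\int \ind_G(\zee) a_{ij}(\zee)\,du$; (ii) use Proposition \ref{lem:q8} to kill the contribution of $\partial G$ to the quadratic variation of $\martin(\cdot \wedge \stopr_r)$, so that in fact $[\martin_i,\martin_j](t \wedge \stopr_r) = \int_0^{t \wedge \stopr_r} a_{ij}(\zee(u))\,du$ for all $t$ (the trace argument of Proposition \ref{lem:q8} bounds each $[\martin_i,\martin_i]$ on $\partial G$ by zero, hence by Kunita--Watanabe the full matrix cross-variation on $\partial G$ vanishes); (iii) compute $[\martin_i,\brm_j](\cdot \wedge \stopr_r)$. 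For step (iii), recall $\brm$ was constructed in Section \ref{sec:BM} as an $\mathbb{L}^2$-limit of stochastic integrals of $a^{-1/2}(\zee)$ against $\smart$ over the excursions of $\zee$ inside $G_m$; using \eqref{disU} (which says $\martin - \int a^{1/2}(\zee)\,d\brm$ is constant on each such excursion) together with \eqref{bdjump} (so $\fvproc$ does not move in $G$), one gets $[\martin_i,\brm_j](t \wedge \stopr_r) = \int_0^{t \wedge \stopr_r} (a^{1/2})_{ij}(\zee(u))\,du$ once the boundary contributions are shown to vanish.

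The boundary contributions are where the real work lies. On $\partial G$, a priori $\martin$ could have nonzero cross-variation with $\brm$; but Proposition \ref{lem:q8} gives $\int_0^{\stopr_r}\ind_{\partial G}(\zee(u))\,d[\martin_i,\martin_i](u)=0$, and $\brm$ has quadratic variation $[\brm_i,\brm_i](t)=t$ which, by the boundary property \eqref{prop-bdary}, puts no mass on $\{u:\zee(u)\in\partial G\}$. By the Kunita--Watanabe inequality, $|\int_0^{\stopr_r}\ind_{\partial G}(\zee(u))\,d[\martin_i,\brm_j](u)|^2 \le \big(\int_0^{\stopr_r}\ind_{\partial G}(\zee(u))\,d[\martin_i,\martin_i](u)\big)\big(\int_0^{\stopr_r}\ind_{\partial G}(\zee(u))\,d[\brm_j,\brm_j](u)\big)=0$. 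Hence all the relevant cross-variations are supported on $\{u:\zee(u)\in G\}$, where \eqref{disU} and \eqref{bdjump2} apply. Putting these together: $[\martin_i - \sum_\ell (a^{1/2})_{i\ell}\!\cdot\!\brm_\ell,\ \martin_j - \sum_{\ell'} (a^{1/2})_{j\ell'}\!\cdot\!\brm_{\ell'}](t\wedge\stopr_r) = [\martin_i,\martin_j] - 2\sum_\ell (a^{1/2})_{i\ell}[\martin_j,\brm_\ell] + \sum_{\ell,\ell'}(a^{1/2})_{i\ell}(a^{1/2})_{j\ell'}[\brm_\ell,\brm_{\ell'}]$, and each term evaluates (on $[0,t\wedge\stopr_r]$) to $\int a_{ij}(\zee)\,du$, $2\int (a^{1/2})_{i\ell}(a^{1/2})_{j\ell}(\zee)\,du = 2\int a_{ij}(\zee)\,du$, and $\int a_{ij}(\zee)\,du$ respectively, so the total is zero. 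A continuous local martingale with identically zero quadratic variation and zero initial value is identically zero, which gives \eqref{Urep2a}.

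For \eqref{Urep2b}: from \eqref{semi}, $\smart(t\wedge\stopr_r) = \martin(t\wedge\stopr_r) + \fvproc(t\wedge\stopr_r)$, and from \eqref{eq:y}, $\y(t) = \smart(t) - \int_0^t a^{1/2}(\zee(u))\,d\brm(u)$. Stopping at $\stopr_r$ and substituting \eqref{Urep2a} gives $\fvproc(t\wedge\stopr_r) = \smart(t\wedge\stopr_r) - \martin(t\wedge\stopr_r) = \smart(t\wedge\stopr_r) - \int_0^{t\wedge\stopr_r} a^{1/2}(\zee(u))\,d\brm(u) = \y(t\wedge\stopr_r)$, as claimed. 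The main obstacle is step (iii) — establishing the cross-variation $[\martin_i,\brm_j](\cdot\wedge\stopr_r) = \int (a^{1/2})_{ij}(\zee(u))\,du$ — because $\brm$ is only defined as an $\mathbb{L}^2$/a.s.\ limit along a subsequence of the excursion-wise integrals $\brm^m$, so one must carefully pass the cross-variation identity \eqref{disU} (valid excursion-by-excursion inside each $G_m$) through this limit, using \eqref{bdjump} to control $\fvproc$ and Proposition \ref{lem:q8} plus \eqref{prop-bdary} to discard the boundary portion, rather than appealing to \eqref{disU} on a fixed excursion alone.
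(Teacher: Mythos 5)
Your argument is correct, and it rests on the same pillars as the paper's proof --- the excursion decomposition via the stopping times \eqref{def-stopin2}--\eqref{def-stopout2}, the identity \eqref{disU}, Proposition \ref{lem:q8}, and the boundary property \eqref{prop-bdary} --- but it finishes the identification differently. The paper never computes the cross-bracket $[\martin_i,\brm_j]$: it writes $\martin(t\wedge\stopr_r)=M^{1,m}(t)+M^{2,m}(t)$ (excursion part plus complementary part), uses \eqref{disU} to see that $M^{1,m}$ minus the target integral is itself a stochastic integral over the complementary intervals whose second moment tends to zero, and shows $\E^{\Q_{\barz}}[|M^{2,m}(t)|^2]$ converges to $\sum_i\E^{\Q_{\barz}}[\int_0^{t\wedge\stopr_r}\ind_{\partial G}(\zee(u))\,d[\martin_i,\martin_i](u)]=0$ by Proposition \ref{lem:q8}, so the whole identification is carried out through $\mathbb{L}^2(\Q_{\barz})$ limits. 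You instead identify the three brackets $[\martin_i,\martin_j]$, $[\martin_i,\brm_j]$, $[\brm_i,\brm_j]$ up to $\stopr_r$ (interior via \eqref{disU} and \eqref{bdjump2}, boundary killed by Kunita--Watanabe combined with Proposition \ref{lem:q8} and \eqref{prop-bdary}) and conclude that the continuous local martingale $\martin(\cdot\wedge\stopr_r)-\int_0^{\cdot\wedge\stopr_r}a^{1/2}(\zee(u))\,d\brm(u)$ has vanishing quadratic variation and zero initial value, hence vanishes; this gives a pathwise identification at the cost of one extra limit passage for the bracket measures $d[\martin_i,\brm_j]$ (dominated convergence for these finite total-variation measures, as you indicate, handles the passage from the per-excursion identity to the full interval) plus the Kunita--Watanabe step. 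Two small remarks: the middle term of your bracket expansion should read $-\sum_\ell\int(a^{1/2})_{i\ell}\,d[\martin_j,\brm_\ell]-\sum_\ell\int(a^{1/2})_{j\ell}\,d[\martin_i,\brm_\ell]$ (the factor $2$ as written is literally correct only for $i=j$, which is all you actually need); and \eqref{bdjump} is not needed, since the finite-variation component $\fvproc$ contributes nothing to any covariation. Your deduction of \eqref{Urep2b} from \eqref{Urep2a}, \eqref{semi} and \eqref{eq:y} coincides with the paper's.
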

\begin{proof}
From \eqref{eq:y} and \eqref{semi}, it is clear that \eqref{Urep2b}
follows from \eqref{Urep2a}.   
To establish \eqref{Urep2a}, 
let the sequences of stopping times 
$\stopin_k^m\uparrow \infty$, $m \in \N$, and $\stopoutt_k^m \uparrow \infty$, $m \in
\N$, be defined by \eqref{def-stopin2} and \eqref{def-stopout2}. 
We use the fact that  $\Q_{\barz}$-almost
surely $\stopin_k^m \uparrow \infty$ and $\stopoutt_k^m \uparrow \infty$
as $k \ra \infty$,  to conclude  that for any $t \geq
0$, 
 \begin{eqnarray} 
 \label{Udecomp1}\
\martin(t \wedge
  \stopr_r) &=& M^{1,m} (t) + M^{2,m} (t),   \quad m \in \N,  
\end{eqnarray}
where for $m \in \N$, 
\begin{eqnarray*}
M^{1,m} (t) & \doteq &   \sum_{k\in \N} \left[\martin(t\wedge \stopoutt_k^m\wedge
    \stopr_r)-\martin(t\wedge \stopin_k^m\wedge \stopr_r) \right], \\
M^{2,m} (t) & \doteq &  \sum_{k\in \N} \left[\martin(t\wedge
    \stopin_k^m\wedge \stopr_r)-\martin(t\wedge \stopoutt_{k-1}^m\wedge
    \stopr_r) \right].
\end{eqnarray*}
Now,  by (\ref{disU}) we have  for any $m \in \N$ and $t \geq 0$, 
\begin{eqnarray*}
M^{1,m} (t) - \int_0^{t \wedge \stopr_r} a^{1/2} (\zee (u)) d\brm(u) = 
-\sum_{k\in \N}\int_{t \wedge \stopoutt_{k-1}^m\wedge
  \stopr_r}^{t\wedge \stopin_k^m\wedge \stopr_r}a^{1/2}( \zee (u)) d
\brm(u). 
\end{eqnarray*}
The last term is  a square integrable $\Q_{\barz}$-martingale, with 
covariation 
\be
\label{qov-stop} \int_0^{t \wedge \stopr_r}  \ind_{\bigcup_{k \in \N} [\stopoutt_{k-1}^m,
  \stopin_k^m]} (u) \ind_G (\zee (u)) a_{ij} (\zee (u)) \, du,  \quad
i, j = 1, \ldots, J. 
\ee
Each integral in \eqref{qov-stop} converges almost surely to zero as $m
\ra \infty$ 
because 
\be
\label{lim-ind}  \lim_{m \ra \infty}  \ind_{\cup_{k \in \N} [\stopoutt_{k-1}^m,
  \stopin_k^m]} (u)  =  \ind_{\partial G} (\zee (u)). 
\ee
Thus, we have shown that for any $t > 0$,  as $m \ra \infty$, 
$M^{1,m}(t)$ converges in $\mathbb{L}^2(\Q_{\barz})$ to $\int_0^{t \wedge
  \stopr_r} a^{1/2} (\zee (u)) \, d\brm(u)$.     In view
of \eqref{Udecomp1}, to
complete the proof of \eqref{Urep2a}, it suffices to show that
$M^{2,m}(t)$ converges  to zero 
in $\mathbb{L}^2(\Q_{\barz})$, as $m \ra \infty$.  
 Now, for each $i=1,\ldots, J$, by \eqref{lim-ind} and the bounded
 convergence theorem, 
\begin{eqnarray}
 \lim_{m\rightarrow \infty}
\E^{\Q_{\barz}}\left[|M^{2,m}(t)|^2\right]  & = 
  &  \sum_{i=1}^J\lim_{m\rightarrow \infty} \E^{\Q_z}\left[\int_0^{t\wedge
      \stopr_r}\ind_{\cup_{k \in \N}[\stopoutt_{k-1}^m,\stopin_k^m]}(u) d [\martin_i,\martin_i](u)\right] \nonumber \\
& = & \sum_{i=1}^J\E^{\Q_z}\left[\int_0^{t\wedge \stopr_r}\ind_{\partial  G}( \zee(u))\, d [\martin_i,\martin_i](u)\right], \label{Udecomp2}
\nonumber\end{eqnarray}
which is identically zero due to  Proposition \ref{lem:q8}. This  completes the proof. 
\end{proof}

\bigskip 

\subsection{Proof  of the Reflection Property for $\barz\in \MU$} 
\label{subsub-locesp}

In this section we establish the reflection property
\eqref{eq-locref}. The proof relies on the following simple geometric
property.

\begin{lemma} \label{lem:ind}
Let $\Theta$ be a convex cone with vertex at $0$ and let
\be \label{Xi}  \conset \doteq \{ v \in \R^J:\ \lan v,b\ran \geq 0 \mbox{ for
  each } b\in \Theta\}.\ee
If there exists $\mvect \in \R^J$ such that $\lan v, \mvect \ran \geq
0$  for all $v\in \conset$, then $\mvect \in \Theta$.
\end{lemma}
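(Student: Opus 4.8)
The statement asserts that $\mvect$, which by hypothesis belongs to the bidual cone $\conset^* = \Theta^{**}$, in fact lies in $\Theta$ itself; this is essentially the bipolar theorem for convex cones, and the plan is to prove it by a standard separating-hyperplane argument. The one mild point to address up front is closedness: the conclusion $\mvect \in \Theta$ requires $\Theta$ to be closed (otherwise an open half-space together with the origin furnishes a counterexample), which is harmless here since in every application $\Theta$ has the form $\conv\left[\bigcup_u d(\zee(u))\right]$, a \emph{closed} convex cone. So I would begin by noting that, on replacing $\Theta$ by its closure if necessary — which alters neither $\conset$ (the dual cone is insensitive to taking closures) nor the hypothesis on $\mvect$ — we may assume $\Theta$ is a closed convex cone with vertex at $0$.

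Next I would argue by contradiction: suppose $\mvect \notin \Theta$. Since $\Theta$ is closed and convex and $\mvect \notin \Theta$, the strict separating hyperplane theorem provides a vector $v \in \R^J$ with $\lan v, \mvect \ran < \inf_{b \in \Theta} \lan v, b\ran$. Because $0 \in \Theta$, the right-hand side is at most $\lan v, 0\ran = 0$, so in particular $\lan v, \mvect\ran < 0$.

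I would then show $v \in \conset$. If there existed $b_0 \in \Theta$ with $\lan v, b_0\ran < 0$, then, using that $\Theta$ is a cone, $\cstt b_0 \in \Theta$ for every $\cstt > 0$ and $\lan v, \cstt b_0\ran = \cstt \lan v, b_0\ran \to -\infty$ as $\cstt \to \infty$, so $\inf_{b \in \Theta}\lan v, b\ran = -\infty$, contradicting the lower bound $\lan v, \mvect\ran$ on that infimum. Hence $\lan v, b\ran \geq 0$ for all $b \in \Theta$, i.e., $v \in \conset$ by the definition \eqref{Xi}. But then the hypothesis of the lemma forces $\lan v, \mvect\ran \geq 0$, contradicting $\lan v, \mvect\ran < 0$ derived above. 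This contradiction shows $\mvect \in \Theta$.

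I do not expect any genuine obstacle: this is elementary finite-dimensional convex geometry. The only step requiring care is the passage from a separating functional to an element of the dual cone $\conset$ — it is precisely the positive homogeneity of $\Theta$ that converts a merely separating hyperplane into a supporting functional with $\lan v, \cdot\ran \geq 0$ on all of $\Theta$ — together with the (implicit) closedness of $\Theta$ flagged above.
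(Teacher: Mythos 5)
Your proof is correct, but it takes a different (equally standard) route from the paper's. The paper argues by contradiction using the metric projection $P_\Theta:\R^J\to\Theta$: the variational characterization of the projection, combined with the fact that $\Theta$ is a convex cone with vertex at $0$, yields $\lan P_\Theta(\mvect)-\mvect, b\ran \geq 0$ for all $b\in\Theta$, so the explicit vector $P_\Theta(\mvect)-\mvect$ lies in $\conset$, and non-expansiveness of $P_\Theta$ then forces $\lan P_\Theta(\mvect)-\mvect,\mvect\ran<0$, a contradiction. You instead invoke strict separation of the point $\mvect$ from the closed convex set $\Theta$ as a black box and then use positive homogeneity of $\Theta$ (scaling $b_0$ to drive the inner product to $-\infty$) to upgrade the separating functional $v$ to an element of $\conset$, after which the hypothesis gives the contradiction. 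The two arguments are close cousins — the projection construction is essentially how one proves the separation theorem in finite dimensions — but the paper buys an explicit dual vector and avoids quoting a separation theorem, while your version is shorter modulo that citation and makes the role of the conic structure (homogeneity converting separation into nonnegativity on all of $\Theta$) more transparent. Your explicit handling of closedness is a genuine point of added care: both proofs need $\Theta$ closed (the paper's metric projection tacitly requires it, and without closedness the statement fails, e.g.\ for an open half-space together with the origin), and your observation that this is harmless — the dual cone is unchanged under closure, and in the application the sets $\Theta_{k-1}^\ve$ are of the form $\conv[\cdot]$, which the paper defines as a closed convex hull — is exactly the right way to discharge the hypothesis the paper leaves implicit.
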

\begin{proof} We use an argument by contradiction to establish  the lemma. Suppose that there exists
$\mvect \in \R^J\sm \Theta$ such that $\lan v, \mvect \ran \geq 0$ for
all $v \in \conset$.
Let $P_{\Theta} : \R^J \rightarrow\Theta$ be the metric projection onto
the cone $\Theta$ (which assigns to each point $x\in \R^J$  the point
on $\Theta$ that is closest to $x$).
Since $P_{\Theta}(\mvect )-\mvect$  is the inward normal to $\Theta$ at
$P_{\Theta}(\mvect)$, $\Theta$
is convex and has vertex at the origin, we have \[\lan
P_{\Theta}(\mvect)-\mvect, b\ran \geq 0
\mbox{ for each } b\in \Theta.\] This implies that $P_{\Theta}(\mvect )-\mvect \in
\conset$ and hence, by the assumed property of $\mvect$, $\lan
P_{\Theta}(\mvect)-\mvect, \mvect \ran \geq 0$.   On the other hand, since $P_{\Theta}$
is non-expansive and $P_{\Theta} (\mvect)\neq \mvect$ because $\mvect \not \in \Theta$, it follows that  $\lan P_{\Theta}(\mvect), \mvect\ran
< \lan \mvect, \mvect \ran$, which yields a contradiction.  \end{proof}
\bigskip

We now use this to establish the local reflection
property when $\barz \in \MU$.  

\begin{lemma}
\label{lem-locrefu}
\[ \Q_{\barz}\left( \y(t\wedge \stopr_r)   \in \conv
   \left[ \huge\cup_{u \in [0,t \wedge \stopr_r]} d(\zee(u))
   \right]\right) =1.\]
\end{lemma}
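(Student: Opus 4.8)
The plan is to first identify the candidate pushing process $\y(\cdot\wedge\stopr_r)$ with an integral against the random measure $\tilde\mu$ of Proposition \ref{prop-chig}, and then to close the argument with the elementary geometric fact recorded in Lemma \ref{lem:ind}. Everything below is understood to hold $\Q_{\barz}$-almost surely, on the intersection of the full-measure set $\Omega_0$ of Proposition \ref{prop-chig} with the $\Q_{\barz}$-null exceptional sets arising from Lemma \ref{lem-alpha} and Lemma \ref{lem:Urep2}.

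The first, and principal, step is to prove the representation
\[
 \y(t\wedge\stopr_r)=\int_{\intset_{t\wedge\stopr_r}}\vect\,\tilde\mu(du,d\vect),\qquad t\geq0 .
\]
Combining \eqref{def-alpha} with \eqref{semi} and the identity $\fvproc(\cdot\wedge\stopr_r)=\y(\cdot\wedge\stopr_r)$ of Lemma \ref{lem:Urep2}, the process $\alpha-\martin(\cdot\wedge\stopr_r)$ coincides with $t\mapsto\y(t\wedge\stopr_r)-\int_{\intset_t}\vect\,\tilde\mu(du,d\vect)$ and vanishes at $t=0$. On one hand, by Lemma \ref{lem-alpha} (taking $\vartheta$ constant, so that $\exp\{\langle\vartheta,\alpha(t\wedge\stopr_r\wedge\zeta_c)\rangle-\frac{1}{2}\int_0^{t\wedge\stopr_r\wedge\zeta_c}\langle\vartheta,a(\zee(u))\vartheta\rangle\,du\}$ is a $\Q_{\barz}$-martingale for every $\vartheta\in\R^J$, and letting the localizing times $\zeta_c$ of \eqref{zeta} tend to $\infty$), the process $\alpha$ is a continuous $\Q_{\barz}$-local martingale; since $\martin(\cdot\wedge\stopr_r)$ is also a continuous $\Q_{\barz}$-local martingale by Proposition \ref{prop-locsm}, so is $\alpha-\martin(\cdot\wedge\stopr_r)$. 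On the other hand, $\y(\cdot\wedge\stopr_r)=\fvproc(\cdot\wedge\stopr_r)$ is continuous of locally bounded variation by Proposition \ref{prop-locsm}, while $t\mapsto\int_{\intset_t}\vect\,\tilde\mu(du,d\vect)$ is continuous by the last assertion of Proposition \ref{prop-chig} (with $g$ the constant maps $x\mapsto e^\ell$) and, since $u\mapsto\intset_u$ is nondecreasing and $|\vect|=1$ on $S_1(0)$, has total variation on $[0,t\wedge\stopr_r]$ bounded by $\tilde\mu(\intset_{t\wedge\stopr_r})$. This last quantity is finite: because $\zee(\cdot\wedge\stopr_r)$ stays in $\bar B_{\kappa_{\barz}(r)}(\barz)$ one has $\langle\vect,\nabla f_{r,s}(\zee(u))\rangle\geq1$ for every $(u,\vect)\in\intset_{t\wedge\stopr_r}$ by Lemma \ref{lem:frs}, whence \eqref{xif31} gives
\[
 \tilde\mu\bigl(\intset_{t\wedge\stopr_r}\bigr)\;\leq\;\int_{\intset_{t\wedge\stopr_r}}\langle\vect,\nabla f_{r,s}(\zee(u))\rangle\,\tilde\mu(du,d\vect)\;=\;\pushproc^{f_{r,s}}(t\wedge\stopr_r)\;<\;\infty ,
\]
the last bound holding by continuity of $\pushproc^{f_{r,s}}$. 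Thus $\alpha-\martin(\cdot\wedge\stopr_r)$ is a continuous $\Q_{\barz}$-local martingale of locally bounded variation starting at $0$, hence vanishes identically; evaluating the resulting identity at $t\wedge\stopr_r$ yields the displayed representation of $\y(t\wedge\stopr_r)$.

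The second step is purely geometric. Fix $t\geq0$ and put $\Theta\doteq\conv\bigl[\bigcup_{u\in[0,t\wedge\stopr_r]}d(\zee(u))\bigr]$; since each $d(x)$ is a nonempty closed convex cone with vertex at the origin, so is $\Theta$. For every $(u,\vect)\in\intset_{t\wedge\stopr_r}$ we have $u\leq\stopr_r$, hence $\zee(u)\notin\MV$ (because $B_{\kappa_{\barz}(r)}(\barz)\cap\MV=\emptyset$, as $\kappa_{\barz}(r)<\dist(\barz,\MV)$ by Lemma \ref{lem:cutoff}), and $\vect\in d(\zee(u))\subseteq\Theta$. By Lemma \ref{lem:ind} applied with this $\Theta$, to conclude that $\y(t\wedge\stopr_r)\in\Theta$ it suffices to verify $\langle q,\y(t\wedge\stopr_r)\rangle\geq0$ for every $q$ in the dual cone $\conset$ of \eqref{Xi}; and for such $q$, using the representation of $\y(t\wedge\stopr_r)$ established above and $\langle q,\vect\rangle\geq0$ on $\intset_{t\wedge\stopr_r}$ (since $\vect\in\Theta$),
\[
 \langle q,\,\y(t\wedge\stopr_r)\rangle=\int_{\intset_{t\wedge\stopr_r}}\langle q,\vect\rangle\,\tilde\mu(du,d\vect)\;\geq\;0 .
\]
This gives $\y(t\wedge\stopr_r)\in\Theta$ $\Q_{\barz}$-almost surely, which is the assertion of the lemma.

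I expect the first step to be the main obstacle, as it requires carefully stitching together the random-measure representation of $\pushproc^f$ (Proposition \ref{prop-chig}), the local semimartingale decomposition of $\smart(\cdot\wedge\stopr_r)$ and its explicit form (Proposition \ref{prop-locsm}, Lemma \ref{lem:Urep2}), the exponential-martingale structure of $\alpha$ (Lemma \ref{lem-alpha}), and the finiteness bound on $\tilde\mu(\intset_{t\wedge\stopr_r})$ furnished by Lemma \ref{lem:frs}. Once the representation $\y(t\wedge\stopr_r)=\int_{\intset_{t\wedge\stopr_r}}\vect\,\tilde\mu(du,d\vect)$ is in hand, the geometric conclusion via Lemma \ref{lem:ind} is immediate.
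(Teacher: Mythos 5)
Your proof is correct, but it takes a genuinely different route from the paper's. The paper argues by excursions: using the covering Lemma \ref{lem:cover} and the localized test functions of Lemmas \ref{lem:cutoff} and \ref{lem:htest2}, it introduces nested stopping times marking successive visits to the boundary and exits from small balls, shows via \eqref{bdjump} and \eqref{Urep2b} that $\y$ is flat during interior excursions, shows via the increasing processes $\pushproc^{g^{\vect}}$ (Lemma \ref{lem-gtest}) and Lemma \ref{lem:ind} that each boundary-excursion increment lies in a local cone $\Theta_{k-1}^{\ve}$, and then sends $\ve \downarrow 0$ using the closedness of the graph of $d(\cdot)$. You instead upgrade the machinery of Sections \ref{subs-integrep}--\ref{subs-martbound} to the explicit representation $\y(t\wedge\stopr_r)=\int_{\intset_{t\wedge\stopr_r}}\vect\,\tilde\mu(du,d\vect)$, obtained by noting that $\alpha(\cdot\wedge\stopr_r)$ is a continuous local martingale, that $\alpha(\cdot\wedge\stopr_r)-\martin(\cdot\wedge\stopr_r)=\fvproc(\cdot\wedge\stopr_r)-\int_{\intset_{\cdot\wedge\stopr_r}}\vect\,\tilde\mu(du,d\vect)$ is continuous of finite variation (your $\tilde\mu$-mass bound via $f_{r,s}$ mirrors the estimate in the proof of Lemma \ref{lem-alpha}), and invoking uniqueness of the semimartingale decomposition together with \eqref{Urep2b}; after that, a single application of Lemma \ref{lem:ind} with $\Theta=\conv\left[\cup_{u\in[0,t\wedge\stopr_r]}d(\zee(u))\right]$ finishes, since $\tilde\mu$ restricted to $\intset_{t\wedge\stopr_r}$ only charges pairs $(u,\vect)$ with $\vect\in d(\zee(u))$. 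Your route is shorter, dispenses with the covering and $\ve$-limiting argument, and yields a stronger conclusion (a boundary-measure representation of the pushing process, which gives the cone property of increments over arbitrary subintervals for free), whereas the paper's route only needs the increasingness of $\pushproc^{g^{\vect}}$, a weaker consequence of the submartingale property. The one step you assert rather than prove is the passage from Lemma \ref{lem-alpha} (exponential martingales) to the local-martingale property of $\alpha$: Lemma \ref{lem-alpha} as stated does not assert this, so you should spell out the (standard) argument — on $[0,\stopr_r\wedge\zeta_c]$ the integral $\int_0^{t}\langle\vartheta,a(\zee(u))\vartheta\rangle\,du$ is bounded by a deterministic constant because $\zee$ stays in $\bar B_{\kappa_{\barz}(r)}(\barz)$, so the exponential-martingale identity yields all exponential moments of $\alpha(t\wedge\stopr_r\wedge\zeta_c)$ and can be differentiated at $\vartheta=0$ to give the martingale property (alternatively this is among the conclusions of the Stroock--Varadhan theorem already invoked in the proof of Lemma \ref{lem-alpha}), after which letting $c\to\infty$ gives the localization.
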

\begin{proof}
 For each $\varepsilon>0$ and $y\in \MU$, by Lemma \ref{lem:cutoff} 
 we can choose $r_{y,\varepsilon}<\varepsilon$ 
such that properties 1--3 of Lemma \ref{lem:cutoff} are satisfied for $r =
r_{y,\varepsilon}$.  By applying
Lemma \ref{lem:cover} with $\mathcal{O}_y= B_{\kappa_y(r_{y,\varepsilon})}(y)$, $y\in \MU$, and $\mathcal{O}_y=B_\varepsilon(y)$, $y\in \MV$, there exist a countable set $\S^\varepsilon$ and a measurable map $\kappa^\varepsilon$ such that the first part of Lemma \ref{lem:cover} holds. Let $\{ \stopinnewt_{k}^\varepsilon,\ k\in \N \cup \{0\}\}$ and
$\{ \stopoutnew_{k}^\varepsilon,\ k\in \N\}$ be the two nested
sequences of  stopping times defined by $\stopinnewt_{0}^\varepsilon =
0$ and for each $k\in \N$, 
recursively define the following two nested sequences of stopping times:
\begin{eqnarray}
\label{tsigma-ne}
\stopoutnew_{k}^\ve & \doteq & \inf \left\{ t \geq
  \stopinnew_{k-1}^\ve:  \zee (t)
\notin
B_{\ell_{\ptt_{k-1}^\ve}} ( \ptt_{k-1}^\ve) \right\},   \\
\label{ttau-ne}
\stopinnew_{k}^\ve & \doteq & \inf \left\{t\geq \stopoutnew_{k}^\ve:
  \zee (t)\in
\partial G\right\},
\nonumber\end{eqnarray} 
where $\ptt_{k-1}^\ve
\doteq  \kappa^\ve( \zee (\stopinnew_{k-1}^\ve))$ and $\ell_{\ptt_{k-1}^\ve}=  \kappa_{\ptt_{k-1}^\ve}(r_{\ptt_{k-1}^\ve,\varepsilon})$. 
Note that when $\zee (\stopoutnew_k^\ve) \in \partial B_{\ell_{\ptt_{k-1}^\ve}} ( \ptt_{k-1}^\ve) \cap \MU_{\delta}^\MJ$, one could have $\stopinnew_{k}^\ve = \stopoutnew_k^\ve$.   However, because Lemma \ref{lem:cover} implies $\zee
(\stopoutnew_k^\ve)$ lies strictly in the interior of $B_{\ell_{\ptt_{k}^\ve}} ( \ptt_{k}^\ve)$, we always have $\stopoutnew_{k+1}^\ve >\stopoutnew_{k}^\ve$.  

First, observe that the relations $\int_0^{\stopr_r} \ind_{G}
(\zee (u)) d|\fvproc|(u) = 0$ and $\fvproc  (t
\wedge \stopr_r) = \y (t
\wedge \stopr_r)$ established in \eqref{bdjump} and \eqref{Urep2b},
respectively, along with the fact that $\zee (t) \in G$ for $t \in
\cup_{k \in \N}[\stopoutnew_{k}^\ve, \stopinnew_k^\ve)$  imply that
for every $t \geq 0$, 
\be
\label{eta-zero}
\y (t \wedge \stopinnew_k^\ve \wedge \stopr_r) - \y( t \wedge
\stopoutnew_{k}^\ve \wedge \stopr_r) = 0, \quad k \in \N. 
\ee
Next,  for each $k \in \N$, if $\stopinnewt^\ve_{k-1}<\infty$, let\[
\Theta^\ve_{k-1} \doteq \conv \left[\bigcup_{y\in
    B_{\ell_{\ptt_{k-1}^\ve}} ( \ptt_{k-1}^\ve)}d(y)\right], 
\] with
$\ptt_{k-1}^\ve$ defined as in \eqref{tsigma-ne},  and otherwise, let 
$\Theta_{k-1}^\ve \doteq \{0\}$. 
We now show that for every $\ve > 0$, 
\be \label{dis:v0u1} \Q_{\barz}\left(\y (t\wedge
  \stopoutnew_k^\varepsilon\wedge \stopr_r)  - \y( t \wedge
  \stopinnew_{k-1}^\varepsilon \wedge \stopr_r)  \in \Theta_{k-1}^\ve
\right) = 1,  \quad k \in \N. 
\ee
First,  let $\conset_{k-1}^\varepsilon$ be
the set defined in (\ref{Xi}) with 
$\Theta_{k-1}^\ve$ in place of $\Theta$.
For each $v \in \conset_{k-1}^\varepsilon$,  define $g^{v} (x) \doteq \langle v, x
\rangle$, $x \in \R^J$.   Then $g^v \in {\mathcal C}^2(\R^J)$ and 
$\langle \nabla g^v(x), d \rangle = \langle v, d \rangle \geq 0$  for $d \in
d(x)$, $x \in  B_{\ell_{\ptt_{k-1}^\ve}} ( \ptt_{k-1}^\ve)$. 
Thus, Lemma \ref{lem:submartperv}, Lemma \ref{lem:htest2} and Lemma \ref{lem-gtest} 
imply that  $S^{g^v}(\cdot \wedge \stopoutnew_k^\ve\wedge \stopr_r)-S^{g^v}(\cdot \wedge \stopinnew_{k-1}^\varepsilon\wedge \stopr_r)$ is a continuous $\Q_{\barz}$-submartingale and $\Q_{\barz}$-almost surely, 
$R^{g^v}(\cdot \wedge \stopoutnew_k^\ve\wedge \stopr_r)-R^{g^v}(\cdot \wedge \stopinnew_{k-1}^\varepsilon\wedge \stopr_r) = \langle v, \fvproc (\cdot \wedge \stopoutnew_k^\ve\wedge \stopr_r)- 
\fvproc (\cdot \wedge \stopinnew_{k-1}^\varepsilon\wedge \stopr_r) \ran$
is a continuous increasing process. 
In turn, since $\fvproc (\cdot \wedge \stopr_r)= \y(\cdot \wedge \stopr_r)$ on $[0,\stopr_r]$ by \eqref{Urep2b}, this implies 
$\lan v, \y(\cdot\wedge 
\stopoutnew_k^\varepsilon\wedge \stopr_r) -  \y(\cdot\wedge 
\stopinnew_{k-1}^\varepsilon\wedge \stopr_r)\ran$ is a continuous increasing
process. In particular,  for every $t \geq 0$, $\lan v, \y(t\wedge 
\stopoutnew_k^\varepsilon\wedge \stopr_r) -  \y(t\wedge 
\stopinnew_{k-1}^\varepsilon\wedge \stopr_r)\ran\geq \lan v, \y(0)-\y(0)\ran=0$ 
for every $v\in \conset_{k-1}^\varepsilon$. 
Thus,   (\ref{dis:v0u1}) follows from Lemma \ref{lem:ind}. 

Note that   \eqref{eta-zero},  (\ref{dis:v0u1}) and the fact that
$Y(0) = 0$ together imply that for every
$\ve > 0$,  
\be \label{dis:v0u4} \Q_{\barz}\left( \y (t\wedge \stopr_r)  \in
  \conv\left[\bigcup_{k\in \N: \stopinnew_{k-1}^\varepsilon < t \wedge \stopr_r}\Theta_{k-1}^\ve   \right]\right) = 1. 
\ee
Now, for each $k \in \N$,  $\ve > 0$ and $\omega \in \ccspace$, 
define 
\[ F_{k-1}^\ve (\omega) \doteq  \bigcup_{u \in [\stopinnewt_{k-1}^\varepsilon(\omega), \stopinnewt_k^\varepsilon(\omega))}\left\{y\in \bar G:\ |y-\zee (\omega,u)|\leq
4\varepsilon\right\},
\] 
and, recalling that $r_{y,\ve}
< \ve$, note that 
\[\Theta_{k-1}^\ve(\omega)\subseteq 
\conv \left[ \bigcup_{y\in F_{k-1}^\ve(\omega)} d(y) \right]. \]
Since the graph $d(\cdot)$ is closed, we have $\Q_{\barz}$-almost surely, 
\[ \bigcap_{\ve > 0}\bigcup_{k\in
  \N: \stopinnew_{k-1}^\varepsilon < t \wedge \stopr_r}\conv\left[\bigcup_{y\in F_{k-1}^\ve} d(y)\right] = \conv\left[\bigcup_{u \in
  [0,t \wedge \stopr_r]} d(\zee (u))\right].\]   
  Thus,   sending $\varepsilon \downarrow 0$ on both sides of
  (\ref{dis:v0u4}), we obtain  \eqref{eq-locref}. 
\end{proof}

\appendix
\numberwithin{equation}{section}

\beginsec

\section{Proof of a Measurability Property for the ESP}
\label{ap-esp}

We now establish 
Lemma \ref{lem-spesp}. 
Let the processes  $X, Y, Z$,  and the stopping times  $\theta_1,
\theta_2$ be as in Lemma \ref{lem-spesp}, let 
the stopped shifted processes $\tilde{Y}$ and $\tilde{Z}$  be defined as in
\eqref{def-tildey} and \eqref{def-tildez}, respectively, 
and also define the corresponding 
process $\tilde{X}(u) = Z (\theta_1) + X
((u+\theta_1) \wedge \theta_2) - X (\theta_1)$, $u \in [0,\infty)$.  
Also, given any $\R^J$-valued process $H$, 
recall that $|H|(u)$ represents the 
total variation of $H$ on $[0,u]$. 
 It follows from Lemma 2.3 of   
\cite{Ram06}   that $\P_z$-almost surely on the set $A = \{\theta_1 < \theta_2\}$, 
$(\tilde{Z}, \tilde{Y})$ satisfies the ESP
for $\tilde{X}$.  On the other hand, Theorem 2.9 of \cite{Ram06}  shows that for $\omega \in A$ such that 
$\tilde Z(\omega,s) \not \in \MV$ for all $s \in  [0,
\theta_2 (\omega)-\theta_1 (\omega)]$ (which should be interpreted as $s \in [0, \infty)$ when
$\theta_2(\omega) = \infty$), 
the total variation $|\tilde Y|$ of $\tilde{Y}$ is
finite on every bounded interval $[0,t]$, $t < \infty$. 
It then follows from property 2 of Theorem 1.3 of \cite{Ram06} 
(which is restated in Remark \ref{rem-esp} of this paper) that for each $\omega$, one can find 
a Borel measurable function $\gamma(\omega, \cdot)$ on $[0,\infty)$,
with the desired properties stated in \eqref{eq-espmeas}.  
However, to prove the assertion in Lemma \ref{lem-spesp}, we need to
show the existence of a version of $\gamma$  that is jointly
measurable  in $\Omega \times [0,\infty)$. 

To show this, for each $N\in  \mathbb{N}$, we define the stopping time 
\[\tau^N\doteq
\inf\{t\geq 0:\ |\tilde Y|(t) \geq N\},\] 
 let $\theta^N \doteq \tau^N \wedge \theta_2$, 
and let  $\tilde{Y}^{\theta_2}(\cdot)$ and $\tilde{Y}^{\theta^N} (\cdot)$,
respectively, be the stopped processes 
$\tilde{Y}(\cdot \wedge \theta_2)$ and 
$\tilde{Y}(\cdot \wedge \theta^N) = \tilde{Y}(\cdot \wedge \theta_2 \wedge \tau^N)$.

Consider the measure $\bar \mu^N$  on $(\Omega \times \R_+, {\cal F} \times \MB(\R_+))$
defined  by 
 \begin{eqnarray}
\label{def-mutvn}
\bar \mu^N(A\times (s,t]) & \doteq & \E^{\P_z}\left[\ind_A (|\tilde
   Y|(t\wedge \theta^N)-|\tilde Y|(s\wedge \theta^N)) \right] \\
\nonumber
& =  & \E^{\P_z}\left[ \ind_A \int_{(s,t]} d
   |\tilde{Y}^{\theta^N}| (u) \right] 
\end{eqnarray}
 for $A\in {\cal F}$ and  $0\leq s<t< \infty$. 
Since $|\tilde{Y}|$ is almost surely non-decreasing, the definition of
$\tau^N$ implies $\bar{\mu}^N(\Omega \times \R_+) \leq N$, and thus, each  $\bar{\mu}^N$ is  a
finite measure.  
 In an analogous fashion, for each $i=1,\ldots,J$,  define $\mu^N_i$
 to be 
the finite signed  measure on $(\Omega \times \R_+,{\cal F} \times
\MB(\R_+))$ that satisfies, 
\be
\label{def-muitvn}
\mu^N_i(A\times
 (s,t] \doteq \E^{\P_z}\left[\ind_A (\tilde Y_i(t\wedge
   \theta^N)-\tilde Y_i(s\wedge \theta^N)) \right], \quad  A\in {\cal
   F},  0\leq s<t< \infty.  
\ee
From \eqref{def-mutvn} and \eqref{def-muitvn}, it is  clear that
$\mu^N_i\ll \bar{\mu}^N$ 
 for $i = 1, \ldots, J$. Let $\mu^N$ denote the $J$-dimensional vector of
finite signed measures whose $i$th entry is $\mu^N_i$.  
By the Radon-Nikod\`{y}m Theorem, there exists a measurable function
$\gamma^N : (\Omega \times \R_+, {\mathcal F} \times \MB(\R_+))
\mapsto (\R^J, {\mathcal B}(\R^J))$ such that 
\be
\label{mun-rn}
\mu^N(A\times (s,t]) =  \int_{A\times (s,t]}\gamma^N(\omega, u)
d \bar{\mu}^N(\omega, u), \quad A\in {\cal F}, 0\leq s<t<\infty. 
\ee
 Moreover, it is also clear that $\gamma^N(\omega, u)=\gamma^N(\omega,
 u\wedge \tau^N(\omega))$ for  each $\omega\in \Omega,\ 0\leq u<\infty$.

Now, from \eqref{def-muitvn}, \eqref{mun-rn} and \eqref{def-mutvn}, it follows that for each random
variable $\rv$ and measurable function $h$ defined on $\R_+$, 
\begin{eqnarray*}
 \E^{\P_z}\left[\rv \int_{[0,\infty)} h(u) d \tilde
   Y^{\theta^N}(u) \right] &=& \int_{\Omega \times \R_+} \rv(\omega)h(u) d\mu^N(\omega, u)\\ &=& \int_{\Omega \times \R_+}\rv(\omega)h(u) \gamma^N(\omega, u) d\bar{\mu}^N(\omega, u) \\ &=&  \E^{\P_z}\left[\rv (\cdot)\int_{[0,\infty)} h(u) \gamma^N(\cdot, u) d |\tilde Y^{\theta^N}|(u) \right].
\end{eqnarray*}
Hence,  for each $0\leq s<t<\infty$,  since the above display holds for each $\xi$, by choosing $h(u)=\ind_{(s,t]}(u)$, we see that $\P_z$-almost surely, 
\[\tilde Y(t\wedge \theta^N)-\tilde Y(s\wedge \theta^N) = \int_{(s,t]}\gamma^N(\cdot, u) d |\tilde Y^{\theta^N}|(u),\]
and the continuity of $\tilde Y$  implies  that  $\P_z$-almost surely, 
\be
\label{YRN1}
\tilde Y(t\wedge \theta^N)-\tilde Y(s\wedge \theta^N) =
\int_{[s,t]}\gamma^N(\cdot, u) d |\tilde
  Y^{\theta^N}|(u),\quad  0\leq s<t< \infty. 
\ee
In turn, this 
 shows that for $\P_z$-almost  every $\omega$,  $\gamma^N(\omega, \cdot)$ is a
version of the Radon-Nikod\`{y}m derivative of
$d\tilde{Y}^{\theta^N}(\omega, \cdot)$
with respect to $d|\tilde{Y}^{\theta^N}|(\omega, \cdot)$.

We now show that the sequence $\gamma^N, N \in \N,$ is consistent in the sense 
that for $N \in \N$, 
\be
\label{gamma-consist}
\gamma^{N+1}(\cdot, u\wedge \tau^N) = \gamma^{N}(\cdot, u) \quad
\mbox{ for } d|\tilde Y^{\theta_2}|-a.e.\   u \in [0,\infty).
\ee
 Indeed, first note that for each $N\in \N$ and $0\leq s<t<
\infty$,  \eqref{YRN1}, with $N$ replaced by $N+1$, $t$ replaced by
$t \wedge \tau^N$ and $s$ replaced by
$s \wedge \tau^N$,  yields 
 \[\tilde Y(t\wedge
\tau^N\wedge \theta^{N+1})-\tilde Y(s\wedge \tau^N\wedge \theta^{N+1})
= \int_{[s\wedge \tau^N, t\wedge \tau^N]}\gamma^{N+1}(\cdot, u) d
|\tilde Y^{\theta^{N+1}}|(u),\] 
which can be  equivalently rewritten as 
\begin{eqnarray*}
\tilde Y(t\wedge \theta^{N})-\tilde Y(s\wedge \theta^{N}) & =
& \int_{[s,t]}\gamma^{N+1}(\cdot, u\wedge \tau^N) d |\tilde
  Y^{\theta^N}|(u).
\end{eqnarray*}
A  comparison with  \eqref{YRN1} shows that 
\[\gamma^{N+1}(\cdot, u\wedge \tau^N) = \gamma^{N}(\cdot, u) \quad
\mbox{ for } d|\tilde Y^{\theta^{N}}|-a.e.\  u\in
[0,\tau^N],  \] 
which is equivalent to \eqref{gamma-consist}.  
Next,  define 
\[\gamma (\omega, u) = \limsup_{N\rightarrow \infty} \gamma^N (\omega,
u), \quad  (\omega, u) \in \Omega \times [0,\infty). \] 
It follows that $\gamma$ is a measurable function from $(\Omega \times
\R_+, {\mathcal F} \times \MB(\R_+))$ to $(\R^J, \MB (\R^J))$, and
$\P_z$-almost surely,  $\gamma(\cdot, u\wedge \tau^N)
= \gamma^N(\cdot, u)$ for $d|\tilde Y^{\theta_2}|-$ almost every $u \in [0,\infty)$  and 
for each $0\leq s<t< \infty$, 
\begin{eqnarray*}\tilde Y(t\wedge \theta_2\wedge \tau^N)-\tilde
  Y(s\wedge \theta_2\wedge \tau^N) &=& \int_{[s,t]}\gamma(\cdot, u\wedge
  \tau^N) d |\tilde Y^{\theta_2\wedge \tau^N}|(u)\\ &=&
  \int_{[s\wedge \tau^N,t\wedge \tau^N]}\gamma(\cdot, u) d
  |\tilde Y^{\theta_2}|(u).
\end{eqnarray*}
Sending  $N\rightarrow \infty$ and using the continuity of $\tilde
Y$ and the fact that $\tau^N \ra \infty$ because $\tilde{Y}$ has
finite variation on every bounded interval, we have  $\P_z$-almost surely, for each $0\leq s<t< \infty$, 
\begin{eqnarray*}\tilde Y(t)-\tilde Y(s)=\tilde Y(t\wedge \theta_2)-\tilde Y(s\wedge \theta_2)= 
  \int_{[s,t]} \gamma(\cdot, u) d |\tilde Y^{\theta_2}|(u)=\int_{[s,t]} \gamma(\cdot, u) d |\tilde Y|(u).
\end{eqnarray*}
Finally, since the pair $(Z,Y)$ solves the ESP for $X$, it follows from Remark \ref{rem-esp} that 
 $\P_z$-almost surely, $\gamma (u) \in d(\tilde Z(u))$ for $d|\tilde
 Y|$-almost every $u \in [0,\infty)$. This completes the proof.

\beginsec

\section{Regular Conditional Probabilities}
\label{ap-regcondprob}

\begin{proof}[Proof of Lemma \ref{lem:submartperv}] 
Fix $y\in \bar{G}$. 
Let $\MH_0$ be the countable dense subset of $\MH$ mentioned in Remark
\ref{rem-MH}. 
Then there exists a set $\MN\in \MM_{\tau}$ with $\Q_y(\MN)=0$ such that for every $\omega'\notin \MN$ with $\tau(\omega')<\infty$, each $f\in \MH_0$ and each $N\in \N$, \[f(\omega((\tau(\omega')+t)\wedge \varsigma^N))-\int_{\tau(\omega')\wedge \varsigma^N}^{(\tau(\omega')+t)\wedge \varsigma^N}\ML
f(\omega(u))\,du\] is a $\Q_{\omega'}$-submartingale, where \[\varsigma^N=\inf\{t\geq 0:\ \omega(t)\notin B_N(0)\},\qquad
 \omega\in \ccspace. \] It follows from (\ref{fgk}) that for every $\omega'\notin \MN$ with $\tau(\omega')<\infty$, each
$f\in \MH$ and $N\in \N$, \[f(\omega((\tau(\omega')+t)\wedge \varsigma^N))-\int_{\tau(\omega')\wedge \varsigma^N}^{(\tau(\omega')+t)\wedge \varsigma^N}\ML
f(\omega(u))\,du\] is a $\Q_{\omega'}$-submartingale. By passing to the limit as
$N\rightarrow \infty$, we conclude that for every $\omega'\notin \MN$ with $\tau(\omega')<\infty$ and each
$f\in \MH$, \[f(\omega(\tau(\omega')+t))-\int_{\tau(\omega')}^{\tau(\omega')+t}\ML
f(\omega(u))\,du\] is a $\Q_{\omega'}$-submartingale. This shows that property 3 of Definition \ref{def-smg} holds for $\omega(\tau(\omega')+\cdot)$ under $\Q_{\omega'}$. At last, properties 1, 2 and 4 of Definition \ref{def-smg} hold for $\omega(\tau(\omega')+\cdot)$ under $\Q_{\omega'}$ by enlarging, if needed, $\MN\in \MM_\tau$ with $\Q_x(\MN)=0$ using properties of regular conditional probability distributions. This completes the proof of the lemma. \end{proof}

\beginsec

\section{Proof of the Covering lemma}
\label{subs-cover}

In this section, we prove Lemma  \ref{lem:cover}. 
Fix a collection of open sets $\{{\mathcal O}_y,\ y \in \partial G\}$ such that ${\mathcal O}_y \cap \MV = \emptyset$ if $y\in \MU$. To prove Lemma \ref{lem:cover}, we introduce a countable open covering of $\MU$ from $\{{\mathcal O}_y,\ y \in \partial G\}$ that has certain desirable geometric properties. 
 For each $\delta>0$, let  
\be
\label{def-delta}
 \MU_\delta\doteq \left\{y\in \MU:\  \begin{array}{c}
\MI(z)\subseteq\MI(y) \mbox{ for all } z\in B_\delta(y)\cap \partial
G \mbox{ and } \exists\ n \in n(y) \\
 \mbox{ such that } n=\sum_{i\in \MI(y)}  \coeff_i n^i(y),\mbox{ where }
 \sum_{i\in \MI(y)}s_i=1,\\\coeff_i \geq 0, i \in \MI (y) \mbox{ and } \lan n, d
 \ran \geq \delta |d| \mbox{ for all } d \in d(y) \end{array}
\right\},
\nonumber \ee
and for each  $\J \subseteq \MI$, let
\be
\label{def-deltaJ}
\MU^{\J}_\delta\doteq \{y\in \MU_\delta: \MI(y)=\J\}.
\ee
It follows from Lemma 6.2 of \cite{KanRam14} that $\MU^{\J}_\delta$ is a
closed set for each $\delta > 0$, and $\J \subseteq \MI$. It is immediate from (\ref{def-deltaJ}) that any two elements in
$\{\MU^{\J}_\delta, \J \subseteq \MI\}$ are
 disjoint, and
\be
\label{ginc}
\MU = \bigcup_{\delta > 0} \MU_{\delta}, \qquad  \MU_\delta =
\bigcup_{\J \subseteq \MI}\MU^{\J}_\delta.  \nonumber
\ee

Recall that $B_r(x)$
denotes an open ball of radius $r$ around the point $x$. Suppose that $\MU_\delta^{\J} \neq \emptyset$. Since $\MU^{\J}_\delta$ is a closed set, then $\MU^{\J}_\delta \cap
\bar{B}_n(0)$ is compact for each $n \in \N$.    Since $\{ {\mathcal O}_y,\ y \in \MU^{\J}_{\delta}\cap \bar{B}_n(0)\}$
clearly covers the compact set $\MU^{\J}_\delta \cap
\bar{B}_n(0)$,  there exists a finite subset $\S_{n,\delta}^\MJ$ of $\MU^{\J}_{\delta}
\cap \bar{B}_n(0)$
such that $\{ {\mathcal O}_y,\ y \in \S_{n,\delta}^\MJ\}$ covers $\MU^{\J}_{\delta} \cap
\bar{B}_n(0)$.
 The countable set $\S^\MJ_\delta \doteq \cup_{m \in \N}
 \S_{n,\delta}^\MJ$ clearly satisfies 
\be \label{coverd}
\MU^\MJ_\delta \subseteq  \bigcup_{y \in  \S^\MJ_\delta}
{\mathcal O}_y. 
\nonumber \ee 
We can further choose the set $\S^\MJ_\delta$ to be minimal in the
sense that for each strict subset $C$ of $\S^\MJ_\delta$, $\cup_{y \in
  C} {\mathcal O}_y$ does not cover $\MU_\delta^\MJ$. Let $\S^\MJ \doteq
\cup_{n=1}^\infty\S^\MJ_{1/2^n}$. Then $ \cup_{y \in \S^\MJ}
{\mathcal O}_y$ covers $\MU^\MJ \doteq \{y\in \MU:\ \MI(y)=\MJ\}$.  
We enumerate the elements of $\S^\MJ=\{y_i,\ i\in \N\}$. Let $D_k={\mathcal O}_{y_k}\setminus (\cup_{i=0}^{k-1} {\mathcal O}_{y_i})\cap  \MU^\MJ$ for each $k$, $\{D_k\}$
is a partition of $\MU^\MJ$, and for each $y\in\MU^\MJ$ there is a unique index $i(y)$ such that
$y\in D_{i(y)}$. Define $\kappa^\MJ(y) = y_{i(y)}$. Then $\kappa^\MJ$
is a measurable mapping from $\MU^\MJ$ onto $\S^\MJ$. Let
$\S=\cup_{\MJ\subseteq \MI}\S^\MJ$. By patching $\kappa^\MJ,
\MJ\subseteq \MI,$ together, we get a measurable mapping $\kappa$
from $\MU$ onto $\S$. For each $y\in \MU$, $y\in \MU^{\MI(y)}$. Since
$\kappa(y)\in \S^{\MI(y)}$, we have $\MI(\kappa(y))=\MI(y)$.

\section{Construction of Test Functions}
\label{ap-test}

This section is devoted to the proof of Lemma \ref{lem:cutoff}. 
The first property of the lemma follows from the upper-semicontinuity of
  the set function $\MI(y) = \{i \in \MI: y \in \partial G_i\}$
  introduced in Definition \ref{ass:G}. 
Since $y \in \MU$, property 
2 follows from property 1, the definition of $\MU$ in 
\eqref{def-mu} and the continuity of  $n^i(\cdot)$ and
$\gamma^i(\cdot)$, which holds by  Definition \ref{ass:G}. 
We devote the rest of the section to the proof of property 3. 

We now establish two results in preparation for the proof of property 3.  We first paraphrase a result from \cite{Ram06}. 

\begin{lemma}
\label{prop:TF1}
Let $\Cone$  be a closed convex cone with vertex at the
origin and a boundary that is ${\cal C}^\infty$, except possibly at
the vertex. 
  Given any  closed, convex, compact subset  ${\cal  K}$ of the interior of
$\Cone$, constants $0 < \csto < \cstt < \infty$ and $\ve > 0$, there
exist $\theta > 0$ and 
a ${\cal C}^\infty$ function $\ell$ on the set
\[  \Lambda \doteq \{ x \in \R^J: \csto < \dist (x, \Cone)
< \cstt \}
\]
that satisfy the following properties:
\begin{enumerate}
\item
$\sup_{x \in \Lambda} \left( \left|\ell(x) - \dist (x, \Cone) \right|
\vee \left||\nabla \ell(x)| - 1 \right|\right)\leq  \ve$;
\item
there exists $\theta > 0$ such that
\[  \lan \nabla \ell (x), p \ran \leq - \theta, \qquad \mbox{ for } p
\in {\cal K} \mbox{ and } x \in \Lambda.
\]
\end{enumerate}
Moreover, if  $\Cone$ is a half-space, given any subset ${\cal K}$  of
$\Cone$,   the
function $\ell (x) \doteq \dist (x, \Cone)$, $x \in \Lambda$, is  a ${\cal C}^2$ function
on $\Lambda$ that satisfies property (1) and
property (2) with $\theta = 0$ above.
\end{lemma}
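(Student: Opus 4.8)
The plan is to reconstruct this statement (which is a paraphrase of a result of \cite{Ram06}) from elementary convex analysis together with a mollification. Write $g(x) \doteq \dist(x,\Cone)$ and let $\pi$ denote the nearest-point projection onto the closed convex cone $\Cone$. First I would record the standard facts: $g$ is convex and $1$-Lipschitz, $\pi$ is single-valued and $1$-Lipschitz (because $\Cone$ is closed and convex), and on the open set $\{g>0\}$ one has $g\in {\mathcal C}^1$ with $\nabla g(x) = (x-\pi(x))/g(x)$, a unit vector. Using the $1$-Lipschitz bounds on $g$ and $\pi$ together with the lower bound $g\ge \csto/2$, a short computation shows that $\nabla g$ is Lipschitz on $\{g\ge \csto/2\}$, with a Lipschitz constant $L=L(\csto)$; in particular $\nabla g$ is uniformly continuous on an open neighbourhood of $\overline{\Lambda}$. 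The one point that needs a little care here is that $\Lambda$ is unbounded (since $\Cone$ is a cone), so this uniformity must be obtained from the quantitative bounds just mentioned rather than from a compactness argument.

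Next I would extract the directional estimate from the cone structure. Testing the variational inequality $\lan x-\pi(x),\,v-\pi(x)\ran\le 0$, valid for all $v\in\Cone$, at $v=0$ and at $v=2\pi(x)$ — both of which lie in $\Cone$ since $\Cone$ is a cone with vertex at the origin — gives the identity $\lan x-\pi(x),\pi(x)\ran = 0$. Since $\mathcal{K}$ is compact and contained in the interior of $\Cone$, I can choose $\rho>0$ with $B_{2\rho}(p)\subset\Cone$ for every $p\in\mathcal{K}$; then for $x\notin\Cone$ the point $v = p+\rho\,\nabla g(x)$ lies in $\Cone$, and plugging it into the variational inequality, together with $\lan x-\pi(x),\nabla g(x)\ran = g(x)$ and the identity above, yields $\lan\nabla g(x),p\ran \le -\rho$ for all $p\in\mathcal{K}$ and all $x\notin\Cone$, in particular for all $x\in\Lambda$.

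Finally I would mollify: fix a standard mollifier and set $\ell \doteq g*\rho_\delta$; for $\delta<\csto/2$ this is a ${\mathcal C}^\infty$ function whose values on $\Lambda$ depend only on values of $g$ on $\{g\ge\csto/2\}$. The $1$-Lipschitz property of $g$ gives $\sup_{\Lambda}|\ell-g|\le\delta$; the uniform continuity of $\nabla g$ gives $\sup_{\Lambda}|\nabla\ell-\nabla g|\le L\delta$; and since $|\nabla g|\equiv 1$ on $\Lambda$ this also bounds $\sup_{\Lambda}\bigl|\,|\nabla\ell|-1\,\bigr|$ by $L\delta$. Combined with the previous step, $\lan\nabla\ell(x),p\ran \le -\rho + L\delta\sup_{p\in\mathcal{K}}|p|$ on $\Lambda$. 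Choosing $\delta$ small enough that the first two errors are $\le\ve$ and the last error $\le\rho/2$ then yields properties (1) and (2) with $\theta\doteq\rho/2$. For the half-space case $\Cone=\{x:\lan x,\nu\ran\le 0\}$ (the vertex at the origin forces the bounding hyperplane through $0$), on $\Lambda$ the function $g(x)=\lan x,\nu\ran$ is affine, so $\ell\doteq g$ is already ${\mathcal C}^2$ (indeed ${\mathcal C}^\infty$), property (1) holds with both sides equal to zero, $\nabla\ell\equiv\nu$, and $\lan\nabla\ell(x),p\ran=\lan\nu,p\ran\le 0$ for every $p\in\Cone$, which is property (2) with $\theta=0$. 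The only genuine obstacle in the whole argument is the uniform Lipschitz control of $\nabla g$ over the unbounded annular region $\Lambda$; everything else is routine bookkeeping of the mollification errors.
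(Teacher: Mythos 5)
Your proof is correct. The paper itself gives no self-contained argument: it simply observes that $\ell$ can be taken to be a suitable mollification of the distance function and defers all details to Lemma 6.2 of the cited work of Ramanan (2006). Your construction is the same in spirit (mollify $x\mapsto \mathrm{dist}(x,\Theta)$), but you rebuild the two ingredients that the citation hides, and you do so cleanly: (i) the key inner-product estimate $\langle \nabla g(x),p\rangle\le-\rho$ is derived directly from the variational inequality for the projection onto the cone, using $\langle x-\pi(x),\pi(x)\rangle=0$ and the test point $p+\rho\nabla g(x)\in\Theta$, which is a neat, purely convex-geometric argument; and (ii) you correctly flag and resolve the only delicate point, namely that $\Lambda$ is unbounded, by proving a quantitative Lipschitz bound for $\nabla g=(x-\pi(x))/g(x)$ on $\{g\ge\eta/2\}$ (nonexpansiveness of $I-\pi$ plus the lower bound on $g$), so the mollification errors are controlled uniformly without any compactness. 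A pleasant by-product is that your argument never uses the hypothesis that $\partial\Theta$ is ${\mathcal C}^\infty$ away from the vertex, so you prove a slightly more general statement than the one quoted; you could even shorten the last step by noting that $\langle\nabla\ell(x),p\rangle=\int\rho_\delta(y)\langle\nabla g(x-y),p\rangle\,dy\le-\rho$ directly, since $x-y\notin\Theta$ for $x\in\Lambda$ and $|y|\le\delta<\eta/2$, which removes the $L\delta\sup_{p\in\mathcal{K}}|p|$ error term and lets you take $\theta=\rho$. The half-space case is handled exactly as in the statement.
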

\begin{proof}
The function $\ell$ with the  properties
stated above can be constructed as a suitable mollification of the
distance function to the cone $\Cone$.  Indeed, 
Lemma \ref{prop:TF1} can be deduced from the proof of Lemma 6.2 of
\cite{Ram06},  with  $g_C$, $L_{C,\delta_C}$, $K_{C}^{\delta_C/3}$,
$\tilde \eta_C$, $\tilde \lambda_C$ and $\tilde
 \varepsilon_C$ therein replaced by  $\ell(\cdot)$, $\Cone$,  ${\cal
   K}$, $\csto$,  $\cstt$ and $\ve$, respectively.
 \end{proof}

Next, we introduce some geometric objects associated with
the directions of reflection,  similar to those introduced in
 Section 6.1 of \cite{Ram06} in the context of polyhedral domains.
For $y \in \MU$,  let
\be \label{Kx}
K_y \doteq \left\{-\sum_{i\in \MI(y)}a_id^i(y):
  a_i\geq 0, i \in \MI(y), \sum_{i\in \MI(y)}a_i=1 \right\}. \ee
Note that $K_y$ is a convex, compact subset of $\R^J$. 
  Therefore,  
 there exist $\delta_y >0$ and a compact, convex
set $K_{y,\delta_y}$ such that $K_{y,\delta_y}$ has $\C^\infty$
boundary and satisfies 
\be \label{func1}K_y^{\delta_y/2}\subset
(K_{y,\delta_y})^\circ\subset K_{y,\delta_y} \subset
K_y^{\delta_y}, \ee
where $K_y^\ve \doteq \{x\in \R^J:\ \dist(x,K_y)\leq \ve\}$ for every $\ve > 0$.
It is easy to see (cf. Lemma 6.1 of \cite{Ram06})  that
$0 \not \in K_y$ and
\be \label{sep0} \min_{i\in \MI(y)}\lan n^i(y),d\ran <0 \qquad
 \mbox{for every } d\in K_y. \nonumber\ee
Therefore,  $\delta_y > 0$ can be chosen such that $0 \notin K_{y,\delta_y}$ and
\be \label{est:test11}\min_{i\in \MI(y)}\lan
n^i(y),d\ran <0 \qquad \mbox{for every } d\in K_y^{\delta_y}. \ee

\begin{lemma}\label{lem:A1} For $y \in \MU$,
there exist $\bar{R}_y\in (0,1)$ and $\beta_y>0$ such that
\be \label{sep2} \min_{i\in \MI(y)}\lan n^i(y),d\ran <-2\beta_y|d|
\qquad \mbox{for every } d\in \cup_{t\in [0, \bar R_y]}tK_{y,\delta_y}\ee
and
\be \label{sep1}
\left(y+\cup_{t\in [0, \bar R_y]} tK_{y,\delta_y}\right) \cap \bar G = \{y\}. \ee
\end{lemma}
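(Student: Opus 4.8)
The plan is to prove Lemma \ref{lem:A1} by a continuity/compactness argument, exploiting the estimate \eqref{est:test11} that already holds at the single point $y$ with strict inequality, and then upgrading it to a uniform estimate on a small cone-cap and a small ball. First I would observe that by \eqref{est:test11}, for each fixed $d \in K_y^{\delta_y}$ we have $\min_{i \in \MI(y)} \lan n^i(y), d \ran < 0$; since $K_{y,\delta_y}$ is compact and does not contain $0$ (by the choice of $\delta_y$), the continuous function $d \mapsto \min_{i \in \MI(y)} \lan n^i(y), d \ran / |d|$ attains a negative maximum $-3\beta_y$ on $K_{y,\delta_y}$ for some $\beta_y > 0$. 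Because the map $(t,d) \mapsto td$ from $[0,1] \times K_{y,\delta_y}$ into $\R^J$ preserves the sign and the normalized value $\lan n^i(y), td \ran/|td| = \lan n^i(y), d\ran/|d|$ for $t>0$, the same bound $\min_{i\in\MI(y)}\lan n^i(y), d'\ran \le -3\beta_y |d'|$ holds for every $d' \in \cup_{t \in [0,1]} t K_{y,\delta_y}$; in particular, enlarging $\beta_y$ is not even needed for \eqref{sep2}, and we may take $\bar R_y$ as large as $1$ for this part. (A minor subtlety: at $d'=0$ the inequality $\le -2\beta_y|d'|$ reads $0 \le 0$, which is fine.)

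The real work is \eqref{sep1}, i.e.\ showing that a sufficiently short truncated cone $y + \cup_{t \in [0,\bar R_y]} t K_{y,\delta_y}$ meets $\bar G$ only at $y$. Here I would use the local $\C^2$ representation of $\partial G$ near $y$ from Definition \ref{ass:G}: on a neighborhood ${\mathcal O}_y$, $\bar G \cap {\mathcal O}_y = \{z : \varphi_y^i(z) \ge 0 \text{ for all } i \in \MI(y)\}$, with $\varphi_y^i(y) = 0$ and $\nabla \varphi_y^i(y) = |\nabla \varphi_y^i(y)| n^i(y)$. For a direction $d \in K_{y,\delta_y}$, Taylor's theorem gives $\varphi_y^i(y + td) = t |\nabla \varphi_y^i(y)| \lan n^i(y), d \ran + o(t)$ uniformly in $d \in K_{y,\delta_y}$ (using compactness of $K_{y,\delta_y}$ and the $\C^2$ bound on the second derivatives of $\varphi_y^i$). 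By the first step, for each $d$ there is at least one index $i = i(d) \in \MI(y)$ with $\lan n^i(y), d \ran \le -3\beta_y |d| < 0$, so the leading term is strictly negative and bounded away from $0$ relative to $t|d|$; hence there exists $\bar R_y \in (0,1)$, small enough that $y + tK_{y,\delta_y} \subset {\mathcal O}_y$ for $t \le \bar R_y \max_{d \in K_{y,\delta_y}}|d|$ and that the $o(t)$ error is dominated, so that $\varphi_y^{i(d)}(y+td) < 0$ for all $0 < t \le \bar R_y$ and all $d \in K_{y,\delta_y}$. This forces $y + td \notin \bar G$ for $t \in (0, \bar R_y]$, $d \in K_{y,\delta_y}$, and rescaling as before yields $\left(y + \cup_{t \in [0,\bar R_y]} t K_{y,\delta_y}\right) \cap \bar G = \{y\}$.

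The main obstacle is making the Taylor estimate uniform in $d$ over the compact set $K_{y,\delta_y}$ while simultaneously controlling, uniformly in $d$, how small $t$ must be for the remainder to be dominated; the index $i(d)$ achieving the sign may vary with $d$, so one cannot fix a single $\varphi_y^i$. The clean way around this is to note that $K_{y,\delta_y} \subset K_y^{\delta_y}$ is compact, the functions $\varphi_y^i$, $i \in \MI(y)$, are finitely many and $\C^2$ on a fixed compact neighborhood of $y$, and the bound $\min_i \lan n^i(y), d\ran \le -3\beta_y|d|$ holds pointwise; a standard covering/compactness argument then produces a single $\bar R_y$ that works for all $d \in K_{y,\delta_y}$ simultaneously. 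One should also take $\bar R_y$ small enough that the whole truncated cone lies inside ${\mathcal O}_y$, so that the local representation \eqref{gi-rep} is applicable throughout, and then shrink $\bar R_y$ once more, if necessary, to stay below $1$ as required by the statement.
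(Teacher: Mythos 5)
Your proof is correct and follows essentially the same route as the paper: the strict negativity of $\min_{i\in\MI(y)}\lan n^i(y),d/|d|\ran$ on the compact set $K_{y,\delta_y}$ (the paper's \eqref{sep3}, obtained from \eqref{est:test11} exactly as you do) yields \eqref{sep2} by homogeneity, and \eqref{sep1} then comes from the first-order behavior of $\partial G$ at $y$. The only cosmetic difference is that the paper expresses this first-order step as the directional estimate $\lim_{\delta\to 0}\inf_{x\in\bar G,\,|x-y|\le\delta}\min_{i\in\MI(y)}\lan n^i(y),(x-y)/|x-y|\ran\ge 0$ and derives a contradiction, whereas you equivalently Taylor-expand the defining functions $\varphi^i_y$ along cone directions to show they become negative; these are the same $\C^1$ estimate read in contrapositive (and your minor overstatement $\bar G\cap{\mathcal O}_y=\{\varphi^i_y\ge 0,\ i\in\MI(y)\}$ is harmless, since only the inclusion $\subseteq$ is used).
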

\begin{proof} We first use an argument by contradiction to prove that
\be \label{sep3}\sup_{d\in K_{y,\delta_y}} \min_{i\in \MI(y)}\lan n^i(y),d/|d|\ran  < 0. \ee
Since $K_{y,\delta_y}$ is compact and $d \mapsto \min_{i\in
  \MI(y)}\lan n^i(y),d/|d|\ran$ is continuous, the supremum can be
replaced by a maximum in \eqref{sep3}.  Thus, if (\ref{sep3}) does not
hold, then there exists  $d \in
K_{y,\delta_y}$ such that $\min_{i\in
  \MI(y)}\lan n^i(y),d/|d|\ran$   $\geq 0$. But this contradicts
(\ref{est:test11}).  
Thus, (\ref{sep3}) holds, which, in particular, implies that  there exists $\beta_y>0$
such that  (\ref{sep2})  holds for any $\bar{R}_y>0$.

In addition, for each $i \in
\MI(y)$,
since $\partial G_i$ is $\C^1$ near $y$, it follows that
\[\lim_{\delta \rightarrow 0}\inf_{x\in \bar G:\ |x-y|\leq
  \delta} \min_{i\in \MI(y)}\lan n^i(y), (x-y)/|x-y| \ran \geq 0. \]
Together with (\ref{sep3}) this shows that there exists $\bar{R}_y\in (0,1)$ such that
\be \label{sep4}
\inf_{\stackrel{x\in \bar G:}{|x-y|\leq \bar{R}_y(\sum_{i\in
    \MI(y)}|d^i(y)|+\delta_y)}} \min_{i\in \MI(y)} \left\lan
n^i(y), \frac{x-y}{|x-y|} \right\ran > \sup_{d\in K_{y,\delta_y}} \min_{i\in
  \MI(y)}\left\lan n^i(y),\frac{d}{|d|}\right\ran.\ee
 We use this to prove (\ref{sep1}) by contradiction.  Suppose that
 (\ref{sep1}) does not hold.  
Then there exists $d\in \cup_{t\in [0,\bar{R}_y]}tK_{y,\delta_y}$ such that $d\neq 0$ and $y+d\in \bar
 G$.   We can write $d = t^* d^*$ for some 
 $t^*\leq \bar{R}_y$ and $d^*\in K_{y,\delta_y}$.  Also, by the
definition of $K_{y,\delta_y}$ 
in \eqref{Kx} and \eqref{func1}, 
 $|d|\leq \bar{R}_y(\sum_{i\in \MI(x)}|d^i(y)|+\delta_y)$. 
Hence,  (\ref{sep4}) implies that
\[ \min_{i\in \MI(y)}\left\lan n^i(y), \frac{d^*}{|d^*|} \right\ran= \min_{i\in
  \MI(y)}\left\lan n^i(y), \frac{d}{|d|} \right\ran > \sup_{d\in K_{y,\delta_y}}
\min_{i\in \MI(y)}\left\lan n^i(y), \frac{d}{|d|} \right\ran, \] which contradicts the fact
that $d^* \in K_{y,\delta_y}$. Thus,  (\ref{sep1}) holds for the chosen
$\bar{R}_y \in (0,1)$.
\end{proof}

 For each $i\in \MI(y)$,
since $\partial G_i$ is of $\C^1$ near
$y\in \partial G$, the hyperplane $\{x\in \R^J:\ \lan n^i(y),x-y\ran = 0\}$ is  the tangent plane to $\partial G_i$ at $y$. Let
\be \label{Sx} \hyp_y \doteq \cap_{i\in \MI(y)}\{x\in \R^J:\ \lan n^i(y),x-y\ran
\geq 0\}. \nonumber\ee Then  $\bar G$ can be locally approximated near $y$
by the polyhedral cone $\hyp_y$ in the sense that for each $N>0$,
\be \label{approxG}\{y+(x-y)/r \in \R^J:\ x\in \bar G,\ |x-y|\leq
N r\} \rightarrow \hyp_y \cap B_N(y)\ \mbox{ as } r\rightarrow 0,\ee
where the convergence is with respect to the Hausdorff distance.
 In view of (\ref{sep1}), it follows that there exist $0<r_y<\dist(y,
 \MV \cup
 \cup_{i\notin \MI(y)}(\partial G\cap \partial G_i))$ and
 $\lambda_y\in (0,1)$ small enough (independent of $r_y$) such that for each $r\in (0,r_y)$,
 \be \label{include1}\left\{x\in \R^J:\ \dist(x,y+\cup_{t\leq
     \bar{R}_y}tK_{y,\delta_y})\leq 3\lambda_y r\right\}\cap \partial G
 \subset B_r(y)\cap \partial G\ee and
\be \label{include2}\left\{x\in \R^J:\ \dist(x,y+\cup_{t\leq
    \bar{R}_y}tK_{y,\delta_y})\leq 3\lambda_y r\right\}\cap \bar G \cap \partial B_r(x)=\emptyset.\ee
It follows from Lemma \ref{prop:TF1} with $\Cone = \cup_{t\geq 0}tK_{y,\delta_y}$,   ${\cal K} = K_y^{\delta_y/3}$, $\lambda =
2\lambda_y$,  $\eta = \eta_y \in (0,\lambda_y)$, $\Lambda  =
\Pi_y\doteq \{x\in \R^J:\ \eta_y<\dist\left(x, \Cone\right)\leq 2\lambda_y\}$  and
$\varepsilon_y=\lambda_y/12\wedge \eta_y/2$ that  there exists a
function $\ell_y:\ \Pi_y \rightarrow \R$, that satisfies all the
properties stated in Lemma \ref{prop:TF1}.

Let $L_{x,\delta_x}$ be a truncated (half) cone with vertex at the
origin defined by
\be \label{Lx} L_{y,\delta_y}\doteq \cup_{t\leq \bar{R}_y/2}tK_{y,\delta_y}.\nonumber \ee
Then (\ref{sep1}) implies
\[(y +L_{y,\delta_y})\cap\bar G = \{y\}.\]
Due to the fact that $y \in \MU$, there exists 
 a unit vector  $q_y$ in the set $K_y$
defined in (\ref{Kx}) such that $-q_y$ points into $G$ from $y$. 
For each $r\in (0,1)$, define \be \label{Mxr} M(y,r)\doteq y-\lambda_y
\frac{\bar{R}_y}{2}rq_y+rL_{y,\delta_y}. \ee For each $\ve\geq 0$, let 
\[ M^{\ve}(y,r)\doteq \{x\in \R^J:\ \dist(x,M(y,r))\leq \ve\}.\]
Since $\bar{R}_y < 1$, it is clear that for each $x\in
M^{2\lambda_y r}(y,r)$, \[\dist(x,y+\cup_{t\leq \bar{R}_y}tK_{y,\delta_y})
\leq 2\lambda_y r + \left|\lambda_y \frac{\bar{R}_y}{2}rq_y\right|< 3 \lambda_y r.\]
Thus, \begin{eqnarray*}M^{2\lambda_y r}(y,r)\subseteq \{x\in \R^J:\
  \dist(x,y+\cup_{t\leq \bar{R}_y}tK_{y,\delta_y})< 3\lambda_y
  r\}\end{eqnarray*}
and hence, by (\ref{include1})--(\ref{include2}) we have
\be \label{sub1} M^{2\lambda_y r}(y,r) \cap \bar G \cap \partial
B_r(y)=\emptyset \mbox{ and } M^{2\lambda_y r}(y,r)\cap
\bar G \subset B_r(y) \cap \bar G. \ee
Let \be \label{Oxr} {\mathcal O}(y,r)\doteq \bar G \cap (M^{2\lambda_y r}(y,r)
\setminus M^{\eta_y r}(y,r))\subset B_r(y) \cap \bar G.\ee
For each $x\in {\mathcal O}(y,r)$, it is clear that $x\in \bar G$ and \be
\label{star}
\eta_y<\dist\left(\frac{x-y}{r}+
  \frac{\lambda_y \bar{R}_y}{2} q_y, L_{y,\delta_y}\right) \leq 2\lambda_y.
\ee

\noi
Since $\bar G$ can be locally approximated at $y$ by $\hyp_y$ as 
in (\ref{approxG}),  by choosing $r_y$ and $\lambda_y$ sufficiently small,
we can ensure that for each $r\in (0,r_y)$ and $x\in {\mathcal O}(y,r)$, the projection of 
$(x-y)/r+\lambda_y (R_y/2)q_y$ to $L_{y,\delta_y}$
coincides with the projection of $(x-y)/r+\lambda_y (R_y/2)q_y$ to
$\cup_{t\geq 0}tK_{y,\delta_y}$ since $L_{y,\delta_y}$ is the portion
of $\cup_{t\geq 0}tK_{y,\delta_y}$ truncated near its vertex.  Hence,
for each $x\in {\mathcal O}(y,r)$, we have 
\[\dist\left(\frac{x-y}{r}+
  \frac{\lambda_y \bar{R}_y}{2} q_y, L_{y,\delta_y}\right) =\dist\left(\frac{x-y}{r}+
  \frac{\lambda_y \bar{R}_y}{2} q_y, \cup_{t \geq 0} t
K_{y,\delta_y}\right).  \]  Together with \eqref{star} this shows that
for each  $x\in {\mathcal O}(y,r)$, $\frac{x-y}{r}+\lambda_y
  \frac{\bar{R}_y}{2} q_y \in \Pi_y$.
Let
$k_{y,r}$ be the function on ${\mathcal O}(y,r)$ given by
\be  \label{kxr} k_{y,r}(x)\doteq \ell_y\left(\frac{x-y}{r}+
  \frac{\lambda_y \bar{R}_y}{2} q_y\right), \quad x \in {\mathcal O}(y,r). \nonumber \ee  Then the properties of
  $\ell_y$ stated in Lemma \ref{prop:TF1} and (\ref{Oxr}) imply that
  $k_{y,r} \in \C^\infty ({\mathcal O}(y,r))$ and $k_{y,r}$
  satisfies \be
\label{star2}
\sup_{x\in {\mathcal O}(y,r)} \left(\left|k_{y,r}(x)-\dist\left(\frac{ (x-y)}{r}+
  \frac{\lambda_y\bar{R}_y}{2}q_y,L_{y,\delta_y}\right)\right|\vee (r|\nabla k_{y,r}(x)|-1)\right)\leq
  \frac{\lambda_y}{12}, \ee 
and there exists $\theta_y >0$ (independent of $r_y$) such that
$ \lan r \nabla  k_{y,r} \left( x\right), p \ran \leq -\theta_y$ for each $p\in K_y^{\delta_y/3}$ and $x\in O(y,r)$.
From the second property of $k_{y,r}$, it follows that
 \[\lan r\nabla k_{y,r}(x),d^i(y)\ran \geq \theta_y\mbox{ for $i\in \MI(y)$ and $x\in O(x,r)$}.\]
Since $d^i(\cdot)$ is  continuous for each $i\in \MI$, by possibly making $r_y$ yet smaller and using the first property of $k_{y,r}$, we have
for each $r\in (0,r_y)$,
\be \label{func2}\lan r\nabla k_{y,r} (y),d^i(y)\ran \geq \theta_y/2\mbox{ for $i\in \MI(x)$ and $x\in O(y,r)$}.\ee

Now, choose $\fntwo_y\in \C^\infty(\R)$ to be a decreasing function such
that \be
\label{def-hx}
\fntwo_y(s)=\left\{\begin{array}{ll} 1 & \mbox{ if } s\in
    (-\infty,5\lambda_y/4],\\ \mbox{ strictly decreasing } & \mbox{ if
    } s\in (5\lambda_y/4, 23\lambda_x/12], \\ 0 & \mbox{ if } s\in
    (23\lambda_y/12,\infty), \end{array}\right.
\ee
and define $f^{y,r}:\ \R^J\rightarrow \R_+$ to be
\be
\label{gxr2}
f^{y,r}\doteq \left\{\begin{array}{ll} \fntwo_y(k_{y,r}(x)) & \mbox{
      if } x\in {\mathcal O}(y,r), \\
1 & \mbox{ if } x\in \bar G \cap M^{\eta_y r}(y,r), \\
0 & \mbox{ otherwise.} \end{array} \right.\ee
When combined with the definitions of $M(y,r)$ and ${\mathcal O}(y,r)$ given in
(\ref{Mxr}) and  (\ref{Oxr}), respectively, and properties
(\ref{sub1}) and \eqref{star2}, we infer that
\be
\label{support}
\begin{array}{l} \ds supp[f^{y,r}]\cap \bar G \\ \qquad \ds
  \subset \left\{x\in \bar G:\ k_{y,r}(x)\leq
    23 \lambda_y/12\right\} \\
\qquad \ds \subset   \{x\in \bar G:\
\dist\left(\frac{x-y}{r}+\frac{\lambda_y 
    \bar{R}_y}{2}q_y,L_{y,\delta_y}\right)\leq
\frac{23\lambda_y}{12}+\frac{\lambda_y}{12}\} \\
\qquad \ds =  \left\{x\in \bar G:\ \dist\left(x-y+\frac{\lambda_y
      \bar{R}_y}{2} rq_x, rL_{y,\delta_y}\right)\leq 2\lambda_y
  r\right\} \\ \qquad \ds = M^{2\lambda_y r}(y,r)\cap \bar G \\ \qquad \ds \subset B_r(y)\cap \bar G, \end{array} \ee
which establishes property 3(b) of Lemma \ref{lem:cutoff}.  In addition,
\be \begin{array}{l} \ds \{x\in \bar G:\ k_{y,r}(x)\geq
  5\lambda_y/4\} \\ \quad \ds \subset   \left\{x\in \bar G:\
    \dist\left(\frac{x-y}{r}+\frac{\lambda_y
        \bar{R}_y}{2}q_y,L_{y,\delta_y}\right)\geq
    \frac{5\lambda_y}{4}-\frac{\lambda_y}{12}\right\} \\ \quad \ds
  \subset  \left\{x\in \bar G:\ \dist((x-y)+\frac{\lambda_y 
     \bar{R}_y}{2}rq_y, rL_{y,\delta_y})\geq \lambda_y r\right\} \\
 \quad \ds \subset (M^{\eta_y r}(y,r))^c\cap \bar G, \end{array} \ee
where the last inclusion uses \eqref{star}. 
Thus, the set on which $f^{y,r}$ is neither $0$ nor $1$ is a strict subset of
${\mathcal O}(y,r)$. Combining this with (\ref{support}) and the properties  $\fntwo_y\in \C^\infty(\R)$
and $k_{y,r}\in \C^\infty({\mathcal O}(y,r))$,  it follows that $f^{y,r}\in
\C^\infty(\bar G)$.  

By the definition of $\fntwo_y$ in \eqref{def-hx},  $f^{y,r}$ clearly satisfies property 3(c) of Lemma
\ref{lem:cutoff}. Moreover, 
since $y$ is an interior point of $M(y,r)$, there exists
$\kappa_y(r) \in (0,r)$ such that $B_{\kappa_y(r)}(y)\subset M(y,r)$.
For each $x \in B_{\kappa_y(r)}(y)\cap \bar G$, the definition of $f^{y,r}$ in
(\ref{gxr2}) implies that $f^{y,r} (x) = 1$.
Thus, $f^{y,r}$ satisfies the property 3(d) of Lemma \ref{lem:cutoff}.
Finally, for each $x\in {\mathcal O}(y,r)$, a simple
calculation shows that $\nabla f^{y,r}(x) =
\fntwo_y'(k_{y,r}(x))\nabla k_{y,r} (x).$
Together with (\ref{func2}) and the property of $\fntwo_y$ in (\ref{def-hx}),
this implies that \[\lan \nabla f^{y,r}(y), d^i(x) \ran \leq
0\mbox{ for $i\in \MI(x)$ and $x\in \MO(y,r)$},\]
which proves that $-f^{y,r} \in {\mathcal H}$. Since $f^{y,r}$ has
compact support by property 
3(b), this implies property 3(a) of  
Lemma \ref{lem:cutoff}. This completes the
proof of Lemma \ref{lem:cutoff}.

\beginsec

\section{Proof of some Integral Representations}
\label{ap-chig}

This section is devoted to the proof of Proposition \ref{prop-chig}.  
First, in Section \ref{subs-ranlin}, we introduce a random positive linear
functional, which we  use  in Section \ref{apsub-integrep} to 
establish a preliminary integral representation for $\pushproc^f$ 
(see Lemma \ref{lem-chirep}).    The proof of Proposition
\ref{prop-chig} is then given in Section \ref{subs-proofchig}.

\subsection{A Random Positive Linear Functional} 
\label{subs-ranlin}

 In what follows, let
\be
\label{def-mk1}
\MK\doteq \left\{(x,\vect)\in \R^{2J}:\ x\in \partial G\setminus \MV,\ \vect\in
d(x),\ |\vect |=1\right\}. \nonumber
\ee
For each $f\in \MH$, let $h_f:\MK \mapsto \R$ be the function given by
\[ h_f(x,\vect)\doteq \langle \vect, \nabla f(x) \rangle,  \qquad (x,\vect)\in
\MK. \]
Clearly, $h_f\in \MC_c^1(\MK)$ for each $f\in \MH$. Note that
$\MC_c(\R_+\times \MK)$, equipped with the uniform norm, is a separable linear space.  Let $\MT_0$ be the linear subspace of $\MC_c(\R_+\times \MK)$ given by
 \[\MT_0\doteq \left\{g\in \MC_c(\R_+\times \MK) : \begin{array}{c}
     g(u,x,\vect)=\sum_{i=1}^n\ell_i(u)h_{f_i}(x,\vect), \\
   n \in \N,   f_i\in \MH, \ell_i\in \MC_c(\R_+),\ i = 1, \ldots,
   n \end{array}\right\}. \]

Now, let $\pushproc^f, f \in \MH,$ be the family of processes defined
in \eqref{def-chig}, 
and  recall from Lemma \ref{lem-gtest} that there exists a set $\Omega_0 \in
{\mathcal M}$ with $\Q_{\barz} (\Omega_0) = 1$ 
such that  for all $\omega \in \Omega_0$, $f\in \MH$, $t \mapsto \pushproc^f (\omega, t\wedge \stopr_r(\omega))$ is 
increasing  and the map $f \mapsto \pushproc^f(\omega, \cdot\wedge \stopr_r(\omega))$ is linear. 
 For each $g\in \MT_0$ that has a representation of the form
$g(u,x,\vect)=\sum_{i=1}^n\ell_i(u)h_{f_i}(x,\vect)$, with $\ell_i \in
{\mathcal C}_c (\R_+)$ and $f_i \in \MH$, $i = 1, \ldots, n$, define 
 \be
\label{def-Lambda}
\Lambda\left(\omega, g\right)\doteq \left\{ 
\begin{array}{ll}
\sum_{i=1}^n\int_0^{\infty} \ell_i(u) d\pushproc^{f_i}(\omega, u\wedge \stopr_r(\omega)), 
\qquad & \mbox{ if } \omega \in \Omega_0, \\
  0 & \mbox{ otherwise. }
\end{array}
\right. 
\ee
We will sometimes suppress the dependence of $\Lambda$ and
$\pushproc^f$ on $\omega$ and
simply write $\Lambda (g)$ and $\pushproc^{f} (t\wedge \stopr_r)$, respectively. 

We will show that $\Lambda$ is a  random positive linear functional on
$\MT_0$,  in a sense made precise below. 

\begin{definition}
\label{defn-ranlin}
Let $X$ be a topological linear space. A map $\Psi:\ \Omega \times X
\rightarrow \R$ is a random linear functional on $X$ if it satisfies
the following two properties: \\
i) $\Psi(\cdot, x)$ is a random variable for each $x\in X$; \\
 ii) $\Psi(\omega,\cdot)$ is a linear functional on $X$ for each
 $\omega \in \Omega$. 
\end{definition}

The positivity of $\Lambda$ will be shown with respect to a suitable positive
cone.  Define 
  \[\MP\doteq \{g\in
\MC_c(\R_+\times \MK):\ 0\leq g(u, x,\vect)\leq h_f(x,\vect), (x,\vect)\in \MK \mbox{ for some
} f\in \MH\}.\]
Consider the partial order $\preceq$ on $\MC_c(\R_+\times \MK)$ defined by
$h \preceq g$ if $g-h\in \MP$.

\begin{lemma} \label{lem:p}
The set  $\MP$ is a
positive cone in $\MC_c(\R_+\times \MK)$. Moreover,  for each $g
\in
\MC_c(\R_+\times \MK)$,  there exists $\hat{g}\in \MT_0$ such that $g \preceq \hat{g}$. 
Furthermore, if $g\in \MC_c(\R_+\times \MK)$ is non-negative,
then $g \in \MP$ and $0 \preceq g$.
\end{lemma}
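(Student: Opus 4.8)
\textbf{Proof plan for Lemma \ref{lem:p}.}

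The plan is to verify the three assertions in turn, using the structural properties of $\MH$ and of the maps $f \mapsto h_f$. For the first claim, that $\MP$ is a positive cone, I would check the two defining properties: closure under addition and under multiplication by nonnegative scalars, together with the fact that $\MP \cap (-\MP) = \{0\}$. Closure under nonnegative scaling is immediate, since if $0 \le g \le h_f$ then $0 \le \lambda g \le h_{\lambda f} = \lambda h_f$ and $\lambda f \in \MH$ whenever $\lambda \ge 0$ (note $\MH$ is closed under multiplication by nonnegative constants because the oblique-derivative inequality $\langle d, \nabla f\rangle \ge 0$ scales correctly, and $\MH \subseteq \C_c^2(\bar G)\oplus\R$ is a cone in the relevant sense). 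Closure under addition uses that $\MH$ is closed under addition: if $0 \le g_1 \le h_{f_1}$ and $0 \le g_2 \le h_{f_2}$ with $f_1, f_2 \in \MH$, then $0 \le g_1 + g_2 \le h_{f_1} + h_{f_2} = h_{f_1+f_2}$, and $f_1 + f_2 \in \MH$ since the constraints defining $\MH$ (constancy near $\MV$, nonnegativity of the oblique derivative) are preserved under sums. The fact that $g \in \MP$ forces $g \ge 0$ pointwise, so $g \in \MP \cap (-\MP)$ implies $g \ge 0$ and $g \le 0$, hence $g = 0$; thus $\MP$ is a genuine positive cone and $\preceq$ is a partial order.

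For the second claim — that every $g \in \MC_c(\R_+ \times \MK)$ is dominated in the order $\preceq$ by some $\hat g \in \MT_0$ — I would argue as follows. Since $g$ has compact support, $|g|$ is bounded, say $|g| \le C$, and the $\R_+$-projection of $\supp(g)$ is contained in some $[0,T]$. It therefore suffices to produce $\hat g \in \MT_0$ with $\hat g(u,x,v) \ge C$ on $[0,T]\times \MK$ (then $\hat g - g \ge 0$; I then also need $\hat g - g \le h_f$ for a single $f \in \MH$, so I want $\hat g$ itself of the form $\ell(u) h_f(x,v)$ with $\ell \equiv 1$ on a neighborhood of $[0,T]$, $0 \le \ell \le 1$, and $h_f \ge C$ on $\MK$). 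The crux is then to exhibit a single $f \in \MH$ with $h_f(x,v) = \langle v, \nabla f(x)\rangle \ge C$ for all $(x,v) \in \MK$, i.e. for all $x \in \partial G \setminus \MV$ and all unit $v \in d(x)$. This is precisely the kind of global test function furnished by Lemma \ref{lem:frs} (equivalently, part 2 of Assumption 1 of \cite{KanRam14}, whose proof does not invoke the complete-$\mathcal S$ condition): scaling that $f_{r,s}$-type function appropriately — or more directly, a global version giving $\langle v, \nabla f(x)\rangle \ge 1$ on all of $\partial G\setminus\MV$, then multiplying by the constant $C$ — produces the required $f \in \MH$. Taking $\hat g(u,x,v) = \ell(u)\, C\, \langle v, \nabla f(x)\rangle \in \MT_0$ with $\ell$ a compactly supported cutoff equal to $1$ on $[0,T]$, we get $g \preceq \hat g$. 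I expect this step to be the main obstacle, since it is the only place requiring the nontrivial construction of a global function in $\MH$ with a uniform lower bound on its oblique derivative; everything else is bookkeeping with the order structure.

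For the third claim, if $g \in \MC_c(\R_+ \times \MK)$ is nonnegative, I need $g \in \MP$, i.e. $g \le h_f$ for some $f \in \MH$; but this is exactly the content of the construction in the previous paragraph applied with $C = \|g\|_\infty$ (taking $\ell \equiv 1$ on the time-support of $g$ is not even needed — we only need the pointwise bound $0 \le g \le h_f$, and $h_f$ does not depend on $u$). Hence $g \in \MP$, and $0 \preceq g$ follows since $g = g - 0 \in \MP$. I would close by remarking that the same reasoning shows $\MT_0 \subseteq \MC_c(\R_+\times\MK)$ is nonempty and that $\preceq$ restricted to $\MT_0$ is directed, which is what is needed for the subsequent Hahn–Banach/Riesz argument in Section \ref{apsub-integrep}.
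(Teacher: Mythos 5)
Your handling of the cone property is fine and essentially matches the paper. The genuine gap is in the domination step. You reduce it to exhibiting a single $f \in \MH$ with $h_f(x,\vect) = \langle \vect, \nabla f(x)\rangle \geq C$ for \emph{all} $(x,\vect) \in \MK$, i.e.\ uniformly over all of $\partial G \setminus \MV$, and you assert that Lemma \ref{lem:frs} (or a ``global version'' of it) furnishes such an $f$. No such $f$ exists in general: every $f \in \MH$ lies in $\C_c^2(\bar G)\oplus\R$ and is constant in a neighborhood of $\MV$, so $\nabla f$ vanishes both near $\MV$ and outside a compact set. Hence $h_f$ vanishes at points of $\partial G\setminus\MV$ close to $\MV$ and at boundary points of large norm, and a uniform lower bound $h_f \geq C>0$ on all of $\MK$ is impossible whenever $\MV\neq\emptyset$ lies in the closure of $\partial G\setminus\MV$ (the typical situation) or $\partial G$ is unbounded. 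Lemma \ref{lem:frs} is not a global statement: it gives the bound only on $\MU_{r,s}=\{x\in\partial G:\ |x|\le r,\ \dist(x,\MV)\ge s\}$, a set that is bounded and bounded away from $\MV$.

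The repair is the step you skipped, and it is exactly what the paper does: exploit the compactness of $\supp(g)$ \emph{inside} $\R_+\times\MK$. Since $\MK$ contains no points over $\MV$ and $\MV$ is closed, the $x$-projection of the $(x,\vect)$-support of $g$ is a compact subset of $\partial G\setminus\MV$, hence contained in $\MU_{r,s}$ for some $r,s>0$. With $C$ a bound for $|g|$ and $f \doteq C f_{r,s}$ one gets $|g(u,x,\vect)|\le C\,\ind_K(x,\vect)\le h_f(x,\vect)$ on all of $\MK$ (off the support one only needs $h_f\ge 0$, which holds because $f\in\MH$); then choosing $\ell\in\MC_c(\R_+)$ with $\ind_{[t_1,t_2)}\le\ell\le 1$ on the time-support and setting $\hat g=\ell\, h_f\in\MT_0$, one checks $0\le \hat g-g\le 2\hat g\le h_{2f}$, so $g\preceq\hat g$; the same local bound yields the third assertion. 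So the architecture of your argument is right, but the global test function you invoke does not exist, and the statement that is both needed and available is the local one on $\MU_{r,s}$.
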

\begin{proof}
Note that if $g, \tilde{g} \in \MP$, there exist $f,\tilde f\in \MH$
such that for $(x,\vect )\in \MK$, $0\leq g(u, x,\vect)\leq h_f(x,\vect )$
and $0\leq \tilde g(u, x, \vect)\leq h_{\tilde f}(x,\vect )$. Hence,
by the linearity of the mapping $f \mapsto h_f$, 
 $0\leq g(u, x,\vect)+ \tilde g(u,x,\vect)\leq h_f(x,\vect )+h_{\tilde
  f}(x,\vect ) = h_{f+\tilde{f}} (x, \vect)$, and for  $a>0$,
$0\leq a g(u, x,u)\leq a h_f(x,\vect ) = h_{af} (x, \vect)$. 
Thus,  $g+\tilde{g}\in \MP$ and $ag\in \MP$, showing that   $\MP$ is a
positive cone in $\MC_c(\R_+\times \MK)$. 

 We now turn to the proof of the second assertion of the lemma. 
Fix  $g\in \MC_c(\R_+\times \MK)$. Then there exists a
compact set $K\subset \MK$, an interval $[t_1,t_2)\subset \R_+$ and a
constant $0 < C < \infty$ such that $|g(u,x, \vect)|\leq
C\ind_{[t_1, t_2)}(u)\ind_{K}(x,\vect )$ for each $(u,x,\vect)\in
\R_+\times \MK$.  Since $\MK \cap \MV \times \R^J = \emptyset$ and $\MV$ is closed,
there exist $r, s  > 0$ such that
\be
\label{rs-cont}
\{ x\in \R^J: (x,\vect ) \in K \} \subseteq \MU_{r,s}\doteq \{ x\in \partial G: |x| \leq r, d(x, \MV)
\geq s\}.
\ee
Now, choose $f_{r,s}$ from Lemma \ref{lem:frs}.  Then    $f \doteq C f_{r,s}$ 
satisfies   $|g(u,x,\vect)|\leq   C\ind_{K}(x,\vect ) \leq h_f(x,\vect )=
\langle \vect, \nabla f(x) \rangle$ for each $(u,x,\vect)\in
\R_+\times \MK$. Let $\ell\in  \MC_c(\R_+)$ be a function such that
$\ind_{[t_1,t_2)}(u) \leq \ell(u)\leq 1$ for each $u\in \R_+$, and  choose $\hat{g}(u,x,\vect)=\ell(u)h_f(x,\vect )$. Then $\hat{g} \in \MT_0$ and  $0\leq  \hat{g}-g \leq 2\hat{g}\leq 2 h_f=h_{2f}$ on $\MK$.
Since $2f \in \MH$, this shows that  $\hat{g}-g\in \MP$ and hence, that  $g
\preceq \hat{g}$.  
Lastly, if $g\in \MC_c(\R_+\times \MK)$ and $g\geq 0$, the last
argument shows that  $0\leq g(u,x,\vect)\leq  
C\ind_{[t_1,t_2)}(u)\ind_{K}(x,\vect ) \leq \ell(u) h_f(x,\vect )$ for each $(x,\vect )\in
\MK$. This shows that $g\in \MP$ and $0 \preceq g$. \end{proof}

\begin{lemma} \label{lem:Lambda}
The map $\Lambda:\Omega \times \MT_0\rightarrow \R$ in  (\ref{def-Lambda})
defines a  random linear functional on $\MT_0$.  Moreover, 
$\Lambda:\ \MT_0\rightarrow \R$ defined by (\ref{def-Lambda}) is
positive in the sense that  
$\Lambda(g)\geq 0$ whenever $g\geq 0$. 
\end{lemma}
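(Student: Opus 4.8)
The plan is to verify, in order, the two properties in Definition \ref{defn-ranlin} together with positivity. The key preliminary issue — which I would address first — is \emph{well-definedness}: the formula \eqref{def-Lambda} is phrased in terms of a particular representation $g(u,x,\vect)=\sum_{i=1}^n\ell_i(u)h_{f_i}(x,\vect)$, and I must check that $\Lambda(\omega,g)$ does not depend on the choice of representation. For $\omega\in\Omega_0$ this reduces to the following claim: if $\sum_{i=1}^n\ell_i(u)h_{f_i}(x,\vect)=0$ for all $(u,x,\vect)\in\R_+\times\MK$, then $\sum_{i=1}^n\int_0^\infty\ell_i(u)\,d\pushproc^{f_i}(\omega,u\wedge\stopr_r(\omega))=0$. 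The natural way to get this is to exploit the linearity of $f\mapsto\pushproc^f(\omega,\cdot\wedge\stopr_r(\omega))$ established in Lemma \ref{lem-gtest} (valid on $\Omega_0$): grouping terms with a common $\ell_i$ (after passing to a common refinement of the functions $\ell_i$, writing each as a finite combination of a fixed family, e.g. via a partition-of-unity / piecewise-linear approximation, or simply by collecting coefficients of linearly independent $\ell$'s), the identity $\sum_i\ell_i h_{f_i}\equiv0$ forces, for each fixed $u$ in the support, $h_{\sum_i\ell_i(u)f_i}(x,\vect)=\langle\vect,\nabla(\sum_i\ell_i(u)f_i)(x)\rangle=0$ on $\MK$, i.e. $\langle d,\nabla g_u(x)\rangle=0$ for all $d\in d(x)$, $x\in\partial G\sm\MV$, where $g_u\doteq\sum_i\ell_i(u)f_i$; from the definition of $\pushproc$ in \eqref{def-chig} and the representation of $\pushproc$ as a boundary integral implicit in the structure of the proof (only the behaviour of $\nabla f$ on $\partial G$ enters once \eqref{chig} is invoked to kill the interior contribution), $\pushproc^{g_u}(\omega,\cdot\wedge\stopr_r)\equiv0$, and integrating against the $\ell_i$'s gives the claim. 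On $\Omega\sm\Omega_0$ the functional is identically $0$, so well-definedness is trivial there.

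Granting well-definedness, property (i) of Definition \ref{defn-ranlin} — measurability of $\omega\mapsto\Lambda(\omega,g)$ for fixed $g$ — follows because for a fixed representation $g=\sum_i\ell_i h_{f_i}$, each $\omega\mapsto\int_0^\infty\ell_i(u)\,d\pushproc^{f_i}(\omega,u\wedge\stopr_r(\omega))$ is measurable: $\pushproc^{f_i}(\omega,\cdot\wedge\stopr_r(\omega))$ is an $\{\MM_t\}$-adapted continuous increasing process (Lemma \ref{lem-gtest}), and a Riemann–Stieltjes integral of a fixed continuous compactly supported $\ell_i$ against such a process is a measurable (indeed, an a.s. limit of Riemann–Stieltjes sums) function of $\omega$; the indicator $\ind_{\Omega_0}$ is $\MM$-measurable. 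Property (ii) — linearity of $g\mapsto\Lambda(\omega,g)$ for fixed $\omega$ — is immediate from the definition once well-definedness is known: concatenating representations of $g$ and $\tilde g$ gives a representation of $g+\tilde g$, and scaling the $\ell_i$ gives one of $ag$, so $\Lambda(\omega,g+\tilde g)=\Lambda(\omega,g)+\Lambda(\omega,\tilde g)$ and $\Lambda(\omega,ag)=a\Lambda(\omega,g)$ on $\Omega_0$, and both sides vanish off $\Omega_0$.

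Finally, for positivity: suppose $g\in\MT_0$ with $g\geq0$. Fix $\omega\in\Omega_0$ (off $\Omega_0$ there is nothing to prove). By Lemma \ref{lem:p}, $g\in\MP$, so there is $f\in\MH$ with $0\le g(u,x,\vect)\le h_f(x,\vect)=\langle\vect,\nabla f(x)\rangle$ on $\R_+\times\MK$. Pick a representation $g=\sum_i\ell_i h_{f_i}$; then for each $u$, $h_{g_u}=\sum_i\ell_i(u)h_{f_i}\ge0$ on $\MK$ where $g_u=\sum_i\ell_i(u)f_i$, i.e. $\langle d,\nabla g_u(x)\rangle\ge0$ for all $d\in d(x)$, $x\in\partial G$ (the values on $\MV$ being irrelevant since $\pushproc$ only integrates over $\partial G\sm\MV$, and in fact one can choose representatives so that the functions are constant near $\MV$). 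Hence, by Lemma \ref{lem-gtest} applied to $g_u$ (or, more carefully, to the members $f_i\in\MH$ together with the hypothesis $\langle\nabla g_u,d\rangle\ge0$ on $B_r(\barz)\cap\partial G$), $\pushproc^{g_u}(\omega,\cdot\wedge\stopr_r(\omega))$ is increasing, so $\int_0^\infty\,d\pushproc^{g_u}\ge0$; assembling over the mesh of $u$ and using linearity, $\Lambda(\omega,g)=\int_0^\infty$ (formally) $\sum_i\ell_i(u)\,d\pushproc^{f_i}(\omega,u\wedge\stopr_r)=\int_0^\infty d\big(\text{integrated }\pushproc^{g_u}\big)\ge0$. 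The main obstacle I anticipate is making the ``for each fixed $u$'' disintegration argument rigorous — i.e. reducing a statement about $\sum_i\ell_i(u)h_{f_i}$ as a function on $\R_+\times\MK$ to pointwise-in-$u$ sign/vanishing statements and then re-integrating against the increasing processes $\pushproc^{f_i}$ — since the $\ell_i$'s need not be linearly independent and the $\pushproc^{f_i}$ are only finite-variation, not absolutely continuous; this is handled by first refining to a representation in which the $\ell_i$ form a linearly independent (or even disjointly supported) family via a partition of unity on the compact time support, at which point the coefficient-matching is valid pointwise in $u$.
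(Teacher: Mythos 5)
Your overall strategy is the paper's: reduce both well-definedness and positivity to pointwise-in-$u$ statements about $g_u \doteq \sum_i \ell_i(u) f_i$, invoke the monotonicity assertion of Lemma \ref{lem-gtest}, and reassemble by a mesh/Riemann--Stieltjes argument (which also gives measurability, and linearity is immediate). Note that no refinement of the $\ell_i$ is needed to get the pointwise statements: by linearity of $f \mapsto h_f$, evaluating the given identity (resp.\ inequality) at a fixed $u$ already yields $h_{g_u}=0$ (resp.\ $h_{g_u}\geq 0$) on $\MK$.

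Two steps need repair. First, in the well-definedness part your justification that $\pushproc^{g_u}(\omega,\cdot\wedge\stopr_r(\omega))\equiv 0$ appeals to ``the representation of $\pushproc^f$ as a boundary integral''; but that representation is exactly \eqref{xif31} of Proposition \ref{prop-chig}, which is proved \emph{using} this lemma, so as written the step is circular, and \eqref{chig} alone (which only says $d\pushproc^f$ does not charge $\{Z\in G\}$) does not give the vanishing. The non-circular argument, which is the paper's and uses only tools you already cite, is: $h_{g_u}=0$ on $\MK$ means both $g_u$ and $-g_u$ satisfy the oblique-derivative sign condition on $B_r(\barz)\cap\partial G\subset \partial G\setminus\MV$, so by Lemma \ref{lem-gtest} both $\pushproc^{g_u}$ and $\pushproc^{-g_u}=-\pushproc^{g_u}$ are nondecreasing, hence $\pushproc^{g_u}\equiv 0$; by linearity of $f\mapsto\pushproc^f$ this gives $\sum_i\ell_i(u)\pushproc^{f_i}(t\wedge\stopr_r)=\sum_j\tilde{\ell}_j(u)\pushproc^{\tilde{f}_j}(t\wedge\stopr_r)$ for all $u,t$, and the Riemann-sum comparison finishes. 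Second, your closing paragraph mis-diagnoses the ``re-integration'' difficulty: passing to linearly independent or ``disjointly supported'' $\ell$'s is neither needed (see above) nor available (continuous partitions of unity overlap, and a change of basis destroys the signs needed for positivity), and it does not address the actual issue, namely that the time variable of the integrand $\ell_i(u)$ and of the integrator $d\pushproc^{f_i}(u\wedge\stopr_r)$ coincide. That issue is resolved precisely by the mesh argument you sketch for positivity: uniform continuity of the $\ell_i$ bounds the Riemann--Stieltjes approximation error by $\ve\sum_i \pushproc^{f_i}(T\wedge\stopr_r)$, the pointwise-in-$u$ facts are applied at the mesh points, and $\ve\downarrow 0$. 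Make that the official argument for both parts and the proof matches the paper's.
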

\begin{proof}  
 For $\omega \not \in \Omega_0$, $\Lambda (\omega,\cdot)$ is trivially
 well defined, and is positive and linear on $\MT_0$. 
So, fix $\omega \in \Omega_0$.    To show that $\Lambda(\omega, \cdot)$ is well
defined, we need to show that if $g \in \MT_0$
admits  two representations 
\be
\label{tworeps} g (u,x,\vect) = \sum_{i=1}^n \ell_i (u) h_{f_i} (x,\vect) = \sum_{j=1}^m
\tilde{\ell}_j (u) h_{\tilde{f}_j} (x,\vect),   \qquad (u,x,\vect) \in
\R_+ \times \MK, 
\ee
with $\ell_i, \tilde{\ell}_j \in {\mathcal C}_c (\R_+)$, $f_i, \tilde{f}_j
\in \MH$, $i = 1, \ldots, n$, $j = 1, \ldots, m$, $m, n \in \N$,
then 
\be
\label{chi-toshow} \sum_{i=1}^n \int_0^\infty \ell_i (u) d
\pushproc^{f_i} (\omega, u\wedge \stopr_r(\omega)) =
\sum_{j=1}^m 
\int_0^\infty \tilde{\ell}_j (u) d \pushproc^{\tilde{f}_j}
(\omega, u\wedge \stopr_r(\omega)). 
\ee
First, note that 
since $\pushproc^{f_i}(\omega, \cdot\wedge \stopr_r(\omega))$ and
$\pushproc^{\tilde{f}_j}(\omega, \cdot\wedge \stopr_r(\omega))$ are 
increasing functions, and 
 $\ell_i$ and $\tilde \ell_j$ are continuous with compact support, 
each of the  integrals in \eqref{def-Lambda} is
well defined as a Riemann-Stieltjes integral.   
In fact,  since the functions 
 $\ell_i$ lie in $\MC_c(\R_+)$, $i = 1, \ldots, n$, they are 
uniformly continuous and so for each $\varepsilon>0$,  there exists
$h>0$ such that $|\ell_i(u)-\ell_i(v)|<\varepsilon$ whenever 
$|u-v|\leq h$, $i =1, \ldots, n, j = 1, \ldots, m$.  Thus, for $T$ 
large enough such that  $[0,T]$ contains the supports of
every $\ell_i$, $\tilde{\ell}_j$, $i = 1,\ldots, n$, $j = 1, \ldots,
m$, we have 
\begin{eqnarray}
\label{estimate}
& & \left|\sum_{i=1}^n\int_0^\infty \ell_i(u)
   d\pushproc^{f_i}(u\wedge \stopr_r) - \sum_{i=1}^n \sum_{k=0}^\infty
   \ell_i(kh)(\pushproc^{f_i}((kh+h)\wedge \stopr_r)-\pushproc^{f_i}(kh\wedge \stopr_r))\right| \\ &\leq & \varepsilon
 \sum_{i=1}^n \pushproc^{f_i}(T\wedge \stopr_r), \nonumber
\end{eqnarray}
and, likewise, 
\begin{eqnarray*}
& & \left|\sum_{j=1}^m\int_0^\infty \tilde \ell_j(u) d\pushproc^{\tilde
    f_i}(u\wedge \stopr_r) - \sum_{j=1}^m \sum_{k=0}^\infty \tilde
  \ell_j(kh)(\pushproc^{\tilde f_j}((kh+h)\wedge \stopr_r)-\pushproc^{\tilde
    f_j}(kh\wedge \stopr_r))\right| \\ &\leq & \varepsilon \sum_{j=1}^m \pushproc^{\tilde
  f_j}(T\wedge \stopr_r). \end{eqnarray*}
We now claim that 
\be
\label{show-intermed} 
\sum_{i=1}^n \ell_i(u) \pushproc^{f_i} (\omega, t\wedge \stopr_r(\omega)) = \sum_{j=1}^m
\tilde{\ell}_j(u) \pushproc^{\tilde{f}_j} (\omega, t\wedge \stopr_r(\omega)), 
\qquad u, t \geq 0. 
\ee
If the claim  \eqref{show-intermed} holds, then in particular, 
\begin{eqnarray*}
& & \sum_{i=1}^n \sum_{k=0}^\infty
\ell_i(kh)(\pushproc^{f_i}(\omega, (kh+h)\wedge \stopr_r(\omega))-\pushproc^{f_i}(\omega,kh\wedge \stopr_r(\omega))) \\ &=& \sum_{j=1}^m
\sum_{k=0}^\infty \tilde \ell_j(kh)(\pushproc^{\tilde
  f_j}(\omega,(kh+h)\wedge \stopr_r(\omega))-\pushproc^{\tilde f_j}(\omega,kh\wedge \stopr_r(\omega))), 
\end{eqnarray*}
which, when combined with the two previous bounds, implies 
\begin{eqnarray*} & & \left|\sum_{i=1}^n\int_0^\infty \ell_i(u) d\pushproc^{f_i}(\omega,u\wedge \stopr_r(\omega)) -
  \sum_{j=1}^m\int_0^\infty \tilde \ell_j(u) d\pushproc^{\tilde f_i}(\omega,u\wedge \stopr_r(\omega))
\right|  \\ &\leq &\varepsilon \left(\sum_{i=1}^n \pushproc^{f_i}(\omega,T\wedge \stopr_r(\omega))+\sum_{j=1}^m
  \pushproc^{\tilde f_j}(\omega,T\wedge \stopr_r(\omega)) \right). \end{eqnarray*}
Sending $\varepsilon \downarrow 0$, we obtain \eqref{chi-toshow}. 

Thus, to prove \eqref{chi-toshow},  it suffices to establish \eqref{show-intermed}. 
Define 
\[ \Delta^u (x) \doteq  \sum_{i=1}^n \ell_i (u) f_i(x)  - \sum_{j=1}^m \tilde{\ell}_j
  (u) \tilde{f}_j(x), \quad  (u,x) \in \R_+ \times \bar{G}. 
\] 
Due to the linearity of the space $\MH$ and of the map $f \mapsto h_f$,
 \eqref{tworeps} implies that for each $u \geq 0$, 
$\Delta^u$  lies in $\MH$ and $h_{\Delta^u}(x,v)  =
\langle \nabla \Delta^u (x), v \rangle = 0$
for every $(x,v) \in \MK$.   
  In turn,  this implies that $\pushproc^{\Delta^u} (t\wedge \stopr_r)$ and $-\pushproc^{\Delta^u} (t\wedge \stopr_r)=\pushproc^{-\Delta^u} (t\wedge \stopr_r)$ are both increasing and hence  $\pushproc^{\Delta^u} (t\wedge \stopr_r) = 0$ for every $t \geq 0$. 
By  linearity of the mapping $f \mapsto
\pushproc^f(\cdot \wedge \stopr_r)$, this is equivalent to \eqref{show-intermed}. 
Thus, we have shown that  $\Lambda(\omega, \cdot)$ is a well defined functional on 
$\MT_0$.    The fact that 
$g \mapsto \Lambda(\omega, g)$ is linear is an immediate consequence of  the definition 
of $\Lambda$ in \eqref{def-Lambda}, and 
the fact that the sum of representations of two functions $g,
\tilde{g}$ in $\MT_0$
is a representation for the sum $g + \tilde{g}$. 
Furthemore, for any $g \in \MT_0$, given any representation for $g$ of
the form \eqref{tworeps}, each stochastic Riemann-Stieltjes integral 
$\int_0^\infty \ell_i(u) d\pushproc^{f_i} (u\wedge \stopr_r)$ is a random
variable, and so is its sum.  Since $\Omega_0$ is a measurable set, it follows immediately from
\eqref{def-Lambda} that $\Lambda (\cdot, g)$ is a random variable. 
Thus, $\Lambda$ satisfies 
both properties of Definition \ref{defn-ranlin} and is a random linear
functional on $\MT_0$. 

We now establish the  positivity of $\Lambda$. 
Let $g \in \MT_0$ be such that $g\geq 0$.  
Since $\MT_0 \subset {\mathcal C}_c(\R_+ \times \MK)$,  $g\in \MP$ by
the last assertion of 
Lemma \ref{lem:p}.  
Now, since $g \in \MT_0$, it also 
 admits a representation  of the form
 $g(u,x,\vect)=\sum_{i=1}^n\ell_i(u)h_{f_i}(x,\vect )$ for $\ell_i\in
 \MC_c(\R_+)$ and $f_i \in \MH$, $i = 1, \ldots, n$. 
For $\omega \not \in \Omega_0$, $\Lambda (\omega, \cdot) \equiv 0$. 
On the other hand,  for $\omega \in \Omega_0$ and each
 $u\geq 0$, $ \left< v,\nabla(\sum_{i=1}^n \ell_i(u) f_i)(x) \right> =\sum_{i=1}^n \ell_i(u) h_{f_i}(x,v) \geq 0$ for each $x\in \partial G \setminus \MV$, $v\in d(x)$ and $|v|=1$. So $\sum_{i=1}^n \ell_i(u) f_i\in \MH$ and hence
\[ \sum_{i=1}^n \ell_i(u)
(\pushproc^{f_i}(\omega, (s+h)\wedge \stopr_r(\omega))-\pushproc^{f_i}(\omega, u\wedge \stopr_r(\omega)))\geq 0.\]  
Together with the approximation \eqref{estimate} to the
Riemann-Stieltjes integral, this implies that for any $\ve > 0$, 
$\sum_{i=1}^n \int_0^\infty \ell_i (u) d \pushproc^{f_i} (u\wedge \stopr_r) \geq -\ve
\sum_{i=1}^n \pushproc^{f_i} (T\wedge \stopr_r)$. Sending  
$\varepsilon$ down  to $0$, we conclude that  for all $\omega \in
\Omega_0$, 
\[\Lambda(\omega, g)=\sum_{i=1}^n\int_0^\infty \ell_i(u)
d\pushproc^{f_i}(\omega, u\wedge \stopr_r(\omega))\geq 0.\]
This shows that $\Lambda$ is positive, and completes the proof of the lemma. 
\end{proof}

\subsection{An Integral Representation}
\label{apsub-integrep}

We now use the random positive linear functional $\Lambda$ to show
that $\pushproc^f(\cdot \wedge \stopr_r)$ admits a suitable integrable representation.

\begin{lemma}
\label{lem-chirep}
 There exists a
unique positive regular Borel measure $\mu(\omega,\cdot)$ on $\R_+\times \MK$ such that
for each $f \in \MH$ and $t\geq 0$, \be \label{xif10} 
\pushproc^f(\omega,t\wedge \stopr_r(\omega))= \int_{[0,t]\times \MK} \left<\vect,\nabla f(x)\right>  \mu(\omega,du,dx,d\vect).\ee 
\end{lemma}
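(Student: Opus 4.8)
The plan is, for each fixed $\omega$, to extend the random positive linear functional $\Lambda(\omega,\cdot)$ from $\MT_0$ to all of $\MC_c(\R_+\times\MK)$ and then to represent the extension by a Radon measure via the Riesz representation theorem; the delicate point will be to do this measurably in $\omega$. By Lemma \ref{lem:Lambda}, $\Lambda$ is a random linear functional on $\MT_0$ that is positive on the cone $\MP\cap\MT_0$ (any $g\in\MP$ is nonnegative pointwise, hence $\Lambda(g)\ge 0$), and by the second assertion of Lemma \ref{lem:p} every $g\in\MC_c(\R_+\times\MK)$ satisfies $g\preceq\hat g$ for some $\hat g\in\MT_0$, i.e.\ $\MT_0+\MP=\MC_c(\R_+\times\MK)$. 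These are exactly the hypotheses of the M.~Riesz extension theorem, so for each $\omega\in\Omega_0$ there is a linear functional $\bar\Lambda(\omega,\cdot)$ on $\MC_c(\R_+\times\MK)$ extending $\Lambda(\omega,\cdot)$ with $\bar\Lambda(\omega,\cdot)\ge 0$ on $\MP$; since by the last assertion of Lemma \ref{lem:p} every nonnegative $g\in\MC_c(\R_+\times\MK)$ lies in $\MP$, $\bar\Lambda(\omega,\cdot)$ is a positive functional. As $\R_+\times\MK$ is locally compact, Hausdorff and $\sigma$-compact, the Riesz representation theorem then produces, for each such $\omega$, a unique positive Radon (hence regular, $\sigma$-finite Borel) measure $\mu(\omega,\cdot)$ with $\bar\Lambda(\omega,g)=\int g\,d\mu(\omega,\cdot)$ for all $g\in\MC_c(\R_+\times\MK)$; for $\omega\notin\Omega_0$ we set $\mu(\omega,\cdot)\equiv 0$.

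The main obstacle is the measurability of $\omega\mapsto\mu(\omega,A)$, since the Riesz extension involves arbitrary choices. To handle this I would exploit the separability of $\MC_c(\R_+\times\MK)$ in the sup norm restricted to a fixed compact exhaustion: fix a countable $\Q$-linear dense subspace $\MD$ containing a $\Q$-basis of $\MT_0$, and carry out the extension along a fixed enumeration of $\MD$, at each step choosing the extended value by a measurable selection inside the nonempty closed interval dictated by positivity and the already-defined values — the endpoints of that interval are measurable in $\omega$ because $\Lambda(\omega,\cdot)$ is (Lemma \ref{lem:Lambda}) and the majorizing bounds come from cofinality in $\MT_0$. This yields a jointly measurable version of $\bar\Lambda$, i.e.\ a random positive linear functional on $\MC_c(\R_+\times\MK)$; measurability of $\mu(\cdot,A)$ then follows since for compact $K'$ one has $\mu(\omega,K')=\inf\{\bar\Lambda(\omega,g):g\in\MD,\ \ind_{K'}\le g\le 1\}$, and a monotone class argument upgrades this to all Borel $A$. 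Equivalently, one may simply quote a measurable version of the Riesz representation theorem for random positive linear functionals.

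Granting $\mu$, the identity \eqref{xif10} is obtained by testing against product functions. Fix $\omega\in\Omega_0$, $f\in\MH$ and $t\ge 0$, and choose $\ell_n\in\MC_c(\R_+)$ with $\ind_{[0,t]}\le\ell_{n+1}\le\ell_n$ and $\ell_n\downarrow\ind_{[0,t]}$ pointwise. Then $\ell_n h_f\in\MT_0$, so by the definition \eqref{def-Lambda} of $\Lambda$ and the construction of $\mu$,
\[ \int_{\R_+\times\MK}\ell_n(u)\,h_f(x,\vect)\,\mu(\omega,du,dx,d\vect)=\Lambda(\omega,\ell_n h_f)=\int_0^\infty\ell_n(u)\,d\pushproc^f(\omega,u\wedge\stopr_r(\omega)). \]
On the left, $h_f\ge 0$ on $\MK$ (because $f\in\MH$), so dominated convergence with dominating function $\ell_1 h_f$ gives the limit $\int_{[0,t]\times\MK}h_f\,d\mu(\omega,\cdot)$. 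On the right, $\pushproc^f(\omega,\cdot\wedge\stopr_r(\omega))$ is continuous and increasing by Lemma \ref{lem-gtest}, so the Riemann--Stieltjes integral against it decreases to $\pushproc^f(\omega,t\wedge\stopr_r(\omega))$. This is precisely \eqref{xif10}.

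Finally, uniqueness. If $\mu_1$ and $\mu_2$ are positive regular Borel measures both satisfying \eqref{xif10}, then, approximating $\ind_{[0,t]}$ as above and taking linear combinations, $\int g\,d\mu_1=\int g\,d\mu_2$ for every $g\in\MT_0$; it therefore suffices to check that $\MT_0$ is measure-determining on $\R_+\times\MK$. This rests on the richness of $\MH$: using the localized test functions of Lemmas \ref{lem:cutoff} and \ref{lem:frs} one verifies that $\{h_f:f\in\MH\}$ separates points of $\MK$ and, more quantitatively, that for any two disjoint compact subsets of $\R_+\times\MK$ there is an element of $\MT_0$ that is nonnegative, bounded by $1$, close to $1$ on one set and to $0$ on the other; combined with the inner regularity and $\sigma$-finiteness of $\mu_1,\mu_2$, a standard monotone class argument then forces $\mu_1=\mu_2$. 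I expect the measurable extension step of the second paragraph to be the principal technical difficulty; the remaining steps are routine given the results already established.
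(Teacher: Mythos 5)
Your existence argument is essentially the paper's: extend $\Lambda(\omega,\cdot)$ from $\MT_0$ to $\MC_c(\R_+\times\MK)$ by the positive-cone (M.~Riesz/Hahn--Banach) extension theorem, using Lemma \ref{lem:p} for cofinality and positivity, represent the extension by the Riesz--Markov--Kakutani theorem, and then recover \eqref{xif10} by monotone/dominated convergence along product test functions $\ell_n(u)h_f(x,\vect)$. Two remarks, one of which is a genuine flaw. First, the measurable-selection construction of $\bar\Lambda$ is unnecessary for this lemma: the statement is purely pathwise ($\mu(\omega,\cdot)$ for each fixed $\omega$), and the paper never proves joint measurability of $\omega\mapsto\mu(\omega,A)$; the measurability needed downstream (in Proposition \ref{prop-chig}) is obtained differently, by identifying the relevant integrals $\int f(\zee(u))^{-1}\,d\pushproc^f(u\wedge\stopr_r)$ with functionals of the processes $\pushproc^f$, which are already adapted. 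Your detour is not harmful, but it is solving a problem the lemma does not pose, and it is by far the least routine part of your write-up.

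The genuine gap is in your uniqueness argument. Every $g\in\MT_0$ is of the form $\sum_i\ell_i(u)\langle \vect,\nabla f_i(x)\rangle$, hence is \emph{linear in the direction variable} $\vect$ for fixed $(u,x)$. Consequently the quantitative separation property you assert (an element of $\MT_0$ with values in $[0,1]$, close to $1$ on one compact set and to $0$ on a disjoint one) is false whenever the two sets involve directions over the same boundary point $x$ with $d(x)$ not a single ray: a function linear in $\vect$ cannot be near $1$ at $v_1,v_2\in d(x)\cap S_1(0)$ and near $0$ at the normalized average $(v_1+v_2)/|v_1+v_2|$. Worse, $\MT_0$ is genuinely not measure-determining: two measures whose conditional distributions over $\vect$ given $(u,x)$ have the same vector-valued first moment give identical values to every integral in \eqref{xif10} (e.g.\ $\delta_{v_1}+\delta_{v_2}$ versus $|v_1+v_2|\,\delta_{(v_1+v_2)/|v_1+v_2|}$ on a fiber where $d(x)$ is at least two-dimensional), so uniqueness of a measure satisfying \eqref{xif10} alone cannot be proved and in general fails. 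The uniqueness asserted in the lemma is the one furnished by the Riesz representation theorem, namely uniqueness of the regular Borel measure representing the \emph{extended} functional $\bar\Lambda(\omega,\cdot)$ on all of $\MC_c(\R_+\times\MK)$; that is what the paper's proof invokes, and it is all that is used later (indeed only the vector-valued object $\int \vect\,\tilde\mu(du,d\vect)$ ever enters the subsequent analysis). You should therefore drop the measure-determining argument and state uniqueness at the level of the representing measure for $\bar\Lambda$.
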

 \begin{proof} Fix $\omega \in \ccspace$.  By Lemma \ref{lem:Lambda}, $\Lambda(\omega, \cdot)$ is
   a positive linear functional on $\MT_0$. 
Thus, by 
the positive cone version of
the Hahn-Banach theorem for positive linear functionals (see Theorem
2.1 of \cite{BagBook92}), $\Lambda(\omega, \cdot)$ can be extended to a positive
linear functional on $\MC_c(\R_+\times \MK)$, which we denote by
$\bar{\Lambda}(\omega, \cdot)$.  In turn, 
an application of the Riesz-Markov-Kakutani representation theorem for
positive linear functionals (see Theorem 2.14 of \cite{RudinBook})  shows
that there exists a
unique positive regular Borel measure $\mu(\omega,\cdot)$ on $\R_+\times \MK$ such that
\be
\label{rep-lambda}
\bar{\Lambda}(\omega, g) =\int_{\R_+\times \MK} g(u,x,\vect) \mu(\omega,du,dx,d\vect) \mbox{ for each } g\in
\MC_c(\R_+\times \MK).\ee
Now, 
for each $t>0$, let $\{\ell^n,\ n\in \N\}$ be a sequence of non-negative functions in $\C_c(\R_+)$ such that $\ell^n \uparrow \ind_{[0,t]}$ as $n\rightarrow \infty$.
For each $f \in \MH$, substituting $g^n(u,x,\vect) = \ell^n(u) h_f(x,\vect ) \in \MT_0$ into both the
definition (\ref{def-Lambda}) and the representation (\ref{rep-lambda}) of
$\bar{\Lambda}$,  taking limits as $n\rightarrow \infty$ and invoking
the monotone convergence theorem,  we
obtain \eqref{xif10}.  
 \end{proof}

We now establish some additional properties of the measure
$\mu(\omega, \cdot)$. 
For each $\omega \in \ccspace$, consider the set \[
\overline
\MK(\omega)\doteq \{(u,x,\vect)\in \R_+\times \R^{2J}:\ x=\zee
(\omega, u)
\in \partial G\setminus \MV,\ \vect\in d(x),\ |\vect|=1\},\] where 
we have written $\zee (\omega, u)$ instead of $\zee (u)$ to make clear
the dependence of the right-hand side on $\omega$.  

\begin{lemma} \label{lem:musupport} There exists $\Omega_0 \in
  {\mathcal M}$ with $\Q_z (\Omega_0) = 1$ such
  that
\be
\label{chif-int2}
\mu(\omega, [\R_+\times \MK] \setminus\overline \MK(\omega))=0, \qquad 
\mbox{ for }  \omega\in \Omega_0. 
\ee
\end{lemma}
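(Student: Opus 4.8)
\textbf{Proof plan for Lemma~\ref{lem:musupport}.}
The goal is to show that the random measure $\mu(\omega,\cdot)$ produced by the Riesz representation in Lemma~\ref{lem-chirep} is, for $\Q_{\barz}$-almost every $\omega$, supported on the ``graph'' set $\overline{\MK}(\omega)$, i.e.\ $\mu(\omega,\cdot)$ assigns no mass to triples $(u,x,\vect)$ for which $x \neq \zee(\omega,u)$. The natural strategy is to test $\mu(\omega,\cdot)$ against a well-chosen family of nonnegative continuous functions that vanish precisely on $\overline{\MK}(\omega)$ and show the resulting integrals are $\Q_{\barz}$-almost surely zero, then invoke inner regularity of $\mu(\omega,\cdot)$ together with separability to combine the null sets. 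First I would fix a countable dense family of functions: take a countable collection $\{g_m\} \subset \MC_c(\R^J)$ that is dense (in the appropriate local-uniform sense) and, for each $m$ and each rational $\delta>0$, consider the nonnegative test function $(u,x,\vect) \mapsto \ell(u)\,g_m(x)\,\varphi_\delta\big(|x - \zee(\omega,u)|\big)$ where $\varphi_\delta$ is a fixed continuous cutoff vanishing on $[0,\delta/2]$ and equal to $1$ on $[\delta,\infty)$. These are not in $\MT_0$, but by Lemma~\ref{lem:p} each is dominated in the order $\preceq$ by an element of $\MT_0$, so $\bar\Lambda(\omega,\cdot)$ (hence $\mu(\omega,\cdot)$) is finite on the relevant compact pieces and the integrals below are well defined.

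The key computation is to relate $\int g\,d\mu(\omega,\cdot)$ back to the processes $\pushproc^f$ and then to \eqref{xif31}-style representations. Concretely, for $g \in \MT_0$ of the form $g(u,x,\vect) = \sum_i \ell_i(u) h_{f_i}(x,\vect)$, the definition \eqref{def-Lambda} together with \eqref{xif10} gives $\int g\,d\mu(\omega,\cdot) = \sum_i \int_0^\infty \ell_i(u)\, d\pushproc^{f_i}(\omega, u\wedge\stopr_r(\omega))$; on the other hand, from definition \eqref{def-chig} of $\pushproc^{f_i}$ combined with the boundary-behaviour results of Lemma~\ref{lem:Urep1} — in particular \eqref{chig}, which says $\pushproc^{f}$ puts no mass on $\{u : \zee(u) \in G\}$ — the measure $d\pushproc^{f_i}(\omega,\cdot)$ is carried by $\{u : \zee(\omega,u) \in \partial G\}$. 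Moreover, since $\Q_{\barz}$-almost surely the process spends zero Lebesgue time on $\MV$ (property~4 of the submartingale problem), and more is true along $\MV$ by \eqref{prop-bdary} and the stopping at $\stopr_r < \dist(\barz,\MV)$, the measure is in fact carried by $\{u : \zee(\omega,u) \in \partial G \setminus \MV\}$. Now I would extend this: using density (Lemma~\ref{lem:p} plus the approximation \eqref{estimate}) and the $\Q_{\barz}$-almost sure linearity and monotonicity of $f \mapsto \pushproc^f(\omega,\cdot\wedge\stopr_r(\omega))$ from Lemma~\ref{lem-gtest}, the identity $\int g\,d\mu(\omega,\cdot) = \int_{\overline{\MK}(\omega)} g\,d\mu(\omega,\cdot)$ propagates to all $g \in \MC_c(\R_+\times\MK)$. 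Testing against the cutoff functions $\ell(u)\,g_m(x)\,\varphi_\delta(|x-\zee(\omega,u)|)$, which vanish on $\overline{\MK}(\omega)$, yields $\int \ell(u)\,g_m(x)\,\varphi_\delta(|x-\zee(\omega,u)|)\,\mu(\omega,du,dx,d\vect) = 0$ for $\Q_{\barz}$-a.e.\ $\omega$, for every $m$, every rational $\delta$, and $\ell$ ranging over a countable dense set.

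Taking the union of these countably many $\Q_{\barz}$-null sets produces the set $\Omega_0$ with $\Q_{\barz}(\Omega_0)=1$. For $\omega \in \Omega_0$, letting $\delta \downarrow 0$ through rationals and using monotone convergence shows $\mu(\omega, \{(u,x,\vect) : x \neq \zee(\omega,u)\} \cap K)=0$ for every compact $K$ in the domain of the $g_m$ and $\ell$; since the $g_m,\ell$ are dense and $\mu(\omega,\cdot)$ is a regular Borel measure, inner regularity (approximating an arbitrary Borel set by compacta on which the integrand can be controlled) upgrades this to $\mu(\omega, [\R_+\times\MK]\setminus \overline{\MK}(\omega)) = 0$, which is \eqref{chif-int2}. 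The main obstacle, I expect, is the bookkeeping in this last density/regularity step: one must be careful that $\mu(\omega,\cdot)$ is only $\sigma$-finite (finite on compacts of the form considered in Lemma~\ref{lem:p}, governed by the dominating functions $f_{r,s}$ from Lemma~\ref{lem:frs}) and that the cutoff family genuinely separates $\overline{\MK}(\omega)$ from its complement uniformly on compacts, so that no mass can ``escape'' to infinity in $u$ or to the excluded set $\MV$. Handling the $\MV$-exclusion cleanly is exactly where one uses that $\stopr_r$ stops the process strictly before it can reach $\MV$, together with the boundary property Proposition~\ref{lem-bdary}, so that $\zee(\omega,u) \in \partial G \setminus \MV$ whenever $u$ is in the support of $d\pushproc^f(\omega,\cdot)$ up to $\stopr_r(\omega)$.
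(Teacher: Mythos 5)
There is a genuine gap, and it sits at the center of your argument: the claim that the identity $\int g\,d\mu(\omega,\cdot)=\int_{\overline{\MK}(\omega)}g\,d\mu(\omega,\cdot)$ holds for $g\in\MT_0$ and then ``propagates by density'' to all of $\MC_c(\R_+\times\MK)$. What you actually derive from \eqref{chig} and the boundary/$\MV$ considerations is only a statement about the \emph{time}-marginal of $\mu$: the measures $d\pushproc^{f}(\omega,\cdot\wedge\stopr_r(\omega))$ put no mass on $\{u:\zee(\omega,u)\in G\}$, so $\mu(\omega,\cdot)$ charges no set of the form $I\times\MK$ with $\zee(\omega,u)\in G$ for $u\in I$ (this is the easier half, the paper's claim (\ref{c2})). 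It does \emph{not} follow that, at times $u$ with $\zee(\omega,u)\in\partial G\setminus\MV$, the spatial coordinate $x$ under $\mu(\omega,\cdot)$ equals $\zee(\omega,u)$ — which is the real content of the lemma. The test functions you have identities for are of tensor-product form $\ell(u)h_f(x,\vect)$ with $h_f(x,\vect)=\lan\vect,\nabla f(x)\ran$; such functions cannot distinguish a measure carried by the graph $\{x=\zee(\omega,u)\}$ from one that, at boundary times, charges boundary points far from $\zee(\omega,u)$. Moreover the linear span of $\{h_f:f\in\MH\}$ is linear in $\vect$ and so is nowhere near dense in $\MC_c(\MK)$; Lemma \ref{lem:p} gives domination in the order $\preceq$, not approximation, so no density argument of the kind you sketch can convert the $\MT_0$-identities into a support statement for $\mu$, and your $\omega$-dependent cutoffs $\ell(u)g_m(x)\varphi_\delta(|x-\zee(\omega,u)|)$ are never actually integrated against $\mu$ by any identity you have established.

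The missing idea is a pathwise localization in \emph{both} time and space, which is how the paper proves the hard half (its claim (\ref{c1})). Fix $\omega$ in the full-measure set where \eqref{chig} holds and $\ve>0$, and decompose time by the stopping times $\stopout_k^\ve$ (hit $\partial G$) and $\stopin_k^\ve$ (move distance $\ve$ from $\zee(\omega,\stopout_k^\ve)$). On $[\stopout_k^\ve,\stopin_k^\ve]$ the path stays in $B_\ve(\zee(\omega,\stopout_k^\ve))$, so for every $f\in\MH$ supported outside that ball the definition \eqref{def-chig} forces $\pushproc^f$ to be constant there; feeding this into the representation \eqref{xif10} and using the positivity of $\mu$ (together with the availability, via Lemma \ref{lem:frs}-type functions, of $f\in\MH$ whose directional derivatives dominate $1$ on compact pieces of $\MK$ away from the ball) yields $\mu\bigl(\omega,\{(u,x,\vect):u\in[\stopout_k^\ve,\stopin_k^\ve],\ \dist(x,\zee(\omega,\stopout_k^\ve))>\ve\}\bigr)=0$; the interior intervals $(\stopin_k^\ve,\stopout_{k+1}^\ve)$ are handled as you do, via $f_{r,s}$ and \eqref{chig}. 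Letting $\ve\downarrow 0$ and using $\overline\MK(\omega)=\bigcap_{\ve>0}\bigcup_k\MA_k^\ve(\omega)$ gives \eqref{chif-int2}. Without some version of this space-time localization (testing with functions supported away from where the path currently sits), your argument cannot rule out mass of $\mu$ at boundary points $x\neq\zee(\omega,u)$, so the proof as proposed does not go through.
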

\begin{proof}
Let $\Omega_0 \in {\mathcal M}$ be the set of full $\Q_z$-measure such that
the relation 
\eqref{chig} holds. 
For each $\omega\in \Omega_0$ and $\varepsilon>0$, let $\stopin_0^\ve(\omega)\doteq 0$ and for each $k\in \N$, let 
\begin{eqnarray*}
 \stopout_k^\ve(\omega) & \doteq &  \inf \{t\geq \varsigma_{k-1}^\ve(\omega):\ \zee(\omega,t)
 \in \partial G\}, \\ \stopin_k^\ve(\omega) & \doteq & \inf \{t\geq \stopout_k^\ve(\omega): \
  \dist(\zee(\omega,t),\zee(\omega,\stopout_k^\ve(\omega)))\geq \ve\}. 
\end{eqnarray*}
Note that for some $k$ and $\omega$,  we could have $\stopout_{k+1}^\ve (\omega)  =
\stopin_k^\ve (\omega)$. 
For $\omega\in \Omega_0$, $\ve > 0$ and $k \in \N$,  define the sets 
\[  \MA^\ve_k(\omega) \doteq  \left\{(u,x,\vect)\in  [\stopout_k^\ve(\omega),
  \stopin_k^\ve(\omega)] \times\MK:  \dist(x,\zee (\omega,\stopout_{k}^\ve(\omega)))\leq \ve\right\}
\]
and 
\[  \ME^\ve_k (\omega) \doteq  \left\{(u,x,\vect)\in 
[\stopout_k^\ve(\omega),
  \stopin_k^\ve(\omega)] \times \MK:  \dist(x,\zee
  (\omega,\stopout_{k}^\ve(\omega))) > \ve \right\}.
\]
Note that 
$\overline \MK(\omega) = \bigcap_{\ve > 0} \bigcup_{k \geq 1} \MA^\ve_k (\omega)$. In fact, it is easy to see that $\overline \MK(\omega) \subset \bigcap_{\ve > 0} \bigcup_{k \geq 1} \MA^\ve_k (\omega)$. For the converse, let $(u,x,\vect) \in \bigcap_{\ve > 0} \bigcup_{k \geq 1} \MA^\ve_k (\omega)$. Then for each $\ve > 0$, there exists $k(\ve)\geq 1$ such that $(u,x,\vect) \in \MA^\ve_{k(\ve)} (\omega)$. Thus, $u\in [\stopout_{k(\ve)}^\ve(\omega),
  \stopin_{k(\ve)}^\ve(\omega)] $ and $(x,v) \in \MK$ such that $\dist(x,\zee (\omega,\stopout_{k(\ve)}^\ve(\omega)))\leq \ve$. It follows that $\dist(\zee(\omega,u),\zee(\omega,\stopout_{k(\ve)}^\ve(\omega)))\leq \ve$. and hence $\dist(x, \zee(\omega,u))\leq 2\ve$. Since $\zee(\omega,\stopout_{k(\ve)}^\ve(\omega))\in \partial G$, we have $\dist(\zee(\omega,u),\partial G)\leq \ve$. By letting $\ve \downarrow 0$, it follows that $x=\zee(\omega,u) \in \partial G$. Since $(x,v)\in \MK$, we have $x=\zee(\omega,u) \in \partial G\setminus \MV$. Thus, $(u,x,v) \in \overline \MK(\omega)$. This completes the proof of $\overline \MK(\omega) = \bigcap_{\ve > 0} \bigcup_{k \geq 1} \MA^\ve_k (\omega)$.

Observe that for every $\ve > 0$, 
\[  [\R_+ \times \MK] \sm \left[ \bigcup_{k \geq 1} \MA^\ve_k (\omega) \right]
 = \left[ \bigcup_{k \geq 1}  \ME^\ve_k (\omega) \right] \bigcup
 \left[  \bigcup_{k \geq 0}
   (\stopin_k^\ve(\omega),\stopout_{k+1}^\ve(\omega) )\times \MK
 \right]. 
\]
Thus, to show \eqref{chif-int2}, it suffices to show that for each $\omega\in \Omega_0$ and $\ve > 0$, 
\be \label{c1} \mu\left(\omega, \ME^\ve_k(\omega)\right)  = 0,  \quad k \in
\N, 
\ee
and 
\be 
\label{c2}
\mu\left(\omega,   (\stopin_k^\ve(\omega),\stopout_{k+1}^\ve(\omega)
  )\times \MK\right) =0,   \quad k \in
\N. 
\ee

To establish (\ref{c1}) and (\ref{c2}), choose $r, s > 0$ such that 
$B_{\kappa_{\barz}(r)}(\barz) \subseteq \MU_{r,s}= \{ x\in \partial G: |x| \leq r, d(x, \MV)
\geq s\}$ and 
recall the definition of $\stopr_r$ in (\ref{def-stopr}). 
From Theorem 2 of  \cite{KanRam14},  
it follows that there exists a function  $f_{r,s} \in \MH \cap \MC_c^2
(\bar{G})$ such 
that for every $x\in \MU_{r,s}$, $\langle d, \nabla f_{r,s}(x)\rangle \geq 1$ for all $d\in d(x) \cap
  S_1(0)$. 
Together with (\ref{xif10}), (\ref{chig}) and the continuity of $\pushproc^{f_{r,s}}(\cdot\wedge \stopr_r)$, this  shows that 
 \begin{eqnarray*}
   0&=& \pushproc^{f_{r,s}}(\omega,
   \stopout_{k+1}^\ve(\omega)\wedge \stopr_r(\omega))-\pushproc^{f_{r,s}}(\omega, \stopin_{k}^\ve(\omega)\wedge \stopr_r(\omega))\\
   & =&  \int_{(\stopin_k^\ve(\omega),\stopout_{k+1}^\ve(\omega) ]
     \times \MK} \left<\vect,\nabla f_{r,s}(x)\right>
   \mu(\omega,du,dx,d\vect)\\ &\geq & \mu(\omega, (\stopin_k^\ve(\omega),\stopout_{k+1}^\ve(\omega) )\times \MK_{r,s}),\end{eqnarray*} where $\MK_{r,s}=\{(x,\vect)\in \R^{2J}:\ x\in \MU_{r,s}, \vect\in d(x),\ |\vect|=1\}$. Then $\mu(\omega, (\stopin_k^\ve(\omega),\stopout_{k+1}^\ve(\omega) )\times \MK_{r,s})=0$ for each $r,s>0$. Note that $\MK_{r,s}\uparrow \MK$ as $r\rightarrow \infty$ and $s\rightarrow 0$. 
This proves (\ref{c2}).  To show (\ref{c1}), by (\ref{xif10}) again, we have that for each
$f\in \MH$,  \be \label{xif1}
\pushproc^{f}(\omega,\stopin_{k}^\ve(\omega)\wedge \stopr_r(\omega)) -
\pushproc^{f}(\omega,\stopout_{k}^\ve(\omega)\wedge \stopr_r(\omega)) =
\int_{[\stopout_{k}^\ve(\omega),\stopin_{k}^\ve(\omega) ] \times \MK}
\left<\vect, \nabla f(x)\right>  \mu(\omega,du,dx,d\vect). \ee
Note that for each $f\in \MH$ with support outside
$B_\ve(\zee(\omega,\stopout_{k}^\ve(\omega)))$, by the definition of
$\pushproc^{f}$ in (\ref{def-chig}), we
have \[\pushproc^{f}(\omega,\stopin_{k}^\ve(\omega)\wedge \stopr_r(\omega)) -
\pushproc^{f}(\omega,\stopout_{k}^\ve(\omega)\wedge \stopr_r(\omega))=0.\] Thus, by running
over all $f\in \MH$ with support outside $B_\ve(\zee (\omega,\stopout_{k}^\ve(\omega)))$, we have that (\ref{c1}) holds. This completes the proof of the lemma. \end{proof}

\subsection{Proof of Proposition \ref{prop-chig}} 
\label{subs-proofchig}

We now present the proof of Proposition \ref{prop-chig}.  
Note that as a consequence of  (\ref{xif10}) and Lemma \ref{lem:musupport}, for every $\omega \in \Omega_0$, we have
\be \label{xif21} \pushproc^f(\omega, t\wedge \stopr_r(\omega)) = \int_{[0,t]\times \MK \cap
  \overline \MK(\omega)} \left<\vect, \nabla f(x)\right>
\mu(\omega,du,dx,d\vect), \qquad f \in \MH. \nonumber \ee 
Define a random measure $\tilde \mu: \ccspace \times \R_+
\times \MK \mapsto \R$ as follows: 
\begin{eqnarray}
\label{def-tildemu}
\tilde \mu(\omega,du,d\vect) \doteq 
\left\{ 
\begin{array}{ll}
 \ind_{\{x=\zee(\omega,u)\in \partial G\setminus
   \MV\}}\mu(\omega,du,dx,d\vect) & \mbox{ if } \omega \in \Omega_0, \\
0 & \mbox{ otherwise. }
\end{array}
\right.  \nonumber
\end{eqnarray} 
Then for $\omega \in \Omega_0$, clearly \eqref{xif31} holds. Since $\mu(\omega,\cdot)$ and hence $\tilde \mu(\omega,\cdot)$ is a Borel measure by Lemma \ref{lem-chirep}, it follows that for each compact set $K\subset \R_+\times \MK$, $\tilde \mu(\omega,K)<\infty$ and therefore $\tilde \mu(\omega,\cdot)$ is $\sigma$-finite. 
Since $\Q_z(\Omega_0)  = 1$, this proves the first part of 
Proposition \ref{prop-chig}. 

Next, note that $\zee (\omega,\cdot \wedge \theta_r(\omega))$ lives in
$B_{\kappa_{\barz}(r)}(\barz)$ and there exists a function $f_{r,s}\in \MH$ such that
$\left<\vect,\nabla f_{r,s}(x) \right>\geq 1$ for each $x\in \partial G \cap
B_{\kappa_{\barz}(r)}(\barz)$, $\vect\in d(x)$ and $|\vect|=1$. For each $f\in \C_c^2(\bar G)$,
there exists a constant $C>0$ such that $f+C f_{r,s} \in \MH$. Since
(\ref{xif31}) holds for both $f_{r,s}$ and $f+C f_{r,s}$, then
(\ref{xif31}) holds for $f$. For any function $f\in \C^2(\bar G)$,
there exists a function $g\in \C^2_c(\bar G)$ such that $f=g$ on
$B_{\kappa_{\barz}(r)}(\barz)$. Since $\zee(\omega,\cdot \wedge \theta_r(\omega))$ lives in
$B_{\kappa_{\barz}(r)}(\barz)$, (\ref{xif31}) also holds for each $f\in \C^2(\bar
G)$. It follows that for each $t\geq 0$ and $f\in \C^2(\bar G)$ that
is uniformly positive, \begin{eqnarray*} & & \int_0^t \int_{d(\zee(\omega,u))\cap S_1(0)}\ind_{\{Z(\omega,u)\in \partial G\setminus \MV\}}
\left<\vect,\frac{\nabla f(\zee (\omega,u))}{f(\zee (\omega,u))}\right>  \tilde
\mu(\omega,du,d\vect) \\ &=& \left\{\begin{array}{ll} \int_0^t \frac{1}{f(\zee (\omega,u))} d\pushproc^f(\omega,u \wedge \stopr_r(\omega)) & \mbox{ if }\omega \in \Omega_0 \\ 0 &  \mbox{ if }\omega \notin \Omega_0 \end{array} \right.\end{eqnarray*} as a function of $\omega$ is a random variable. 

Let $\vartheta\in \R^J$. Choose $f(x)=\exp\{\left<\vartheta,
  x\right>\}$.
Then $f\in \C^2(\bar G)$ is uniformly
positive. Simple calculations yield that $\frac{\nabla
  f(x)}{f(x)}= \vartheta$, then by plugging  $f(x)=\exp\{\left<\vartheta,
  x\right>\}$ into the previous display, we have
$\left<\vartheta, \int_{\intset_t} \vect  \tilde
  \mu(\cdot,du,d\vect) \right>$, $t\geq 0$, is a one-dimensional continuous stochastic process.  Since $\vartheta$
is arbitrary, then we have $R^\vect(t) \doteq \int_{\intset_t} \vect  \tilde
  \mu(\cdot,du,d\vect)$, $t\geq 0$, is a $J$-dimensional continuous stochastic process and the $i$th component of $R^\vect(t)$, denoted by $R^\vect_i(t)$, is $\int_{\intset_t} \vect_i  \tilde
  \mu(\cdot,du,d\vect)$.  Let $g:\ \R^J\rightarrow \R^J$ be a continuous function and $g_i$ denote its $i$th component. It follows that \[ \int_{\intset_t} \left<\vect, g(Z(u)) \right> \tilde
  \mu(\cdot,du,d\vect) = \sum_{i=1}^J \int_{\intset_t} \vect_i g_i(Z(u))  \tilde
  \mu(\cdot,du,d\vect) = \sum_{i=1}^J\int_0^t g_i(Z(u)) d R^\vect_i(u).\] Hence $\int_{\intset_t} \left<\vect, g(Z(u)) \right> \tilde
  \mu(\cdot,du,d\vect)$, $t\geq 0$, is also a continuous process.
This completes the proof of Proposition \ref{prop-chig}.

\end{document}